\let\oldhypertarget\hypertarget
\renewcommand{\hypertarget}[2]{%
  \oldhypertarget{#1}{#2}%
    \protected@write\@mainaux{}{%
        \string\expandafter\string\gdef
          \string\csname\string\detokenize{#1}\string\endcsname{#2}%
    }%
  }
\newcommand{\myhyperlink}[1]{%
  \hyperlink{#1}{\csname #1\endcsname}%
  }
\newcommand{\R}{\mathbb{R}}
\newcommand{\N}{\mathbb{N}}
\newcommand{\SL}{\mathcal{L}}
\newcommand{\SO}{\mathcal{O}}
\renewcommand{\S}{\mathbb{S}}
\newcommand{\IP}[2]{\left<#1,#2\right>}
\newtheorem{thm}{Theorem}[section]
\newtheorem{prop}[thm]{Proposition}
\newtheorem{lem}[thm]{Lemma}
\newtheorem{cor}[thm]{Corollary}
\theoremstyle{definition}
\newtheorem{rmk}[thm]{Remark}
\begin{document}

\title{Curve shortening flow with an ambient force field}
\author{Samuel Cuthbertson\and Glen Wheeler\and Valentina-Mira Wheeler}

\thanks{}
\address{University of Wollongong\\
Northfields Ave\\
2522 NSW, Australia}
\email{glenw,vwheeler,csamuel@uow.edu.au}
\subjclass[2020]{53E40 \and 58J35} 

\begin{abstract}
In this paper we consider the anisotropic curve shortening flow in the plane in
the presence of an ambient force.
We consider force fields in which all their derivatives are bounded in the
$L^{\infty}$ sense.
We prove that closed embedded curves that have a minimum of curvature
sufficiently large shrink to round points.
The method of proof follows along the same lines of Gage and Hamilton, in that
we study a rescaling to prove curvature bounds.
We additionally show that the influence of an ambient force field may make such
a result untrue, by giving sufficient conditions on the ambient field that
ensures eventual non-convexity of an initially convex curve evolving under the
flow.
\end{abstract}
\maketitle

\section{Introduction}

In this paper we consider the anisotropic curve shortening flow $\gamma:\S^1\times[0,T)\rightarrow \R^2$ in the presence of an ambient force field.
The evolution equation is
\begin{equation} \label{flow}
	\begin{cases}
	\partial_t^\bot \gamma
	= (\sigma_1k+\sigma_2 + (V\circ\gamma)\cdot\nu)\nu & \text{ in } \S^1\times(0,T)\text{, and}\\
	 \gamma = \gamma_0& \text{ on }\S^1\times\{t=0\}\,.\\
	 \end{cases}
\end{equation}
Here $\gamma_0$ is a smooth closed embedded curve, $\nu$ is the unit inward
pointing normal, $\sigma_i$ are constants (with $\sigma_1>0$), and
$V:\R^2\rightarrow \R^2$ is a vector field.
At times we term $V$ the \emph{ambient force field} or simply \emph{force field}.
Intuitively we may think of $V$ as incorporating a force external to $\gamma$ that affects its evolution.
Examples include external flows such as atmospheric wind and currents in fluid.
In the present work the ambient force field is fixed in time for simplicity.

As is well-known, the tangential component of the velocity does not change the
geometry of the evolving curve (only its parametrisation).
This is why we take into account only the normal component of the velocity in
\eqref{flow}.

Curvature flows with spatio-temporal forcing terms have received some recent
attention.
A seminal contribution is Takasao-Tonegawa \cite{takasao2016existence}, where
deep regularity results are proved, both up to and after singular times.
Sun \cite{sun2018singularities} studied the blow up limits of forced
mean curvature flow.
Their work (in the setting of Brakke's flow) is a weak formulation of mean
curvature flow (see \cite{kamotion,tonegawa2019brakke}).
Sun used a monotonicity formula to show that blow-up limits satisfy the
self-shrinker equation 
\[
H(x)+\frac{x^\perp}{-2t} = 0\,.
\]
The blow-up limit about a singular point $(y,s)$ is the limit of the
parabolically rescaled solutions given by $M_t^\lambda =
\lambda^{-1}(M_{s+\lambda^2 t}-y)$.

Dirr-Kawali-Yip \cite{dirr2008pulsating} proved the existence of a pulsating
wave when the ambient force field is periodic and defined intrinsically on the
evolving surface. A pulsating surface with velocity $c\neq 0$ is a solution
satisfying
\[
M_t(t+\tau) = M_t(t)+z,\text{ for all } z\in \mathbb{Z}^{n+1}, \text{ }\tau = \frac{\langle \nu, z\rangle}{c}\,.
\]
Their method involved constructing sub and super solutions that satisfy the
flow and are of the form above for the graph equation.

Cesaroni-Novaga-Valdinoci \cite{cesaroni2011curve} included a space-dependent
forcing term to study the curve shortening flow in a heterogeneous medium.
In their work the normal speed is $k+g(x,y)$ where $g$ is a periodic function
defined on a unit square.
The authors analyse the shape of finite time singularities.
Their estimates depend only on the initial curve and the $L^{\infty}$ norm of
$g$.
The graphical case appears to be a good setting to work for this problem as the
equation the graph function satisfies is 
\[
	u_t = \frac{u_{xx}}{1+u_x^2}+g(x,u(x))\sqrt{1+u_x^2}\,.
\]
Their results lead to existence and uniqueness for solutions to the above
equation when the function $g$ does not depend on $u$.

Mikula and \v{S}ev\v{c}ovi\v{c} \cite{mikula2004direct} presented an algorithm
to numerically study the flow.
Furthermore, they applied Angenent's abstract existence result
\cite{angenent1990nonlinear} and ruled out time-periodic solutions through the
use of a Lyapunov functional. 

Of fundamental concern is the relationship between the dynamics of our flow
\eqref{flow} and the anisotropic curve shortening flow without a vector field.
For the vanilla curve shortening flow, Grayson \cite{grayson} proved that all
embedded curves shrink to round points.
For the anisotropic flow, results of \cite{xpz,he2019curvature} show that 
 a class of initial curves continue to be driven to round points.
This includes convex curves, reminiscent of Gage and Hamilton's
\cite{gage1986heat} seminal work on curve shortening flow.
Indeed, many of the techniques in prior work and our paper here are inspired by
Gage and Hamilton.

The vector field $V$ is locally bounded on the plane, which would indicate that
it should not dramatically affect the dynamics of the flow in a neighbourhood
of a small circle where $k$ may be arbitrarily large.
However this does not turn out to be the case.
One simple way to see this is to take $V$ to be a smooth extension of
$-(\sigma_1\hat k + \sigma_2)\hat \nu$ where the hat is used to denote
quantities on some `target' curve.
The flow may evolve so that its curvature field exactly cancels with the
ambient force field, becoming stationary well before shrinking to a point.
This phenomenon leads to an application of the curve shortening flow in the
presence of a force field to Yau's problem, see \cite{SGV3}.

While this is a special case, it is indicative of a general unexpected
phenomenon: Many vector fields $V$ do not allow a Gage-Hamilton result.
To highlight this, in Section \ref{S:loss} we show by example that even strict
convexity can be lost in finite time, for quite a broad class of vector fields
$V$.
The result is as follows.

\begin{restatable}[Preservation of convexity is not generic]{thm}{Tloss}\label{T:loss}
Let $V:\R^2\rightarrow\R^2$, $V\in C^4_{loc}(\R^2)$, be a vector field such
that for some point $p\in\R^2$ and vector $\tau\in\S^1\subset\R^2$ we have
\begin{equation}
\label{EQD2Vhyp}
D^2_{\tau,\tau}V(p)\cdot R\tau < 0\,,
\end{equation}
where $R:\R^2\rightarrow\R^2$ is defined by $R(x,y) = (-y,x)$.

There exists a solution $\gamma:\S^1\times[0,T)\rightarrow\R^2$ to \eqref{flow}
such that (i) $\gamma_0$ is strictly convex; and (ii) for some $t_0\in(0,T)$,
$\gamma(\cdot,t_0)$ is not convex.
\end{restatable}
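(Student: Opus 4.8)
\textit{Approach.} The idea is to run the flow from a strictly convex curve that is almost a stadium --- nearly flat and tangent to the direction $\tau$ in a small neighbourhood of $p$ --- and to exploit that, by \eqref{EQD2Vhyp}, the inward component $V\cdot\nu$ of the force is strictly concave to second order along $\tau$ at $p$. This makes the centre of the flat arc move inward faster than its neighbours, producing a dent and destroying convexity in finite time. Concretely, I would first record the evolution of the signed curvature $k$ under \eqref{flow}. With the conventions of the paper ($\nu$ inward, $k>0$ on convex curves) and a tangentially--free parametrisation, $\partial_t k = \partial_s^2 F + k^2 F$ for $F = \sigma_1 k + \sigma_2 + (V\circ\gamma)\cdot\nu$; expanding $\partial_s^2\big((V\circ\gamma)\cdot\nu\big)$ with the Frenet relations $\partial_s T = k\nu$, $\partial_s\nu = -kT$, and observing that the $k^2(V\cdot\nu)$ terms cancel, gives
\[
\partial_t k = \sigma_1\partial_s^2 k - (V\cdot T)\,\partial_s k + (D^2_{T,T}V)\cdot\nu + k\,\mathcal R,
\]
where $\mathcal R$ is an expression in $k$, $T$, $\nu$, $V$, $DV$, $\sigma_1$, $\sigma_2$ that is bounded wherever $|k|$ and $\|V\|_{C^1}$ are bounded. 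The one structural fact used below is that $(D^2_{T,T}V)\cdot\nu$ is the \emph{only} contribution to $\partial_t k$ of order $k^0$ coming from the force, and that at $p$, with $T=\tau$ and $\nu=R\tau$ (the curve traversed so that $\nu = RT$), it equals $D^2_{\tau,\tau}V(p)\cdot R\tau =: -c_0 < 0$.

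\textit{The initial curves and a priori bounds.} Next I would build, for a small parameter $\varepsilon>0$, a smooth strictly convex embedded closed curve $\gamma_0^\varepsilon$ lying in a ball $B$ about $p$ whose radius depends only on $V$ and the $\sigma_i$ (not on $\varepsilon$): a centrally symmetric ``rounded stadium'' whose two long sides are circular arcs of radius $1/\varepsilon$ (so $k\equiv\varepsilon$ there), joined to two highly curved short caps by smooth monotone transitions of $k$. Central symmetry makes such a curve close up automatically, and the transitions can be taken of $\varepsilon$-independent length so that all curvature derivatives of $\gamma_0^\varepsilon$ and $\sup_{\gamma_0^\varepsilon}|k|$ are bounded uniformly in $\varepsilon$. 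After a rigid motion, arrange that an interior point of a long side lies at $p$ with unit tangent $\tau$ and inward normal $R\tau$; at that point $k=\varepsilon$ and $\partial_s k = \partial_s^2 k = 0$ at $t=0$. Since \eqref{flow} is quasilinear parabolic and $V\in C^4_{loc}$ has bounded derivatives on $B$, local existence gives a solution on $[0,T_\varepsilon)$ with $T_\varepsilon\ge T_\ast>0$ uniformly in small $\varepsilon$; as the forcing $(D^2_{T,T}V)\cdot\nu$ is bounded on $B$, the maximum principle applied to the $k$-equation keeps $\sup|k|$ bounded on a uniform time interval, and standard parabolic bootstrapping (the $C^4$ regularity of $V$ is precisely enough to control $\gamma$ up to sixth order, hence $\partial_s^2 k$) then yields $\varepsilon$-uniform bounds on $\gamma$ in $C^{6}$ and $\varepsilon$-uniform Lipschitz-in-$t$ control of $k,\partial_s k,\partial_s^2 k,T,\nu$ on some $[0,\tau_\ast]$ with $\tau_\ast>0$ independent of $\varepsilon$.

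\textit{Driving the curvature negative.} Finally I would track $k$ at the material parameter $x_0\in\mathbb{S}^1$ that starts at $p$. Inserting the estimates above into the evolution equation: $\sigma_1\partial_s^2 k(x_0,t) = O(t)$ (it vanishes at $t=0$); $(D^2_{T,T}V)\cdot\nu$ at $(x_0,t)$ equals $-c_0 + O(t)$ because $D^2 V$ is locally Lipschitz and $\gamma(x_0,t),T(x_0,t),\nu(x_0,t)$ are $O(t)$-close to $p,\tau,R\tau$; and $|k\,\mathcal R|(x_0,t) = O(\varepsilon+t)$; all $O(\cdot)$ with constants independent of $\varepsilon$. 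Hence $\tfrac{d}{dt}k(x_0,t)\le -c_0 + C(\varepsilon+t)$, so there is $t_1\in(0,\tau_\ast]$ depending only on $c_0$ and the fixed constants such that, for $\varepsilon$ small, $\tfrac{d}{dt}k(x_0,t)\le -\tfrac{c_0}{2}$ on $[0,t_1]$; therefore $k(x_0,t)\le\varepsilon - \tfrac{c_0}{2}t$, which is negative for $t\in(\tfrac{2\varepsilon}{c_0},t_1]$. Choosing $\varepsilon<\tfrac{c_0 t_1}{2}$ (and small enough for the earlier parts of the argument) makes this interval nonempty and contained in $(0,T_\varepsilon)$, and for any such time $t_0$ the curve $\gamma(\cdot,t_0)$ has a point of negative curvature, hence is not convex, while $\gamma_0^\varepsilon$ is strictly convex. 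This $\gamma=\gamma^\varepsilon$ is the required solution.

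\textit{Main obstacle.} The step I expect to be most delicate is \emph{not} the estimates --- which become routine once $\gamma_0^\varepsilon$ is confined to a fixed ball with $\varepsilon$-uniform bounds --- but the careful bookkeeping of sign conventions in the curvature evolution (inward versus outward $\nu$, the orientation of $\gamma$, the rotation $R$, the sign of $k$ on convex curves), so that hypothesis \eqref{EQD2Vhyp} genuinely produces an inward-pushing dent rather than its opposite. A close second is making the construction of $\gamma_0^\varepsilon$ honest: simultaneously strictly convex and embedded, of $\varepsilon$-independent extent, flat with the prescribed tangent and inward normal near $p$, and $\varepsilon$-uniformly smooth, so that the a priori bounds used above are uniform in $\varepsilon$.
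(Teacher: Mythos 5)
Your proposal is correct in its essentials and lands on the same mechanism as the paper: start from a curve that is nearly flat near $p$ with inward normal $R\tau$, observe that the zeroth-order contribution of the force to the curvature evolution is $D^2_{T,T}V\cdot\nu$, which equals $D^2_{\tau,\tau}V(p)\cdot R\tau<0$ at $t=0$, and conclude that $k$ (in your version) or $u_{xx}$ (in the paper's) is driven negative in time $\sim\varepsilon/c_0$, which is less than a uniform lower bound on the existence time once $\varepsilon$ is small. The difference is in the setup, and it is a genuine one: the paper works in graphical coordinates over a line through $p$ parallel to $\tau$, with $u_0(x)=\varepsilon x^2$, and uses the second-order Taylor remainder to write $u_{xx}(0,t)<2\varepsilon-\tfrac{C_V}{2}t+Kt^2$; you work intrinsically with the arclength evolution of the signed curvature, build a rounded-stadium initial curve whose long sides are circular arcs of curvature $\varepsilon$, and get the one-sided differential inequality $\tfrac{d}{dt}k(x_0,t)\le -\tfrac{c_0}{2}$ directly. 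What the graphical route buys is that the analytic object is a single scalar parabolic equation $u_t=\dots$ for which the $\varepsilon$-uniform short-time bound on $u_{xxtt}$ is immediate from standard theory; your route buys a cleaner geometric picture and avoids having to restrict to a graphical neighbourhood, but it pushes the technical burden into the claim that the stadium initial data gives $\varepsilon$-uniform $C^6$ control of $\gamma$ on a uniform short time, which you flag as ``standard parabolic bootstrapping'' with $V\in C^4_{loc}$. That claim is correct but deserves a line of justification (Schauder for the quasilinear $k$-equation with $C^2$ coefficients built from $D^2V$, using that $\gamma_0^\varepsilon$ is smooth with $\varepsilon$-independent bounds), and it is the one place where your argument is doing noticeably more work than the paper's, which sidesteps most of it by quoting $|u_{xxtt}(0,0)|\le K_\varepsilon$ directly from the graph equation. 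Two small bookkeeping remarks: with the paper's convention (purely normal velocity written with $\partial_t^\perp$, $\nu$ inward, $k=\langle\gamma_{ss},\nu\rangle>0$ on convex curves), the $-(V\cdot T)\partial_s k$ term you obtain is correct in a tangentially-free parametrisation, while the paper's Proposition~\ref{p:evo} and Lemma~\ref{LMcurvevo} use the velocity $(\sigma_1k+\sigma_2)\nu+V$, which carries the tangential component $\langle V,\tau\rangle\tau$ and exactly cancels that term; both are consistent. And for the stadium closure, central symmetry in the form $k(s+L/2)=k(s)$ (so the turning function satisfies $\theta(s+L/2)=\theta(s)+\pi$) is what guarantees the curve closes; this is implicit in your construction but worth making explicit.
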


For a simple example, consider $V(x,y) = (x,-x^2)$.
Then $D^2_{e_1,e_1}V = (0,-2)$ and the LHS of \eqref{EQD2Vhyp} is $-2$.
So \eqref{EQD2Vhyp} is satisfied everywhere in this case.

But the condition \eqref{EQD2Vhyp} only needs to hold at a single point.
Thus this example is easily generalised to fields of the form $V(x,y) =
(f(x),g(x))$ where there exists an $x_0$ such that $f'(x_0) \geq 0$ and
$g''(x_0) <0$.
Due to the requirement \eqref{EQD2Vhyp} being negativity (and not sufficient or
uniform negativity) and only needing to hold at a single point, we call this
loss of convexity \emph{generic}.

To prove Theorem \ref{T:loss} we construct a solution that is initially convex
and becomes non-convex in finite time, using an approach pioneered by Giga-Ito \cite{giga1999loss}.
It leaves open the possibility that solutions with everywhere large, positive curvature, even in the presence of \eqref{EQD2Vhyp}, may nevertheless remain convex.
Our main result confirms that, for three classes of ambient field $V$, this is
essentially correct.
Furthermore, we fully determine the dynamics of these convex solutions.
They all converge to round points.

\begin{thm}[Gage-Hamilton type result]
\label{T:main}
Let $U=B_{R_0}(0)\subset\R^2$ be an open disk of radius $R_0\in(0,\infty]$ centred at the origin.
Let $V:\R^2\rightarrow\R^2$, $V\in C^\infty_{loc}(\R^2)$, $V\in C^\infty(U)$ be a vector field, and set $C_0,C_1,C_2$ to be constants such that
\begin{equation}
\label{EQvhyps}
||V||_{L^\infty(U)} \leq C_0
\,,\quad
||DV||_{L^\infty(U)} \leq C_1
\,,\quad\text{and}\quad
||\IP{D^2_{X,X}V(x,y)}{RX}||_{L^\infty(U\times\S^1)} \leq C_2
\,.
\end{equation}
Assume one of the following \emph{confinement conditions}:
\begin{enumerate}[(a)]
\item (Global boundedness) $R_0=\infty$,
\item (Small $R_0$, dominant diffusion) $R_0 < \frac{\sigma_1}{|\sigma_2| + C_0}$,
\item (Large $R_0$, integrability) The function
\[
	R(r) = ||V||_{L^\infty(B_r(0))}
\]
is uniformly globally Lipschitz, so that the IVP
\begin{equation}
\label{EQivp}
x'(r) = |\sigma_2|+R(x(r))\,,\quad x(0) = r_0\,,
\end{equation}
 where $r_0>0$ has a unique global solution $x:[0,\infty)\rightarrow\R$.
\end{enumerate}

Let $\gamma:\S^1\times[0,T)\rightarrow\R^2$ be a solution to \eqref{flow} such that $\gamma_0(\S)\subset U$.
There exists constants $M\in(0,\infty]$ and $K\in[0,\infty)$ depending only on $\sigma_1$, $\sigma_2$,
$C_0$, $C_1$ and $C_2$ with the following property.
If
\begin{enumerate}[(i)]
\item $\min k(\cdot,0) > K$,
\item $L(\gamma_0) < M$, 
\end{enumerate}
and in case (c) $A(\gamma_0) \le 1$, $R_0 \ge x((\sigma_1\pi)^{-1})$; 
then $\gamma(\mathbb{S},t)\subset U$ for all $t$, and $\gamma(\S,t)$ converges
as $t\rightarrow T$ in the Hausdorff metric on $\R^2$ to a round point $\SO$.
Upon rescaling $\gamma$ to $\hat\gamma:\S\times[0,\infty)\rightarrow\R^2$ via
\[
\hat\gamma(\hat\theta,\hat t)
= \frac1{\sqrt{2T}}e^{\hat t}
    \Big(\gamma\big(\theta, T(1-e^{-2\hat t})\big)-\SO\Big)
\,
\] 
the flow $\hat\gamma$ converges exponentially fast in the $C^\infty$ topology
to a standard round circle.
\end{thm}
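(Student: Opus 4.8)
The plan is to follow the Gage--Hamilton strategy: first establish that the flow stays inside $U$ and remains convex for all $t \in [0,T)$, then derive isoperimetric-type and curvature estimates for the rescaled flow, and finally bootstrap to smooth exponential convergence to a circle.

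Step 1 (Confinement). I would first show $\gamma(\S,t) \subset U$ for all $t$. Since the inward normal speed is $\sigma_1 k + \sigma_2 + (V\circ\gamma)\cdot\nu$ and $\sigma_1 k > 0$ for a convex curve, the radial function $\rho(t) = \max_{\S} |\gamma(\cdot,t)|$ satisfies a differential inequality $\rho'(t) \le |\sigma_2| + R(\rho(t))$ at points realising the maximum (using that $\sigma_1 k \ge 0$ pushes inward and the $V$-term is bounded by $R(\rho)$). In case (a) $U = \R^2$ and there is nothing to prove; in case (b) one checks directly that the ball of radius $R_0$ is a supersolution because $\frac{\sigma_1}{R_0} > |\sigma_2| + C_0$ forces the speed to be inward on $\partial B_{R_0}$; in case (c) the comparison ODE \eqref{EQivp} gives the bound, with the area hypothesis $A(\gamma_0)\le 1$ and $R_0 \ge x((\sigma_1\pi)^{-1})$ controlling the lifespan (via $A' \le -2\sigma_1\pi + (\text{bounded terms})$, so $T \lesssim (\sigma_1\pi)^{-1}$) so that $x$ evaluated at the maximal time still lies below $R_0$.

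Step 2 (Preservation of convexity and curvature lower bound). This is the heart of the argument and the main obstacle. The curvature $k$ evolves (in the arclength-parametrised, tangentially-normalised gauge) by an equation of the form
\[
\partial_t k = \sigma_1 k_{ss} + \sigma_1 k^3 + \sigma_2 k^2 + (\text{terms involving } V, DV, D^2V, k, k_s)\,.
\]
The dangerous lower-order terms coming from the force field are controlled in $L^\infty$ by $C_0, C_1, C_2$ — crucially the second-derivative term enters through exactly the quantity $\langle D^2_{X,X}V, RX\rangle$ appearing in \eqref{EQvhyps}, which is why that is the natural norm to bound. At a spatial minimum of $k$ one has $k_{ss} \ge 0$ and $k_s = 0$, so $\frac{d}{dt}\min k \ge \sigma_1 (\min k)^3 - (C + C(\min k) + C(\min k)^2)$ for constants $C$ depending on $\sigma_i, C_0, C_1, C_2$; choosing $K$ large enough that the cubic dominates for $k \ge K$, the hypothesis $\min k(\cdot,0) > K$ gives $\min k(\cdot,t) > K > 0$ for all $t$, hence convexity is preserved. (One must be slightly careful about the non-smoothness of $\min k$ and use a Hamilton-type trick or the parabolic maximum principle on the support function; this is routine.)

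Step 3 (Shrinking, rescaling, and curvature upper bound). With convexity in hand, the isoperimetric deficit / Gage inequality machinery applies: I would show $L(\gamma(\cdot,t)) \to 0$ and $A(\gamma(\cdot,t)) \to 0$ in finite time $T$, and that the curve shrinks to a point $\SO$ (using the length hypothesis $L(\gamma_0) < M$ to guarantee the force-field contribution cannot overcome the shrinking driven by $\sigma_1 k$; concretely $L'(t) \le -\sigma_1\int k^2\,ds + C_0 L(t) + \dots \le -\frac{4\pi^2\sigma_1}{L} + C L$, which forces $L \to 0$ provided $L_0$ is below the threshold where the right side is negative). Then pass to the rescaled flow $\hat\gamma$ defined in the statement; it satisfies a forced CSF whose forcing term carries a factor that decays like $e^{-\hat t}$ (or better) because $V$ is evaluated near the shrinking point and gets multiplied by the rescaling factor. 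One proves a uniform upper bound on $\hat k$ — following Gage--Hamilton, via a bound on $\hat k_{\max}/\hat k_{\min}$ (the curve stays uniformly convex after rescaling) together with the fixed enclosed area $\pi$ — so the rescaled curvature is pinched between two positive constants.

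Step 4 (Higher regularity and exponential convergence). Once $\hat k$ is bounded above and below, Bernstein-type estimates give uniform bounds on all derivatives $\partial_s^m \hat k$; Arzelà--Ascoli then yields subsequential smooth convergence, and since the limit satisfies the shrinking-soliton equation for CSF (the perturbation having vanished in the limit), the limit is the unit circle. Finally, linearising the rescaled flow about the circle and using that the forcing decays exponentially, a standard Łojasiewicz--Simon or direct spectral argument (as in Gage--Hamilton / Huisken) upgrades the convergence to exponential in every $C^m$ norm. The only genuinely new ingredient relative to the classical proof is tracking how the three constants $C_0, C_1, C_2$ enter the reaction terms and verifying the decay of the rescaled forcing — everything else is the Gage--Hamilton template.
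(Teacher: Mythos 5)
Your Steps 1 and 2 match the paper closely: confinement exactly as in Lemma~\ref{LMconfcondb} (ball supersolution in case (b)) and Proposition~\ref{PNcasec} (ODE comparison plus an area argument bounding $T$ in case (c)), and the curvature lower bound exactly as in Lemma~\ref{LMkconvex} (the cubic in $\min k$ at a spatial minimum). These parts are essentially correct and on the paper's route.

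The serious gap is in your Step 3, in how you propose to get the uniform upper bound on the rescaled curvature. You suggest doing this ``via a bound on $\hat k_{\max}/\hat k_{\min}$ (the curve stays uniformly convex after rescaling) together with the fixed enclosed area $\pi$.'' This is circular: a pinching bound on the ratio is a \emph{consequence} of the uniform curvature bound, not a tool for obtaining it, and nothing in what you have established so far controls that ratio. In fact the paper does not argue this way even for the unrescaled flow; it follows Gage--Hamilton's median-curvature $\to$ entropy $\to$ pointwise chain (Lemmas~\ref{LMgeomest}, \ref{LMintegralest}, \ref{LMpwest}) to rule out a non-degenerate singularity and conclude shrinking to a point, which your sketch glosses over, and then for the rescaled flow it runs an entirely separate argument. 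That argument (Sections~\ref{S:speed}--\ref{S:rescaling}) is the genuinely new content: one estimates the speed $F = \sigma_1 k + \sigma_2 + \IP{V}{N}$ using Sturm oscillation theory (Proposition~\ref{PNsturmF} and the $F_\theta$ and $F_{\max}$ estimates), uses those to deduce $\hat k_{\max} e^{-\hat t}\to 0$ and an integral bound (Lemma~\ref{LMrescaledcurv1}, Corollary~\ref{CYrescaledcurvintegral}), then proves a Riccati-type differential inequality for the renormalised entropy (Proposition~\ref{PNfest}) to get $\hat E(\hat t)\le D_6$, and only then closes the pointwise bound $\hat k_{\max}\le \hat k_1$ (Theorem~\ref{TMresccurvbd}) by a level-set decomposition of the entropy integral. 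Your proposal contains none of this machinery and would stall at exactly this point.

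Your Step 4 is also a different route from the paper: you invoke Łojasiewicz--Simon or a linearised spectral argument, whereas the paper derives a Huisken-type monotonicity formula with the Gaussian kernel $\rho = e^{-|\hat\gamma|^2/2\sigma_1}$ (Lemma~\ref{LMmonoton}), shows the error terms coming from $V$ are integrable in $\hat t$, extracts a subsequential limit satisfying the self-shrinker equation $\IP{\hat\gamma}{N} = -\sigma_1\hat k$, and cites Abresch--Langer plus embeddedness to identify the round circle. Your spectral/Łojasiewicz approach is in principle plausible but would require setting up the analysis from scratch in the presence of the non-self-adjoint, time-dependent forcing; the paper's monotonicity route is cleaner here because the forcing terms in the monotonicity identity are manifestly exponentially decaying.

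So: same overall template, correct beginning, but the mechanism for the uniform rescaled curvature bound is wrong as stated (the ratio-pinching argument is circular), and the speed/Sturmian and entropy estimates that the paper actually uses are missing entirely.
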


We now make a number of natural remarks motivated by our main results.

\begin{rmk}%{{{
In the case where $V=0$, we note that Theorem \ref{T:main} generalises the
first part of \cite[Theorem 1.1]{he2019curvature}, which itself contains the
case of vanilla curve shortening flow, where $\sigma_2 = C_0 = C_1 = 0$, and
$\sigma_1=1$.
This is because $K=0$ and $M=\infty$.
In other words the condition (i) becomes just strict convexity and (ii)
becomes bounded length, which is vacuous.
In the case where $V$ is not constant, our theorem is completely new.
\end{rmk}%}}}

\begin{rmk}%{{{
A condition on the growth of $V$ (such as (c)) is required if we do not confine the initial curve to a small disk centred at the origin (as in (b)).
This highlights an ambient coordinate dependence (a lack of translation invariance specifically) that is not an issue for curve shortening flow (and many other flows).
Even if the length and curvature are arbitrarily small and large respectively,
the curve may blow up in finite time.
For instance, if $V(x,y) = (1+x^2+y^2)^\frac{p}{2}(x,y)$ for $p>0$, then
numerically a circle of any radius (in particular a circle satisfying (i) and
(ii)) can be placed in the plane such that the resultant flow exists at most
only for finite time and in particular leaves every compact set.
For $p\le0$ the integrability condition (c) holds.
\end{rmk}%}}}

\begin{rmk}%{{{
Confinement condition (b) allows for very general vector fields $V$ with
potentially extreme growth outside of $U$.
However the diffusion is always in balance with the absolute size of $V$ in
$U$, which is the main disadvantage of condition (b).

Confinement condition (c) does not require anything on the diffusion coefficient.
However the growth of $V$ is restricted very far from the origin.
Linear functions yield exponentials for $x$ and it is not possible to go much
further than this.
Nevertheless, near the origin the size of $V$ may be quite large.

One may consider that, a-posteriori, the flow is contained to $U$.
At that point, its growth outside $U$ is completely unimportant.

In neither condition do we need any control over derivatives of $V$.
\end{rmk}%}}}

\begin{rmk}%{{{
The global boundedness assumption (a) is not subsumed by integrability (c),
since the length condition is slightly different; in particular, in the case of
(c) we need an additional (mild) assumption on initial area.
\end{rmk}%}}}

\begin{rmk}%{{{
The constants $M$ and $K$ are not abstract and can be written down explicitly.
They depend on the roots of a quadratic and cubic polynomial respectively.
In cases (a) and (b), for $M$, we may take
\[
	M = \max\Bigg\{\frac{\pi\sigma_1\sigma_2+\sigma_1\sqrt{\pi^2\sigma_2^2+3\pi^2C_0^2}}{C_0^2},
	\frac{\pi\sigma_2+\sqrt{\pi^2\sigma_2^2+3\pi^2\sigma_1C_1\left(1-\frac{\sigma_1C_1}{4C_0^2}\right)}}{C_1\left(1-\frac{\sigma_1C_1}{4C_0^2}\right)}\Bigg\}\,
\]
This choice is proved in Lemma \ref{LMlengthbd}.

For case (c), we take
\begin{align*}
M = 
\min\bigg\{ \frac{4\sigma_1\pi}
{|\sigma_2|-\sigma_2 + \sqrt{(|\sigma_2|-\sigma_2)^2 + 4\pi x^2(T_0)}}, 
\frac{\sigma_1\pi}{
		(|\sigma_2|-\sigma_2)/2
			+ x(T_0)
	}
\bigg\}
\,.
\end{align*}
The assumption on $A(\gamma_0)$ is essentially free, since it is quite likely
that $M \le 2\sqrt\pi$ and then $A(0) \le 1$ is guaranteed by the isoperimetric
inequality.
This is the subject of Proposition \ref{PNcasec}.

A discussion on the constant $K$ is given in Lemma \ref{LMkconvex} and Remark \ref{RMKcubic}.
\end{rmk}%}}}

\begin{rmk}%{{{
It is possible that none of the three confinement conditions are satisfied and yet a curve can still shrink to a point provided we put stronger conditions on the initial curvature.
As an example, take $V(\gamma) = \langle \alpha , \gamma \rangle \gamma$ where $\alpha \in \R^2$ with $|\alpha| = 1$.
Then we have 
\begin{align*}
	|V(X)|^2 &= \langle \langle \alpha, X\rangle X,\langle \alpha, X\rangle X\rangle 
	=|X|^2\langle \alpha,X\rangle^2
\end{align*}
and so $||V||_{L^\infty(B_r(0))} = |\alpha|r^2 = r^2$; the supremum is achieved when $X$ points in the same direction as $\alpha$ and is as large as possible.
Hence the IVP \eqref{EQivp} is $x'(r) = |\sigma_2|+x(r)^2$ with $x(0) = r_0$, and one can easily show that 
\[x(r) \geq \frac{r_0}{1-rr_0}
\] 
so that the solution is not global. However, using the evolution for curvature
we can derive a condition on the initial curve such that the minimum of curvature blows up
before $t = 1/r_0$. See the first author's thesis \cite{sambient2024}
for a detailed proof.
\end{rmk}%}}}

\begin{rmk}%{{{
There is no requirement that circles remain circular under the flow, and indeed
this is a very strong restriction on the vector field (see \cite{SGV4} for a
study of such vector fields).
For instance, in our example $V(x,y) = (x,-x^2)$, initial circles will not remain
circular, with their furthest horizontal points moving faster southward than
their points closer to the origin.
So, the dynamic picture is that a sufficiently convex curve, even if initially
circular, will become non-circular and only asymptotically approach circularity in
infinite time.
\end{rmk}%}}}

\begin{rmk}%{{{
Theorem \ref{T:loss} and Theorem \ref{T:main} may apply with the same force
field $V$, i.e. to the same flow.
To illustrate, consider again the simple example of $V(x,y) = (x,-x^2)$.
Theorem \ref{T:loss} applies and says that there exist (many) curves that are
initially strictly convex that become non-convex in finite time.\footnote{We don't know their long-time dynamics beyond this.}
Theorem \ref{T:main} also applies and says that all sufficiently convex small
curves shrink to round points.

In terms of their proofs, the length of the curve used in the proof of Theorem
\ref{T:loss} is not important; however its curvature is.
The curvature must be small relative to the second derivative of $V$, and so
all of the curves that satisfy condition (i) of Theorem \ref{T:main} violate
\eqref{EQepschoiceloss}.
\end{rmk}%}}}

\begin{rmk}%{{{
Non-preservation of convexity (Theorem \ref{T:loss}) motivates the following
question: what is the dynamics of the flow if we assume a-priori that all
convex curves remain convex?
This question, while related, is quite different to the one answered by our
main result (Theorem \ref{T:main}), and forms the subject of another paper
\cite{SGV2}.
\end{rmk}%}}}

\begin{rmk}%{{{
It seems intuitively clear that for some choices of $V$, the solution to
\eqref{flow} may be nothing more than a family of rigid motions applied to a
solution of the wound healing flow \cite{he2019curvature}, that is,
\eqref{flow} with $V=0$.
This is true when $V$ is a Killing field, but we are not aware of a general
condition under which this equivalence holds.
We give the details of the Killing field case in Appendix A.
\end{rmk}%}}}

The paper is organised as follows.
In Section \ref{S:loss} we give examples of initially strictly convex curves
that lose convexity, completing the proof of Theorem \ref{T:loss}. 
In Section \ref{S:evo} and Section \ref{S:convex} we establish the Hausdorff
convergence part of Theorem \ref{T:main}.
Section \ref{S:speed} is in preparation for the rescaling of Section
\ref{S:rescaling}, in which we use a variety of techniques (including a
monotonicity formula) to conclude smooth convergence of the rescaled flow. We
use the notation $C_i$ to represent the $L^\infty$ norms of the
$i\textsuperscript{th}$ derivative of $V$. We use $D_i$ to denote most other
constants (for aesthetics the count restarts at the beginning of Lemma
$\ref{gradest}$).

\section*{Acknowledgements}%{{{

The work of the first author was completed under the financial support of an
Australian Postgraduate Award.
He is grateful for their support.
The work of the third author is partially supported by ARC Discovery Project
DP180100431 and ARC DECRA DE190100379.
She is grateful for their support.
%}}}

\section{Loss of Strict Convexity is Generic}%{{{
\label{S:loss}

In this section we prove Theorem \ref{T:loss}:

\Tloss*

Our proof is inspired by Giga-Ito \cite{giga1999loss}. 

\begin{proof}
Suppose we have a solution $\gamma:\S^1\times[0,T)\rightarrow\R^2$ to the flow \eqref{flow}.
Let $p\in\R^2$ and $\tau\in\S^1$ be a point in the plane and direction respectively.
We consider the flow in a tubular neighbourhood of the line $\SL$ passing through $p$ parallel to $\tau$ in graphical coordinates over the line $\SL$.
Without loss of generality we assume $p = \gamma(0,0)$ and $\tau = \gamma_x(0,0)$ (here we parametrise $\S^1$ by angle).
Choose $\delta>0$ such that $\gamma(\cdot,0)$ is graphical over $\SL$ in an interval $I_\delta = (-\delta,\delta)$.
That is, there exists a function $u_0:I_\delta\rightarrow\R$ such that
\[
    \gamma(x,0) = p + x\tau + u_0(x)R\tau
    \,,\quad x\in I_\delta\,.
\]
Above we used $R\tau$ to denote the counter-clockwise right angle rotation of $\tau$, as in the statement of Theorem \ref{T:loss}.
Consider the interval $I_{\delta/2} = (-\delta/2,\delta/2)$.
There exists a maximal $t_g\in(0,T]$ and parametrisation such that the flow $\gamma$ can be written as a graph as follows:
\[
    \gamma(x,t) = p + x\tau + u(x,t)R\tau
    \,,\quad (x,t)\in I_{\delta/2}\times[0,t_g)\,.
\]
Here $u:I_{\delta/2}\times[0,t_g)\rightarrow\R$ is the graph function.
Note that $u$ is smooth and all derivatives of $u$ are uniformly bounded on $I_{\delta/2}\times[0,t_g]$ (this is why we take $\delta/2$).

We calculate the following equations on $I_{\delta/2}\times[0,t_g)$:
\begin{align*}
	\gamma_x &= \tau+u_xR\tau\\
	\gamma_{xx} & = u_{xx}R\tau\\
	|\gamma_x| &= \sqrt{1+(u_x)^2}\\
	\nu &= \frac{R\tau-u_x\tau}{\sqrt{1+(u_x)^2}}\\
    k &= \Big(\frac{R\tau-u_x\tau}{\sqrt{1+(u_x)^2}}\Big)
        \cdot
        \Big(
           \frac{\gamma_{xx}}{1+(u_x)^2}
           - \gamma_x\frac{u_xu_{xx}}{\sqrt{1+u_x^2}^3} 
        \Big)
    = \frac{u_{xx}}{\sqrt{1+(u_x)^2}^3}
    \,.
\end{align*}
Note that the unit tangent vector to the graph is $\frac{\tau+u_xR\tau}{\sqrt{1+(u_x)^2}}$, not $\tau$.
The unit normal vector to the graph is $\nu$.
Then
\begin{align*}
\frac{u_t}{\sqrt{1+(u_x)^2}}\nu
    &= (u_t R\tau\cdot\nu)\nu
    = \gamma_t^\bot
    = (\sigma_1k+\sigma_2 + V(\gamma)\cdot\nu)\nu
\\
    &= 
        \bigg(\frac{\sigma_1u_{xx}}{\sqrt{1+(u_x)^2}^3}
        + \sigma_2 + V(\gamma)\cdot \nu\bigg)\nu
        \,,
\end{align*}
which implies
\[
u_t = \frac{\sigma_1u_{xx}}{1+(u_x)^2}+\sqrt{1+(u_x)^2 }(\sigma_2 + V(\gamma)\cdot\nu)
\,
.
\]
Let $h(x,t) = \sqrt{1+(u_x)^2}(V(\gamma)\cdot\nu)(x,t)$.
We see that 
\[
u_{xt} = \frac{\sigma_1u_{x^3}}{1+(u_x)^2}-\frac{2\sigma_1(u_{xx})^2u_x}{(1+(u_x)^2)^2}+\frac{\sigma_2u_{x}u_{xx}}{\sqrt{1+u_x^2}} + h_x
\,,
\]
and
\begin{align*}
u_{xxt}
=
  \frac{\sigma_1u_{x^4}}{1+(u_x)^2} 
&- \frac{6\sigma_1u_{x^3}u_xu_{xx}}{(1+(u_x)^2)^2}
- \frac{2\sigma_1(u_{xx})^3}{(1+(u_x)^2)^2}
\\&\qquad + \frac{8\sigma_1(u_x)^2(u_{xx})^3}{(1+(u_x)^2)^3}
+ \frac{\sigma_2(u_{xx})^2}{\sqrt{1+u_x^2}}
+ \frac{\sigma_2u_xu_{x^3}}{\sqrt{1+u_x^2}}
- \frac{\sigma_2u_x^2u_{xx}^2}{\sqrt{(1+u_x^2)^3}}
+ h_{xx}
\,.
\end{align*}
Notice that $h = V(\gamma)\cdot(R\tau - u_x\tau)$.
We calculate
\begin{align*}
h_x
&= D_{\gamma_x}V(\gamma) \cdot (R\tau-u_x\tau)
    - u_{xx}V(\gamma)\cdot \tau
\\
h_{xx}
&= (D^2_{\gamma_x,\gamma_x}V(\gamma) + D_{\gamma_{xx}}V(\gamma))\cdot
        (R\tau - u_x\tau)
    - (2u_{xx}D_{\gamma_x}V(\gamma) + u_{x^3}V(\gamma))\cdot\tau
    \,.
\end{align*}
Now let us choose $u_0(x) = \varepsilon x^2$.
For $\varepsilon>0$ we may close $\gamma_0$ so that $k_0>0$ everywhere (in fact we may insist that $\inf k_0 > \frac{2\varepsilon}{\sqrt{1+\varepsilon^2\delta^2}^3}$).

We have 
\begin{align*}
u_{x^l}(0,0) &= 0\text{ for $l\ne2$,}\ u_{xx}(0,0) = 2\varepsilon
\,,
\\
\gamma(0,0) &= p\,,\quad \gamma_x(0,0) = \tau\,,\quad \gamma_{xx}(0,0) = 2\varepsilon R\tau\,,
\\
h(0,0) &= V(p)\cdot R\tau\,,\quad h_x(0,0) = D_\tau V(p)\,,\quad \text{and}
\\
h_{xx}(0,0) &= (D^2_{\tau,\tau}V(p) + 2\varepsilon D_{R\tau}V(p))\cdot R\tau - 4\varepsilon D_\tau V(p)\cdot\tau
\,.
\end{align*}
Thus
\begin{align}
u_{xxt}(0,0)
&= -2\sigma_1(u_{xx}(0,0))^3
    + \sigma_2(u_{xx}(0,0))^2
    + h_{xx}(0,0)
\label{eq:kt}\\
&= -16\sigma_1\varepsilon^3
    + 4\sigma_2\varepsilon^2
    + D^2_{\tau,\tau}V(p)\cdot R\tau
    - 4\varepsilon D_{\tau}V\cdot \tau
    + 2\varepsilon D_{R\tau}V\cdot R\tau
\nonumber
\\
&= \varepsilon(-16\sigma_1\varepsilon^2
    + 4\sigma_2\varepsilon
    - 4 D_{\tau}V\cdot \tau
    + 2 D_{R\tau}V\cdot R\tau)
    + D^2_{\tau,\tau}V(p)\cdot R\tau
\,.
\nonumber
\end{align}
Hypothesis \eqref{EQD2Vhyp} implies that there exists a $C_V>0$ such that
\[
    D^2_{\tau,\tau}V(p)\cdot R\tau = -C_V < 0\,.
\]
Combining this with \eqref{eq:kt} we find
\[
u_{xxt}(0,0)
= \varepsilon(-16\sigma_1\varepsilon^2
    + 4\sigma_2\varepsilon
    - 4 D_{\tau}V\cdot \tau
    + 2 D_{R\tau}V\cdot R\tau)
    - C_V
    \,.
\]
Therefore
\[
u_{xxt}(0,0)
< -\frac{C_V}{2}
\]
provided we choose 
\begin{equation}
\label{EQepschoiceloss}
\varepsilon < \varepsilon_1 := \min\left\{
    1,
    \frac{C_V}{2}(4\sigma_2 + 4 |D_{\tau}V(p)\cdot \tau| + 2 |D_{R\tau}V(p)\cdot R\tau|)^{-1}
    \right\}\,.
\end{equation}
There exists a $K_\varepsilon = K(\varepsilon, D^{(m)}V(p))$ for $m\in\{1,2,3,4\}$ such that
$|u_{xxtt}(0,0)| \le K_\varepsilon$.
By continuity, there exists a $t_{K_\varepsilon}$ such that
$|u_{xxtt}(0,t)| \le 2K_\varepsilon$ for $t<t_{K_\varepsilon}$.
Furthermore, take $K = \sup\{K_\varepsilon\,:\,\varepsilon\in[0,\varepsilon_1]\}$ and $t_K =
\inf\{t_{K_\varepsilon}\,:\,\varepsilon\in[0,\varepsilon_1]\}$.
Note that we also implicitly have a $\delta_0>0$ and $t_{G}>0$ such that the
solution, for any $\varepsilon\in[0,\varepsilon_1]$, is locally smooth and
uniformly graphical on $I_{\delta_0/2}\times[0,t_G]$.
However for $\varepsilon = 0$ the initial data is not strictly convex, so we
must not take $\varepsilon=0$ as our initial data that proves the theorem.

By the fundamental theorem of calculus we have the following crucial estimate
\begin{equation}
\label{EQnonconv}
u_{xx}(0,t) = u_{xx}(0,0) + u_{xxt}(0,0)t
+ \int_0^t \int_0^{\hat t} u_{xxtt}(0,\tilde t)\,\, d\tilde{t}\, d\hat{t} 
< 2\varepsilon - \frac{C_V}{2}t + Kt^2
\,,
\end{equation}
 for $t \in [0,\min(t_K,t_g))$.
The roots of the RHS are
\[
 \frac{C_V/2 \pm\sqrt{C_V^2/4-8\varepsilon K}}{2K}\,.
\]
We see that as $\varepsilon\searrow0$, the first root $t_1$ approaches zero (in time).
Therefore there exists an $\varepsilon_0>0$ such that
\[
0 < t_1 < \min(t_K,t_G)\,.
\]
By \eqref{EQnonconv}, $u_{xx}(0,t_1) < 0$, and so the solution with initial
data corresponding to $\varepsilon_0>0$ is initially strictly convex, and at
time $t_1$ is not convex.
This finishes the proof.
\end{proof}

\begin{rmk}
For curve shortening flow, the above approach does not yield any result.
In that case, $u_{xxt}(0,0)$ can not be made negative independent of
$\varepsilon$, and the right hand side of \eqref{EQnonconv} does not have a
positive root.
\end{rmk}

%}}}

\section{Evolution Equations, Controlling Length and Curvature}%{{{
\label{S:evo}

In this section we calculate the evolution of important geometric quantities
under the flow.
We also prove that a bound on the curvature implies a bound on all derivatives
of curvature.

\begin{prop}\label{p:evo}
For a smooth family of curves $\gamma:\mathbb{S}^1\times[0,T)\rightarrow
\mathbb{R}^2$ evolving under \eqref{flow} we have the following evolution equations.
\begin{itemize}
\item The arc-length element:
\[
	|\gamma_u|_t = -(\sigma_1k^2+\sigma_2)|\gamma_u|+\langle D_\tau V,\tau \rangle |\gamma_u|\,.
\]
\item The commutator:
\[
	\partial_t\partial_s = \partial_s\partial_t+(\sigma_1k^2+\sigma_2k)\partial_s-\langle D_\tau V, \tau \rangle \partial_s\,.
\]
\item The tangent vector:
\[
	\tau_t = (\sigma_1k_s +\langle D_\tau V,\nu \rangle)\nu\,.
\]
\item The normal vector:
\[ 
	\nu_t = -(\sigma_1k_s +\langle D_\tau V,\nu \rangle) \tau\,.
\]
\item The arc-length measure:
\[ 
	\partial_t(ds) = -\sigma_1k^2 -\sigma_2kds+\langle D_\tau V, \tau \rangle ds\,.
\]
\item The length:
\[
	L' = -\sigma_1\int_{\gamma}k^2ds-2\pi \sigma_2-\int_{\gamma}k \langle V, \nu \rangle ds\,.
\]
\item The signed enclosed area:
\[
	A' = -2\pi \sigma_{1} -\sigma_{2}L-\int_{\gamma} \langle V(\gamma), \nu \rangle ds\,.
\]
\end{itemize}
\end{prop}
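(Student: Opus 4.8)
The plan is to derive each evolution equation from the defining flow \eqref{flow} by routine computation, building up from the arc-length element to the integrated quantities. Throughout I will write $F = \sigma_1 k + \sigma_2 + \IP{V(\gamma)}{\nu}$ for the (scalar) normal speed, so that $\gamma_t^\bot = F\nu$, and I will freely modify the tangential component of $\gamma_t$ since (as noted in the introduction) it does not affect any of the geometric quantities listed. For definiteness I will choose the parametrisation in which $\gamma_t = F\nu$ exactly, i.e. with vanishing tangential velocity.

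First I would compute $|\gamma_u|_t$. Since $|\gamma_u|^2 = \IP{\gamma_u}{\gamma_u}$, differentiating in $t$ and using $\gamma_{ut} = \partial_u(F\nu) = F_u\nu + F\nu_u$ together with $\nu_u = -k|\gamma_u|\tau$ (the Frenet relation with our sign conventions, where $k>0$ for a circle traversed so that $\nu$ is inward) gives $|\gamma_u|_t = \IP{\gamma_{ut}}{\tau} = -kF|\gamma_u|$. Expanding $F$ yields $-(\sigma_1k^2 + \sigma_2 k + k\IP{V}{\nu})|\gamma_u|$; to match the stated form I rewrite $k\IP{V}{\nu}$ using the structure equation. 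Actually the cleaner route, and the one I would take, is to note that $\IP{D_\tau V}{\tau}$ appears: differentiating $\IP{V(\gamma)}{\tau}$ is not what we want, but the quantity $\IP{D_\tau V}{\tau}$ is the tangential derivative of $V$ along $\gamma$ in the tangential direction. One checks directly that the stated formula $|\gamma_u|_t = -(\sigma_1k^2 + \sigma_2)|\gamma_u| + \IP{D_\tau V}{\tau}|\gamma_u|$ is equivalent to $|\gamma_u|_t = -kF|\gamma_u|$ only after re-choosing the tangential velocity; so more carefully, I would allow a tangential velocity $\gamma_t = F\nu + \varphi\tau$ and compute $|\gamma_u|_t = (\varphi_s - kF)|\gamma_u|$. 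Choosing $\varphi$ so that $\varphi_s = \IP{D_\tau V}{\nu}\cdot(\text{something})$... rather, the point is that the term $-k\IP{V}{\nu}$ is geometric but the authors have evidently rewritten it; I would verify the identity $-k\IP{V}{\nu} + (\text{tangential divergence terms}) = \IP{D_\tau V}{\tau} - \sigma_2\cdot 0$ by writing $\partial_s\IP{V}{\tau} = \IP{D_\tau V}{\tau} - k\IP{V}{\nu}$, which is exactly the Frenet computation, and then choosing the tangential reparametrisation that absorbs $\partial_s\IP{V}{\tau}$ minus a correction. The honest statement is that with the convenient tangential velocity the formula holds; I would present the one-line Frenet identity and the reparametrisation choice that makes it so.

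Next, the commutator $[\partial_t,\partial_s]$ follows immediately from $\partial_s = |\gamma_u|^{-1}\partial_u$ and the formula for $|\gamma_u|_t$: since $\partial_t\partial_s = \partial_t(|\gamma_u|^{-1})\partial_u + |\gamma_u|^{-1}\partial_t\partial_u = -\frac{(|\gamma_u|)_t}{|\gamma_u|}\partial_s + \partial_s\partial_t$, substituting $(|\gamma_u|)_t/|\gamma_u| = -(\sigma_1k^2+\sigma_2) + \IP{D_\tau V}{\tau}$ — wait, I must be careful: the stated commutator has $\sigma_2 k$, not $\sigma_2$, so in fact the arc-length formula being used inside must produce $\sigma_1 k^2 + \sigma_2 k$; I would reconcile this by noting the tangential-velocity bookkeeping differs between the two bullets, or (more likely) that the $(ds)$-evolution and the $|\gamma_u|_t$-evolution reflect two different parametrisation conventions, and I would flag which convention each bullet uses. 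Granting the commutator, the tangent vector evolution comes from $\tau_t = \partial_t\partial_s\gamma = \partial_s\partial_t\gamma + [\partial_t,\partial_s]\gamma = \partial_s(F\nu) + (\ldots)\tau$; using $\partial_s\nu = -k\tau$ and that $\tau_t \perp \tau$ (since $|\tau|=1$), the tangential pieces must cancel and one is left with $\tau_t = (F_s)\nu$-type expression, and expanding $F_s = \sigma_1 k_s + \partial_s\IP{V}{\nu} = \sigma_1 k_s + \IP{D_\tau V}{\nu} + k\IP{V}{\tau}$... again a $V$-tangential term appears that the stated formula has absorbed, so the same reparametrisation caveat applies. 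Then $\nu_t = -\IP{\tau_t}{\nu}\tau$ by orthonormality, giving the stated normal evolution. The measure $\partial_t(ds)$ is just $(|\gamma_u|)_t\,du$ rewritten, and the length derivative $L' = \int \partial_t(ds)$ integrates it, using $\int k\,ds = 2\pi$ for the $\sigma_2$ term and recognising $\int \IP{D_\tau V}{\tau}\,ds - \int(\ldots) = -\int k\IP{V}{\nu}\,ds$ after an integration by parts (this is where the tangential-$V$ terms get converted: $\int \partial_s\IP{V}{\tau}\,ds = 0$ on a closed curve). Finally $A'$: from $A = -\frac12\int \IP{\gamma}{\nu}\,ds$ (or $\frac12\int\IP{\gamma}{R\tau}\,ds$ with the right sign), differentiate using $\gamma_t = F\nu$, $\nu_t$ from above, and $\partial_t(ds)$; the standard curve-shortening computation gives $A' = -\int F\,ds = -\int(\sigma_1 k + \sigma_2 + \IP{V}{\nu})\,ds = -2\pi\sigma_1 - \sigma_2 L - \int\IP{V(\gamma)}{\nu}\,ds$, which is the cleanest of all and needs no integration by parts.

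The main obstacle is purely bookkeeping: keeping the tangential component of the velocity and the resulting reparametrisation terms consistent across all seven formulas, and in particular reconciling the two slightly different forms that appear (the $|\gamma_u|_t$ bullet has $-(\sigma_1k^2+\sigma_2)$ while the $\partial_t(ds)$ bullet effectively carries $\sigma_2 k$), together with correctly converting the tangential derivatives $\IP{D_\tau V}{\tau}$, $\IP{D_\tau V}{\nu}$ that arise naturally from $\partial_s(V\circ\gamma) = D_{\gamma_s}V = D_\tau V$ into the forms displayed. I expect no conceptual difficulty — every step is a one- or two-line application of the Frenet equations $\partial_s\tau = -k\nu$... (sign depending on orientation; I would fix this at the outset, likely $\partial_s\tau = k\nu$ with $\nu$ inward and $k>0$ for small circles) plus $\partial_t = F\nu + \varphi\tau$ and the commutator — so the write-up is a careful but routine computation, and I would present it by first fixing conventions, then doing the arc-length element and commutator, then reading off the tangent, normal, measure, length, and area in that dependency order.
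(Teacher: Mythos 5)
Your proposal runs into a real problem at the very first bullet, and it propagates through the parametrisation-dependent formulas. You chose the velocity $\gamma_t = F\nu$ with $F = \sigma_1 k + \sigma_2 + \IP{V}{\nu}$, i.e.\ purely normal. With that choice one gets $|\gamma_u|_t = -kF|\gamma_u| = -(\sigma_1 k^2 + \sigma_2 k + k\IP{V}{\nu})|\gamma_u|$, and the $D_\tau V$ term does \emph{not} appear. You notice this and try to fix it by ``re-choosing the tangential velocity,'' but you never say which one to choose; the fix you gesture at ($-k\IP{V}{\nu} = \IP{D_\tau V}{\tau} - \partial_s\IP{V}{\tau}$) shows that your formula and the stated one differ by a nonzero total derivative $\partial_s\IP{V}{\tau}$, so they are genuinely different formulas, not two presentations of the same thing. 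The paper's convention is the specific choice $\gamma_t = (\sigma_1 k + \sigma_2)\nu + V\circ\gamma$, i.e.\ the tangential velocity is $\varphi\tau = \IP{V}{\tau}\tau$. With that choice $\partial_s\gamma_t = \sigma_1 k_s\,\nu + (\sigma_1 k + \sigma_2)\nu_s + D_\tau V$, and all the $\IP{D_\tau V}{\tau}$ and $\IP{D_\tau V}{\nu}$ terms in the arc-length element, commutator, $\tau_t$, $\nu_t$, and $\partial_t(ds)$ drop out in one line each with no auxiliary integration by parts or re-absorption. This is the single idea your write-up is missing: the proposition is stated for that particular tangential normalisation, and the proof just is the direct computation in that gauge.

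A second, smaller point: you speculate that the $-(\sigma_1k^2+\sigma_2)$ in the first bullet versus the $\sigma_2 k$ appearing in the commutator and the $\partial_t(ds)$ bullet reflects ``two different parametrisation conventions.'' It does not — it is simply a typo in the statement of the first bullet; the body of the paper's proof derives and uses $|\gamma_u|_t = -(\sigma_1 k^2 + \sigma_2 k)|\gamma_u| + \IP{D_\tau V}{\tau}|\gamma_u|$, consistent with the other bullets. Treating a typo as a conventions discrepancy sends you down an unnecessary and confusing path. Finally, on the integrated quantities: your observation that with purely normal speed $A' = -\int F\,ds$ follows in two lines is correct and is actually cleaner than the computation given in the paper (which, having committed to the velocity $\gamma_t = (\sigma_1 k + \sigma_2)\nu + V$, must integrate by parts twice to cancel the tangential pieces). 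That is a genuine simplification for $A'$ and for $L'$ — both are gauge-independent — but it does not rescue the other five bullets, which really do depend on the gauge and really do require the paper's choice of tangential velocity.
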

\begin{proof}%{{{
First we calculate
\begin{align*}
	2|\gamma_u||\gamma_u|_t = \partial_t|\gamma_u|^2 &= 2 \langle \gamma_{ut}, \gamma_{u} \rangle \\
	& = 2\langle ((\sigma_1k+\sigma_2)\nu)_u+(V\circ\gamma(u))_u , \gamma_{u} \rangle\\
	& = 2|\gamma_u|^2\langle (\sigma_1k+\sigma_2)_s\nu +(\sigma_1k+\sigma_2)\nu_s+(V\circ\gamma(u))_s, \gamma_{s} \rangle\\
	& = -2|\gamma_u|^2k(\sigma_1k+\sigma_2)+2|\gamma_u|^2 \langle D_\tau V, \tau \rangle\,.\\
	\implies |\gamma_u|_t & = -(\sigma_1k^2+\sigma_2k)|\gamma_u| +|\gamma_u|\langle D_\tau V, \tau \rangle\,.
\end{align*}
Then use this to calculate the commutator
\begin{align*}
	\partial_t\partial_s = \partial_t(|\gamma_u|^{-1}\partial_u) &= -|\gamma_u|^{-2}|\gamma_u|_t\partial_u+|\gamma_u|^{-1}\partial_u\partial_t\\
	& = -\frac{1}{|\gamma_u|^2}(-(\sigma_1 k^2+\sigma_2k)|\gamma_u|+|\gamma_u|\langle D_\tau V, \tau \rangle )\partial_u +\partial_s\partial_t\\
	&= \partial_s\partial_t+(\sigma_1k^2+\sigma_2k)\partial_s - \langle D_\tau V, \tau \rangle \partial_s\,.
\end{align*}
For the tangent vector we calculate
\begin{align*}
	\tau_t =(\gamma_s)_t & = (\gamma_t)_s+(\sigma_1k^2+\sigma_2)\gamma_s-\langle D_\tau V, \tau \rangle \gamma_s \\
	&= ((\sigma_1k+\sigma_2)\nu +V\circ\gamma)_s+(\sigma_1k^2+\sigma_2k)\tau -\langle D_\tau V, \tau \rangle\tau \\
	&= \sigma_1k_s\nu+(\sigma_1k+\sigma_2)\nu_s+D_\tau V +(\sigma_1k^2+\sigma_2k
  )\tau -\langle D_\tau V, \tau \rangle \tau \\
	&= \sigma_1k_s \nu +D_\tau V-\langle D_\tau V, \tau \rangle \tau \\
	&= (\sigma_1k_s+\langle D_\tau V, \nu \rangle)\nu\,.
\end{align*} 
The final line is justified by 
\[
D_\tau V = \langle D_\tau V, \tau \rangle  \tau +\langle D_\tau V, \nu \rangle \nu 
\]
since $\{ \nu,\tau \}$ forms an orthonormal basis\,.\\
We also have 
\[ \nu_t = \langle \nu_t,\nu \rangle \nu + \langle \nu_t, \tau \rangle \tau\,.
\]
Differentiating the facts that $\langle \nu,\nu \rangle = 1$ and $\langle \nu,\tau \rangle = 0$ imply respectively that 
\[ \langle \nu_t,\nu \rangle = 0 \text{ and } \langle \nu_t,\tau \rangle = -\langle \nu,\tau_t \rangle = -\sigma_1k_s-\langle D_\tau V,\nu \rangle\,.
\]
From this we conclude that
\[ \nu_t = -(\sigma_1k_s+ \langle D_\tau V,\nu \rangle)\tau\,.
\]
For the arc-length element we calculate
\begin{align*} \partial_t(ds)  = \partial_t(|\gamma_u|du) &= |\gamma_u|_tdu\\
	&= -(\sigma_1k^2+\sigma_2k)|\gamma_u|du+ \langle D_\tau V, \tau \rangle |\gamma_u|du\\
	&= -(\sigma_1k^2+\sigma_2k)ds+ \langle D_\tau V, \tau \rangle ds\,.\\
\end{align*}
We use this to calculate the time derivative of the length functional
\begin{align*} L'(\gamma(\cdot,t)) = \int_{\gamma} |\gamma_u|_tdu &= -\int_{\gamma} (\sigma_1k^2+\sigma_2k)ds +\int_{\gamma} \langle D_\tau V,\tau \rangle ds \\
	& = - \sigma_1\int_{\gamma}k^2ds -\sigma_2 \int_{\gamma} kds +\int_{\gamma} \langle D_\tau V,\tau \rangle ds\\
	&=  - \sigma_1\int_{\gamma}k^2ds- 2 \pi \sigma_2 + \int_{\gamma} \langle D_\tau V,\tau \rangle ds\,. \\
\end{align*}
Recall that the area inside a closed curve is given by 
\[ A(\gamma(\cdot,t)) = -\frac{1}{2} \int_{\gamma} \langle \gamma , \nu \rangle ds,
\]
therefore 
\begin{align*}
	A' &= -\frac{1}{2} \int_{\gamma} (\langle \gamma_t, \nu \rangle + \langle \gamma, \nu_t \rangle)ds+ \langle \gamma, \nu \rangle \partial_t(ds)\\
	&= -\frac{1}{2} \int_{\gamma} (\langle (\sigma_1k+\sigma_2)\nu+V, \nu \rangle +\langle \gamma, -(\sigma_1k_s+\langle D_\tau V,\nu\rangle )\tau \rangle +\langle \gamma, \nu \rangle (-\sigma_1k^2-\sigma_2k+\langle D_\tau V,\tau \rangle ))ds \\
	&= -\frac{1}{2} \int_{\gamma} (\sigma_1k+\sigma_2 + \langle V, \nu \rangle + \langle \gamma, -\sigma_1k_s\tau \rangle -\langle D_\tau V,\nu\rangle\langle \gamma,\tau \rangle +\langle \gamma, -(\sigma_1k+\sigma_2)k\nu \rangle+ \langle \gamma,\nu \rangle\langle D_\tau V,\tau \rangle)ds\\
	&= -\frac{1}{2} \int_{\gamma} (\sigma_1k+\sigma_2 + \langle V, \nu \rangle + \langle \gamma, -\sigma_1k_s\tau-(\sigma_1k+\sigma_2)\tau_s \rangle -\langle D_\tau V,\nu\rangle\langle \gamma,\tau \rangle + \langle \gamma,\nu \rangle\langle D_\tau V,\tau \rangle)ds\\
	& = -\frac{1}{2} \int_{\gamma} (\sigma_1k+\sigma_2 + \langle V, \nu \rangle + \langle \gamma, -((\sigma_1k+\sigma_2)\tau)_s \rangle -\langle D_\tau V,\nu\rangle\langle \gamma,\tau \rangle + \langle \gamma,\nu \rangle\langle D_\tau V,\tau \rangle)ds\\
	&=-\frac{1}{2} \int_{\gamma} (\sigma_1k+\sigma_2 + \langle V, \nu \rangle +\langle \gamma_s, (\sigma_1k+\sigma_2)\tau \rangle -\langle D_\tau V,\nu\rangle\langle \gamma,\tau \rangle + \langle \gamma,\nu \rangle\langle D_\tau V,\tau \rangle)ds\\
	&= -\int_{\gamma} (\sigma_1k+\sigma_2)ds -\frac{1}{2} \int_{\gamma} \langle V, \nu \rangle ds +\frac{1}{2}\int_{\gamma}\langle D_\tau V,\nu\rangle\langle \gamma,\tau \rangle ds -\frac{1}{2} \int_{\gamma} \langle \gamma,\nu \rangle\langle D_\tau V,\tau \rangle ds\,.
\end{align*}
Using integration by parts (noting that $\frac{\partial}{\partial s} V(\gamma(s)) = D_\tau V$) we calculate the third integral to be 
\[
\frac{1}{2}\int_{\gamma}\langle D_\tau V,\nu\rangle\langle \gamma,\tau \rangle ds= -\frac{1}{2} \int_{\gamma} \langle V, -k\tau \rangle \langle \gamma,\tau \rangle+ \langle V, \nu \rangle \langle \tau, \tau \rangle +\langle V, \nu \rangle \langle \gamma, k\nu \rangle ds\,.
\]
The fourth can be written as
\[ -\frac{1}{2}\int_{\gamma} \langle \gamma , \nu \rangle \langle D_\tau V , \tau \rangle ds = \frac{1}{2}\int_{\gamma} (\langle \gamma , \nu \rangle \langle V , k\nu \rangle + \langle \tau , \nu \rangle \langle V , \tau \rangle + \langle \gamma , -k\tau \rangle\langle V , \tau \rangle)ds\,.
\]
From this we observe the cancellation of several terms and thus conclude the claim:
\[
A' = -2\pi\sigma_{1} -\sigma_{2}L-\int_{\gamma} \langle V(\gamma), \nu \rangle ds\,. 
\]
\end{proof}%}}}

For bounding length and preserving convexity we will need uniform bounds on
$V$, as well as its derivatives.
These bounds can be assumed (that is confinement condition (a)), but to allow
more general (and natural) force fields $V$ that are only locally bounded, we
present the two confinement conditions (b) and (c).
Each of these enable us to confine the flow to a compact subset of
the plane, which gives uniform estimates on $V$ and its derivatives.
Before we give the proofs, let us remark that while the avoidance principle
does hold for our flows, it does not help, since we do not have access to a
suitable comparison solution.  That is, in one sense, a main purpose of this
present paper.

First, let us consider confinement condition (b), when we assume that the
diffusion coefficient is relatively large.
In this case the flow is not only bounded to a compact region, its position
vector is bounded by the maximum of the position vector of the initial data.

\begin{lem}
\label{LMconfcondb}
Let $\gamma:\S^1\times[0,T)\rightarrow\R^2$ be a solution to \eqref{flow}
satisfying the hypotheses of Theorem \ref{T:main} and confinement condition (b).
Then $\gamma(\S^1,t)\subset\subset U$ for all $t$.
\end{lem}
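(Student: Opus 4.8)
The plan is to estimate the maximum of the squared position vector $\phi(t) = \max_{\S^1} |\gamma(\cdot,t)|^2$ and show it cannot exceed $R_0^2$. First I would compute the evolution equation for $|\gamma|^2$ under the flow \eqref{flow}: using $\partial_t^\bot\gamma = (\sigma_1 k + \sigma_2 + V(\gamma)\cdot\nu)\nu$ and the fact that the tangential reparametrisation does not affect the geometric quantity $|\gamma|^2$ (we may work with the normal velocity alone or add a tangential term freely), we get
\[
\partial_t |\gamma|^2 = 2\IP{\gamma}{\partial_t\gamma} = 2(\sigma_1 k + \sigma_2 + V(\gamma)\cdot\nu)\IP{\gamma}{\nu} + (\text{tangential term})\,.
\]
Combined with the spatial Laplacian $\partial_s^2 |\gamma|^2 = 2 + 2k\IP{\gamma}{\nu}$ (since $\gamma_{ss} = -k\nu$ with our sign convention, so $\IP{\gamma}{\gamma_{ss}} = -k\IP{\gamma}{\nu}$ and $|\gamma_s|^2 = 1$), I can write a parabolic evolution inequality of the form $\partial_t |\gamma|^2 = \sigma_1 \partial_s^2|\gamma|^2 - 2\sigma_1 + 2(\sigma_2 + V(\gamma)\cdot\nu)\IP{\gamma}{\nu} + (\text{tangential})$.

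Next I would apply the parabolic maximum principle at an interior spatial maximum of $|\gamma|^2$: there $\partial_s^2|\gamma|^2 \le 0$ and the tangential term vanishes, so
\[
\phi'(t) \le -2\sigma_1 + 2(\sigma_2 + V(\gamma)\cdot\nu)\IP{\gamma}{\nu} \le -2\sigma_1 + 2(|\sigma_2| + C_0)\sqrt{\phi(t)}\,,
\]
using $\|V\|_{L^\infty(U)}\le C_0$ and Cauchy-Schwarz $|\IP{\gamma}{\nu}| \le |\gamma| = \sqrt{\phi}$, valid as long as $\gamma(\S^1,t)\subset U$. The right-hand side is strictly negative whenever $\sqrt{\phi(t)} \ge \sigma_1/(|\sigma_2| + C_0)$, which by confinement condition (b) exceeds $R_0$. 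So if $\phi$ ever approached $R_0^2$, its derivative would be strictly negative there; since $\phi(0) < R_0^2$ (as $\gamma_0(\S^1)\subset U = B_{R_0}(0)$ and $\gamma_0$ is compact, so $\max|\gamma_0| < R_0$), a continuity/first-time argument shows $\phi(t) < R_0^2$ for all $t$, hence $\gamma(\S^1,t)\subset\subset U$. In fact the argument shows $\phi(t) \le \max\{\phi(0), (\sigma_1/(|\sigma_2|+C_0))^2\} = \phi(0)$, recovering the stronger statement in the remark that the position vector is bounded by its initial maximum.

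The main subtlety — and the thing to be careful about — is the bootstrapping structure: the estimate $V(\gamma)\cdot\nu \le C_0$ is only available while $\gamma$ remains in $U$, which is exactly what we are trying to prove. This is handled cleanly by a standard first-exit-time argument: let $t^* = \sup\{t : \gamma(\S^1,s)\subset B_{R_0}(0) \text{ for all } s\le t\}$; on $[0,t^*)$ all estimates hold, so $\phi$ is non-increasing there (or at least bounded by $\phi(0)$), giving $\max|\gamma(\cdot,t)| \le \sqrt{\phi(0)} < R_0$ on $[0,t^*)$, which by continuity forces $t^* = T$ and moreover gives a uniform gap between $\gamma(\S^1,t)$ and $\partial U$, i.e. $\gamma(\S^1,t)\subset\subset U$. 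A minor technical point is justifying differentiation of $\phi(t) = \max_{\S^1}|\gamma(\cdot,t)|^2$, for which one uses Hamilton's trick (the max of a smooth family is locally Lipschitz and its upper Dini derivative is controlled by the PDE at the maximising point); since the excerpt works in the smooth category this is routine and I would not belabor it.
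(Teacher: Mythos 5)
Your proof is correct and follows essentially the same route as the paper: both examine the spatial maximum of the position vector, use the second-derivative test to extract the favorable curvature term $k\IP{\gamma}{\nu}\le -1$ (equivalently $\sigma_1\partial_s^2|\gamma|^2 - 2\sigma_1$), and then invoke confinement condition (b) together with the first-exit-time argument to get a contradiction/monotonicity (the paper works with $|\gamma|$, you with $|\gamma|^2$, which is cosmetic). One small slip worth fixing: with the paper's inward-normal convention $\gamma_{ss}=k\nu$, not $-k\nu$, so the parenthetical justification of $\partial_s^2|\gamma|^2=2+2k\IP{\gamma}{\nu}$ has the wrong sign even though the displayed identity you actually use is correct; and the stated "$\max\{\phi(0),(\sigma_1/(|\sigma_2|+C_0))^2\}=\phi(0)$" is backwards (the second entry is larger), though the intended conclusion $\phi(t)\le\phi(0)$ does follow from the monotone decrease you established.
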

\begin{proof}%{{{
We calculate
\[
	(|\gamma|^2)_t
	= 
		2(\sigma_1k+\sigma_2 + (V\circ\gamma)\cdot\nu)\nu\cdot\gamma
\]
so
\[
	|\gamma|_t
	= 
		(\sigma_1k+\sigma_2 + (V\circ\gamma)\cdot\nu)\nu\cdot\hat\gamma
	\,.
\]
where $\hat\gamma = \gamma/|\gamma|$.
At a non-zero max for $|\gamma|$, we have $\gamma\cdot\tau = 0$.
(If the max of $|\gamma|$ is zero, then the curve is a point, and a point is not a solution to our flow.)
Furthermore
\[
|\gamma|_{ss} = (\gamma\cdot\tau/|\gamma|)_s
              = (\gamma\cdot\tau)_s/|\gamma|
              	+ \gamma\cdot\tau(|\gamma|^{-1})_s
              = (1 + k\gamma\cdot\nu)|\gamma|^{-1}
              	+ \gamma\cdot\tau(|\gamma|^{-1})_s
\,.
\]
At a maximum we have
\[
(1 + k\gamma\cdot\nu)|\gamma|^{-1} \le 0
\]
or
\[
k\gamma\cdot\nu \le -1\,.
\]
%Since $|\gamma|\ne0$ at a max, and $\gamma\cdot\tau=0$ at a max, the above is
%\[
%k \le -|\gamma|^{-1}\,.
%\]
From the evolution equation, at a new max, we have
\begin{align*}
0 &\le (\sigma_1k+\sigma_2 + (V\circ\gamma)\cdot\nu)\nu\cdot\hat\gamma
\\
\Longrightarrow\qquad 
0 &\le (\sigma_1k+\sigma_2)\nu\cdot\gamma + ((V\circ\gamma)\cdot\nu)\nu\cdot\gamma
\\
\Longrightarrow\qquad 
0 &\le -\sigma_1 + (\sigma_2 + ((V\circ\gamma)\cdot\nu))\nu\cdot\gamma
\end{align*}
Since $\gamma_0(\S)\subset U\subset\subset\R^2$, a new max of $|\gamma|$ must
occur in $\overline U$ and so in particular at the new max $|\gamma|\le R_0$ and $|V|\le C_0$.
Thus
\[
\sigma_1 \le R_0(|\sigma_2| + C_0)\,.
\]
This contradicts the hypothesis.
\end{proof}%}}}

Now we consider confinement condition (c).
This condition enables us to control the growth of the maximum of $V$ along the
flow in terms of the global solution $x$ to the ODE $x'(r) = |\sigma_2| + R(x(r))$, where
$R(r)$ is the supremum of $|V|$ on a disk of radius $r$ and $x(0)$ equals the
maximum of $|\gamma_0|$.
%We do not need any control over the growth of the derivatives of $V$.

\begin{prop}
\label{PNcasec}
Let $\gamma:\S^1\times[0,T)\rightarrow\R^2$ be a solution to \eqref{flow}
satisfying the hypotheses of Theorem \ref{T:main} and confinement condition (c).
Then length is monotone decreasing for all $t$ and the maximal smooth existence time $T$ satisfies $T\le (\sigma_1\pi)^{-1}$.
Furthermore
\[
\gamma(\S,t)\subset B_{x((\sigma_1\pi)^{-1})}(0)
\]
for all $t$, where $x:[0,\infty)\rightarrow\R$ is the growth function given in
confinement condition (c).
\end{prop}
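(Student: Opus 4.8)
The plan is to derive a differential inequality for the maximum of $|\gamma|$ and compare it against the ODE \eqref{EQivp}. Starting from the identity $|\gamma|_t = (\sigma_1 k + \sigma_2 + (V\circ\gamma)\cdot\nu)\nu\cdot\hat\gamma$ computed in the proof of Lemma \ref{LMconfcondb}, I would again use that at a spatial maximum of $|\gamma|$ we have $\gamma\cdot\tau = 0$ and $k\,\gamma\cdot\nu \le -1$, so that $(\sigma_1 k)\nu\cdot\hat\gamma = \sigma_1 k \gamma\cdot\nu/|\gamma| \le -\sigma_1/|\gamma| \le 0$. The remaining terms are bounded by $|\sigma_2| + |V(\gamma)| \le |\sigma_2| + R(\max|\gamma|)$, since at the maximum the curve lies in $\overline{B_{\max|\gamma|}(0)}$. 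Writing $\rho(t) = \max_{\S^1}|\gamma(\cdot,t)|$, Hamilton's trick (the maximum of a family of smooth functions is locally Lipschitz and differentiable a.e.) gives $\rho'(t) \le |\sigma_2| + R(\rho(t))$ for a.e.\ $t$. Since $R$ is globally Lipschitz, the ODE \eqref{EQivp} with $x(0) = \rho(0) = \max|\gamma_0|$ has a unique global solution, and the standard ODE comparison principle yields $\rho(t) \le x(t)$ for all $t\in[0,T)$.

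Next I would establish the bound on $T$ and the monotonicity of length. The area evolution from Proposition \ref{p:evo} reads $A' = -2\pi\sigma_1 - \sigma_2 L - \int_\gamma \langle V(\gamma),\nu\rangle\,ds$. Actually, it is cleaner to use that $\int_\gamma k\,ds = 2\pi$ so the length evolution $L' = -\sigma_1\int_\gamma k^2\,ds - 2\pi\sigma_2 - \int_\gamma k\langle V,\nu\rangle\,ds$ and the Cauchy–Schwarz bound $\int_\gamma k^2\,ds \ge (2\pi)^2/L$ together with $|\int_\gamma k\langle V,\nu\rangle\,ds| \le R(\rho)\int_\gamma|k|\,ds$; but since the curve may not yet be known to be convex here, I should instead argue directly. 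The key observation for bounding $T$ is that $A'(t) = -2\pi\sigma_1 - \sigma_2 L - \int_\gamma\langle V,\nu\rangle\,ds \le -2\pi\sigma_1 + |\sigma_2|L + R(\rho)L$. However, to get the clean bound $T \le (\sigma_1\pi)^{-1}$ I expect the argument to go the other way: using $A(t) \ge 0$ and integrating a lower bound. Concretely, while the curve is convex (which the companion results in Section \ref{S:convex} establish under the curvature hypothesis $\min k(\cdot,0) > K$), $\int_\gamma\langle V,\nu\rangle\,ds$ can be rewritten via the divergence theorem as $\int_\Omega \operatorname{div} V$, which is controlled, and $A' \le -2\pi\sigma_1 + (\text{lower order})$, forcing $A\to 0$ in time at most $(\sigma_1\pi)^{-1}$ (using $A(0)\le 1$ and the area-controlling terms). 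The monotonicity of length should follow because once the curve is confined and convex, $\int_\gamma k^2\,ds \ge 2\pi^2/L$ dominates the forcing contribution $\int_\gamma k\langle V,\nu\rangle\,ds$ in absolute value, given the smallness of $L$ guaranteed by hypothesis (ii) together with the explicit $M$ for case (c); I would invoke Lemma \ref{LMlengthbd} (or its case-(c) analogue) for this.

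Finally, combining the pieces: since $T\le(\sigma_1\pi)^{-1}$ and $\rho(t)\le x(t)$ with $x$ increasing, we get $\rho(t)\le x((\sigma_1\pi)^{-1})$ for all $t\in[0,T)$, hence $\gamma(\S^1,t)\subset B_{x((\sigma_1\pi)^{-1})}(0)$; the hypothesis $R_0 \ge x((\sigma_1\pi)^{-1})$ then guarantees this is contained in $U$, as needed. The main obstacle I anticipate is the interdependence between confinement, the length bound, and convexity: the clean monotonicity of $L$ and the bound on $T$ seem to require convexity, which in turn is proved (in Section \ref{S:convex}) using confinement to get uniform bounds on $V$ — so care is needed to sequence these deductions without circularity. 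The resolution is presumably a continuity/bootstrap argument: on the maximal interval where the curve stays in $U$ and stays convex, all the estimates hold, and one shows the curve cannot exit $U$ or lose convexity strictly before $T$, so the interval is all of $[0,T)$. I would set this up as a standard open–closed argument on $[0,T)$.
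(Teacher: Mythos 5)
Your first paragraph reproduces the paper's argument exactly: from $|\gamma|_t = (\sigma_1 k + \sigma_2 + V\cdot\nu)\,\nu\cdot\hat\gamma$ together with the maximum-point facts $\gamma\cdot\tau = 0$ and $k\,\gamma\cdot\nu \le -1$, one gets $m'(t) \le |\sigma_2| + R(m(t))$ for $m(t) = \max|\gamma(\cdot,t)|$, and ODE comparison gives $m(t) \le x(t)$. This part is fine.

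The remainder is left unresolved, and the obstruction you anticipate --- a circular dependence among confinement, length monotonicity, and convexity, to be broken by an open--closed argument --- does not arise in the paper's proof. Convexity is never used in Proposition \ref{PNcasec}. The key inequality $\int_\gamma k^2\,ds \ge 4\pi^2/L$ (note: $4\pi^2$, not $2\pi^2$) is Cauchy--Schwarz applied to $\int_\gamma k\,ds = 2\pi$, which holds for any closed embedded curve regardless of convexity; and the forcing term $-\int_\gamma k\langle V,\nu\rangle\,ds$ is absorbed pointwise by Young's inequality, $-k\langle V,\nu\rangle \le \tfrac{\sigma_1}{2}k^2 + \tfrac{1}{2\sigma_1}|V|^2$, which sidesteps the $\int_\gamma|k|\,ds$ term you were worried about. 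Combined with the $\sup|V|$ control already derived from your ODE comparison, this produces a closed differential inequality
\[
(\log L)' \le -2\sigma_1\pi\Big(\pi L^{-2} - \frac{|\sigma_2|-\sigma_2}{2\sigma_1}\,L^{-1} - \frac{x^2(t)}{4\sigma_1^2}\Big),
\]
whose right-hand side is non-positive whenever $L^{-1}$ exceeds the larger root of the explicit quadratic. Since that root grows with $x(t)$, it is precisely the hypothesis $L(0) < M$ with the case-(c) value of $M$ that keeps length monotone down to the time $T_0 = (\sigma_1\pi)^{-1}$; no bootstrap is required, and the appeal to ``Lemma \ref{LMlengthbd} or its case-(c) analogue'' is circular, because that analogue \emph{is} the length-monotonicity claim of Proposition \ref{PNcasec}. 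Neither is the divergence theorem used: the area inequality $A' \le -2\sigma_1\pi + L\big(\tfrac12(|\sigma_2|-\sigma_2) + x(t)\big)$ follows from the crude bound $\big|\int_\gamma\langle V,\nu\rangle\,ds\big| \le L\sup_\gamma|V|$. Integrating it over $[0,T_0]$ with the length bound in hand, and using $A_0 \le 1$, gives $A(T_0) \le A_0 - 1 \le 0$, contradicting embeddedness if the flow survives past $T_0$; hence $T \le (\sigma_1\pi)^{-1}$ and $\gamma(\S,t) \subset B_{m(t)}(0) \subset B_{x((\sigma_1\pi)^{-1})}(0)$, as you state in closing.
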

\begin{proof}%{{{
The proof of Lemma \ref{LMconfcondb} above shows that the function $m(t) = \max
|\gamma(\cdot,t)|$ satisfies
\[
m'(t) \le |\sigma_2| + R(m(t))
\,.
\]
By ODE comparison we thus have $m(t) \le x(t)$ and so
\begin{equation}
\label{EQposvecest}
|\gamma|(s,t) \le m(t) \le x(t)\,.
\end{equation}
We calculate
\begin{align*}
A'(t)
	&= -2\sigma_1\pi - L(t)\sigma_2 - \int_\gamma \IP{V(\gamma)}{\nu}\,ds
\\
	&\le -2\sigma_1\pi + L(t)\frac12(|\sigma_2|-\sigma_2) + L(t)x(t)
\,,
\end{align*}
so, for $T_0\le T$,
\begin{equation}
\label{EQareaest}
A(T_0) \le A_0 - 2\sigma_1\pi T_0 
	+ \frac12(|\sigma_2|-\sigma_2)\int_0^{T_0} L(t)\,dt
	+ \int_0^{T_0} L(t)x(t)\,dt
\,.
\end{equation}
For length, we have
\[
	L' = -\sigma_1\int_{\gamma} k^2ds-\sigma_2\int_{\gamma} kds - \int_{\gamma} k\langle V, \nu \rangle ds
\,.
\]
Thus
\begin{align*}
    L' &\leq -\sigma_1\int_{\gamma}k^2ds
	 - 2\pi\sigma_2-\int_{\gamma}k\langle V,\nu \rangle ds
\\
    & \leq -\frac{\sigma_1}{2}\int_{\gamma}k^2ds
	 + \pi(|\sigma_2|-\sigma_2)
	 + \frac{L(t)x^2(t)}{2\sigma_1}
\,.
\end{align*}
Now we use the inequality
\begin{equation*}\label{EQintk}
    \int k^2ds \geq \frac{4\pi^2}{L}
\end{equation*}
 which follows from the Poincar\'e inequality applied to the tangent vector.
Rearranging we find 
\begin{equation*}
    (\log L)' \le 
     -2\sigma_1\pi\bigg( \pi L^{-2}
		 - \frac{|\sigma_2|-\sigma_2}{2\sigma_1} L^{-1}
		 - \frac{x^2(t)}{4\sigma_1^2}
	\bigg)
\,.
\end{equation*}
The quadratic on the RHS is non-positive exactly when
\begin{align}
L^{-1}(t) &\ge \frac{\frac{|\sigma_2|-\sigma_2}{2\sigma_1} + \sqrt{\frac{(|\sigma_2|-\sigma_2)^2}{4\sigma_1^2} + 4\pi\frac{x^2(t)}{4\sigma_1^2} }}{
		2\pi
		}
\notag
\\
 &= \frac{|\sigma_2|-\sigma_2 + \sqrt{(|\sigma_2|-\sigma_2)^2 + 4\pi x^2(t)}}{
		4\sigma_1\pi
		}
\,.
\label{EQlengthcondc}
\end{align}
Given condition \eqref{EQlengthcondc} for $t=0$, the length decreases monotonically.
However the right hand side of \eqref{EQlengthcondc} increases and so the length is only
guaranteed to be decreasing for a finite amount of time.
In particular, length will be decreasing until $t=t_0$, characterised by $L^{-1}(t_0)$
being equal to the RHS in \eqref{EQlengthcondc} with $t$ set to $t_0$.

Suppose $T_0\in(0,T]$ is given.
(We will set it momentarily.)
Let us assume that $L(0) = L_0$ is small enough so that \eqref{EQlengthcondc}
is satisfied precisely until $t = T_0$.
Thus $L(t) \le L_0$ for $t\in[0,T_0]$.
Using \eqref{EQlengthcondc}, this is achieved by assuming that
\begin{equation}
L(0) \le
 \frac{4\sigma_1\pi}
{|\sigma_2|-\sigma_2 + \sqrt{(|\sigma_2|-\sigma_2)^2 + 4\pi x^2(t)}}
\,.
\label{EQlengthcondcT0}
\end{equation}
Under the assumption \eqref{EQlengthcondcT0} we estimate the RHS of \eqref{EQareaest} to find
\begin{align*}
A(T_0) &\le A_0 - 2\sigma_1\pi T_0 
	+ \frac12(|\sigma_2|-\sigma_2)T_0L_0
	+ T_0L_0x(T_0)
\\&\le A_0 
	+ T_0\bigg(- 2\sigma_1\pi 
		+ L_0\bigg(\frac12(|\sigma_2|-\sigma_2)
			+ x(T_0)
		\bigg)
	\bigg)
\,.
\end{align*}
There are several choices to be made.
Let us enforce $T_0 = 1/(\sigma_1\pi)$, and then take
\begin{align}
L(0) \le
\min\bigg\{ \frac{4\sigma_1\pi}
{|\sigma_2|-\sigma_2 + \sqrt{(|\sigma_2|-\sigma_2)^2 + 4\pi x^2(T_0)}}, 
\label{EQlengthcondcF}
\frac{\sigma_1\pi}{
		(|\sigma_2|-\sigma_2)/2
			+ x(T_0)
	}
\bigg\}
\,.
\end{align}
Under \eqref{EQlengthcondcF} we find
\[
A(T_0) \le A_0 - \sigma_1\pi T_0 = A_0 - 1\,.
\]
Assuming finally that $A_0 \le 1$, we find $A(T_0) \le 0$ and since the flow is a family of embeddings the maximal time of existence $T$ must satisfy $T\le T_0$.

Thus $\gamma(\S,t)\subset B_{x(T_0)}(0) = B_{x((\sigma_1\pi)^{-1})}(0)$ as required.
\end{proof}%}}}

Now we give the length bound for confinement conditions (a) and (b).

\begin{lem}
\label{LMlengthbd}
Let $\gamma:\S^1\times[0,T)\rightarrow\R^2$ be a solution to \eqref{flow}
satisfying the hypotheses of Theorem \ref{T:main} and confinement condition (a) or (b).
Then $L'(t) \le 0$ and in particular $L(t) \le L(\gamma_0)$.
\end{lem}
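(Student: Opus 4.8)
The plan is to show that the length derivative $L'(t) = -\sigma_1\int_\gamma k^2\,ds - 2\pi\sigma_2 - \int_\gamma k\langle V,\nu\rangle\,ds$ (from Proposition \ref{p:evo}) is non-positive as long as the length is controlled by the constant $M$ given in the statement of Theorem \ref{T:main}. First I would absorb the force term using Cauchy--Schwarz in the weighted $L^2$ sense: $\left|\int_\gamma k\langle V,\nu\rangle\,ds\right| \le \|V\|_{L^\infty(U)}\int_\gamma |k|\,ds$, or alternatively the Young-type splitting $\int_\gamma k\langle V,\nu\rangle\,ds \le \frac{\sigma_1}{2}\int_\gamma k^2\,ds + \frac{1}{2\sigma_1}\int_\gamma |V|^2\,ds \le \frac{\sigma_1}{2}\int_\gamma k^2\,ds + \frac{C_0^2 L}{2\sigma_1}$. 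Combined with $\int_\gamma k^2\,ds \ge 4\pi^2/L$ (the Poincaré inequality applied to the tangent vector, as used in the proof of Proposition \ref{PNcasec}), this gives an upper bound for $L'$ that is a rational function of $L$ alone. The remaining task is purely algebraic: show this upper bound is non-positive whenever $L < M$, where $M$ is the larger of the two quadratic roots displayed in the statement of Theorem \ref{T:main}.

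The delicate point is that there are two natural ways to handle the cross term $\int_\gamma k\langle V,\nu\rangle\,ds$, and the constant $M$ is the maximum of the two resulting thresholds, so one should present both. For the first estimate I would bound $\left|\int_\gamma k\langle V,\nu\rangle\,ds\right| \le C_0\int_\gamma |k|\,ds$ and then apply the curvature bound $\int_\gamma |k|\,ds \ge 2\pi$ only where it helps, while using $\int_\gamma k^2\,ds \ge (\int_\gamma|k|\,ds)^2/L \ge 4\pi^2/L$ to control the diffusion term; setting $\ell = \int_\gamma|k|\,ds \ge 2\pi$ one gets $L' \le -\sigma_1\ell^2/L - 2\pi\sigma_2 + C_0\ell$, and asking the worst case over $\ell$ (or just $\ell = 2\pi$ after using the isoperimetric-type monotonicity of the right side) yields the first root $\frac{\pi\sigma_1\sigma_2 + \sigma_1\sqrt{\pi^2\sigma_2^2 + 3\pi^2 C_0^2}}{C_0^2}$. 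For the second estimate I would instead use the $DV$ bound: since $\frac{d}{ds}(V\circ\gamma) = D_\tau V$, integrating by parts gives $\int_\gamma k\langle V,\nu\rangle\,ds = \int_\gamma \langle D_\tau V,\tau\rangle\,ds$ (using $\langle V,\nu\rangle_s$ and $\tau_s = k\nu$, $\nu_s = -k\tau$), which is bounded by $C_1 L$; feeding $L' \le -\frac{4\pi^2\sigma_1}{L} - 2\pi\sigma_2 + C_1 L$, possibly after a further split to land the factor $\left(1 - \frac{\sigma_1 C_1}{4C_0^2}\right)$, produces the second root. Taking $M$ to be the maximum of the two means $L_0 < M$ guarantees $L'(0) \le 0$ via whichever estimate is active.

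Finally I would close the continuity argument: since $L(0) = L(\gamma_0) < M$ and $L' \le 0$ whenever $L < M$, the set $\{t : L(t) < M\}$ is open, closed and nonempty in $[0,T)$, hence all of $[0,T)$; consequently $L$ is non-increasing throughout and $L(t) \le L(\gamma_0)$ for all $t$. I expect the main obstacle to be purely bookkeeping: matching the two algebraic thresholds exactly to the stated formula for $M$, in particular getting the coefficient $3\pi^2$ under the square root and the factor $\left(1 - \frac{\sigma_1 C_1}{4C_0^2}\right)$ right, which requires choosing the Young's-inequality weights to optimise the bound rather than using the naive $\frac12/\frac12$ split. No genuine analytic difficulty arises beyond Proposition \ref{p:evo}, the Poincaré inequality, and the fact (from Lemma \ref{LMconfcondb} or confinement (a)) that $\gamma$ stays in $U$ so that $\|V\|_{L^\infty}\le C_0$ and $\|DV\|_{L^\infty}\le C_1$ are available along the flow.
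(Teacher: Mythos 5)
Your plan is essentially the paper's proof. The correct route you sketch is: use the length evolution from Proposition \ref{p:evo}, split $\int k\langle V,\nu\rangle\,ds$ between a Young/Peter--Paul estimate (absorbed into the $-\sigma_1\int k^2$ term, using $\|V\|_{L^\infty}\le C_0$) and an integration-by-parts estimate (giving $\le C_1 L$), then use the Poincar\'e-type inequality $\int k^2\,ds \ge 4\pi^2/L$ to reduce to a quadratic in $L$, and close with the continuity/open-closed argument. That is exactly what the paper does.

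The one structural point you describe slightly differently is how the two roots in $M$ arise. The paper does not ``present both estimates separately and take the better one''; it splits the $V$-term \emph{once}, with weight $\alpha\in[0,1)$, so that
\[
L' \le -\tfrac{3\sigma_1}{4}\int k^2\,ds - 2\pi\sigma_2 + \bigl(\alpha C_1 + \sigma_1^{-1}(1-\alpha)^2C_0^2\bigr)L
\]
(using $\varepsilon = \tfrac{\sigma_1}{4(1-\alpha)}$ in Young's inequality), and then optimises the resulting root over $\alpha$. The optimum sits at $\alpha = 1 - \tfrac{\sigma_1C_1}{2C_0^2}$ when $\sigma_1C_1 < 2C_0^2$ (that's where the factor $1-\tfrac{\sigma_1C_1}{4C_0^2}$ comes from), and at $\alpha = 0$ otherwise (giving the $C_0$-only root). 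So the ``further split'' you flag is precisely this $\alpha$-interpolation; you had the right instinct, but it's worth noting the two stated roots are the two possible optimisers of a single one-parameter family, not two \emph{a priori} independent estimates. Also, the computation $\varepsilon = \sigma_1/4$ rather than $\sigma_1/2$ is what produces the $3\pi^2$ (not $2\pi^2$) under the square root. Finally, the ``first estimate'' you offer via $\ell = \int|k|\,ds$ and ``worst case over $\ell$'' is not quite right as stated: the RHS $-\sigma_1\ell^2/L - 2\pi\sigma_2 + C_0\ell$ maximised over $\ell$ gives $-2\pi\sigma_2 + C_0^2L/(4\sigma_1)$, not the stated root, and fixing $\ell=2\pi$ (valid since the flow is convex) gives a different threshold $2\pi\sigma_1/(C_0-\sigma_2)$. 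The clean way to the stated constant is the Young split with $\int k^2\ge 4\pi^2/L$, as in the paper. Your closing continuity argument is correct.
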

\begin{proof}%{{{
Letting $\alpha \in [0,1)$, we write the length evolution as follows
\[
	L' = -\sigma_1\int_{\gamma} k^2ds-\sigma_2\int_{\gamma} kds -\alpha\int_{\gamma} k\langle V, \nu \rangle ds-(1-\alpha)\int_{\gamma} k\langle V, \nu \rangle ds\,.
\]
For the third integral we have 
\[ 
	\bigg|\alpha\int_{\gamma} k\langle V, \nu \rangle\bigg| = \bigg|-\alpha\int_{\gamma} \langle D_{\tau}V, \tau \rangle\bigg| \leq \alpha LC_1\,.
\]
Thus
\begin{align*}
    L' &\leq -\sigma_1\int_{\gamma}k^2ds-2\pi\sigma_2+\alpha LC_1 -(1-\alpha)\int_{\gamma}k\langle V,\nu \rangle ds\\
    & \leq (\varepsilon(1-\alpha)-\sigma_1)\int_{\gamma}k^2ds-2\pi\sigma_2+\alpha LC_1 +\frac{(1-\alpha)}{4\varepsilon}C_0^2L\\
    & \leq -\frac{3\sigma_1}{4}\int_{\gamma}k^2ds-2\pi\sigma_2+\alpha LC_1+\sigma_1^{-1}(1-\alpha)^2C^2_0L
\end{align*}
with the choice of $\varepsilon = \frac{\sigma_1}{4(1-\alpha)}$, for $\alpha\ne1$.
%If $\alpha=1$ then of course the estimate is not needed since the problematic term vanishes.

Again we use the inequality \eqref{EQintk} and rearrange to find 
\begin{equation*}
    \frac{1}{2}(L^2)' \leq L^2\left(\alpha C_1+\sigma_1^{-1}(1-\alpha)^2C_0^2\right)-2\pi\sigma_2L-3\pi^2\sigma_1\,.
\end{equation*}
The quadratic on the RHS is negative exactly when
\begin{equation}
\label{EQlengthalpha}
L(\gamma_0) < \frac{\pi\sigma_2+\sqrt{\pi^2\sigma_2^2+3\pi^2\sigma_1(\alpha C_1+\sigma_1^{-1}(1-\alpha)^2C_0^2)}}{\alpha C_1+\sigma_1^{-1}(1-\alpha)^2C_0^2}
\,.
\end{equation}
The maximum of the RHS occurs for $\sigma_1C_1 < 2C_0^2$ at $\alpha = 1-\frac{\sigma_1C_1}{2C_0^2}$ and
for $\sigma_1C_1 \ge 2C_0^2$ at $\alpha = 0$, which gives precisely condition (ii)
of Theorem \ref{T:main}.
This condition, if satisfied at initial time, ensures length starts decreasing,
which then preserves \eqref{EQlengthalpha} for all future times.
This finishes the proof.
\end{proof}%}}}

\begin{rmk}%{{{
The length estimate is sharp in the following sense.
If $V$ is a constant vector field, then the maximum in Theorem \ref{T:main}
(ii) is $\infty$ and the condition is vacuous.
Thus for $C_1=0$ there is no length restriction in Theorem \ref{T:main},
regardless of the value of $\sigma_2$.
If $V$ is not a constant vector field there is no reason to expect a length
bound to hold.
For example, if $V(x,y) = (x,y)$ and the curve $\gamma_0$ is a
circle centred at the origin with radius $r_0$ then it remains circular and
\[
	L'(t) = -\frac{2\pi\sigma_1}{r(t)} + 2\pi (r(t)-\sigma_2)
	\,.
\]
Clearly for $r_0$ large enough length (and radius) increases without bound, and
so there is no way to bound length in general.
(For further study of flows that preserve circularity, we refer to
\cite{SGV4}.)
\end{rmk}%}}}

The evolution of the curvature is of major importance in studying the flow.

\begin{lem}
\label{LMcurvevo}
Let $\gamma:\S^1\times[0,T)\rightarrow\R^2$ be a solution to \eqref{flow}
satisfying the hypotheses of Theorem \ref{T:main}.
Then
\[
k_t
 = \sigma_1 k_{ss} + \sigma_1 k^3 + \sigma_2k^2 - k(2\langle D_{\tau}V,\tau \rangle-\langle D_\nu V, \nu \rangle)+\langle D^2_{\tau, \tau}V, \nu \rangle
\,.
\]
\end{lem}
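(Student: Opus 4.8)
The plan is to compute the time derivative of the curvature directly from the definition $k = \langle \gamma_{ss}, \nu\rangle$ (equivalently, differentiate the identity $\tau_s = -k\nu$, recalling that $\nu$ is the \emph{inward} normal so that $\tau_s = -k\nu$ and $\nu_s = k\tau$), and then repeatedly substitute the evolution equations from Proposition~\ref{p:evo}. First I would write $k_t = \langle \tau_{st}, \nu\rangle + \langle \tau_s, \nu_t\rangle = -\langle \tau_{st},\nu\rangle$ after noting $\langle \tau_s, \nu_t\rangle = -k\langle \nu, \nu_t\rangle = 0$; alternatively, differentiate $k\nu = -\tau_s$ in $t$. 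The cleanest route is to use the commutator $\partial_t\partial_s = \partial_s\partial_t + (\sigma_1 k^2 + \sigma_2 k)\partial_s - \langle D_\tau V,\tau\rangle\partial_s$ from Proposition~\ref{p:evo} applied to $\tau$, together with the already-established formula $\tau_t = (\sigma_1 k_s + \langle D_\tau V,\nu\rangle)\nu$.

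The key steps in order: (1) From $-k\nu = \tau_s$, differentiate in $t$: $-k_t\nu - k\nu_t = \partial_t \tau_s = \partial_s\tau_t + (\sigma_1 k^2+\sigma_2 k)\tau_s - \langle D_\tau V,\tau\rangle \tau_s$. (2) Substitute $\nu_t = -(\sigma_1 k_s + \langle D_\tau V,\nu\rangle)\tau$ on the left, and $\tau_t = (\sigma_1 k_s + \langle D_\tau V,\nu\rangle)\nu$ into $\partial_s\tau_t$ on the right, expanding via the product rule and $\nu_s = k\tau$. (3) Take the inner product of the resulting vector identity with $\nu$ to isolate $k_t$; the $\tau$-components drop out and the terms $(\sigma_1 k^2 + \sigma_2 k)\tau_s$ and $-\langle D_\tau V,\tau\rangle\tau_s$ contribute $-(\sigma_1 k^2+\sigma_2 k)k\langle\nu,\nu\rangle$-type pieces through $\tau_s = -k\nu$, giving the $\sigma_1 k^3 + \sigma_2 k^2$ terms. (4) Handle the force-field contributions: $\partial_s\big(\langle D_\tau V,\nu\rangle \nu\big) = \partial_s\langle D_\tau V,\nu\rangle\, \nu + \langle D_\tau V,\nu\rangle k\tau$, and expand $\partial_s\langle D_\tau V,\nu\rangle = \langle D^2_{\tau,\tau}V,\nu\rangle + \langle D_{\tau_s}V,\nu\rangle + \langle D_\tau V,\nu_s\rangle = \langle D^2_{\tau,\tau}V,\nu\rangle - k\langle D_\nu V,\nu\rangle + k\langle D_\tau V,\tau\rangle$ using $\tau_s = -k\nu$ and $\nu_s = k\tau$. (5) Collect the $\nu$-components of everything; the $k\langle D_\tau V,\tau\rangle$ from step (4), the $-2k\langle D_\tau V,\tau\rangle$ arising from the combination of the $\langle D_\tau V,\tau\rangle\tau_s$ commutator term and an $\langle D_\tau V,\nu\rangle$ cross-term, and the $+k\langle D_\nu V,\nu\rangle$ should assemble into $-k(2\langle D_\tau V,\tau\rangle - \langle D_\nu V,\nu\rangle) + \langle D^2_{\tau,\tau}V,\nu\rangle$.

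The main obstacle I expect is bookkeeping: correctly tracking the sign conventions (inward normal, so $\tau_s = -k\nu$ and $\nu_s = +k\tau$, which differ from the more common outward-normal convention) and making sure the several sources of $k\langle D_\tau V,\tau\rangle$ terms — one from the commutator, one from differentiating $\nu$ inside $\tau_t = (\sigma_1 k_s + \langle D_\tau V,\nu\rangle)\nu$, and one from $\partial_s\langle D_\tau V,\nu\rangle$ — combine with the correct coefficient $-2$. A useful consistency check along the way: setting $V\equiv 0$ must recover the known anisotropic curve shortening equation $k_t = \sigma_1 k_{ss} + \sigma_1 k^3 + \sigma_2 k^2$, and setting $\sigma_2 = 0$, $\sigma_1 = 1$ must recover the forced curve shortening curvature equation; both serve to pin down the signs. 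The computation is otherwise routine once the conventions are fixed.
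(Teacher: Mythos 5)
Your approach---differentiate $\gamma_{ss}=\tau_s$ in $t$, apply the commutator from Proposition~\ref{p:evo}, substitute the known $\tau_t$ and $\nu_t$, and project onto $\nu$---is structurally identical to the paper's proof, which starts from $k_t=\langle\gamma_{sst},\nu\rangle$ and performs exactly the same sequence of substitutions. However, the Frenet formulas you state are wrong for this paper's convention: you wrote $\tau_s=-k\nu$ and $\nu_s=k\tau$ and attributed them to the inward normal, but these are the outward-normal formulas. Here $\nu$ is inward and $k=\langle\gamma_{ss},\nu\rangle=\langle\tau_s,\nu\rangle$, so $\tau_s=+k\nu$ and $\nu_s=-k\tau$; you can read $\nu_s=-k\tau$ directly off the cancellation step $(\sigma_1 k+\sigma_2)\nu_s+(\sigma_1 k^2+\sigma_2 k)\tau=0$ in the paper's derivation of $\tau_t$ in Proposition~\ref{p:evo}.

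This is not a cosmetic slip. With your signs the $\nu$-component of the differentiated identity on the left is $-k_t$, while $\partial_s\tau_t$ contributes $+\sigma_1 k_{ss}+\partial_s\langle D_\tau V,\nu\rangle$ on the right; solving for $k_t$ then yields $k_t=-\sigma_1 k_{ss}+\sigma_1 k^3+\sigma_2 k^2-k(2\langle D_\tau V,\tau\rangle-\langle D_\nu V,\nu\rangle)-\langle D^2_{\tau,\tau}V,\nu\rangle$, a backward heat equation---precisely the error your own $V\equiv 0$, $\sigma_2=0$ sanity check would catch. With the corrected conventions, $k=\langle\tau_s,\nu\rangle$ gives $k_t=\langle\partial_t\tau_s,\nu\rangle$ with no extra minus sign, your step (4) becomes $\partial_s\langle D_\tau V,\nu\rangle=\langle D^2_{\tau,\tau}V,\nu\rangle+k\langle D_\nu V,\nu\rangle-k\langle D_\tau V,\tau\rangle$, and all pieces assemble to the stated formula. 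So the plan is sound and matches the paper, but you must flip both Frenet signs before the bookkeeping closes.
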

\begin{proof}%{{{
Using Proposition \ref{p:evo} we calculate
\begin{align*}
k_t 
	&= \langle \gamma_{ss}, \nu \rangle_t =\langle \gamma_{sst}, \nu \rangle + \langle \gamma_{ss}, \nu_t \rangle = \langle \gamma_{sst}, \nu \rangle
\\
	&= \langle (\gamma_s)_{ts} +(\sigma_1k^2+\sigma_2k)\gamma_{ss}-\langle D_\tau V, \tau \rangle \gamma_{ss}, \nu \rangle,
\\
	&= \langle(\tau_t)_s, \nu \rangle +\sigma_1k^3+\sigma_2k^2-k\langle D_{\tau}V, \tau \rangle 
\\
	&= \langle ((\sigma_1k_s+\langle D_{\tau}V, \nu \rangle)\nu)_s, \nu \rangle+\sigma_1k^3+\sigma_2k^2-k\langle D_\tau V, \tau \rangle 
\\
	&= \langle (\sigma_1k_{ss}+\langle D^2_{\tau \tau} V, \nu \rangle -k\langle D_\tau V, \tau \rangle+k\langle D_\nu V, \nu \rangle  )\nu,\nu\rangle+\sigma_1k^3+\sigma_2k^2-k\langle D_\tau V, \tau \rangle
\\
	&= \sigma_1 k_{ss} +\sigma_1 k^3 +\sigma_2k^2-k(2\langle D_{\tau}V,\tau \rangle-\langle D_{\nu}V, \nu \rangle)+\langle D^2_{\tau, \tau}V, \nu \rangle
\,.
\end{align*}
\end{proof}%}}}

Lemma \ref{LMcurvevo} and a standard maximum principle argument allows us to argue that if the curvature is large enough initially then it remains so.

\begin{lem}
\label{LMkconvex}
Let $\gamma:\S^1\times[0,T)\rightarrow\R^2$ be a solution to \eqref{flow} 
There exists a constant $K>0$ depending only on $\sigma_1$, $\sigma_2$, $C_1$ and $C_2$ such that if 
\[
\min k(\cdot,0)  > K
\]
 then $k(\cdot,t)\geq K$ for all $t\in [0,T)$.
\end{lem}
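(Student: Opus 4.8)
The plan is to apply the parabolic maximum principle to the curvature evolution equation from Lemma \ref{LMcurvevo}, treating $\min_{\S^1} k(\cdot,t)$ as a function of $t$ and showing that it cannot decrease below a suitable threshold $K$. Writing the evolution equation as
\[
k_t = \sigma_1 k_{ss} + \sigma_1 k^3 + \sigma_2 k^2 - k\big(2\IP{D_\tau V}{\tau} - \IP{D_\nu V}{\nu}\big) + \IP{D^2_{\tau,\tau}V}{\nu}\,,
\]
at a spatial minimum of $k$ the diffusion term $\sigma_1 k_{ss}$ is nonnegative, so the zeroth-order reaction terms control the sign of $\frac{d}{dt}\min k$. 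The $L^\infty$ bounds \eqref{EQvhyps} give $|\IP{D_\tau V}{\tau}|, |\IP{D_\nu V}{\nu}| \le C_1$ and $|\IP{D^2_{\tau,\tau}V}{\nu}| \le C_2$ (the latter since $\tau\in\S^1$, matching the definition of $C_2$; note one may need the flow confined to $U$ first, which is guaranteed under the confinement conditions). Hence at a minimum,
\[
k_t \ge \sigma_1 k^3 + \sigma_2 k^2 - 3C_1 k - C_2 =: P(k)\,,
\]
a cubic in $k$ with positive leading coefficient. The key observation is that $P$ has a largest real root $K_0 \ge 0$ (depending only on $\sigma_1,\sigma_2,C_1,C_2$), and $P(k) > 0$ for all $k > K_0$.

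First I would make the confinement step explicit: under any of (a), (b), (c) the flow stays inside $U$ (Lemma \ref{LMconfcondb}, Proposition \ref{PNcasec}, or the trivial case (a)), so the stated bounds on $V$, $DV$, $D^2 V$ apply along the flow. Next I would define $K$ to be any constant strictly greater than the largest root $K_0$ of $P$ — for concreteness one can take $K = K_0 + 1$, or give the explicit root formula (this is the content promised in Remark \ref{RMKcubic}). Then I would set up the maximum principle argument rigorously: let $k_{\min}(t) = \min_{u\in\S^1} k(u,t)$, which is Lipschitz in $t$, and suppose for contradiction that $k_{\min}$ drops to $K$ at some first time $t_1 > 0$. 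At $(u_1,t_1)$ realizing this minimum we have $k_{ss}(u_1,t_1)\ge 0$ and $k_t(u_1,t_1) \le 0$ (since $k_{\min}$ was $\ge K$ before and equals $K$ at $t_1$), but the differential inequality gives $k_t(u_1,t_1) \ge P(K) > 0$, a contradiction. The standard way to make "first time" and the ODE comparison airtight is to instead compare with the solution $\phi(t)$ of $\phi' = P(\phi)$, $\phi(0) = \min k(\cdot,0) > K$; since $P > 0$ on $(K_0,\infty)$, $\phi$ is increasing and stays above $K$, and $k_{\min}(t) \ge \phi(t) > K$ by the parabolic comparison principle.

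The main obstacle is not conceptual but bookkeeping: one must verify that the coefficient of $-k$ in the reaction term is genuinely bounded by $3C_1$ using only the hypotheses of Theorem \ref{T:main} — in particular that $\IP{D_\nu V}{\nu}$ and $\IP{D_\tau V}{\tau}$ are each controlled by $\|DV\|_{L^\infty(U)} = C_1$, which requires the curve to already lie in $U$, so the confinement lemmas must be invoked before the maximum principle. A second minor subtlety is the regularity needed to differentiate $k_{\min}(t)$ (Hamilton's trick / the fact that a Lipschitz function's Dini derivative at a minimum point controls its behaviour), which is routine. Finally, since the cubic $P$ depends on $\sigma_1,\sigma_2,C_1,C_2$ only, the constant $K = K_0+1$ (or $K_0$ itself) depends only on those quantities, as claimed; I would record the explicit largest-root expression and defer its discussion to Remark \ref{RMKcubic}.
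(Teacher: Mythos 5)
Your proof is correct and follows essentially the same approach as the paper's: apply the maximum principle (via the curvature evolution of Lemma \ref{LMcurvevo} and the confinement lemmas to legitimize the $L^\infty$ bounds on $V$), reduce at a spatial minimum to a cubic differential inequality $k_t \ge P(k)$ in the reaction terms alone, and take $K$ to be (essentially) the largest non-negative root of $P$. The only cosmetic difference is that the paper replaces $\sigma_2 k^2$ by the slightly weaker $-\tfrac12(|\sigma_2|-\sigma_2)k^2$ before naming $P$, and takes $K$ equal to the largest root rather than $K_0+1$; both are immaterial.
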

\begin{proof}%{{{
At a minimum $k_0$ for curvature Lemma \ref{LMcurvevo} implies  
\begin{equation}
\label{EQpdecons}
 \partial_t k-\sigma_1\partial_{ss}k
 \geq \sigma_1k_0^3-\frac12(|\sigma_2|-\sigma_2)k_0^2-3C_1k_0-C_2
\,.
\end{equation}
The right hand side is a cubic in $k_0$.
Set
\[
P(x) =  \sigma_1x^3-\frac12(|\sigma_2|-\sigma_2)x^2-3C_1x-C_2
\,.
\]
Since $\sigma_1>0$ the cubic $P(x)$ has one or two non-negative roots.
Take $K$ to be the largest non-negative root of $P(x)$; then at any new minimum
value we have $k_0>K$ and so the RHS of \eqref{EQpdecons} is non-negative,
contradicting the existence of the new minimum as is standard.
\end{proof}%}}}

\begin{rmk}%{{{
\label{RMKcubic}
We may find the root $K$ using the standard process for solving a cubic.
The cubic $P(x)$ can be written in depressed cubic form
\[
t^3+pt+q = 0
\]
with $x = t+\frac{|\sigma_2|}{3\sigma_1}$,
\begin{align*}
    p& =\frac{-(36\sigma_1C_1+(|\sigma_2|-\sigma_2)^2)}{12\sigma_1^2}\,,
	\\
    \text{ and } q&= \frac1{27\sigma_1^3}\bigg(
			-\frac14(|\sigma_2|-\sigma_2)^3-\frac{27}{2}\sigma_1(|\sigma_2|-\sigma_2)C_1-27\sigma_1^2C_2
		\bigg)
\,.
\end{align*}
The discriminant is given by 
$\Delta = -(4p^3+27q^2)$, and the root $K$ by either
\[
K = \frac{(|\sigma_2|-\sigma_2)}{6\sigma_1}+2\sqrt{\frac{-p}{3}}\cos\left[\frac{1}{3}\arccos\left(\frac{3q}{2p}\sqrt{\frac{-3}{p}}\right)\right]
\]
or
\[
K = \frac{(|\sigma_2|-\sigma_2)}{6\sigma_1}-2\frac{|q|}{q}\sqrt{\frac{-p}{3}}\text{cosh}\left[\frac{1}{3}\text{arccosh}\left(\frac{-3|q|}{2p}\sqrt{\frac{-3}{p}}\right)\right]
\,.
\]
These formulae give a more explicit view of the dependence of $K$ on $\sigma_1, \sigma_2, C_1$ and $C_2$.
\end{rmk}%}}}

As expected, the turning number is invariant along the flow.
This follows from the fact that the flow is a smooth one-parameter family, but
as the explicit calculation is not lengthy, we present it below.

\begin{lem}\label{l:winding}
Let $\gamma:\S^1\times[0,T)\rightarrow\R^2$ be a solution to \eqref{flow}.
Then
\[
	\frac{d}{dt} \int_{\gamma}k\,ds = 0\,.
\]
\end{lem}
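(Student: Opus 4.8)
The plan is to compute $\frac{d}{dt}\int_\gamma k\,ds$ directly using the evolution equations already established in Proposition \ref{p:evo}, and show all contributions cancel or integrate to zero. First I would write $\int_\gamma k\,ds = \int_{\S^1} k\,|\gamma_u|\,du$ and differentiate under the integral sign, so that
\[
\frac{d}{dt}\int_\gamma k\,ds = \int_\gamma k_t\,ds + \int_\gamma k\,\partial_t(ds)\,.
\]
For the first term I substitute the curvature evolution from Lemma \ref{LMcurvevo}, and for the second term I use $\partial_t(ds) = (-\sigma_1k^2 - \sigma_2k + \langle D_\tau V,\tau\rangle)\,ds$ from Proposition \ref{p:evo}. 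Adding the two, the terms $\sigma_1 k^3$, $\sigma_2 k^2$, and $-k\langle D_\tau V,\tau\rangle$ coming from $k_t$ are exactly cancelled by $k\cdot(-\sigma_1 k^2 - \sigma_2 k + \langle D_\tau V,\tau\rangle)$ from the measure term, leaving
\[
\frac{d}{dt}\int_\gamma k\,ds = \int_\gamma \Big(\sigma_1 k_{ss} - k\langle D_\tau V,\tau\rangle + k\langle D_\nu V,\nu\rangle + \langle D^2_{\tau,\tau}V,\nu\rangle\Big)\,ds\,.
\]

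The remaining task is to show this integral vanishes. The $\sigma_1\int_\gamma k_{ss}\,ds$ term is zero since $\gamma$ is closed and $k_{ss}$ is an exact derivative along a closed curve. For the $V$-dependent terms, the key observation is that the integrand should be recognised as a total $s$-derivative. Note that $\partial_s\big(V(\gamma)\cdot\nu\big) = D_\tau V\cdot\nu + V\cdot\nu_s = D_\tau V\cdot\nu - k\,(V\cdot\tau)$, which is not quite it; instead I would look at $\partial_s\big(D_\tau V(\gamma)\cdot\tau\big)$ or more directly recall that $k\,ds = d\theta$ where $\theta$ is the tangent angle, and recompute using $\frac{d}{dt}\int_\gamma k\,ds = \frac{d}{dt}\oint d\theta$. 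Since $\theta$ ranges over a multiple of $2\pi$ fixed by the (integer) turning number and the flow is a smooth family of immersions, this integral is constant, giving the result immediately. I would present this conceptual argument first, and then verify consistency by checking that the explicit integrand above is indeed $\partial_s$ of something: using $h := V(\gamma)\cdot\nu$ one finds after a short computation that $\langle D^2_{\tau,\tau}V,\nu\rangle - k\langle D_\tau V,\tau\rangle + k\langle D_\nu V,\nu\rangle = \partial_s\big(\langle D_\tau V,\nu\rangle\big) + (\text{terms that integrate to zero})$, so the integral over the closed curve vanishes.

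The main obstacle is bookkeeping: correctly differentiating the expression $\langle D_\tau V(\gamma),\nu\rangle$ along the curve requires carefully applying the chain rule to the composition $V\circ\gamma$ (picking up a $D^2$ term and a $D_{\nu_s}V$ type term) together with the Frenet relations $\tau_s = k\nu$, $\nu_s = -k\tau$, and matching these against the surviving terms. Once the integrand is identified as $\partial_s$ of a globally defined function on $\S^1$, integration over the closed curve finishes the proof. I would lead with the soft argument (turning number of a smooth family of immersions is constant, hence locally constant, hence constant) as the honest reason, and include the explicit cancellation only as a remark or verification, consistent with the paper's stated intent to present the calculation "since it is not lengthy."
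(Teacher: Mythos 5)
Your computation follows precisely the same route as the paper: expand $\tfrac{d}{dt}\int k\,ds$ via $k_t$ and $\partial_t(ds)$, cancel the $\sigma_1k^3$, $\sigma_2k^2$ and one $k\langle D_\tau V,\tau\rangle$ term, kill $\sigma_1 k_{ss}$ by periodicity, and recognise the remaining integrand as $\partial_s\langle D_\tau V,\nu\rangle$ (the paper also flags your ``soft'' turning-number argument in the sentence preceding the lemma). The only cosmetic difference is your hedge ``$+$ (terms that integrate to zero)'': in fact the identity is exact, i.e.\ $\partial_s\langle D_\tau V,\nu\rangle = \langle D^2_{\tau,\tau}V,\nu\rangle + k\langle D_\nu V,\nu\rangle - k\langle D_\tau V,\tau\rangle$ with no remainder.
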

\begin{proof}%{{{
We calculate
\begin{align*}
\frac{d}{dt} \int_{\gamma} kds &= \int_{\gamma} k_tds+\int_{\gamma} k\frac{d}{dt}ds
	\\
	&= \int_\gamma (\sigma_{1} k_{ss} +\sigma_1 k^3 +\sigma_2k^2-k(2\langle D_{\tau}V,\tau \rangle-\langle D_\nu V, \nu \rangle)+\langle D^2_{\tau, \tau}V, \nu \rangle)ds
	\\&\qquad - \int_{\gamma}(\sigma_1 k^3 +\sigma_2k^2-k\langle D_{\tau}V,\tau \rangle
	)ds\\
	&= \int_{\gamma} \frac{\partial}{\partial s}\langle D_\tau V(\gamma(s)),\nu\rangle ds\\
	&=0\,.
\end{align*}
\end{proof}%}}}

An explicit bound on the maximal existence time is given for confinement condition (c) in Proposition \ref{PNcasec}.
For confinement conditions (a) and (b) we are able to obtain $T<\infty$ by using the length decay as follows.

\begin{lem}
\label{LMfintietime}
Let $\gamma:\S^1\times[0,T)\rightarrow\R^2$ be a solution to \eqref{flow} satisfying Theorem \ref{T:main}.
Then the maximal time of existence is finite.
\end{lem}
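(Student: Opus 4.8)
The plan is to combine the length decay from Lemma \ref{LMlengthbd} with the lower curvature bound from Lemma \ref{LMkconvex} to force the enclosed area to zero in finite time, which is incompatible with a family of smooth embeddings persisting. First I would recall that under confinement condition (a) or (b), Lemma \ref{LMlengthbd} gives $L(t) \le L(\gamma_0) =: L_0$ for all $t\in[0,T)$, and that the hypotheses of Theorem \ref{T:main} include $\min k(\cdot,0) > K$, so Lemma \ref{LMkconvex} yields $k(\cdot,t) \ge K \ge 0$ for all $t$. In particular the evolving curve is convex for all time, so $A(\gamma(\cdot,t)) > 0$ as long as the flow exists and $\int_\gamma k\,ds = 2\pi$ by Lemma \ref{l:winding} (the turning number is $1$).

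Next I would estimate $A'(t)$ from below in magnitude using the area evolution in Proposition \ref{p:evo}:
\[
A'(t) = -2\pi\sigma_1 - \sigma_2 L(t) - \int_\gamma \IP{V(\gamma)}{\nu}\,ds
\le -2\pi\sigma_1 + |\sigma_2| L_0 + C_0 L_0
\,.
\]
The bound $\int_\gamma \IP{V(\gamma)}{\nu}\,ds \ge -C_0 L_0$ uses $\|V\|_{L^\infty(U)} \le C_0$ together with the confinement $\gamma(\S^1,t)\subset\subset U$ (automatic in case (a); Lemma \ref{LMconfcondb} in case (b)) and $L(t)\le L_0$. The key point is that the constant $M$ appearing in Theorem \ref{T:main} (ii), as constructed in Lemma \ref{LMlengthbd}, can be taken small enough that $L_0 < M$ forces $-2\pi\sigma_1 + (|\sigma_2|+C_0)L_0 < 0$; indeed the first term in the max defining $M$ in cases (a) and (b), namely $\frac{\pi\sigma_1\sigma_2 + \sigma_1\sqrt{\pi^2\sigma_2^2 + 3\pi^2 C_0^2}}{C_0^2}$, is already of the right order, and if necessary one shrinks $M$ further — this only strengthens hypothesis (ii). Thus $A'(t) \le -c < 0$ for a fixed constant $c$ depending only on $\sigma_1,\sigma_2,C_0$ and $L_0$, hence $A(\gamma(\cdot,t)) \le A(\gamma_0) - ct$, which becomes nonpositive at $t = A(\gamma_0)/c < \infty$.

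Finally I would conclude: since $\gamma(\cdot,t)$ is a smooth embedded curve for every $t\in[0,T)$, its enclosed area is strictly positive, so the flow cannot exist beyond $A(\gamma_0)/c$; therefore $T \le A(\gamma_0)/c < \infty$. The main obstacle here is purely bookkeeping: one must check that the threshold $M$ from Lemma \ref{LMlengthbd} is (or can be taken) small enough to guarantee $-2\pi\sigma_1 + (|\sigma_2|+C_0)L_0<0$, i.e. $L_0 < \frac{2\pi\sigma_1}{|\sigma_2|+C_0}$; comparing with the explicit formula for $M$ this is immediate when $C_0$ and $|\sigma_2|$ are comparable, and in the remaining regime one simply replaces $M$ by $\min\{M, \tfrac{2\pi\sigma_1}{|\sigma_2|+C_0}\}$ without affecting any other part of the argument. (In case (b), note $R_0 < \sigma_1/(|\sigma_2|+C_0)$ and $\gamma_0(\S)\subset U = B_{R_0}(0)$ already give $L_0 \le 2\pi R_0 < \frac{2\pi\sigma_1}{|\sigma_2|+C_0}$ for circles and more generally the length bound combined with confinement keeps us in the good regime, so no extra restriction is needed there.)
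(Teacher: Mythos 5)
Your mechanism (area decays at a uniform rate) is different from the paper's (length decays at a uniform rate), and it has a genuine gap. The paper's proof handles condition (c) by appealing to Proposition \ref{PNcasec} (which already gives the explicit bound $T\le(\sigma_1\pi)^{-1}$), and for (a), (b) it notes that since $L(\gamma_0)<M$ is \emph{strict}, the computation in Lemma \ref{LMlengthbd} actually yields $L'(t)\le -\delta\int_\gamma k^2\,ds$ with $\delta=M-L(\gamma_0)>0$, and then $\int_\gamma k^2\,ds \ge \min k(\cdot,0)\int_\gamma k\,ds = 2\pi\min k(\cdot,0) > 0$ (using Lemma \ref{LMkconvex}, which preserves the strict lower bound $\min k(\cdot,0)>K$, and Lemma \ref{l:winding}). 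So $L$ would become negative in finite time, contradiction.

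The gap in your version is the assertion that $L_0<M$ forces $L_0<\tfrac{2\pi\sigma_1}{|\sigma_2|+C_0}$, so that $A'(t)\le -c<0$ from time zero. This is false under confinement condition (a): the constant $M$ in Lemma \ref{LMlengthbd} tends to $\infty$ as $C_1\to 0$, and for $C_1=0$ (constant $V$) the condition (ii) is \emph{vacuous} — there is no length restriction at all, precisely as noted in the remark after Theorem \ref{T:main}. With, say, $\sigma_2<0$ and $L_0$ large, your inequality $A'(t)\le -2\pi\sigma_1+(|\sigma_2|+C_0)L_0$ gives a \emph{positive} right-hand side, and the argument stalls. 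Your suggested patch — replacing $M$ by $\min\{M,\tfrac{2\pi\sigma_1}{|\sigma_2|+C_0}\}$ — is not harmless bookkeeping: it would impose a finite length bound in exactly the case where the theorem is advertised to have none, strictly weakening the stated result and losing the claimed generalization of the $V=0$ (wound healing / anisotropic CSF) case. You also do not address case (c), which needs a separate argument since the length bound there is conditional and Proposition \ref{PNcasec} is what settles it. (Your observation that case (b) is automatically in the good regime via $L_0\le 2\pi R_0$ is correct, but doesn't rescue case (a).)

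A repair within your framework would be to note that under (a)/(b) the length is decreasing, and once $L(t)$ drops below $\tfrac{2\pi\sigma_1}{|\sigma_2|+C_0}$ the area decays at a uniform rate; but to show $L$ reaches that threshold you end up needing the uniform length-decay rate $L'\le -\delta\int_\gamma k^2\,ds\le -2\pi\delta\min k(\cdot,0)$, i.e.\ precisely the paper's one-step argument, which already gives finiteness of $T$ directly without passing through the area at all.
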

\begin{proof}%{{{
For confinement condition (c) this already follows from Proposition \ref{PNcasec}.
Otherwise, recalling the proof of Lemma \ref{LMlengthbd} we have
\begin{align*}
L'(t) 
    & \leq -\delta\int_{\gamma}k^2ds
\end{align*}
since the inequality $L(\gamma_0) < M$ is strict (here $\delta = M-L(\gamma_0)$).
Integrating gives 
\[
\delta\int_0^T\int_\gamma k^2ds\,dt \leq L(0)-L(T) \le L(0)\,.
\]
If the time of existence is infinite, then the above inequality and Lemma \ref{LMkconvex} yield a contradiction.
Thus $T$ must be finite.
\end{proof}%}}}

We note that a precise estimate for $T$ under confinement conditions (a) and
(b) depend on how close the hypothesis $L(\gamma_0) < M$ is to an equality, and
on how large $\min k(\cdot,0)$ is.
For our purposes here, we do not require a precise estimate.

We shall need the following evolution equation for $k_s$.

\begin{lem}
\label{LMksevo}
Let $\gamma:\S^1\times[0,T)\rightarrow\R^2$ be a solution to \eqref{flow}.
Then
\[
    k_{st} = \sigma_1k_{sss}+4\sigma_1k^2k_s+3\sigma_2kk_s+k_sA_1
    +k^2A_2+kA_3+\langle D^3_{\tau, \tau, \tau} V, \nu \rangle\,,
\]
where
\begin{align*}
A_1 &= \langle D_\nu V, \nu \rangle-3\langle D_\tau V, \tau \rangle\\
A_2 &= 3 \langle D_\tau V, \nu \rangle-3\langle D_{\nu}V, \tau \rangle\\
A_3&=3\langle D^2_{\tau, \nu}V , \nu \rangle-3\langle D^2_{\tau, \tau}V, \tau \rangle
\,.
\end{align*}
\end{lem}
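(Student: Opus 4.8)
The plan is to obtain $k_{st}$ by differentiating the curvature evolution equation of Lemma \ref{LMcurvevo} in arc length, being careful that $\partial_s$ and $\partial_t$ do not commute along the flow. Concretely, I would first invoke the commutator identity of Proposition \ref{p:evo} to write
\[
k_{st} = \partial_t(\partial_s k) = \partial_s(\partial_t k) + (\sigma_1 k^2 + \sigma_2 k)\,k_s - \langle D_\tau V, \tau\rangle\, k_s
\,,
\]
so the problem reduces to computing $\partial_s(k_t)$ from the right-hand side of Lemma \ref{LMcurvevo} and then adding the two commutator correction terms.

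The differentiation of the purely metric/diffusion part $\sigma_1 k_{ss} + \sigma_1 k^3 + \sigma_2 k^2$ is immediate and produces $\sigma_1 k_{sss} + 3\sigma_1 k^2 k_s + 2\sigma_2 k k_s$. Adding the commutator contributions $\sigma_1 k^2 k_s + \sigma_2 k k_s$ already yields the leading terms $\sigma_1 k_{sss} + 4\sigma_1 k^2 k_s + 3\sigma_2 k k_s$ of the claimed identity, which is a good internal consistency check.

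The substantive work is $\partial_s$ of the force terms $-k(2\langle D_\tau V,\tau\rangle - \langle D_\nu V,\nu\rangle) + \langle D^2_{\tau,\tau}V,\nu\rangle$. Here I would use repeatedly the Frenet relations $\tau_s = k\nu$, $\nu_s = -k\tau$ (with $\nu$ the inward normal so that $\gamma_{ss} = k\nu$), together with the chain-rule identities $\partial_s\big(D_X V(\gamma(s))\big) = D^2_{\tau,X}V + D_{X_s}V$ and $\partial_s\big(D^2_{X,Y}V(\gamma(s))\big) = D^3_{\tau,X,Y}V + D^2_{X_s,Y}V + D^2_{X,Y_s}V$ for vector fields $X,Y$ along $\gamma$, plus symmetry of the Hessian $D^2_{\tau,\nu}V = D^2_{\nu,\tau}V$. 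This gives, for instance, $\partial_s\langle D_\tau V,\tau\rangle = \langle D^2_{\tau,\tau}V,\tau\rangle + k\langle D_\nu V,\tau\rangle + k\langle D_\tau V,\nu\rangle$, $\partial_s\langle D_\nu V,\nu\rangle = \langle D^2_{\tau,\nu}V,\nu\rangle - k\langle D_\tau V,\nu\rangle - k\langle D_\nu V,\tau\rangle$, and $\partial_s\langle D^2_{\tau,\tau}V,\nu\rangle = \langle D^3_{\tau,\tau,\tau}V,\nu\rangle + 2k\langle D^2_{\tau,\nu}V,\nu\rangle - k\langle D^2_{\tau,\tau}V,\tau\rangle$.

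Finally I would collect everything by homogeneity in $k$: the $k^0$ contribution is exactly $\langle D^3_{\tau,\tau,\tau}V,\nu\rangle$; the $k^1$ contributions (all coming from the $D^2 V$ terms) assemble into $kA_3$; the $k^2$ contributions (all from $DV$ terms) assemble into $k^2 A_2$; and the contributions proportional to $k_s$ — the $-2k_s\langle D_\tau V,\tau\rangle$ and $k_s\langle D_\nu V,\nu\rangle$ from differentiating the coefficient $-k(\cdot)$, together with the commutator term $-\langle D_\tau V,\tau\rangle k_s$ — assemble into $k_s A_1$. I expect the only genuine obstacle to be the bookkeeping: several terms of each order arise from different sources (the $k^2$ terms, in particular, come both from $\tau_s,\nu_s$ hitting $\langle D_\tau V,\tau\rangle$ and from $\nu_s$ hitting $\langle D_\nu V,\nu\rangle$), and one must be scrupulous with the normal-sign convention to land precisely on the stated $A_1,A_2,A_3$.
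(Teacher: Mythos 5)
Your approach is exactly the one the paper uses: apply the commutator $\partial_t\partial_s = \partial_s\partial_t + (\sigma_1k^2+\sigma_2k)\partial_s - \langle D_\tau V,\tau\rangle\partial_s$ to $k$, differentiate the right-hand side of Lemma \ref{LMcurvevo} in $s$ using $\tau_s = k\nu$, $\nu_s = -k\tau$ and the chain rule for $D_X V(\gamma(s))$, then collect by homogeneity in $k$. All the intermediate derivative identities you write down are correct, and in particular the $k^0$, $k^1$ and $k_s$ collections do land on the stated $\langle D^3_{\tau,\tau,\tau}V,\nu\rangle$, $kA_3$ and $k_sA_1$.

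One thing you should not take on faith, however, is that the $k^2$ terms ``assemble into $k^2A_2$'' with $A_2$ as printed. Carrying your own computation through: the $k^2$ terms arise from $-2k\cdot\partial_s\langle D_\tau V,\tau\rangle$ contributing $-2k^2\langle D_\nu V,\tau\rangle - 2k^2\langle D_\tau V,\nu\rangle$, and from $+k\cdot\partial_s\langle D_\nu V,\nu\rangle$ contributing $-k^2\langle D_\tau V,\nu\rangle - k^2\langle D_\nu V,\tau\rangle$; in total $-3k^2\langle D_\tau V,\nu\rangle - 3k^2\langle D_\nu V,\tau\rangle$. That is, the correct coefficient is $A_2 = -3\langle D_\tau V,\nu\rangle - 3\langle D_\nu V,\tau\rangle$, which disagrees (in the sign of the $\langle D_\tau V,\nu\rangle$ term) with the lemma's displayed $A_2 = 3\langle D_\tau V,\nu\rangle - 3\langle D_\nu V,\tau\rangle$. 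The paper's own line-by-line derivation actually arrives at $-k^2(3\langle D_\nu V,\tau\rangle + 3\langle D_\tau V,\nu\rangle)$, consistent with your calculation, so the discrepancy is a typo in the statement of $A_2$ rather than an error in the method. Your proof is fine; just trust your own bookkeeping over the printed formula for $A_2$ at this last step.
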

\begin{proof}%{{{
Using the commutator we calculate
\begin{align*}
(k_s)_t &= (k_t)_s+(\sigma_1k^2+\sigma_2k)k_s-\langle D_\tau V, \tau \rangle k_s\\
	&= \sigma_1k_{sss}+3\sigma_1k^2k_s+2\sigma_2kk_s-k_s(2\langle D_\tau V, \tau \rangle-\langle D_\nu V, \nu \rangle)\\
	&\qquad-k(\langle 2D^2_{\tau, \tau} V, \tau \rangle+2k\langle D_\nu V, \tau \rangle+2k\langle D_\tau V, \nu\rangle -\langle D^2_{\tau,\nu} V, \nu \rangle+k\langle D_\tau V, \nu \rangle+k\langle D_\nu V, \tau \rangle  )\\
	&\qquad+\langle D^3_{\tau,\tau,\tau} V, \nu \rangle+2k\langle D^2_{\tau,\nu} V, \nu \rangle-k\langle D^2_{\tau,\tau} V, \tau \rangle\\
	&\qquad+\sigma_1k^2k_s+\sigma_2kk_s-\langle D_\tau V, \tau \rangle k_s\\
	&=\sigma_1k_{sss}+4\sigma_1k^2k_s+3\sigma_2kk_s-k_s(3\langle D_\tau V, \tau \rangle-\langle D_\nu V, \nu \rangle)\\
	&\qquad-k^2(3\langle D_\nu V, \tau \rangle+3\langle D_\tau V, \nu \rangle)\\
	&\qquad-k(3\langle D^2_{\tau, \tau} V, \tau \rangle-3\langle D^2_{\tau, \nu} V, \nu \rangle)\\
	&\qquad+\langle D^3_{\tau,\tau,\tau} V, \nu \rangle
	\,.
\end{align*}
\end{proof}%}}}

Lemma \ref{LMksevo} is the base case for the following more general evolution equation.

\begin{prop}
Let $\gamma:\S^1\times[0,T)\rightarrow\R^2$ be a solution to \eqref{flow}.
The evolution of the $p\textsuperscript{th}$ derivative of $k$ is given by
\[
 (k_{s^p})_t = \sigma_1k_{s^{p+2}} + (p+3)\sigma_1k^2k_{s^p} +(p+2)\sigma_2 kk_{s^p} +A_pk_{s^p}+B_p,
\]
where 
\[
	A_p = \langle D_\nu V, \nu \rangle -(2+p)\langle D_\tau V, \tau \rangle
\]
and $B_p$ depends on $V$ and its derivatives up to order $(p+2)$, and $k_{s^{m}}$ for $0\le m\le p-1$.
\end{prop}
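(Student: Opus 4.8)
The plan is to prove the evolution equation
\[
 (k_{s^p})_t = \sigma_1 k_{s^{p+2}} + (p+3)\sigma_1 k^2 k_{s^p} + (p+2)\sigma_2 k\, k_{s^p} + A_p k_{s^p} + B_p
\]
by induction on $p$, with Lemma \ref{LMksevo} (the case $p=1$) together with Lemma \ref{LMcurvevo} (the case $p=0$, where the lower-order terms are absorbed into $B_0$) serving as the base. The inductive step is a direct computation: assuming the formula holds for $p$, I would differentiate both sides in $s$ and then commute $\partial_s$ past $\partial_t$ using the commutator identity from Proposition \ref{p:evo}, namely $\partial_t\partial_s = \partial_s\partial_t + (\sigma_1 k^2 + \sigma_2 k)\partial_s - \IP{D_\tau V}{\tau}\partial_s$. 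Concretely, $(k_{s^{p+1}})_t = (k_{s^p})_{st} = \big((k_{s^p})_t\big)_s + (\sigma_1 k^2 + \sigma_2 k)k_{s^{p+1}} - \IP{D_\tau V}{\tau}k_{s^{p+1}}$, and then I substitute the inductive hypothesis for $(k_{s^p})_t$ and differentiate that expression in $s$.

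The bookkeeping then proceeds term by term. Differentiating $\sigma_1 k_{s^{p+2}}$ gives the new leading term $\sigma_1 k_{s^{p+3}}$. Differentiating $(p+3)\sigma_1 k^2 k_{s^p}$ by the product rule yields $(p+3)\sigma_1 k^2 k_{s^{p+1}}$ plus $2(p+3)\sigma_1 k k_s k_{s^p}$, and the first of these combines with the $\sigma_1 k^2 k_{s^{p+1}}$ coming from the commutator term to give $(p+4)\sigma_1 k^2 k_{s^{p+1}}$, matching the claimed coefficient $(p+1)+3$; similarly the $\sigma_2$-coefficient goes from $(p+2)$ to $(p+3)$ by the same mechanism. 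The leftover pieces — the terms $2(p+3)\sigma_1 k k_s k_{s^p}$ and $(p+2)\sigma_2 k_s k_{s^p}$, the contribution $(A_p)_s k_{s^p}$ from differentiating $A_p k_{s^p}$, the piece $(B_p)_s$, and the $-\IP{D_\tau V}{\tau}k_{s^{p+1}}$ from the commutator — must be reorganised into the form $A_{p+1}k_{s^{p+1}} + B_{p+1}$. The coefficient of $k_{s^{p+1}}$ collecting from $A_p$ and the commutator term is $A_p - \IP{D_\tau V}{\tau} = \IP{D_\nu V}{\nu} - (2+p)\IP{D_\tau V}{\tau} - \IP{D_\tau V}{\tau} = \IP{D_\nu V}{\nu} - (3+p)\IP{D_\tau V}{\tau} = A_{p+1}$, exactly as required. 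Everything else — including all the $k_s k_{s^p}$ products, $(A_p)_s k_{s^p}$, and $(B_p)_s$ — is declared to be $B_{p+1}$, and one checks that it depends only on $V$ and its derivatives up to order $(p+1)+2 = p+3$ (since $(A_p)_s$ and $(B_p)_s$ each raise the derivative order of $V$ by one, using $\partial_s V(\gamma(s)) = D_\tau V$) and on $k_{s^m}$ for $0\le m\le p$, because the genuinely new quantity $k_{s^{p+1}}$ appears only in the terms already extracted.

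The main obstacle is not conceptual but organisational: one must verify that no stray copy of the top-order derivative $k_{s^{p+1}}$ hides inside $B_{p+1}$. This is where care is needed — in particular, when differentiating $B_p$ in $s$, the term of highest order in $B_p$ is at most $k_{s^{p-1}}$, so $(B_p)_s$ reaches at most $k_{s^p}$, which is safely below $k_{s^{p+1}}$; and the products $k k_s k_{s^p}$ and $k_s k_{s^p}$ are of order $p$ in their highest factor and so also legitimately belong to $B_{p+1}$. A clean way to organise this is to track, as an invariant of the induction, that $B_p$ is a universal polynomial in $k, k_s, \dots, k_{s^{p-1}}$ with coefficients that are smooth functions of the components of $V, DV, \dots, D^{p+2}V$ evaluated along $\gamma$ (contracted with $\tau,\nu$), and that this structure is preserved under the operation $B_p \mapsto 2(p+3)\sigma_1 k k_s k_{s^p} + (p+2)\sigma_2 k_s k_{s^p} + (A_p)_s k_{s^p} + (B_p)_s + (\text{whatever lower-order residue the commutator contributes})$. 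Once this invariant is set up, the induction closes immediately. I would also remark that the explicit leading structure — the term $\langle D^{p+1}_{\tau,\dots,\tau}V,\nu\rangle$ inside $B_p$ — follows the same pattern visible in Lemmas \ref{LMcurvevo} and \ref{LMksevo}, though the statement as given only requires knowing the order of $V$-derivatives involved, not their precise form.
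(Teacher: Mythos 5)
Your proposal is correct and follows the same inductive argument the paper uses: commute $\partial_s$ past $\partial_t$ via the commutator formula, differentiate the inductive hypothesis, track how the coefficients of $k^2k_{s^{p+1}}$, $kk_{s^{p+1}}$, and $k_{s^{p+1}}$ shift up by one, and verify that the leftover terms involve only $k_{s^m}$ with $m\le p$ and $V$-derivatives up to order $p+3$. One small correction to a side remark: Lemma \ref{LMcurvevo} is \emph{not} the ``$p=0$'' instance of the statement, since specialising the formula to $p=0$ would produce $3\sigma_1k^3$ and $2\sigma_2k^2$ rather than $\sigma_1k^3$ and $\sigma_2k^2$, and the discrepancy could not be hidden in $B_0$ because $B_0$ is by definition forbidden from containing any $k_{s^m}$; this is immaterial to your argument because, as in the paper, the actual base case used is $p=1$ from Lemma \ref{LMksevo}.
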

\begin{proof}%{{{
The proof is by induction.
The base case $p = 1$ is proven above.

Using the commutator $[\partial_s,\partial_t]$ and the inductive hypothesis we have 
\begin{align*}
(k_{s^{p+1}})_t &= \partial_s \partial_t(k_{s^p})+ k(\sigma_1k+\sigma_2)\partial_s(k_{s^p})-\langle D_\tau V, \tau \rangle\partial_sk_{s^p}
\\
&= \sigma_1k_{s^{p+3}}+2(p+3)\sigma_1kk_sk_{s^p}+(p+3)\sigma_1k^2k_{s^{p+1}}+(p+2)\sigma_2k_sk_{s^p}+(p+2)\sigma_2kk_{s^{p+1}}
\\
&\qquad+(A_p)_sk_{s^p}+(A_p)k_{s^{p+1}}+(B_p)_s+\sigma_1k^2k_{s^{p+1}} +\sigma_2kk_{s^{p+1}}-\langle D_\tau V, \tau \rangle k_{s^{p+1}}
\,.
\end{align*}
Since $B_p$ contains derivatives of $k$ up to the $p-1$\textsuperscript{th} order then $\partial_sB$ contains them up to the  $p$\textsuperscript{th} order.
Thus 
\[
	(k_{s^{p+1}})_t = \sigma_1k_{s^{p+3}}+(\sigma_1(p+3)+\sigma_1)k^2k_{s^{p+1}}+(\sigma_2(p+2)+\sigma_2)kk_{s^{p+1}}+A_{p+1}k_{s^{p+1}}+B_{p+1}.
\]
\end{proof}%}}}

For the curve shortening flow, a `bootstrapping principle' applies: a bound on
all derivatives of the curvature up to order $(p-1)$ implies a bound on the
next derivative of curvature.
We wish to prove the same holds here.
For this it is crucial that we have confined the flow to a compact subset of
the plane.

\begin{prop}
\label{PNbootstrap}
Let $\gamma:\S^1\times[0,T)\rightarrow\R^2$ be a solution to \eqref{flow} satisfying Theorem \ref{T:main}.
Let $c_m$, $m\in\N_0$, be such that
\[
||D^mV||_{L^\infty(U)} = c_m\,.
\]
Suppose $D_i$, $i = 0,1,...,p-1$ are such that 
\[
	|k_{s^{i}}| \leq D_{i} \text{ for all } i = 0,1,...,p-1;
\]
then $|k_{s^{p}}| \leq D_p $. 
\end{prop}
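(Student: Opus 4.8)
The plan is to prove the bounds by a parabolic maximum principle applied to $f:=(k_{s^p})^2$, using the evolution equation for $(k_{s^p})_t$ derived above. The key structural observation is that in that evolution equation, \emph{every} term other than the leading diffusion $\sigma_1 k_{s^{p+2}}$ is controlled along the flow: $k$ is bounded by $D_0$; the coefficient $A_p=\langle D_\nu V,\nu\rangle-(2+p)\langle D_\tau V,\tau\rangle$ is bounded by $(3+p)c_1$; and $B_p$ is a polynomial in $k,k_s,\dots,k_{s^{p-1}}$ and in the tangential/normal components of $D^jV(\gamma)$ for $j\le p+2$. Since $\gamma(\S^1,t)\subset U$ for all $t$ — by whichever confinement hypothesis is in force (Lemma \ref{LMconfcondb} under (b), Proposition \ref{PNcasec} under (c), or the global bound (a)) — we have $|D^jV(\gamma)|\le c_j$, and together with the assumed bounds $|k_{s^i}|\le D_i$ for $i\le p-1$ this gives $|B_p|\le b_p$ for a constant $b_p$ depending only on $p$, $\sigma_1$, $\sigma_2$, $c_0,\dots,c_{p+2}$ and $D_0,\dots,D_{p-1}$.

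First I would compute, using the stated evolution of $k_{s^p}$ together with the identity $2k_{s^p}k_{s^{p+2}}=\partial_{ss}\!\big((k_{s^p})^2\big)-2(k_{s^{p+1}})^2$,
\[
\partial_t f = \sigma_1 f_{ss} - 2\sigma_1 (k_{s^{p+1}})^2 + \big(2(p+3)\sigma_1 k^2 + 2(p+2)\sigma_2 k + 2A_p\big)f + 2B_p\,k_{s^p}.
\]
Estimating the last term by Young's inequality, $2B_p k_{s^p}\le f+b_p^2$, bounding the bracketed coefficient by a constant $a_p$ (again depending only on $p$, $\sigma_1$, $\sigma_2$, $D_0$, $c_1$), and discarding the nonpositive term $-2\sigma_1(k_{s^{p+1}})^2$, one obtains
\[
\partial_t f \le \sigma_1 f_{ss} + (a_p+1)f + b_p^2 \qquad\text{on }\S^1\times[0,T).
\]

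Then I would close the argument with Hamilton's trick: writing $\phi(t)=\max_{\S^1}f(\cdot,t)$, at a spatial maximum $f_{ss}\le 0$, so $\phi'(t)\le (a_p+1)\phi(t)+b_p^2$ in the sense of upper Dini derivatives, whence by Gr\"onwall $\phi(t)\le e^{(a_p+1)t}\big(\phi(0)+b_p^2/(a_p+1)\big)$. Since $\gamma_0$ is smooth, $\phi(0)=\max_{\S^1}(k_{s^p})^2(\cdot,0)<\infty$; and since the maximal existence time $T$ is finite by Lemma \ref{LMfintietime}, this yields the uniform bound $\phi(t)\le e^{(a_p+1)T}\big(\phi(0)+b_p^2/(a_p+1)\big)=:D_p^2$ for all $t\in[0,T)$, i.e. $|k_{s^p}|\le D_p$.

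The only part requiring care — the rest is routine — is justifying that $B_p$ really is bounded, i.e. that it is a polynomial in the quantities listed above: this is seen by unwinding the inductive construction of $B_p$ from the general evolution proposition, noting that each inductive step applies $\partial_s$, which raises the order of $V$-differentiation by exactly one and produces only extra powers of $k$ (via $\partial_s\tau=k\nu$, $\partial_s\nu=-k\tau$), and crucially never introduces $k_{s^p}$ or any higher derivative. I would also flag explicitly the two essential uses of the standing hypotheses: the finiteness of $T$ (without it the Gr\"onwall estimate would not give a time-uniform bound) and the confinement of the image to $U$ (without it the constants $c_m$ could be infinite) — the latter being precisely the reason confinement was emphasized as ``crucial'' before the statement.
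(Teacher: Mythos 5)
Your proof is correct and follows essentially the same route as the paper: derive the second-order parabolic evolution equation for $k_{s^p}$, observe that confinement and the inductive hypothesis bound every coefficient and the inhomogeneity $B_p$, apply a maximum-principle argument to $(k_{s^p})^2$, and close via the finite maximal existence time (Lemma \ref{LMfintietime}). The only difference is packaging — where you absorb the cross term $2B_pk_{s^p}$ with Young's inequality and then run Gr\"onwall on $\max_{\S^1}(k_{s^p})^2$, the paper instead sets $X=e^{\alpha t}k_{s^p}$ with $\alpha$ sufficiently negative and dispatches the same term via the dichotomy $|X|\le 1$ or $|X|\le|X|^2$ before invoking the maximum principle; yours is arguably slightly cleaner, but the substance is identical.
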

\begin{proof}
First we note that due to (any of) the containment conditions the flow is
trapped inside $U$ for all time, and so we may use $c_m$ to estimate $D^mV$
without issue.

We use induction.
For the base case assume that the curvature is bounded i.e $k \leq
D_0 = k_{\text{max}}$.
Let $X = e^{\alpha t}k_s$ where $\alpha$ is a negative
constant to be chosen.
The evolution is therefore
\begin{align*}
(k_s)_t  &= e^{-\alpha t}(X_t-\alpha X)\\
\implies e^{-\alpha t}(X_t-\alpha X) &= e^{-\alpha t}(\sigma_1 X_{ss}+ 4\sigma_1k^2X+3\sigma_2kX+A_1X) + A_2k^2+A_3k+\langle D^3_{\tau, \tau, \tau} V, \nu \rangle\\
\implies X_t-\sigma_1X_{ss} &= X(\alpha +4\sigma_1k^2+3\sigma_2k+A_1) +e^{\alpha t}(A_2k^2+A_3k+\langle D^3_{\tau, \tau, \tau} V, \nu \rangle)\\
\implies (\partial_t-\sigma_1\partial_{ss})X^2 &= 2X(X_t-\sigma_1X_{ss})-2\sigma_1(X_s)^2\\
&\le 2X^2(\alpha+4\sigma_1k^2+3\sigma_2k+A_1)+ 2Xe^{\alpha t}(A_2k^2+A_3k+\langle D^3_{\tau, \tau, \tau} V, \nu \rangle).
\end{align*}
We can assume that $|X| \leq |X|^2$, as otherwise the proof of the base case is finished.
Then
\[
	(\partial_t-\sigma_1\partial_{ss})X^2 \leq 2X^2(\alpha+4\sigma_1k^2_{\text{max}}+3\sigma_2k_{\text{max}}+4c_1+6c_1 k^2_{\text{max}}+6c_2 k_{\text{max}}+c_3).
\]
Take $\alpha < -(4\sigma_1k^2_{\text{max}}+3\sigma_2k_{\text{max}}+4c_1+6c_1 k^2_{\text{max}}+6c_2 k_{\text{max}}+c_3)$ then by the maximum principle 
\[
 |k_s| \leq e^{-\alpha T}\max\limits_{t = 0} |k_s|.
\]
Finite existence time (Lemma \ref{LMfintietime}) finishes the proof of the base case.

The proof for the inductive case follows along the same lines. Assume $|k_{s^{i}}| \le D_i$ for each $i = 1,2,..p-1$.
Let $X = e^{\alpha t}k_{s^p} $ (where $\alpha$ is negative) then 
\[
 (k_{s^p})_t = e^{-\alpha t}(X_t-\alpha X)\,,
\]
and 
\begin{align*} 
e^{-\alpha t }(X_t-\alpha X)
 &= e^{-\alpha t}(\sigma_1X_{ss}+(p+3)\sigma_1k^2X+(p+2)\sigma_2kX+A_pX)+B_p\\
\implies X_t-\sigma_1X_{ss} &= X(\alpha +(p+3)\sigma_1k^2+(p+2)\sigma_2k+A_p) + e^{\alpha t }B_p\\
\implies (\partial_t-\sigma_1\partial_{ss})X^2  &= 2X(X_t-\sigma_1X_{ss})-2\sigma_1(X_s)^2\\
&\le 2X^2(\alpha +(p+3)\sigma_1k^2+(p+2)\sigma_2k+A_p)+2Xe^{\alpha t}B_p-2\sigma_1(X_s)^2
\,.
\end{align*}
Again, assume that $|X| \leq |X|^2$ because the proof is done if otherwise; then
\[
 (\partial_t-\sigma_1\partial_{ss})X^2 \leq 2X^2(\alpha + (p+3)\sigma_1k^2 +(p+2)\sigma_2k+(3+p)c_1+||B_p||_{L^\infty}).
\]
Take $\alpha < -((p+3)\sigma_1k^2 +(p+2)\sigma_2k+(3+p)c_1+||B_p||_{L^\infty})$ and then the maximum principle implies that there exists a $D_{p+1}$ such that
\[
 |k_{s^{p+1}}| \leq D_{p+1}
\]
where $D_{p+1} = e^{-\alpha T}D_p$.
Note that here again we use finite maximal time (Lemma \ref{LMfintietime}).
\end{proof}
%}}}

\section{Convex Curves Shrink to a Point}%{{{
\label{S:convex}

We continue on to prove that initial curves satisfying the assumptions of Theorem \ref{T:main} shrink to points.
For this we follow Gage-Hamilton \cite{gage1986heat}.
Briefly, we change to the angle
parametrisation and use a new time variable $t'$ so that $(t',\theta)$ are
independent.
Then, we establish analogues of Gage-Hamilton's \cite{gage1986heat} ``geometric'',
``integral'' and ``pointwise'' estimates.

First we calculate the curvature evolution in this parametrisation.
\begin{lem}
\label{LMkevothetaparam}
Let $\gamma:\S^1\times[0,T)\rightarrow\R^2$ be a solution to \eqref{flow} satisfying Theorem \ref{T:main}.
Then
\[
 k_{t'} = \sigma_1k^2k_{\theta \theta}+\sigma_1k^3+\sigma_2k^2-k(2\langle D_TV,T \rangle-\langle D_NV,N\rangle)-k_\theta\langle D_TV,N \rangle+\langle D^2_{T,T}V, N\rangle\,,
\]
where $t'$ and $\theta$ are new variables that are independent of each other
and $\theta$ is the angle the tangent makes with a fixed vector in
$\mathbb{R}^2$.
The vectors $T$ and $N$ are the tangent and normal respectively in the $(t',\theta)$ parametrisation.
\end{lem}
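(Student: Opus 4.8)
The plan is to derive the stated curvature evolution in the $(t',\theta)$ parametrisation by starting from the already-established evolution in the arc-length parametrisation (Lemma \ref{LMcurvevo}) and carefully tracking the change of variables. First I would record the relation between arc-length and angle parametrisations: since $\theta$ is the turning angle of the tangent, $\partial_\theta = k^{-1}\partial_s$ (equivalently $\partial_s = k\,\partial_\theta$), and I would compute how $\partial_t$ changes when we decouple time from the moving parametrisation, i.e. the commutator relation between $\partial_{t'}$ and $\partial_\theta$ versus $\partial_t$ and $\partial_s$. This is exactly the Gage--Hamilton trick: one chooses the new time $t'$ and parametrisation so that $(t',\theta)$ are independent, which introduces a correction term involving $\theta_t$ when converting $k_t$ to $k_{t'}$.

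Concretely, I would proceed as follows. Using $\partial_s = k\partial_\theta$ I would rewrite $k_{ss} = k\partial_\theta(k k_\theta) = k^2 k_{\theta\theta} + k k_\theta^2$. Then I would determine $\theta_t$: since $T_t = \tau_t = (\sigma_1 k_s + \langle D_\tau V,\nu\rangle)\nu$ from Proposition \ref{p:evo}, and $\partial_t$ of the angle is the coefficient of $\nu$ in $\tau_t$ (because rotating $\tau$ by angle $\theta$ gives a fixed frame), we get $\theta_t = \sigma_1 k_s + \langle D_\tau V,\nu\rangle$ up to the appropriate sign/orientation convention. The relation between the operators is then $\partial_{t'} = \partial_t + (\text{correction})\partial_\theta$, or rather $k_{t'} = k_t - \theta_t \, k_\theta$ when one holds $\theta$ fixed instead of the material point. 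Substituting $k_t$ from Lemma \ref{LMcurvevo}, replacing $k_{ss}$ by $k^2 k_{\theta\theta} + k k_\theta^2$ and $k_s$ by $k k_\theta$, the $\sigma_1 k k_\theta^2$ term from $k_{ss}$ should cancel against the $-\theta_t k_\theta = -(\sigma_1 k k_\theta + \langle D_\tau V,\nu\rangle)k_\theta$ contribution, leaving exactly $\sigma_1 k^2 k_{\theta\theta}$ for the diffusion term and $-k_\theta\langle D_T V,N\rangle$ as the stated first-order term. The remaining reaction terms $\sigma_1 k^3 + \sigma_2 k^2 - k(2\langle D_T V,T\rangle - \langle D_N V,N\rangle) + \langle D^2_{T,T}V,N\rangle$ carry over unchanged from Lemma \ref{LMcurvevo}, since $T=\tau$ and $N=\nu$ pointwise (only the parametrisation of the domain has changed, not the curve's geometry).

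The main obstacle I anticipate is bookkeeping the sign conventions consistently: the direction of increasing $\theta$ relative to the inward normal $\nu$, whether $\partial_s = k\partial_\theta$ or $\partial_s = -k\partial_\theta$ for the chosen orientation, and correspondingly the sign in $\theta_t$. Getting this wrong would flip the sign of the $k_\theta\langle D_T V,N\rangle$ term or fail to produce the cancellation of the $kk_\theta^2$ term. I would pin this down by checking against the known vanilla curve shortening case ($\sigma_1=1$, $\sigma_2=0$, $V=0$), where the equation must reduce to $k_{t'} = k^2 k_{\theta\theta} + k^3$, which is the classical Gage--Hamilton equation; matching this forces the correct conventions. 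A secondary (purely mechanical) point is verifying that the decoupling of $t'$ from $\theta$ is legitimate, i.e. that $\theta$ is a genuine coordinate on $\S^1$ for convex curves — this is guaranteed because $k>0$ (from Lemma \ref{LMkconvex} under the hypotheses of Theorem \ref{T:main}), so the Gauss map is a diffeomorphism onto $\S^1$ and we may reparametrise.
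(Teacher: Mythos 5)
Your proposal follows the same route as the paper: starting from Lemma \ref{LMcurvevo}, substituting $\partial_s = k\partial_\theta$, reading off $\theta_t = \sigma_1 k k_\theta + \langle D_T V,N\rangle$ from $\tau_t = \tau_\theta\, \theta_t = \nu\,\theta_t$, and applying the chain rule $k_{t'} = k_t - \theta_t k_\theta$ so that the spurious $\sigma_1 k k_\theta^2$ term cancels. The cancellation and the residual $-k_\theta\langle D_T V,N\rangle$ term come out exactly as you predict, and your sanity check against the vanilla Gage--Hamilton case is a sensible way to pin down the orientation conventions the paper leaves implicit.
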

\begin{proof}%{{{
Recall that the curvature evolves via
\[
 k_t = \sigma_1k_{ss}+\sigma_1k^3+\sigma_2k^2-k(2\langle D_\tau V, \tau \rangle- \langle D_\nu V, \nu \rangle)+\langle D^2_{\tau, \tau}V, \nu \rangle.
\] 	
Under the new parametrisation we have $\frac{\partial \theta}{\partial s} = k$ so that
\[
k_s = kk_\theta, \quad\text{ and }\quad
 k_{ss} = k\left(\frac{\partial}{\partial_\theta}kk_\theta\right) = kk^2_\theta+k^2k_{\theta \theta}\,.
\]
%Now $\tau = T(\theta), \nu  = N(\theta)$. 
Using the chain rule we have 
\[
 k_t = \frac{\partial k}{\partial t'} \frac{\partial t'}{\partial t}+\frac{\partial k}{\partial \theta} \frac{\partial \theta}{\partial t}. 
\] 
Furthermore $\tau_t = \tau_\theta \frac{\partial \theta}{\partial t} = \nu
\frac{\partial \theta}{\partial t}$ and from Proposition \ref{p:evo} we compute 
\[
 \tau_t = (\sigma_1k_s+\langle D_\tau V, \nu \rangle)\nu.
\] 
Hence $\frac{\partial \theta}{\partial t} = \sigma_1kk_\theta+\langle D_TV,N\rangle$ and we take $\frac{\partial t'}{\partial t}= 1$. Therefore 
\[
 \sigma_1k^2k_{\theta \theta}+\sigma_1kk^2_{\theta}+\sigma_1k^3+\sigma_2k^2-k(2\langle D_TV,T\rangle-\langle D_NV,N\rangle)+\langle D^2_{T, T}V, N \rangle = k_{t'}+\sigma_1kk_{\theta}^2+k_{\theta}\langle D_TV,N\rangle.
\]
Rearranging gives the result.
\end{proof}%}}}

For the rest of the paper we abuse notation by reverting back to $t$ to represent $t'$.

\subsection{Geometric estimate}

This estimate, due to Gage-Hamilton, controls the median curvature in terms to the ratio of length to area.
The median curvature $k^*$ is defined by 
\[
 k^* = \sup \{ b : k(\theta)> b \text{ on some interval of length } \pi\}.
\]
\vspace{-5mm}
\begin{lem}(Geometric estimate.)
\label{LMgeomest}
The relation $k^*(t)<\frac{L}{A}$ holds for convex curves.
\end{lem}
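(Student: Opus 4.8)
The plan is to follow the classical Gage--Hamilton argument, which is essentially geometric and insensitive to the precise form of the normal speed, since it only uses convexity and basic integral geometry. Let $\gamma(\cdot,t)$ be convex, so $k>0$ and we may use the angle parametrisation $\theta\in[0,2\pi)$. The key geometric fact is the support function description: writing $h(\theta)$ for the support function of the convex curve, one has the classical formulae $\frac1k = h+h_{\theta\theta}$, $L = \int_0^{2\pi} h\,d\theta$, and $A = \frac12\int_0^{2\pi} h\big(h+h_{\theta\theta}\big)\,d\theta = \frac12\int_0^{2\pi}\frac{h}{k}\,d\theta$ (the last equality after integration by parts, using periodicity). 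Since $\frac1k>0$, the measure $\frac1k\,d\theta$ is a positive measure on the circle of total mass $\int_0^{2\pi}\frac{d\theta}{k} = L$ (this is just $\int ds$). So $\frac{2A}{L}$ is a weighted average of $h$ against this probability measure (after normalising).

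The heart of the argument is to compare $k^*$, which is a kind of median of $k$, against this ratio. First I would fix the $L$-length interval realising (nearly) the supremum in the definition of $k^*$: by definition, for any $b<k^*$ there is an interval $J\subset\mathbb S^1$ of angular length $\pi$ on which $k>b$; equivalently there is an interval of angular length $\pi$ on which $\frac1k<\frac1b$. The strategy is then to estimate $2A = \int_0^{2\pi}\frac{h}{k}\,d\theta$ from below by splitting the integral over $J$ and its complement $J^c$ (also of angular length $\pi$), using the fact that the support function at two antipodal directions $\theta$ and $\theta+\pi$ sum to the width of the curve in that direction, hence $h(\theta)+h(\theta+\pi)\le$ the diameter, and more usefully that pairing $\theta\in J$ with $\theta+\pi\in J^c$ lets one bound $\int_0^{2\pi} h\,d\theta = L$ against a sum of widths. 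One packages this into $\frac{2A}{L}< \frac1{k^*}\cdot(\text{something}) $; running the inequalities carefully yields $k^*<\frac{L}{A}$.

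Concretely, the cleanest route (this is essentially Gage's original lemma) is: for the interval $J$ of length $\pi$ with $k>k^*$ there, write $2A=\int_J \frac hk\,d\theta+\int_{J^c}\frac hk\,d\theta$. On $J$ we have $\frac hk<\frac{h}{k^*}$, while on $J^c$ we crudely keep $\frac hk$. One then uses that for a convex curve, $\int_{J^c}\frac1k\,d\theta = \int_{J^c}ds$ is the length of the arc with outward normals turning through $J^c$, and a reflection/antipodal pairing bounds $\int_{J^c} h/k\,d\theta$ by a term that recombines with $\int_J h/k$ to produce a factor of $L$. After the dust settles one gets $2A< \frac{2}{k^*}\cdot\frac{L}{2}=\frac{L}{k^*}$, i.e. $k^*<\frac{L}{A}$, with the strictness coming from the strict inequality $k>k^*$ on a set of positive measure.

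The main obstacle is getting the antipodal/reflection bookkeeping exactly right so that the two pieces $\int_J$ and $\int_{J^c}$ recombine into $L$ rather than into something merely comparable to $L$ — in particular making sure the length-$\pi$ interval in the definition of $k^*$ is used in a way that respects the $\theta\mapsto\theta+\pi$ symmetry of the support function. The evolution equation for $k$ plays no role here at all; this is a static inequality valid for any convex curve, so I would simply cite or reproduce Gage--Hamilton \cite{gage1986heat} for this step, adapting notation to the present angle parametrisation, and note that none of $\sigma_1,\sigma_2,V$ enters.
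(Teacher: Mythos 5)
Your proposed route is genuinely different from the paper's. The paper's argument (given there as a one-paragraph sketch) is purely synthetic: if $k>M$ on an interval of tangent-angle length $\pi$, that arc has parallel tangent lines at its two endpoints, and since $k>M$ along the arc those lines are less than $2/M$ apart; convexity then traps the entire curve between the two lines; a closed convex curve of perimeter $L$ lying between parallel lines a distance $w$ apart fits in a $w\times(L/2)$ box, so $A<L/M$, i.e.\ $M<L/A$, and letting $M\nearrow k^*$ finishes. You instead propose an analytic argument via the support-function identities $L=\int h\,d\theta$ and $2A=\int_0^{2\pi} h/k\,d\theta$, splitting the area integral over the high-curvature interval $J$ and its complement.

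As written, the analytic version does not close. On $J$ you can indeed replace $1/k$ by $1/k^*$, but on $J^c$ the hypothesis gives no pointwise control on $k$, and $h$ on $J^c$ is not controlled by $h$ on $J$ through any soft ``antipodal pairing'' of $\theta$ and $\theta+\pi$: the identity $h(\theta)+h(\theta+\pi)=w(\theta)$ is only useful once one already has a bound on the width $w$, and that width bound is exactly what the geometric step produces. You flag the pairing as ``the main obstacle,'' but it is not just bookkeeping --- without the width estimate the integral over $J^c$ is simply not bounded. There is also a concrete numerical slip at the end: you assert ``$2A<L/k^*$'', i.e.\ $k^*<L/(2A)$, but for a round circle $k^*=1/r$ and $L/A=2/r$, so $L/(2A)=k^*$ and your claimed inequality fails; the correct target is only $A<L/k^*$. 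Since you intend to cite Gage--Hamilton in any case, reproducing the box argument above (as the paper does) is the clean and correct route; the support-function identities are the right tool for Gage's \emph{isoperimetric} estimate $\int k^2\,ds\ge\pi L/A$, which is a different lemma, and that may be the source of the confusion.
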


For the convenience of the reader, we give a sketch of the proof.
If $k^*(t)>M$ (for some $M>0$), the curve $\gamma$ restricted to an open
interval $(a,a+\pi)$ has curvature $k>M$.
This means that one can contain this
arc in a circle of radius $\frac{1}{M}$ and thus between two parallel lines
$\frac{2}{M}$ units apart.
Convexity now implies that the entire curve lies between these two lines.
In particular, the curve can be contained in a rectangular box with width
$\frac{2}{M}$ and length $\frac{L}{2}$, thus the area inside the curve
satisfies $A(\gamma) < \frac{L}{M}$, which implies $M < L/A$.
Since $M$ can be taken arbitrarily close to $k^*$, this is enough to finish the proof.

\subsection{Integral estimate}

The integral estimate controls the \emph{entropy} of the evolving curve
assuming control on the median of curvature.

\begin{lem} (Integral estimate.) 
\label{LMintegralest}
Let $\gamma:\S^1\times[0,T)\rightarrow\R^2$ be a solution to \eqref{flow}
satisfying the hypotheses of Theorem \ref{T:main}.
Suppose $k^*(t) < K^* < \infty$ for all $t$.
Then
\begin{align*}
\int_0^{2\pi}\log k d\theta
&\le 
\int_0^{2\pi}\log k d\theta\bigg|_{t=0}
	+ \left(2K^* + (C_1G_0)\frac{|\sigma_2|+\sigma_2}{2\sigma_1}\right)
	  	\left(L(0)-L'(t)\right)
\\ &\qquad
	+ \left(2K^* + (C_1G_0)\frac{|\sigma_2|+\sigma_2}{2\sigma_1}\right)
		\left(2\pi(|\sigma_2|-\sigma_2) + 2\pi C_0 + G_1\right)T
\,.
\end{align*}
where $G_0 = \max\left\{\frac{1}{e},\left| \frac{\log K}{K} \right| \right\}$
and $G_1 = 2\pi\sigma_1(K^*)^2+6\pi C_1+2\pi C_2\left(\frac{1}{K} +
G_0\right)$.
\end{lem}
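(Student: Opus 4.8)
The plan is to control the time derivative of the entropy $\int_0^{2\pi}\log k\,d\theta$ by an affine function of $\int_0^{2\pi}k\,d\theta$, to exchange $\int_0^{2\pi}k\,d\theta$ for (minus) the rate of length decay using the length evolution of Proposition~\ref{p:evo}, and then to integrate in time. Differentiating the entropy and inserting Lemma~\ref{LMkevothetaparam} — that is, dividing the curvature evolution by $k$ and integrating over $\theta\in[0,2\pi]$ — the diffusion term integrates by parts, $\sigma_1\int_0^{2\pi}kk_{\theta\theta}\,d\theta=-\sigma_1\int_0^{2\pi}k_\theta^2\,d\theta$, leaving
\begin{align*}
\frac{d}{dt}\int_0^{2\pi}\log k\,d\theta
&= -\sigma_1\int_0^{2\pi}k_\theta^2\,d\theta
  + \sigma_1\int_0^{2\pi}k^2\,d\theta
  + \sigma_2\int_0^{2\pi}k\,d\theta \\
&\quad
  - \int_0^{2\pi}\bigl(2\langle D_TV,T\rangle-\langle D_NV,N\rangle\bigr)d\theta
  - \int_0^{2\pi}\frac{k_\theta}{k}\langle D_TV,N\rangle\,d\theta
  + \int_0^{2\pi}\frac1k\langle D^2_{T,T}V,N\rangle\,d\theta\,.
\end{align*}

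The principal obstacle is the term $\sigma_1\int_0^{2\pi}k^2\,d\theta$, which the median hypothesis does not control on its own; here I would use the Gage--Hamilton argument. Since $k^*(t)<K^*$, the open set $\{k>K^*\}$ contains no interval of length $\pi$ (choosing $b\in(k^*,K^*)$, it is contained in $\{k>b\}$, which contains no such interval by definition of $k^*$), so it is a disjoint union of open arcs, each of length strictly less than $\pi$, with $k-K^*$ vanishing at the two endpoints of each arc by continuity. On $\{k\le K^*\}$ one has $k^2\le K^*k$; on an arc $I$ of length $\ell<\pi$, the one-dimensional Poincar\'e (Wirtinger) inequality for functions vanishing at both endpoints — whose sharp constant is $(\ell/\pi)^2<1$ — gives $\int_I(k-K^*)^2\,d\theta\le\int_I k_\theta^2\,d\theta$. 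Expanding $(k-K^*)^2$, summing over the arcs, and using $|\{k>K^*\}|\le 2\pi$, we obtain
\[
\sigma_1\int_0^{2\pi}k^2\,d\theta
\le \sigma_1\int_0^{2\pi}k_\theta^2\,d\theta
   + 2\sigma_1K^*\int_0^{2\pi}k\,d\theta
   + 2\pi\sigma_1(K^*)^2\,,
\]
so that the $\sigma_1\int k_\theta^2$ appearing here is exactly cancelled by the negative diffusion term $-\sigma_1\int k_\theta^2$.

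The three forcing terms are lower order, but must be handled so as not to destroy the linearity of the estimate. The zeroth-order ones are bounded in absolute value, using $\|DV\|_{L^\infty(U)}\le C_1$, $\|\langle D^2_{X,X}V,RX\rangle\|_{L^\infty(U\times\mathbb{S}^1)}\le C_2$ and the lower bound $k\ge K$ from Lemma~\ref{LMkconvex}, by $6\pi C_1$ and $2\pi C_2/K$ respectively. For the first-order term we integrate by parts (boundary terms vanish by periodicity): $-\int_0^{2\pi}\frac{k_\theta}{k}\langle D_TV,N\rangle\,d\theta=\int_0^{2\pi}\log k\,\partial_\theta\langle D_TV,N\rangle\,d\theta$, and the Frenet relations $\partial_sN=-kT$, $\partial_s(D_TV)=D^2_{T,T}V+kD_NV$ give $\partial_\theta\langle D_TV,N\rangle=\langle D_NV,N\rangle-\langle D_TV,T\rangle+\frac1k\langle D^2_{T,T}V,N\rangle$. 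The last summand contributes at most $2\pi C_2 G_0$, using $|\log k|/k\le G_0$; for the remainder the integrand is $\log k$ times a quantity of modulus $\le 2C_1$, and here I would invoke the elementary inequality $|\log k|\le G_0 k$, valid for every $k\ge K$ by the definition of $G_0$, to replace $\int_0^{2\pi}|\log k|\,d\theta$ by $G_0\int_0^{2\pi}k\,d\theta$. It is exactly this step — rather than the cheaper $\int_0^{2\pi}|\log k|\,d\theta\le\int_0^{2\pi}\log k\,d\theta$, which would force a Gr\"onwall argument and an exponential-in-$T$ conclusion — that keeps the bound linear and is the source of the factor $G_0$ in the statement.

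Assembling, every surviving term is either an absolute constant or a multiple of $\int_0^{2\pi}k\,d\theta$, so that the entropy derivative is bounded by $\kappa_1\int_0^{2\pi}k\,d\theta+\kappa_2$, where $\kappa_2=2\pi\sigma_1(K^*)^2+6\pi C_1+2\pi C_2(\tfrac1K+G_0)=G_1$ and $\kappa_1$ is assembled from $K^*$, $\sigma_2$, $C_1$ and $G_0$ (one also uses $\sigma_2\int_0^{2\pi}k\,d\theta\le\tfrac12(|\sigma_2|+\sigma_2)\int_0^{2\pi}k\,d\theta$). Rewriting the length evolution of Proposition~\ref{p:evo} in the angle parametrisation, where $ds=k^{-1}d\theta$ so that $\int_\gamma k^2\,ds=\int_0^{2\pi}k\,d\theta$, yields
\[
\sigma_1\int_0^{2\pi}k\,d\theta
= -L'(t)-2\pi\sigma_2-\int_0^{2\pi}\langle V,N\rangle\,d\theta
\le -L'(t)+2\pi(|\sigma_2|-\sigma_2)+2\pi C_0\,.
\]
Substituting this into the bound for the entropy derivative and integrating from $0$ to $t$, using $\int_0^t(-L'(s))\,ds=L(0)-L(t)$ and $t\le T$, produces an inequality of the stated form, with the length-decay coefficient $2K^*+(C_1G_0)\tfrac{|\sigma_2|+\sigma_2}{2\sigma_1}$ and the remaining constant $\mu=2\pi(|\sigma_2|-\sigma_2)+2\pi C_0+G_1$, after dividing through by $\sigma_1$ and tallying the $\sigma_2$-dependent pieces. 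The two genuinely delicate points are the median-plus-Poincar\'e control of $\int k^2\,d\theta$ (inherited from Gage--Hamilton) and the use of $|\log k|\le G_0 k$ to stop the first-order forcing term from spoiling linearity; the rest is bookkeeping.
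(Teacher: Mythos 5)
Your proposal is correct and follows essentially the same route as the paper's own proof: differentiate the entropy using the angle-parametrised curvature evolution, control $\sigma_1\int_0^{2\pi}k^2\,d\theta$ by $\sigma_1\int_0^{2\pi}k_\theta^2\,d\theta + 2\sigma_1 K^*\int_0^{2\pi}k\,d\theta + 2\pi\sigma_1(K^*)^2$ via the Gage–Hamilton median decomposition and Wirtinger's inequality, integrate the first-order forcing term by parts, exploit the lower curvature bound $k\ge K$ to get $|\log k|\le G_0 k$ and $|\log k|/k\le G_0$, and then trade $\sigma_1\int_0^{2\pi}k\,d\theta$ for $-L'(t)$ plus a constant via the length evolution and integrate in time. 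The only superficial difference is in the median decomposition: you work with $\{k>K^*\}$ and justify directly that its arcs have length strictly less than $\pi$ by picking an intermediate $b\in(k^*,K^*)$, whereas the paper uses $\{k\ge k^*\}$ split over arcs of length at most $\pi$ and handles the complement separately; both give the same inequality, and your variant is if anything a bit cleaner. You have also correctly identified the two load-bearing steps (the median-plus-Wirtinger estimate and the pointwise $|\log k|\le G_0 k$ bound needed to avoid Grönwall and preserve the linear-in-$L$ structure); the rest is, as you say, bookkeeping.

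One small remark for your own benefit: the multiplicative coefficient $2K^* + (C_1G_0)\frac{|\sigma_2|+\sigma_2}{2\sigma_1}$ appearing in the lemma statement does not in fact arise from the bookkeeping you (correctly) carry out; assembling the three sources you identify would give an additive expression of the form $2K^* + \frac{|\sigma_2|+\sigma_2}{2\sigma_1} + \frac{2C_1G_0}{\sigma_1}$ rather than a product. The paper's stated coefficient appears to contain a typographical slip, and you have reproduced it verbatim rather than flagging that it does not follow from the computation. This does not affect the validity of your argument, but you should be alert to such discrepancies rather than treating a target formula as given.
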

\begin{proof}%{{{
We calculate 
\begin{align*}
\frac{d}{dt} \int_0^{2\pi}\log kd\theta
 &= \int_0^{2\pi} \left(\sigma_1kk_{\theta \theta}+\sigma_1k^2+\sigma_2k-(2\langle D_TV,T\rangle - \langle D_NV,N \rangle ) - \frac{k_{\theta}}{k} \langle D_TV,N \rangle +\frac{1}{k} \langle D^2_{T,T}V,N\rangle \right)d\theta
\\
 &\le \int_0^{2\pi}\sigma_1(k^2-k_{\theta}^2)d\theta
	+ \frac12(|\sigma_2|+\sigma_2)\int_0^{2\pi}k d\theta
	+ 6\pi C_1
	- \int_0^{2\pi}(\log k)_\theta \langle D_TV,N\rangle d\theta
	+ \frac{2\pi}{K}C_2
\,.
\end{align*}
We deal with the first integral by splitting the domain into regions where the
curvature is greater than $k^*$.
That is, let 
\[
 W = \{k(\theta) \geq k^* \}.
\]
From the definition of $k^*$, $W$ can be written as the union of intervals $I_i$ of length no greater
than $\pi$.
On each $I_i$ we estimate (using Gage-Hamilton's Wirtinger inequality)
\begin{align*}
\int_{I_i}(k-k^*)^2d\theta
 &\leq \int_{I_i}k_{\theta}^2d\theta
\\
\implies\quad 
 \int_{I_i}(k^2-k^2_\theta )d\theta &\le 2k^*\int_{I_i}kd\theta -2\int_{I_i}(k^*)^2d\theta
\\
&\leq 2K^*\int_{I_i}kd\theta.
\end{align*}
Taking their union we have 
\[
 \sigma_1 \int_W(k^2-k^2_{\theta})d\theta \leq 2\sigma_1K^*\int_0^{2\pi}kd\theta.
\]
On the complement set $W^c$ we have $k \leq k^*$ therefore 
\[
 \sigma_1 \int_{W^c}(k^2-k_{\theta}^2)d\theta \leq \sigma_1\int_0^{2\pi}k^2d\theta \leq 2\pi \sigma_1 (K^*)^2.
\]
Thus we obtain the estimate 
\[
 \sigma_1 \int_{0}^{2\pi}(k^2-k_{\theta}^2)d\theta \leq 2\sigma_1K^*\int_0^{2\pi}kd\theta+2\pi\sigma_1(K^*)^2.
\]
Therefore 
\[
 \frac{d}{dt}\int_0^{2\pi}\log k d\theta
 \leq \left(2K^*+\frac{|\sigma_2|+\sigma_2}{2\sigma_1}\right)\sigma_1\int_0^{2\pi}kd\theta
	+ 2\pi\sigma_1(K^*)^2
	+ 6\pi C_1
	+ \frac{2\pi}{K}C_2 
	- \int_0^{2\pi} (\log k)_\theta \langle D_TV,N\rangle d\theta
\,.
\]
Integration by parts applied to the last integral yields
\[
 -\int_0^{2\pi}(\log k)_\theta \langle D_TV,N\rangle d\theta
= \int_0^{2\pi}\log k\frac{\partial}{\partial \theta} \langle D_TV,N\rangle d\theta.
\]
Using $T_\theta = N$, $N_\theta = -T$, and $\gamma_\theta = \frac{T}{k}$ we have 
\[
 \frac{\partial}{\partial \theta} \langle D_TV,N\rangle = \langle D^2_{T,\gamma_\theta}V,N\rangle +\langle D_NV,N\rangle -\langle D_TV,T\rangle.
\]
This gives 
\begin{align*}
 \int_0^{2\pi} \log k \frac{\partial}{\partial \theta} \langle D_TV,N\rangle d\theta
 &= \int_0^{2\pi} \frac{\log k}{k}\langle D^2_{T,T}V,N\rangle d\theta+\int_0^{2\pi}\log k(\langle D_NV,N\rangle -\langle D_TV,T\rangle)d\theta \\ 
	& \leq C_2\int_0^{2\pi}\left| \frac{\log k}{k} \right| d\theta +2C_1 \int_0^{2\pi}|\log k|d\theta.
\end{align*}
Since $0 < K < k$ 
\[
 \left| \frac{\log k}{k} \right| \leq \left| \frac{\log k_0}{k_0} \right| \text{ for } K \leq 1,
\]
otherwise we have $\frac{\log k}{k} \leq \frac{1}{e}$.
Hence 
\[
 C_2\int_0^{2\pi}\left| \frac{\log k}{k}\right| d\theta
 \leq
 2\pi C_2 \max\left\{\frac{1}{e},\left| \frac{\log K}{K} \right| \right\}
\,.
\]
Set $G_0 = 
  \max\left\{\frac{1}{e},\left| \frac{\log K}{K} \right| \right\}$.
We observe
\[
\int_{0}^{2\pi}|\log k|d\theta
 \leq G_0\int_0^{2\pi}kd\theta\,.
\]
Thus
\[
\int_0^{2\pi}(\log k)\frac{\partial}{\partial \theta} \langle D_TV,N\rangle d\theta
 \leq 2\pi C_2 G_0
	+ 2C_1G_0\int_0^{2\pi} k\,d\theta
\,.
\]
Combining the above we find
\begin{equation}
\label{EQmidest}
  \frac{d}{dt}\int_0^{2\pi}\log k d\theta \leq 
\left(2K^* + (C_1G_0)\frac{|\sigma_2|+\sigma_2}{2\sigma_1}\right)\sigma_1\int_0^{2\pi}kd\theta
	+ G_1
\,,
\end{equation}
where
\[
 G_1 = 2\pi\sigma_1(K^*)^2+6\pi C_1+2\pi C_2\left(\frac{1}{K} + G_0\right)
\,.
\]
Recall the evolution of the length:
\[
 L'(t) = -\sigma_1\int_0^{2\pi} kd\theta-  2\pi\sigma_2-\int_0^{2\pi}\langle V,T\rangle d\theta
\,.
\]
Substitution into the estimate \eqref{EQmidest} yields
\[
 \frac{d}{dt}\int_0^{2\pi}\log k d\theta
\le 
\left(2K^* + (C_1G_0)\frac{|\sigma_2|+\sigma_2}{2\sigma_1}\right)
  \left(-L'(t)-2\pi\sigma_2 -\int_0^{2\pi}\langle V,N\rangle d\theta  \right)
	+ G_1
\,.
\]
Integrating and performing straightforward estimates finishes the proof.
\end{proof}%}}}

\subsection{Pointwise estimate}

Now we move on to estimating the pointwise maximum of the curvature in terms of entropy and other integral quantities.
First, we need the following estimate on $||k_\theta||_{L^2(d\theta)}^2$.

\begin{lem}
Let $\gamma:\S^1\times[0,T)\rightarrow\R^2$ be a solution to \eqref{flow}
satisfying the hypotheses of Theorem \ref{T:main}.
Then
\begin{align*}
\int_0^{2\pi}k^2_{\theta}\,d\theta 
 &\le e^{D_3T}\left[ 
		\int_0^{2\pi}k^2_{\theta}
		- k^2d\theta\vert_{t=0}
		+ \int_0^{2\pi}k^2d\theta
		+ \frac{4\sigma_2^2}{\sigma_1}\int_0^t\int_0^{2\pi}k^2d\theta dt
		+ D_4T
	\right]
\,,
\end{align*}
where
\[ 
   D_3 = \frac{4C_1^2}{\sigma_1K^2},\quad\text{ and } 
   D_4 = \left(\frac{72\pi C_1^2}{\sigma_1}+\frac{8\pi C_2^2}{\sigma_1K^2} \right)
\,.
\]
\label{LMconvktheta}
\end{lem}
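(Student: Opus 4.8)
The plan is to derive a differential inequality for $\frac{d}{dt}\int_0^{2\pi}k_\theta^2\,d\theta$ using the curvature evolution equation from Lemma \ref{LMkevothetaparam}, and then apply Gronwall's inequality. First I would differentiate under the integral sign, writing
\[
\frac{d}{dt}\int_0^{2\pi}k_\theta^2\,d\theta = 2\int_0^{2\pi}k_\theta k_{\theta t}\,d\theta
= 2\int_0^{2\pi}k_\theta (k_t)_\theta\,d\theta\,,
\]
since $t$ and $\theta$ are independent in this parametrisation. I would then substitute $k_t = \sigma_1 k^2 k_{\theta\theta} + \sigma_1 k^3 + \sigma_2 k^2 - k(2\langle D_TV,T\rangle - \langle D_NV,N\rangle) - k_\theta\langle D_TV,N\rangle + \langle D^2_{T,T}V,N\rangle$ and differentiate in $\theta$. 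Integration by parts on the principal term $2\int k_\theta \partial_\theta(\sigma_1 k^2 k_{\theta\theta})\,d\theta$ should produce a good negative term $-2\sigma_1\int k^2 k_{\theta\theta}^2\,d\theta$ plus lower-order terms involving $k_\theta^2 k_{\theta\theta}$, which can be absorbed using Young's inequality against the negative term and against $\int k^2\,d\theta$-type quantities (using $k\ge K>0$ to control reciprocals).

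Next I would carefully collect all the terms arising from the force-field contributions. The key structural point is that derivatives of $V$ along the curve introduce factors of $\gamma_\theta = T/k$, so $\partial_\theta\langle D_TV, \cdot\rangle$ and similar expressions involve $\frac{1}{k}D^2V$, $D V$, etc.; combined with the $C_0,C_1,C_2$ bounds (valid since the flow is confined to $U$) and $k\ge K$, every such term is bounded by a constant times $k_\theta^2$, or a constant, or $k^2$, or $\int k^2$. The cross terms like $-2\int k_\theta^2\langle D_TV,N\rangle_\theta\,d\theta$ give contributions $\le \frac{4C_1^2}{\sigma_1 K^2}\int k_\theta^2\,d\theta$ after Young (matching the stated $D_3$), and the inhomogeneous $D^2V$, $D^3V$ pieces give the $\frac{72\pi C_1^2}{\sigma_1}$ and $\frac{8\pi C_2^2}{\sigma_1 K^2}$ constants in $D_4$. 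The $\sigma_2$ term produces, after integration by parts and Young against the negative $\int k^2 k_{\theta\theta}^2$ term, the $\frac{4\sigma_2^2}{\sigma_1}\int_0^{2\pi}k^2\,d\theta$ contribution (which upon the final time-integration becomes $\frac{4\sigma_2^2}{\sigma_1}\int_0^t\int_0^{2\pi}k^2\,d\theta\,dt$). The term $\sigma_1\int k^3$ differentiated gives $3\sigma_1\int k^2 k_\theta\cdot k_\theta = 3\sigma_1\int k^2 k_\theta^2$; this one needs care — it should be handled by integrating by parts back into $k_{\theta\theta}$ or by absorbing into $\int k^2\,d\theta$ after noting boundedness is not yet available, so instead I expect it is combined with the $\sigma_1 k^2 k_{\theta\theta}$ integration-by-parts output so that the net coefficient of $\int k^2 k_\theta^2$ vanishes or becomes negative. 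Concretely, $\partial_\theta(\sigma_1 k^2 k_{\theta\theta}) = \sigma_1(2kk_\theta k_{\theta\theta} + k^2 k_{\theta\theta\theta})$, and $2\int k_\theta \cdot \sigma_1 k^2 k_{\theta\theta\theta} = -2\sigma_1\int \partial_\theta(k^2 k_\theta)k_{\theta\theta} = -2\sigma_1\int(2kk_\theta^2 + k^2k_{\theta\theta})k_{\theta\theta}$, so one gets $-2\sigma_1\int k^2k_{\theta\theta}^2 - 4\sigma_1\int kk_\theta^2 k_{\theta\theta} + 4\sigma_1\int kk_\theta^2 k_{\theta\theta} = -2\sigma_1\int k^2 k_{\theta\theta}^2$, a clean negative term, with the $3\sigma_1\int k^2 k_\theta^2$ from the $k^3$ term then dealt with by another integration by parts: $3\sigma_1\int k^2 k_\theta^2 = 3\sigma_1\int k^2 k_\theta k_\theta = -\sigma_1\int \partial_\theta(k^3)\cdot k_\theta = $ wait — more cleanly, $\int k^2 k_\theta^2\,d\theta = \frac{1}{3}\int (k^3)_\theta k_\theta = -\frac{1}{3}\int k^3 k_{\theta\theta}$, which is then absorbed via Young into $-2\sigma_1\int k^2 k_{\theta\theta}^2$ at the cost of a $+c\int k^4$ term — and here one must use $k\le K^*$-type bounds coming from the $k^*$ hypothesis, or instead bound it by $\int k^2\,d\theta$ and push it into the time integral. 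The cleanest route is likely to keep $\int k^2\,d\theta$ terms and feed them into the Gronwall output as the explicit $\int_0^{2\pi}k^2\,d\theta + \frac{4\sigma_2^2}{\sigma_1}\int_0^t\int_0^{2\pi}k^2\,d\theta\,dt$ pieces appearing in the statement.

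Having reduced to an inequality of the form
\[
\frac{d}{dt}\int_0^{2\pi}k_\theta^2\,d\theta \le D_3\int_0^{2\pi}k_\theta^2\,d\theta + D_4 + (\text{terms in }\textstyle\int k^2)\,,
\]
I would integrate via Gronwall: multiplying by $e^{-D_3 t}$, integrating from $0$ to $t$, and rearranging yields exactly the stated bound with the bracketed expression, the leading $e^{D_3T}$ factor, and the $D_4 T$ term (using $\int_0^t e^{D_3(t-s)}D_4\,ds \le e^{D_3T}D_4 T$). The main obstacle I anticipate is the bookkeeping: tracking every force-field term through two $\theta$-differentiations, correctly identifying which go into $D_3$ (the ones multiplying $\int k_\theta^2$, hence needing a Young split that produces $C_1^2/(\sigma_1 K^2)$) versus $D_4$ (genuine inhomogeneities, bounded constants times $2\pi$), and making sure the negative term $-2\sigma_1\int k^2 k_{\theta\theta}^2$ has enough strength to absorb every term containing $k_{\theta\theta}$ (in particular the $\sigma_2 k^2$ contribution $2\sigma_2\int k k_\theta k_{\theta\theta}$, which by Young gives $\le \sigma_1\int k^2 k_{\theta\theta}^2 + \frac{\sigma_2^2}{\sigma_1}\int k_\theta^2$ — one must double-check whether this instead produces the claimed $\frac{4\sigma_2^2}{\sigma_1}\int k^2$, suggesting a slightly different grouping). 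Getting the precise constants $\frac{4C_1^2}{\sigma_1K^2}$, $\frac{72\pi C_1^2}{\sigma_1}$, $\frac{8\pi C_2^2}{\sigma_1 K^2}$, and $\frac{4\sigma_2^2}{\sigma_1}$ to come out exactly will require choosing the Young's-inequality weights with some care, but none of this is conceptually hard — it is the same circle of ideas as in the classical Gage-Hamilton pointwise estimate, with the force field contributing only lower-order (in derivatives of $k$) perturbations that are uniformly controlled by the confinement of the flow to $U$.
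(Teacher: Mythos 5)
Your overall plan — a differential inequality followed by Gronwall — is the same as the paper's, and you correctly anticipate that the force-field terms are benign once the flow is confined and $k\ge K>0$. But there is a genuine gap in how you handle the reaction term, and the fix is precisely the structural choice the paper makes that your proposal does not.

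You differentiate $\int k_\theta^2\,d\theta$ directly, so after integration by parts you obtain $\frac{d}{dt}\int k_\theta^2\,d\theta = -2\int k_{\theta\theta}k_t\,d\theta$. The $\sigma_1 k^2 k_{\theta\theta}$ piece of $k_t$ then gives the good term $-2\sigma_1\int k^2 k_{\theta\theta}^2\,d\theta$, exactly as you compute. But the $\sigma_1 k^3$ piece gives $-2\sigma_1\int k^3 k_{\theta\theta}\,d\theta = +6\sigma_1\int k^2 k_\theta^2\,d\theta$, and you cannot absorb this. Young against the coercive term leaves a residue of order $\int k^4\,d\theta$, and you have no a-priori $L^\infty$ bound on $k$ at this stage of the argument (indeed, the whole point of Lemmas \ref{LMconvktheta} and \ref{LMpwest} is to \emph{produce} such a bound). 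Your two proposed escapes both fail: there is no $k^*$ or $K^*$ hypothesis in this lemma, and $\int k^4\,d\theta \le \int k^2\,d\theta$ is simply false when $k$ can be large. You flag this spot as needing ``care'', but the care required is not a cleverer Young split; it is to avoid splitting the $k^3$ and $k^2 k_{\theta\theta}$ contributions apart in the first place.

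The paper instead differentiates the combination $\int (k^2 - k_\theta^2)\,d\theta$, whose time derivative is, after a single integration by parts, $2\int(k_{\theta\theta}+k)\,k_t\,d\theta$. Since the curvature evolution (Lemma \ref{LMkevothetaparam}) has principal part $\sigma_1 k^2(k_{\theta\theta}+k)$, the leading contribution is the complete square $2\sigma_1\int k^2(k_{\theta\theta}+k)^2\,d\theta$, which is manifestly nonnegative and serves as the absorbing term for every remaining piece (including the $\sigma_2 k^2(k_{\theta\theta}+k)$ term, which after Young produces exactly the $\frac{4\sigma_2^2}{\sigma_1}\int k^2$ you aimed for). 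Crucially, the $k^4$ that worried you is now part of that square — it enters with the right sign — rather than being an uncontrolled leftover. Rearranging at the end gives the inequality for $\frac{d}{dt}\int k_\theta^2\,d\theta$ with a $\frac{d}{dt}\int k^2\,d\theta$ term on the right, which is the form Gronwall then converts into the stated bound. So: replace the quantity you differentiate with $\int(k^2-k_\theta^2)\,d\theta$ and the rest of your outline (Young splits producing $D_3$, $D_4$, Gronwall) goes through as you described.
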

\begin{proof}%{{{
Using integration by parts and the evolution of curvature (Lemma \ref{LMkevothetaparam}) we calculate
\begin{align*}
\frac{d}{dt} \int_0^{2\pi} (k^2-k^2_\theta)d\theta &= 2\int_0^{2\pi}(k_{\theta \theta}+k)k_td\theta
\\
	&= 2\int_0^{2\pi}\sigma_1k^2(k_{\theta \theta}+k)^2+\sigma_2k^2(k_{\theta \theta}+k)-k(k_{\theta \theta}+k)\left(  2\langle D_TV,T\rangle -\langle D_NV,N\rangle \right)
\\
	&\qquad
	- k_\theta(k_{\theta \theta}+k) \langle D_TV,N\rangle
	 + (k_{\theta \theta}+k)\langle D^2_{T,T}V, N\rangle \,d\theta
\\
	&\ge
	 2\int_0^{2\pi}\sigma_1k^2(k_{\theta \theta}+k)^2
	 - |\sigma_2|\,|k^2(k+k_{\theta \theta})|
	 - 3C_1|k(k_{\theta \theta}+k)|
\\
	&\qquad 
	 - C_1|k_\theta(k_{\theta \theta}+k)
	 - C_2|(k_{\theta \theta}+k)|\, d\theta
\\
	&\ge
	 2\int_0^{2\pi}\sigma_1k^2(k_{\theta \theta}+k)^2
	- \sigma_2|k^2(k+k_{\theta \theta})|-3C_1|k(k_{\theta \theta}+k)|
\\
	&\qquad -\frac{C_1|k_{\theta}(k_{\theta \theta}+k)||k|}{K} - \frac{C_2|(k+k_{\theta \theta})||k|}{K}
		\,d\theta
\,.
\end{align*}
In the last inequality we have used $\frac{|k|}{K}\ge 1$ in the last two
terms.
This is so that we can use the Cauchy inequality $-ab \geq -\varepsilon a^2
-\frac{b^2}{4\varepsilon}$ with $a = k(k+k_{\theta \theta})$ to absorb the
last four terms into the first.
We find
\begin{align*}
\frac{d}{dt} \int_0^{2\pi} (k^2-k^2_\theta)d\theta 
	&\ge
	 2\int_0^{2\pi} \sigma_1k^2(k+k_{\theta \theta})^2 - \varepsilon_1\sigma_2k^2(k+k_{\theta \theta})^2 -\frac{\sigma_2k^2}{4\varepsilon_1}
\\
	&\qquad 
	-3C_1\left(\varepsilon_2k^2(k+k_{\theta \theta}\right)^2+\frac{1}{4\varepsilon_2}) - \frac{C_1}{K}\left( \varepsilon_3k^2(k+k_{\theta \theta})^2+\frac{k_{\theta}^2}{4\varepsilon_3}\right)
\\
	&\qquad 
	-\frac{C_2}{K}\left( \varepsilon_4 k^2(k+k_{\theta \theta})^2+\frac{1}{4\varepsilon_4}\right)
		\,d\theta
\,.
\end{align*}
Now we choose $\varepsilon_i$ to ensure
\[
 \frac{\sigma_1}{2}
	 = \varepsilon_1 \sigma_2 +3C_1\varepsilon_2 +\frac{C_1}{K}\varepsilon_3 +\frac{C_2}{K}\varepsilon_4
\,.
\]
For instance, let us take
\[
 \varepsilon_1 = \frac{\sigma_1}{8\sigma_2},\quad
 \varepsilon_2 = \frac{\sigma_1}{24C_1},\quad
 \varepsilon_3 = \frac{\sigma_1K}{8C_1},\quad
 \varepsilon_4 = \frac{\sigma_1K}{8C_2}\,.
\]
Using this choice we find
\begin{align*}
\frac{d}{dt} \int_0^{2\pi} (k^2-k^2_\theta)d\theta
 &\ge
	\sigma_1\int_0^{2\pi} k^2(k+k_{\theta\theta})^2\,d\theta
	- 2\int_0^{2\pi}
		  \frac{2\sigma_2^2}{\sigma_1}k^2 
		+ \frac{18C_1^2}{\sigma_1}
		+ \frac{2C_1^2}{\sigma_1 K^2}k^2_{\theta}
		+ \frac{2C_2^2}{\sigma_1K^2} \, d\theta
\\
\implies
 \frac{d}{dt} \int_0^{2\pi} k^2_{\theta}d\theta 
 &\le 
	\frac{4C_1^2}{\sigma_1K^2} \int_0^{2\pi}k^2_{\theta}\,d\theta 
	+ \frac{d}{dt}\int_0^{2\pi}k^2\,d\theta
	+ \frac{4\sigma_2^2}{\sigma_1}\int_0^{2\pi}k^2\,d\theta
	+ \frac{72\pi C_1^2}{\sigma_1}+\frac{8\pi C_2^2}{\sigma_1K^2}
\,.
\end{align*}
Finally, using the Gronwall inequality we have 
\begin{align*}
\int_0^{2\pi}k^2_{\theta}\,d\theta 
 &\le
	e^{\frac{4C_1^2}{\sigma_1K^2}T}
	\left[ 
		\int_0^{2\pi}k^2_{\theta} - k^2\,d\theta\bigg{|}_{t=0}
		+ \int_0^{2\pi}k^2\,d\theta
		+ \frac{4\sigma_2^2}{\sigma_1}\int_0^t\int_0^{2\pi}k^2\,d\theta dt
		+ \left(\frac{72\pi C_1^2}{\sigma_1}
			+ \frac{8\pi C_2^2}{\sigma_1K^2} 
			\right)T
		\right]
\\
 &\le e^{D_3T}\left[ 
		\int_0^{2\pi}k^2_{\theta}
		- k^2d\theta\vert_{t=0}
		+ \int_0^{2\pi}k^2d\theta
		+ \frac{4\sigma_2^2}{\sigma_1}\int_0^t\int_0^{2\pi}k^2d\theta dt
		+ D_4T
	\right]
\,,
\end{align*}
as required.
\end{proof}%}}}

Now we derive the pointwise bound assuming control on the entropy.

\begin{lem} (Pointwise estimate.)
\label{LMpwest}
Let $\gamma:\S^1\times[0,T)\rightarrow\R^2$ be a solution to \eqref{flow}
satisfying the hypotheses of Theorem \ref{T:main}.
Suppose we have 
\[
 \int_0^{2\pi}\log k(\theta, t)d\theta \leq D_5
\,.
\]
Then 
\begin{align*}
 k_{\max}
 &\leq 
	2\exp\left(
		4D_6e^{D_3T}
			\left(
			\sqrt{2\pi}+2\sigma_2\sqrt{2\frac{\pi T}{\sigma_1}}
			\right)^2
		\right)
\\&\qquad
	+ \left(\sqrt{\int_0^{2\pi} k^2_\theta-k^2d\theta|_{t=0}}+\sqrt{D_4T}\right)
		\bigg/\left(\sqrt{2\pi}+2\sigma_2\sqrt{\frac{\pi T}{\sigma_1}}\right)
\,,
\end{align*}
where $k_{\max}$ denotes the maximum of $k(\cdot,t)$ and $D_3$, $D_4$ are as in
Lemma \ref{LMconvktheta} and $D_6 = D_5 + 2\pi|\log K|$.
\end{lem}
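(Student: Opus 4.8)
The plan is to run the Gage--Hamilton pointwise argument in our setting, feeding the entropy hypothesis together with the $L^2$-gradient control already established in Lemma~\ref{LMconvktheta}. First I would exploit $\int_0^{2\pi}\log k(\theta,t)\,d\theta \le D_5$ via the averaging principle: there is an angle $\theta_*=\theta_*(t)$ with $\log k(\theta_*,t) \le D_5/(2\pi)$, and using the uniform lower bound $k\ge K$ from Lemma~\ref{LMkconvex} it is cleanest to record this through $D_6 = D_5 + 2\pi|\log K|$, so that $k(\theta_*)/K$ is controlled by $e^{D_6/(2\pi)}$. Next, working with $g=\sqrt k$, the fundamental theorem of calculus on $\S^1$ gives $\sqrt{k(\theta,t)} \le \sqrt{k(\theta_*,t)} + \int_0^{2\pi}|g_\theta|\,d\theta$ for every $\theta$, and Cauchy--Schwarz bounds $\int_0^{2\pi}|g_\theta|\,d\theta \le \sqrt{2\pi}\,\big(\int_0^{2\pi}g_\theta^2\,d\theta\big)^{1/2}$; since $g_\theta^2 = k_\theta^2/(4k) \le k_\theta^2/(4K)$, taking the supremum over $\theta$ reduces the problem to controlling $\int_0^{2\pi}k_\theta^2\,d\theta$.

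That quantity is exactly what Lemma~\ref{LMconvktheta} handles, bounding $\int_0^{2\pi}k_\theta^2\,d\theta$ by $e^{D_3 T}$ times the sum of the fixed initial quantity $\int_0^{2\pi}(k_\theta^2-k^2)\,d\theta|_{t=0}$, the curvature-energy terms $\int_0^{2\pi}k^2\,d\theta$ and $\tfrac{4\sigma_2^2}{\sigma_1}\int_0^t\!\int_0^{2\pi}k^2\,d\theta\,dt$, and $D_4 T$. To absorb the energy terms I would use that the turning number is one (Lemma~\ref{l:winding}), i.e.\ $\int_\gamma k\,ds = 2\pi$, so that in the angle parametrisation $\int_0^{2\pi}k^2\,d\theta = \int_\gamma k^3\,ds \le 2\pi\,k_{\max}^2$, and likewise for its time integral; for the time-integrated term one may more cheaply trade one power of $k$ for $k_{\max}$ and invoke the length evolution of Proposition~\ref{p:evo} (with $L$ decreasing by Lemma~\ref{LMlengthbd}/Proposition~\ref{PNcasec} and $|V|\le C_0$) to bound $\int_0^t\!\int_0^{2\pi}k\,d\theta\,dt$ by initial data. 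Using $(a^2+b^2)\le(a+b)^2$ to collect the $\sigma_2$-contribution, this recasts Lemma~\ref{LMconvktheta}'s bound in the shape $\int_0^{2\pi}k_\theta^2\,d\theta \le e^{D_3 T}\big[(\text{initial }k_\theta^2-k^2\text{ integral}) + D_4 T + \big(\sqrt{2\pi}+2\sigma_2\sqrt{2\pi T/\sigma_1}\big)^2 k_{\max}^2\big]$.

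Plugging this back into the $\sqrt k$-estimate from the first step produces an inequality in which $\sqrt{k_{\max}}$ sits on the left while $k_{\max}$ (times a $T$-dependent coefficient) re-appears on the right, alongside an $e^{cD_6}$-type term from $k(\theta_*)$ and an $\sqrt{\text{initial data}}+\sqrt{D_4T}$ term; using $(a+b)^2\le 2a^2+2b^2$ to peel off the exponential contribution as the leading $2\exp(\cdots)$ term and rearranging leaves the data contribution divided by the coefficient $\sqrt{2\pi}+2\sigma_2\sqrt{\pi T/\sigma_1}$, which is the stated estimate. I expect this final combination to be the delicate part: because $\int_0^{2\pi}k^2\,d\theta$ is only controlled by $k_{\max}^2$ (the turning-number identity cannot do better), $k_{\max}$ genuinely re-enters the right-hand side, and the argument must carefully separate the instantaneous energy term (quadratic in $k_{\max}$, via the turning number) from the time-integrated one (handled more cheaply via length decay), track the constants $D_3,D_4,D_6$ and the $T$-dependence precisely, and then solve the resulting essentially quadratic inequality for $\sqrt{k_{\max}}$ into the clean two-term bound claimed. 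Everything else — the averaging step, the two applications of Cauchy--Schwarz, and the invocations of Lemmas~\ref{LMkconvex}, \ref{l:winding} and \ref{LMconvktheta} — is routine.
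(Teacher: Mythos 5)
Your averaging step only produces a single angle $\theta_\ast$ with $k(\theta_\ast)\le e^{D_5/2\pi}$, and you then run the fundamental theorem of calculus from $\theta_\ast$ to an arbitrary $\theta$ over the full circle, so Cauchy--Schwarz hands you a \emph{fixed} factor $\sqrt{2\pi}$. After inserting Lemma~\ref{LMconvktheta} and $\int_0^{2\pi}k^2\,d\theta\le 2\pi k_{\max}^2$, the inequality you obtain is of the shape
\[
\sqrt{k_{\max}}\ \le\ A + B\,k_{\max},\qquad A,B>0,
\]
with $B$ a definite constant of order $\sqrt{2\pi}\,e^{D_3T/2}\sqrt{2\pi(1+4\sigma_2^2T/\sigma_1)}/(2\sqrt K)$, which is not small. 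This inequality carries no information: writing $y=\sqrt{k_{\max}}$ it reads $y\le A+By^2$, and the parabola $By^2-y+A$ is nonnegative for \emph{all} $y\ge0$ whenever $4AB\ge1$ (which here you have no reason to rule out), so the inequality is tautologically satisfied and cannot be ``solved'' for $k_{\max}$. Even in the favourable regime $4AB<1$ it only excludes a middle interval of values, and without a continuity-in-time argument anchored by a smallness assumption on initial data — which neither the statement nor your proposal invokes — there is no a~priori bound. The substitution $g=\sqrt k$ does not repair this; it simply converts the (potentially useful, if $B<1$) linear form $k_{\max}\le A+Bk_{\max}$ into the never-solvable quadratic form above.

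The missing idea is a \emph{tunable} parameter $\delta$, which the paper's proof extracts from the entropy bound at the level of measure rather than of a single point. One shows $\mu_L(\{\theta:\log k\ge\alpha\})\le D_6/\alpha$, sets $\delta=D_6/\alpha$, and observes that outside $I_\alpha$ the curvature obeys $k\le e^{D_6/\delta}$. Then, for any $\phi$, one chooses $a\notin I_\alpha$ at \emph{distance at most $\delta$} from $\phi$ and applies the fundamental theorem of calculus over that short interval; Cauchy--Schwarz then produces $\sqrt\delta\,\|k_\theta\|_{L^2}$ rather than $\sqrt{2\pi}\,\|k_\theta\|_{L^2}$. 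This yields $k_{\max}\le e^{D_6/\delta}+\sqrt\delta\,e^{D_3T/2}(\cdots +\text{const}\cdot k_{\max})$, a linear inequality in $k_{\max}$ whose coefficient on the right is $O(\sqrt\delta)$ and can be forced to be $1/2$ by choice of $\delta$; rearranging then gives the stated two-term bound. You should replace the averaging-point step with the superlevel-set measure argument and carry the free parameter $\delta$ through the Cauchy--Schwarz and the final rearrangement; there is no need for the $\sqrt k$ substitution.
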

\begin{proof}%{{{
Recall that $k(\theta,t) > K$ for all $\theta$ and $t$. 
Fix a time $t_0$ and a positive number $\alpha$ and define
\[ 
	I_\alpha = \{\theta:\log k(\theta,t_0) \geq \alpha \}\,.
\]
Then we calculate 
\begin{align*}
D_5 \ge \int_0^{2\pi} \log k(\theta, t_0)\,d\theta 
	&= \int_{I_\alpha} \log k(\theta, t_0)\,d\theta
		+ \int_{\mathbb{S}^1\backslash I_\alpha} \log k(\theta, t_0)\,d\theta 
\\
	&\ge \alpha\mu_L(I_\alpha)
		+ \log K \mu_L(\mathbb{S}^1\backslash I_\alpha)
\,,
\end{align*}
where $\mu_L$ denotes the Lebesgue measure. Upon rearranging we have 
\begin{align*}
\alpha\mu_L(I_\alpha) &\leq D_5-\log K \mu_L(\mathbb{S}^1 \backslash I_\alpha)\\
	& \leq D_5+|\log K \mu_L(\mathbb{S}^1 \backslash I_\alpha)|\\
	&\leq D_5+ 2\pi|\log K|
\,.
\end{align*}
Set $D_6 = D_5 + 2\pi|\log K|$
 then $\mu_L(I_\alpha) \leq \frac{D_6}{\alpha}$. 
Fix $\delta = \frac{D_6}{\alpha}$.
We have $k(\theta,t_0) \leq e^{\frac{D_6}{\delta}}$ (by definition of $I_\alpha$) for all $\theta \notin
I_\alpha$. 
Then for any $\phi \in \mathbb{S}^1$ we take  $a \notin I_\alpha$
such that $\mu_L((a,\phi)) \leq \delta$ and, using Lemma \ref{LMconvktheta} in
addition to the considerations above, derive the following estimate:
\begin{align*}
k(\phi) &= k(a)+\int_a^\phi k_\theta d\theta \\
	&\le
	 e^{\frac{D_6}{\delta}}
	+ \sqrt{\delta}\left(\int_0^{2\pi}k^2_\theta\,d\theta\right)^{\frac{1}{2}}
	\\
	&\le 
	 e^{\frac{D_6}{\delta}}
	+ \sqrt{\delta}\left(e^{D_3T}\left[
		\int_0^{2\pi}(k^2_{\theta}-k^2)\,d\theta\bigg|_{t=0}
		+ \int_0^{2\pi}k^2\,d\theta
		+ \frac{4\sigma_2^2}{\sigma_1}\int_0^{t}\int_0^{2\pi}k^2\,d\theta dt
		+ D_4T\right]
	\right)^{\frac{1}{2}}
\,.
\end{align*}
Thus
\[
k_{\max}
 \le
	e^{\frac{D_6}{\delta}}
	+ e^{\frac{D_3T}{2}}\left(
		\sqrt{2\pi\delta}\, k_{\max}
		+ \sqrt{\frac{8\pi\sigma^2_2T\delta}{\sigma_1}}\,k_{\max}
		+ \sqrt{\delta\int_0^{2\pi}(k^2_{\theta}-k^2)d\theta \bigg|_{t=0}}
		+ \sqrt{D_4T\delta} 
	\right)
\,.
\]
Rearranging gives 
\[
 k_{\max}
 \le
	\frac{
		e^{\frac{D_6}{\delta}}
		+ \sqrt{\delta}e^{\frac{D_3T}{2}}\left(\sqrt{\int_0^{2\pi}(k^2_{\theta}-k^2)\,d\theta \big|_{t=0}}
			+ \sqrt{D_4T}
		\right)}
	{1
	- \sqrt\delta e^{\frac{D_3T}{2}}\left(
		\sqrt{2\pi}
		+ \sqrt{\frac{8\pi\sigma_2^2T}{\sigma_1}}
		\right)}
\,.
\]
Choosing $\delta = \frac{e^{-D_3T}}{4}\left(\sqrt{2\pi}+2\sigma_2\sqrt{\frac{2\pi T}{\sigma_1}}\right)^{-2}$
yields the result.
\end{proof}%}}}

From these estimates we conclude the proof that the flow shrinks to a point.

\begin{thm}
Let $\gamma:\S^1\times[0,T)\rightarrow\R^2$ be a solution to \eqref{flow}
satisfying the hypotheses of Theorem \ref{T:main}.
The area and length of $\gamma(\S,t)$ tend to zero as $t\rightarrow T$, and
there exists a point $\mathcal{O} \in \mathbb{R}^2$ such that 
\[
 \gamma(\mathbb{S}^1,t)\rightarrow \mathcal{O} \text{ as } t\rightarrow T
\,.
\]
\end{thm}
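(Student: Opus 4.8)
The plan is to show first that $\gamma(\cdot,t)$ stays strictly convex, embedded and confined to a bounded set with $T<\infty$; then that $A(t)\to0$ and $L(t)\to0$, reading this off the length and area estimates already proved; and finally to obtain Hausdorff convergence to a point by showing that the \emph{perimeter barycenter} of $\gamma(\cdot,t)$ converges. For the first point, hypothesis (i) and Lemma~\ref{LMkconvex} give $k(\cdot,t)\ge K\ge0$, and since $\min k(\cdot,0)>K$ each $\gamma(\cdot,t)$ is strictly convex with turning number $1$, which is preserved by Lemma~\ref{l:winding}; hence $\int_\gamma k\,ds=2\pi$ and $L(t)\le 2\pi/K$ when $K>0$. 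Finiteness of $T$ is Lemma~\ref{LMfintietime} (Proposition~\ref{PNcasec} in case (c)), and confinement to a bounded set is Lemma~\ref{LMconfcondb} and Proposition~\ref{PNcasec}, or — in case (a) — a maximum-principle bound on $|\gamma|$ together with $T<\infty$.

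Next I would read off $A\to0$ and $L\to0$. In cases (a) and (b), hypothesis (ii) is the \emph{strict} inequality $L(\gamma_0)<M$, so the upward parabola bounding $\tfrac12(L^2)'$ in the proof of Lemma~\ref{LMlengthbd} is negative at both $L=0$ and $L=L(\gamma_0)$, hence negative throughout $[0,L(\gamma_0)]$; thus $(L^2)'\le -c<0$ for a fixed $c>0$ and $L^2(t)\le L(\gamma_0)^2-ct\to0$, whence also $A\le L^2/(4\pi)\to0$. In case (c), the area equation of Proposition~\ref{p:evo} together with the case-(c) bounds (and $L$ decreasing, $x$ increasing) gives $A'(t)\le -2\sigma_1\pi+L(\gamma_0)\bigl(\tfrac12(|\sigma_2|-\sigma_2)+x(T_0)\bigr)\le-\sigma_1\pi<0$, so $A\to0$; since each enclosed region $\Omega_t$ is convex, uniformly bounded and has area tending to $0$, a Blaschke-selection argument (a Hausdorff subsequential limit would be a convex set of zero area, i.e.\ a point) forces $\operatorname{diam}\gamma(\cdot,t)\to0$ and hence $L(t)\le\pi\operatorname{diam}\gamma(\cdot,t)\to0$.

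For convergence to a point I would parametrise by purely normal velocity and set $\bar b(t)=\tfrac1{L(t)}\int_\gamma\gamma\,ds\in\overline{\Omega_t}$. Differentiating, and using Proposition~\ref{p:evo} together with $\int_\gamma\nu\,ds=\int_\gamma k\nu\,ds=0$ and $\int_\gamma k\langle V,\nu\rangle\,ds=-\int_\gamma\langle D_\tau V,\tau\rangle\,ds$, the terms proportional to $\bar b$ in $\tfrac{d}{dt}\bigl(\tfrac1L\int_\gamma\gamma\,ds\bigr)$ cancel exactly against $-\tfrac{L'}{L}\bar b$, leaving $\bar b'(t)=E(t)$ with $|E(t)|\le\tfrac{\sigma_1}{2}\int_\gamma k^2\,ds+D$ for a fixed constant $D$; the remaining terms are controlled using only $|\gamma(u,t)-\bar b(t)|\le\operatorname{diam}\gamma(\cdot,t)\le L(t)/2$ and $\int_\gamma k\,ds=2\pi$. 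Since $\int_0^T\!\int_\gamma k^2\,ds\,dt<\infty$ (proof of Lemma~\ref{LMfintietime}, or Proposition~\ref{PNcasec}) and $T<\infty$, we get $E\in L^1([0,T))$, so $\bar b(t)\to\SO$ for some $\SO\in\R^2$; then $\sup_u|\gamma(u,t)-\SO|\le\operatorname{diam}\gamma(\cdot,t)+|\bar b(t)-\SO|\to0$, which is the asserted Hausdorff convergence.

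The step needing real care is the last one: checking the exact cancellation of the $\bar b$-proportional contributions and arranging the error so that its only a priori non-integrable-in-$t$ piece is $\tfrac{\sigma_1}{2}\int_\gamma k^2\,ds$, which the length decay makes integrable. This is also the only non-routine point conceptually, since a fixed marked point on $\gamma$ can move with unbounded speed as $t\to T$; the barycenter works because the diffusion and constant-forcing parts of its velocity vanish by closedness, while the remaining $V$-contributions are tamed by $\operatorname{diam}\gamma(\cdot,t)\to0$. (A quicker route is available if one is willing to enlarge $K$ so that $\sigma_1k+\sigma_2+V\cdot\nu>0$ pointwise: the $\overline{\Omega_t}$ are then strictly nested and decreasing, so $\bigcap_t\overline{\Omega_t}$ is a single point and $\operatorname{diam}\overline{\Omega_t}\to0$ finishes the proof — but this strengthens hypothesis (i).)
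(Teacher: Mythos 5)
There is a genuine gap in your argument for showing that $A(t)\to0$ and $L(t)\to0$. Your differential inequalities $(L^2)'\le -c$ (cases (a),(b)) and $A'\le -\sigma_1\pi$ (case (c)) are correct, but they only show that $L$ or $A$ is decreasing at a definite rate while the flow exists; they do not show that $L$ or $A$ actually reaches zero as $t\to T$. The maximal time $T$ is determined by when the flow ceases to exist smoothly, and a priori the flow could develop a singularity at some $T$ with $L(T^-)>0$ and $A(T^-)>0$ (for instance, through a curvature blow-up localized at a point). To rule this out, you must show that a positive lower bound on $A$ forces uniform $C^\infty$ bounds, so that the flow could be extended past $T$ — a contradiction. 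That is precisely what the paper does: the geometric estimate $k^*<L/A$ (Lemma~\ref{LMgeomest}) converts a lower area bound into a bound on median curvature, the integral estimate (Lemma~\ref{LMintegralest}) then bounds the entropy, the pointwise estimate (Lemma~\ref{LMpwest}) yields the curvature bound, and Proposition~\ref{PNbootstrap} bootstraps to all derivatives. None of these appear in your proposal, which is why the decay of $A$ cannot actually be concluded from your estimate alone. A second, smaller flaw: you assert that ``a Hausdorff subsequential limit would be a convex set of zero area, i.e.\ a point,'' but a convex set of zero area in the plane can be a non-degenerate segment; the lower curvature bound $k\ge K>0$ (Lemma~\ref{LMkconvex}) is what excludes this, as the paper makes explicit at the end of its proof.

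Your barycenter argument is, however, a genuinely different and attractive way to obtain Hausdorff convergence to a single point once $L\to0$ is known. The paper's statement of Hausdorff convergence is rather terse and relies on the weaker observation that, given $A\to0$ and $k\ge K>0$, no degeneration to a segment can occur. Your explicit computation — that the $\bar b$-proportional contributions cancel, leaving an error controlled by $\tfrac{\sigma_1}{2}\int_\gamma k^2\,ds+D$, and that $\int_0^T\!\int_\gamma k^2\,ds\,dt<\infty$ from the proof of Lemma~\ref{LMfintietime} makes the error integrable — is a concrete and correct mechanism for the convergence of the limit point, complementary to what the paper actually writes. If you reinstall the paper's contradiction argument to establish $A\to0$ (and hence $L\to0$ via the curvature lower bound ruling out segment degeneration), the remainder of your route would give a clean proof.
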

\begin{proof}%{{{
Suppose the area $A(\gamma(\cdot, t))\rightarrow \varepsilon
>0$ as $t\rightarrow T$.
The geometric estimate (Lemma \ref{LMgeomest}) implies a uniform estimate on
the median curvature $k^*$.
Then the integral estimate (Lemma \ref{LMintegralest}) implies uniform control
on the entropy, which then by the pointwise estimate (Lemma \ref{LMpwest})
yields a uniform estimate on the curvature. 

Thus Proposition \ref{PNbootstrap} applies and all the derivatives of
curvature are uniformly bounded.
This implies the flow is uniformly bounded in the smooth topology, enabling us to apply a local existence result to extend the flow beyond the maximal time $T$.
This is a contradiction.

Thus the area must converge to zero.
As $\gamma(\S,t)$ is convex, we are done if we rule out the possibility of
$\gamma(\cdot,t)$ converging to a straight line segment.
If this occurs, then there must exist a $\theta_0\in\S$ such that
$k(\theta_0,t)\rightarrow0$.
However this is impossible since $\min k(\cdot,t) > K > 0$ for all $t$.
Thus the only possible limiting shape is a point.
\end{proof}%}}}

%}}}

\section{Speed Estimates}%{{{
\label{S:speed}

In this section we establish estimates on $F = \sigma_1k+\sigma_2 +\langle V, N \rangle$.

\begin{lem}	
\label{LMFevo}
Let $\gamma:\S^1\times[0,T)\rightarrow\R^2$ be a solution to \eqref{flow}
satisfying the hypotheses of Theorem \ref{T:main}.
Then
\[
F_t
 = \sigma_1k^2\left(F_{\theta \theta}+F\right)+F\langle D_{N}V, N \rangle +\langle V,T\rangle \langle D_TV,N\rangle
\,.
\]
\end{lem}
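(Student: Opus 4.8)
The goal is to compute $F_t$ for $F = \sigma_1 k + \sigma_2 + \langle V, N\rangle$ in the $(t',\theta)$ parametrisation (with $t'$ abbreviated to $t$). The strategy is to differentiate $F$ term by term, using the curvature evolution from Lemma \ref{LMkevothetaparam} for the $\sigma_1 k$ term, and carefully handling the derivative of $\langle V\circ\gamma, N\rangle$ using the frame evolution equations. The constant $\sigma_2$ contributes nothing. So the main work is organising
\[
F_t = \sigma_1 k_t + \partial_t\langle V(\gamma), N\rangle\,,
\]
and showing the pile of terms collapses into the claimed right-hand side $\sigma_1 k^2(F_{\theta\theta}+F) + F\langle D_NV,N\rangle + \langle V,T\rangle\langle D_TV,N\rangle$.

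First I would record the relevant frame data in the angle parametrisation: $T_\theta = N$, $N_\theta = -T$, $\gamma_\theta = T/k$, and $\partial_t\theta = \sigma_1 k k_\theta + \langle D_T V, N\rangle$ (from the proof of Lemma \ref{LMkevothetaparam}), together with $\gamma_t = F N$ (since in this parametrisation the motion is purely normal with speed $F$) and $N_t = -(\partial_t\theta)\,T$ (because $N$ depends on $t$ only through $\theta$, and $N_\theta = -T$). Then
\[
\partial_t\langle V(\gamma),N\rangle = \langle D_{\gamma_t}V, N\rangle + \langle V, N_t\rangle = F\langle D_N V, N\rangle - (\partial_t\theta)\langle V, T\rangle\,.
\]
Substituting $\partial_t\theta$ gives $F\langle D_NV,N\rangle - \sigma_1 k k_\theta \langle V,T\rangle - \langle D_TV,N\rangle\langle V,T\rangle$. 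Note the sign: $\langle D_TV,N\rangle\langle V,T\rangle$ appears with a minus here, but the target has $+\langle V,T\rangle\langle D_TV,N\rangle$; this means that term must be cancelled and regenerated with opposite sign somewhere in the $\sigma_1 k_t$ contribution — I would watch for this carefully, as it is the likely site of a sign slip.

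Next I would expand $\sigma_1 k_t$ using Lemma \ref{LMkevothetaparam}:
\[
\sigma_1 k_t = \sigma_1 k^2\bigl(\sigma_1 k_{\theta\theta} + \sigma_1 k\bigr) + \sigma_1\bigl(\sigma_2 k^2 - k(2\langle D_TV,T\rangle - \langle D_NV,N\rangle) - k_\theta\langle D_TV,N\rangle + \langle D^2_{T,T}V,N\rangle\bigr)\,.
\]
The plan is then to identify $\sigma_1 k^2(F_{\theta\theta}+F)$ on the right: since $F = \sigma_1 k + \sigma_2 + \langle V,N\rangle$, we have $F_{\theta\theta} + F = \sigma_1(k_{\theta\theta}+k) + \sigma_2 + \bigl(\langle V,N\rangle\bigr)_{\theta\theta} + \langle V,N\rangle$, so I would compute $\bigl(\langle V(\gamma),N\rangle\bigr)_{\theta\theta}$ explicitly using $\gamma_\theta = T/k$, $T_\theta = N$, $N_\theta = -T$ (this produces terms with $D^2_{T,T}V$, $D_TV$, $D_NV$, and factors of $k_\theta/k$), multiply by $\sigma_1 k^2$, and subtract. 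What remains after this subtraction should, term by term, match the residual from $\sigma_1 k_t$ plus the $\partial_t\langle V,N\rangle$ contribution computed above, leaving exactly $F\langle D_NV,N\rangle + \langle V,T\rangle\langle D_TV,N\rangle$.

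\textbf{Expected main obstacle.} The bookkeeping in the $\bigl(\langle V,N\rangle\bigr)_{\theta\theta}$ computation, multiplied by $\sigma_1 k^2$, is where all the cross terms (those involving $\langle D_TV,N\rangle$, $\langle D_NV,N\rangle$, $k_\theta$, and $\langle V,T\rangle$) must conspire to cancel against the other two contributions. In particular the $\sigma_1 k k_\theta\langle V,T\rangle$ term from $\partial_t\theta$ and a matching $k_\theta$-term arising inside $\sigma_1 k^2 (\langle V,N\rangle)_{\theta\theta}$ need to annihilate, and the sign of the surviving $\langle V,T\rangle\langle D_TV,N\rangle$ must come out positive. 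Everything else (the $D^2_{T,T}V$ terms, the $\sigma_1 k^2 \sigma_2$ term, the $\sigma_1\sigma_2 k^2$ term from $k_t$) either matches a piece of $\sigma_1 k^2(F_{\theta\theta}+F)$ directly or cancels; I would verify the $D^2_{T,T}V$ terms cancel by noting $\sigma_1 k_t$ contributes $+\sigma_1\langle D^2_{T,T}V,N\rangle$ while $\sigma_1 k^2 (\langle V,N\rangle)_{\theta\theta}$ contributes $\sigma_1 k^2 \cdot \tfrac{1}{k^2}\langle D^2_{T,T}V,N\rangle = \sigma_1\langle D^2_{T,T}V,N\rangle$ with the appropriate sign so that the difference vanishes. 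This is a routine but error-prone calculation; I would do it by collecting coefficients of each distinct tensor contraction separately.
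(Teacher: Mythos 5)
Your plan has the right structure, but both frame identities you set up at the outset are wrong, and the resulting defect is structural rather than the sign slip you anticipate. First, $\gamma_t = FN$ is false: in the angle parametrisation the velocity at fixed $\theta$ necessarily carries a tangential component. Writing $\gamma_t = aN + bT$, differentiating $\gamma_\theta = T/k$ in $t$ (with $T_t = 0$ since $T = T(\theta)$) and using $T_\theta = N$, $N_\theta = -T$ forces $a_\theta + b = 0$; with normal speed $a = F$ one has $\gamma_t = FN - F_\theta T$, and $F_\theta$ does not vanish in general. Second, $N_t = -(\partial_t\theta)T$ is the chain rule in the $(u,t)$ parametrisation, where $\theta = \theta(u,t)$ is not held fixed; in the $(\theta,t')$ parametrisation being used here $N = N(\theta)$ depends only on $\theta$, so at fixed $\theta$ we have $N_{t'} = 0$. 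Adopting $\gamma_t = FN$ together with $N_t = -(\partial_t\theta)T$ mixes the two parametrisations: the resulting
\[
\partial_t\langle V, N\rangle
 = F\langle D_N V, N\rangle - (\partial_t\theta)\langle V, T\rangle
 = F\langle D_N V, N\rangle - \sigma_1 kk_\theta\langle V, T\rangle - \langle D_T V, N\rangle\langle V, T\rangle
\]
carries a spurious $-\sigma_1 kk_\theta\langle V,T\rangle$ and the wrong sign on $\langle D_T V, N\rangle\langle V, T\rangle$. Since $\sigma_1 k_t$ from Lemma~\ref{LMkevothetaparam} contributes no $\langle V,T\rangle$ terms at all, there is nowhere downstream for these to cancel against, so the ``sign slip elsewhere'' you intend to watch for does not exist and the computation will not close.

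On the $\sigma_1 k_t$ side your plan matches the paper: after computing $F_\theta$ and $F_{\theta\theta}$ explicitly, one can recognise the bracket in Lemma~\ref{LMkevothetaparam}, divided by $k^2$, as precisely $F_{\theta\theta}+F$, giving $\sigma_1 k_t = \sigma_1 k^2(F_{\theta\theta}+F)$ directly. The difference is in the remaining term: the paper computes $\langle V,N\rangle_t = \langle D_{\gamma_t}V,N\rangle$ using $N_t = 0$ (no $N_t$ contribution at all) and decomposes $\gamma_t = (\sigma_1 k+\sigma_2)N + V$ in the orthonormal frame $\{N,T\}$, which immediately yields $F\langle D_N V, N\rangle + \langle V, T\rangle\langle D_T V, N\rangle$.
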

\begin{proof}%{{{
Recall the evolution in the angle parametrisation is given by Lemma \ref{LMkevothetaparam}; we reproduce it here for the convenience of the reader:
\begin{equation*}
k_t 
	=
	 \sigma_1k^2k_{\theta \theta}+\sigma_1k^3
	+ \sigma_2k^2
	- k(2\langle D_TV,T \rangle-\langle D_NV,N \rangle)
	- k_\theta\langle D_TV,N \rangle
	+ \langle D^2_{T,T}V, N\rangle
\,.
\end{equation*}
We calculate the first two spatial derivatives of $F$:
\begin{equation*}
F_\theta = \sigma_1k_\theta + \langle D_{\gamma_\theta}V, N \rangle - \langle V, T \rangle= \sigma_1k_\theta + \frac{1}{k}\langle D_{T}V, N \rangle - \langle V, T \rangle\,, 
\end{equation*}
and
\begin{align*}
F_{\theta \theta}
 &= 
	\sigma_1 k_{\theta \theta}
	+ \frac{1}{k} \left(\langle D^2_{T, \gamma_\theta}V,N \rangle + \langle D_NV,N \rangle-\langle D_TV,T \rangle\right)
\\&\qquad
	- \frac{k_\theta}{k^2}\langle D_TV,N \rangle 
	- \frac{1}{k}\langle D_TV,T \rangle
	- \langle V, N \rangle
\\
	&= 
	\sigma_1 k_{\theta \theta} 
	+ \frac{1}{k^2} \langle D^2_{T, T}V,N \rangle 
	- \frac{k_\theta}{k^2}\langle D_TV,N \rangle 
	+ \frac{1}{k}\langle D_NV,N \rangle 
	- \frac{2}{k}\langle D_TV,T \rangle 
	- \langle V, N \rangle.
\end{align*}
Therefore we have 
\begin{align}
F_t& = \sigma_1k^2\left(\sigma_1k_{\theta \theta} +\sigma_1k+\sigma_2 -\frac{2}{k}\langle D_TV,T\rangle+\frac{1}{k}\langle D_NV,N\rangle-\frac{k_\theta}{k^2}\langle D_TV,N \rangle +\frac{1}{k^2}\langle D^2_{T,T},N \rangle\right) +\langle V, N\rangle_t
\notag\\
&= \sigma_1k^2(F_{\theta \theta } +\langle V,N\rangle + \sigma_1k +\sigma_2)+ \langle V, N\rangle_t
\notag\\
&= \sigma_1k^2(F_{\theta \theta } +F)+ \langle V, N\rangle_t
\,.
\label{EQevolf1}
\end{align}
Next we calculate 
\begin{align*}
\langle V, N \rangle_t &= \langle D_{\gamma_t}V, N \rangle\\
	&= (\sigma_1k+\sigma_2) \langle D_{N}V, N \rangle+\langle D_{V}V, N \rangle\\
	&= (\sigma_1k+\sigma_2) \langle D_{N}V, N \rangle+\langle D_{\langle V,N \rangle N+\langle V,T \rangle T}V, N \rangle\\
	&=(\sigma_1k+\sigma_2) \langle D_{N}V, N \rangle+\langle V,N\rangle \langle D_NV,N\rangle+\langle V,T\rangle \langle D_TV,N\rangle
\,.
\end{align*}
Therefore we have
\begin{equation}
\label{EQevolf2}
 \langle V, N \rangle_t = F\langle D_{N}V, N \rangle +\langle V,T\rangle \langle D_TV,N\rangle
\,.
\end{equation}
Combining equations \eqref{EQevolf1} and \eqref{EQevolf2} gives the result.
\end{proof}%}}}

Since $\langle D_NV,N\rangle \ge -C_1$, $k>K$, we have $\sigma_1k\langle
D_NV,N\rangle \ge -\sigma_1k^2 C_1/k \ge -\sigma_1 k^2 C_1/K$.
Thus
\begin{align}
    F_t
&= 
	\sigma_1k^2(F_{\theta \theta}+F)
	+ \sigma_1k\langle D_NV,N\rangle 
	+ (\sigma_2+\langle V,N\rangle)\langle D_NV,N\rangle
	+ \langle D_TV,N\rangle\langle V,T\rangle
\notag\\
    &\geq \sigma_1k^2(F_{\theta \theta}+F-\frac{C_1}{K})-(|\sigma_2|C_1+2C_0C_1)
\,.
\label{EQft}
\end{align} 
Set $\tilde{F} = F- \frac{C_1}{K}$.
Then 
\[ 
	\tilde{F}_t \geq \sigma_1k^2(\tilde{F}_{\theta \theta}+\tilde{F})-(|\sigma_2|C_1+2C_0C_1))
	\,.
\]
We will estimate $\tilde F$.
First, we show that if $\tilde F$ and its derivative are large enough, then the
second derivative of $\tilde F$ must not be too negative.

\begin{prop}
\label{PNsturmF}
Let $\gamma:\S^1\times[0,T)\rightarrow\R^2$ be a solution to \eqref{flow}
satisfying the hypotheses of Theorem \ref{T:main}.

Set $M^2 =
\max\left\{(\tilde{F}^2+\tilde{F}^2_{\theta})(\theta,0),(\frac{C_1}{K}+|\sigma_2|+C_0)^2\right\}$.
Then $(\tilde{F}^2+\tilde{F}^2_{\theta})(\theta,t) \geq M^2$ implies
$(\tilde{F}_{\theta \theta}+\tilde{F})(\theta,t) \geq 0$.
\end{prop}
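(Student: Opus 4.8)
The plan is to argue by a maximum-principle-style contradiction in the $\theta$-variable at a fixed time, exploiting the fact that $\tilde F^2 + \tilde F^2_\theta$ being large forces, via the ODE-geometry of the pair $(\tilde F, \tilde F_\theta)$, a sign on $\tilde F_{\theta\theta} + \tilde F$. The key observation is that the conclusion $\tilde F_{\theta\theta} + \tilde F \geq 0$ is exactly a Sturm/oscillation-type statement: if we think of $\theta \mapsto \tilde F(\theta,t)$ as competing against solutions of $y'' + y = 0$ (i.e. $\cos,\sin$), then $\tilde F_{\theta\theta} + \tilde F < 0$ somewhere would let us compare against such an oscillating solution and pull $\tilde F$ small. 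First I would suppose, for contradiction, that at some $(\theta_0,t)$ we have $(\tilde F^2 + \tilde F_\theta^2)(\theta_0,t) \ge M^2$ but $(\tilde F_{\theta\theta} + \tilde F)(\theta_0,t) < 0$.

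Next I would consider the auxiliary quantity $G(\theta) := \tilde F^2 + \tilde F_\theta^2$ as a function of $\theta$ at the frozen time $t$, and compute $G' = 2\tilde F_\theta(\tilde F + \tilde F_{\theta\theta})$. At a point where $G$ is large, either $\tilde F_\theta = 0$ — in which case $\tilde F^2 \ge M^2$, so $|\tilde F| \ge M \ge \frac{C_1}{K} + |\sigma_2| + C_0$, which bounds $|F| = |\tilde F + \frac{C_1}{K}|$ below by $|\sigma_2| + C_0 \ge |\sigma_2 + \langle V,N\rangle|$; since $F = \sigma_1 k + (\sigma_2 + \langle V,N\rangle)$ and $\sigma_1 k > 0$, this pins down the sign of $F$ and hence of $\tilde F$, and then $k > K$ combined with $F_{\theta\theta} + F = \frac{1}{\sigma_1 k^2}F_t - (\text{lower-order})$ from Lemma~\ref{LMFevo}, plus the known evolution, gives the sign — or $\tilde F_\theta \neq 0$, and then I would track how $G$ behaves: the set $\{G \ge M^2\}$ is a union of arcs, and on such an arc the worst case is that $\tilde F + \tilde F_{\theta\theta}$ changes sign. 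The cleanest route is to argue directly at an extremum: at a $\theta_0$ realizing the spatial maximum of $G(\cdot,t)$ (which is where the hypothesis is hardest to violate), we have $G'(\theta_0)=0$ and $G''(\theta_0)\le 0$; $G'(\theta_0)=0$ forces either $\tilde F_\theta(\theta_0)=0$ (handled above) or $\tilde F_{\theta\theta}(\theta_0) = -\tilde F(\theta_0)$, i.e. $\tilde F_{\theta\theta} + \tilde F = 0 \ge 0$ at that point, which is already the conclusion there; then one propagates to all of $\{G \ge M^2\}$ using that between consecutive zeros of $\tilde F_\theta$ inside that set, $\tilde F$ cannot oscillate enough to make $\tilde F + \tilde F_{\theta\theta}$ negative without $G$ dropping below $M^2$.

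More carefully, I expect the actual mechanism to be the following elementary lemma about ODE inequalities: if $\tilde F + \tilde F_{\theta\theta} < 0$ on an interval, then on that interval $(\tilde F, \tilde F_\theta)$ rotates clockwise in the phase plane strictly faster than the flow of $y'' + y = 0$, which rotates on circles; so $G = |(\tilde F,\tilde F_\theta)|^2$ is forced to decrease along such rotation unless the extra term is controlled — and here the extra term is bounded by the constant $(\frac{C_1}{K}+|\sigma_2|+C_0)^2$ appearing in $M^2$. So I would write $\tilde F + \tilde F_{\theta\theta} = -\tilde F_\theta \cdot (\text{something})$ plus a bounded remainder and show $G$ strictly decreases wherever $G > (\frac{C_1}{K}+|\sigma_2|+C_0)^2$ and $\tilde F + \tilde F_{\theta\theta}<0$; since $\theta$ runs over the circle $\S^1$, $G$ attains its max, and at the max $G$ cannot be strictly decreasing in the relevant sense — contradiction once $G(\theta_0,t) \ge M^2 \ge \max_\theta G(\theta,0)$ and $M^2 \ge (\frac{C_1}{K}+|\sigma_2|+C_0)^2$.

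The main obstacle I anticipate is making the ``rotates faster than $\cos,\sin$'' comparison fully rigorous in the presence of the lower-order forcing terms (the $\langle D_N V,N\rangle$, $\langle V,T\rangle\langle D_T V,N\rangle$ pieces from Lemma~\ref{LMFevo}, and the $|\sigma_2|C_1 + 2C_0C_1$ error in \eqref{EQft}): one must verify that the threshold $M^2 \ge (\frac{C_1}{K} + |\sigma_2| + C_0)^2$ is precisely what is needed to dominate these remainder terms so that the phase-plane radius is genuinely non-increasing on the bad set. The choice of $M$ in the statement is tuned exactly for this, so the proof amounts to unwinding $F = \sigma_1 k + \sigma_2 + \langle V,N\rangle$ together with $k > K > 0$, $|\langle V,N\rangle| \le C_0$, to see that $|\tilde F| \ge \frac{C_1}{K} + |\sigma_2| + C_0$ already forces $\tilde F > 0$ (since then $F > \frac{C_1}{K} + |\sigma_2| + C_0 - C_0 - |\sigma_2| \ge 0$ is impossible to have wrong-signed), after which the claimed inequality $\tilde F_{\theta\theta} + \tilde F \ge 0$ is the statement that $\tilde F$ is ``curved the right way,'' provable by the Sturm comparison sketched above.
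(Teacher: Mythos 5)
Your proposal and the paper's proof diverge in a way that matters: you argue entirely at a fixed time, whereas the paper's argument is essentially temporal. The claimed implication $(\tilde F^2+\tilde F_\theta^2)(\theta_0,t)\geq M^2 \Rightarrow (\tilde F_{\theta\theta}+\tilde F)(\theta_0,t)\geq 0$ is \emph{false} for a general smooth $2\pi$-periodic function. Take $g(\theta)=A\cos(2\theta)$ with $A>M$: then $(g^2+g_\theta^2)(0)=A^2>M^2$ while $(g_{\theta\theta}+g)(0)=-3A<0$. So no amount of phase-plane bookkeeping in $\theta$ alone can establish the conclusion; the evolution equation for $\tilde F$ must enter.

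This is exactly what the paper does. It sets $c(\theta)=B\cos(\theta-\theta_0+\xi)$, chosen so that $G=\tilde F - c$ has a double zero at $(\theta_0,t_0)$. Because $c_{\theta\theta}+c=0$ and $\tilde F$ evolves by the (sub/super-)parabolic equation coming from Lemma \ref{LMFevo}, one can track the zeros of $G(\cdot,t)$ \emph{in time} via Sturmian theory (\cite{galaktionov2004geometric}): the number of zeros is non-increasing. At $t=0$ the choice $M^2\geq \max_\theta (\tilde F^2+\tilde F_\theta^2)(\theta,0)$ together with $B>M$ ensures $G(\theta_0-\xi,0)<0<G(\theta_0-\xi\pm\pi,0)$ and that $G$ is strictly increasing at each of its zeros, so it has exactly two simple zeros on the period. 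Hence at time $t_0$ there are at most two zeros counted with multiplicity, and the double zero at $\theta_0$ is the only one. Combined with $G\geq 0$ at the endpoints (which uses $B>M\geq C_1/K+|\sigma_2|+C_0$ and $\sigma_1 k>0$ to keep $\tilde F+B\geq 0$), $\theta_0$ is a minimum of $G(\cdot,t_0)$, giving $G_{\theta\theta}(\theta_0,t_0)=\tilde F_{\theta\theta}+\tilde F\geq 0$.

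Concretely: in your sketch you compute $G'=2\tilde F_\theta(\tilde F+\tilde F_{\theta\theta})$, but the sign of $G'$ depends on both factors, so $G$ is not forced to decrease where $\tilde F+\tilde F_{\theta\theta}<0$ — it can just as easily increase when $\tilde F_\theta<0$. Likewise the angular velocity of $(\tilde F,\tilde F_\theta)$ in the phase plane is $1-\tilde F(\tilde F+\tilde F_{\theta\theta})/(\tilde F^2+\tilde F_\theta^2)$, so the ``faster than unit rotation'' picture requires a sign on $\tilde F$ as well, not on $\tilde F+\tilde F_{\theta\theta}$ alone. You do mention the initial bound $M^2\geq \max_\theta G(\theta,0)$ at the very end, which is the right constant to be looking at, but you never use the PDE to propagate it from $t=0$ to $t$; that propagation (via parabolic zero counting against the cosine comparison function) is precisely the missing idea, and it is the heart of the paper's proof.
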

\begin{proof}%{{{
Fix $(\theta_0,t_0)$ and assume that $B =
(\tilde{F}^2+\tilde{F}_\theta^2)^{\frac{1}{2}}(\theta_0,t_0) > M$. Pick $\xi
\in (-\pi,\pi)$ such that $\tilde{F}(\theta_0,t_0) = B\cos(\xi),$ and
$\tilde{F}_\theta(\theta_0,t_0) = -B\sin(\xi)$. Consider the function 
\[
 G(\theta,t)  = \tilde{F}(\theta,t)-B\cos(\theta-\theta_0+\xi).
\]
Now $(\theta_0,t_0)$ is a double root for $G$ and we aim to show that these are
the only two roots up to periodicity. Since $\tilde{F}$ and $G$ are $2\pi $
periodic we study the interval $(\theta_0-\xi - \pi,\theta_0-\xi + \pi)$ and
show that $G$ initially has only two roots on this interval. First note that 
\[
 G(\theta_0-\xi,0) = \tilde{F}(\theta_0-\xi,0) - B < 0,
\]
and on the endpoints 
\[
 G(\theta_0-\xi\pm \pi,0) = \tilde{F}(\theta_0-\xi\pm \pi,0)+B >0.
\]
This means that there are at least two roots for $G$ initially. Now let $\theta_1 \in (\theta_0-\xi, \theta_0-\xi+\pi)$ be a root for $G$ initially, then we have that 
\[
 G_\theta(\theta_1,0) = \tilde{F}_\theta(\theta_1,0) +B\sin(\theta_1-\theta_0+\xi),.
\]
Noting that on this interval $\sin(\theta_1-\theta_0+\xi)$ is positive and also that since $\theta_1$ is a root $\tilde{F}(\theta_1,0) = B\cos(\theta_1-\theta_0+\xi)$. Then we have 
\begin{align*}
G_\theta(\theta_1,0) &> -\sqrt{B^2-\tilde{F}^2(\theta_1,0)}+B\sin(\theta_1-\theta_0+\xi)\\
	& = -\sqrt{B^2(1-\cos^2((\theta_1-\theta_0+\xi))}+B\sin(\theta_1-\theta_0+\xi)\\
	& = 0.
\end{align*}
So that $G$ is increasing at this root and in particular this implies that
there is only one root on $(\theta_0-\xi, \theta_0-\xi+\pi).$ A similar
calculation on the other interval reveals that there are at most two roots on
the entire interval $(\theta_0-\xi-\pi,\theta_0-\xi+\pi)$. Therefore, by
Sturmian theory \cite{galaktionov2004geometric} there is always at most two
roots on the interval (counted with multiplicity) hence $G(\theta_0,t_0)$ must
be the only double root. Since we have that 
\begin{align*}
 G(\theta_0-\xi\pm\pi,t_0)
 &= \tilde{F}(\theta_0-\xi\pm\pi,t_0)+B
\\
 &\ge \sigma_1k + \sigma_2 + \IP{V}{N} -\frac{C_1}{K} + \left|\frac{C_1}{K}+|\sigma_2|+C_0\right|
\\
 &\geq 0,
\end{align*}
the point $(\theta_0,t_0)$ is a minimum for $G$ therefore   
\[
 G_{\theta \theta}(\theta_0,t_0) = \tilde{F}_{\theta \theta}(\theta_0,t_0)+\tilde{F}(\theta_0,t_0)\geq 0.
\]
\end{proof}%}}}

%\begin{rmk}%{{{
%At a point $(\theta_0,t_0)$ such that $(\tilde F^2 + \tilde
%F_\theta^2)(\theta_0,t_0) \ge M^2$, we have (using \eqref{EQft})
%$F_t(\theta_0,t_0) \geq 0$.
%\end{rmk}%}}}

Let us continue to estimate $F$.

\begin{prop}
Let $\gamma:\S^1\times[0,T)\rightarrow\R^2$ be a solution to \eqref{flow}
satisfying the hypotheses of Theorem \ref{T:main}.
Then
\begin{equation}\label{fgrad} 
	\sup\limits_{\theta \in [0,2\pi)} |F_\theta(\theta,t)| \leq M
	+\int_0^{2\pi}|F(\theta,t)|d\theta,
\end{equation}
\begin{equation} \label{Fmax}
	F_{\max}(t) \leq M_1\left(1+\int_0^{2\pi}|F(\theta,t)|d\theta\right),
\end{equation}
and
\begin{equation} \label{maximum}
	F_{\max}(t) \leq 2F(\theta,t)+\frac{M}{2\pi},
\end{equation}
where $M_1 = \max\left\{2\pi M, 2\pi+\frac{1}{2\pi}\right\}$ and $M$ is as in Proposition \ref{PNsturmF}.
\end{prop}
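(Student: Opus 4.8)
The plan is to extract all three inequalities from a single device: the \emph{global cosine barrier} that is implicit in the proof of Proposition~\ref{PNsturmF}. Write $Z(\theta,t)=(\tilde F^2+\tilde F_\theta^2)(\theta,t)$ and let $B(t)=\big(\max_\theta Z(\cdot,t)\big)^{1/2}$, attained at some $\theta_0=\theta_0(t)$. If $B(t)\le M$ then $|\tilde F|\le M$ and $|\tilde F_\theta|\le M$ everywhere, and (after writing $F=\tilde F+\tfrac{C_1}{K}$ and bounding constants) all of \eqref{fgrad}--\eqref{maximum} hold trivially; so the work is in the case $B(t)>M$. There I would re-run the argument of Proposition~\ref{PNsturmF} at $(\theta_0,t)$: choosing $\xi$ with $\tilde F(\theta_0,t)=B\cos\xi$, $\tilde F_\theta(\theta_0,t)=-B\sin\xi$, and noting that $B\cos(\theta-\theta_0+\xi)$ is a stationary solution of the operator $\partial_t-\sigma_1k^2(\partial_{\theta\theta}+1)$ governing $\tilde F$ (since $(\partial_{\theta\theta}+1)\cos(\theta-c)=0$), the function $G=\tilde F-B\cos(\theta-\theta_0+\xi)$ has $(\theta_0,t)$ as its only zero on a period counted with multiplicity, is nonnegative at the two endpoints $\theta_0-\xi\pm\pi$ (exactly as in that proof, using $B>M\ge\tfrac{C_1}{K}+|\sigma_2|+C_0$), and is therefore nonnegative everywhere. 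This yields the barrier $\tilde F(\theta,t)\ge B\cos(\theta-\theta_0+\xi)$ for all $\theta$, together with the pointwise bounds $|\tilde F|\le B$ and $|\tilde F_\theta|\le B$ (from $\tilde F^2,\tilde F_\theta^2\le Z\le B^2$); in particular $\tilde F_{\max}(t)=B(t)$, attained where $\cos=1$.

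For \eqref{fgrad}: since $F_\theta=\tilde F_\theta$, the pointwise bound gives $\sup_\theta|F_\theta|\le B$, so it remains to bound $B$. Integrating the barrier over the half-period $\{\cos(\theta-\theta_0+\xi)\ge0\}$, on which $|\tilde F|=\tilde F\ge B\cos(\theta-\theta_0+\xi)$, produces $\int_0^{2\pi}|\tilde F|\,d\theta\ge B\int_{-\pi/2}^{\pi/2}\cos u\,du=2B$, hence $B\le\tfrac12\int_0^{2\pi}|\tilde F|\,d\theta\le\tfrac12\int_0^{2\pi}|F|\,d\theta+\tfrac{\pi C_1}{K}$, and \eqref{fgrad} follows once the additive constant is absorbed into $M$. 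For \eqref{Fmax}: $F_{\max}=\tilde F_{\max}+\tfrac{C_1}{K}=B+\tfrac{C_1}{K}$, and the same estimate for $B$ gives $F_{\max}\le\tfrac12\int_0^{2\pi}|F|\,d\theta+(\pi+1)\tfrac{C_1}{K}\le M_1\big(1+\int_0^{2\pi}|F|\,d\theta\big)$, using $M_1\ge 2\pi M\ge(\pi+1)\tfrac{C_1}{K}$ and $M_1\ge\tfrac12$. For \eqref{maximum}: on the interval $\{\cos(\theta-\theta_0+\xi)\ge\tfrac12\}$ the barrier gives $\tilde F(\theta)\ge\tfrac12B=\tfrac12\tilde F_{\max}$, whence $F_{\max}=\tilde F_{\max}+\tfrac{C_1}{K}\le 2\tilde F(\theta)+\tfrac{C_1}{K}=2F(\theta)-\tfrac{C_1}{K}\le 2F(\theta)+\tfrac{M}{2\pi}$; combined with the trivial case and constant bookkeeping this gives \eqref{maximum}.

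The substantive step, and the one I expect to be the main obstacle, is justifying the global barrier $G\ge0$ (rather than merely $G_{\theta\theta}(\theta_0,t)\ge0$, which is all that is stated in Proposition~\ref{PNsturmF}): this is a Sturm-type zero-counting argument for the quasilinear parabolic equation satisfied by $\tilde F$ (Lemma~\ref{LMFevo} and \eqref{EQft}), mildly complicated by the lower-order forcing constant $|\sigma_2|C_1+2C_0C_1$ appearing in \eqref{EQft}. Fortunately this is exactly the mechanism already set up in the proof of Proposition~\ref{PNsturmF}, so in practice the task is to reread that proof and record that it actually establishes $G\ge0$ globally, after which everything is elementary: the half-period integral of a cosine, passing between $F$ and $\tilde F$, and checking the explicit $M,M_1$.

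A genuine subtlety to watch is that the barrier only pins $F$ from below where $\cos(\theta-\theta_0+\xi)$ is not too negative, so the cosine-barrier proof of \eqref{maximum} outlined above is cleanest on the half-height interval $\{\cos(\theta-\theta_0+\xi)\ge\tfrac12\}$ about the point where $F$ is maximised --- which is the form consumed later in the rescaling. If one insists on \eqref{maximum} verbatim for every $\theta\in[0,2\pi)$, I would expect to supplement the barrier with convexity, namely $k>K$ and hence $F\ge\sigma_1K+\sigma_2-C_0$, to rule out deep troughs of $F$ far from its maximum; I would treat this as the delicate point of the argument.
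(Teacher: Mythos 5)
Your proposal takes a genuinely different route from the paper's, and the central idea is sound: the Sturmian construction in Proposition~\ref{PNsturmF} does, upon rereading, establish the global barrier $G\ge 0$ (once $G$ has at most two roots counted with multiplicity, the double root at $\theta_0$ exhausts them, and $G\ge0$ at the endpoints $\theta_0-\xi\pm\pi$ then forces $G\ge0$ on the whole period). From this you get $\tilde F\ge B\cos(\theta-\theta_0+\xi)$ pointwise, and your half-period integral $\int_0^{2\pi}|\tilde F|\,d\theta\ge 2B$ and the half-height interval argument for \eqref{maximum} are correct computations. By contrast, the paper avoids the global barrier altogether: it runs a supremum-over-time a-priori argument for \eqref{fgrad}, and on each maximal interval $(\theta_1,\theta_2)$ where $F^2+F_\theta^2>M^2$ it integrates $(F_{\theta\theta}+F)\ge0$ from $\theta$ to an endpoint, using $|F_\theta|\le M$ at the endpoint (where the threshold fails), to get \eqref{fgrad} directly. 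The paper's version is less elegant but has the advantage that all constants land exactly on $M$ without any $F\leftrightarrow\tilde F$ accounting.

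That accounting is where your argument has a genuine gap. After $2B\le\int|\tilde F|\le\int|F|+2\pi\tfrac{C_1}{K}$ you need $B\le M+\int|F|$, which forces $\pi\tfrac{C_1}{K}\le M$; but by definition $M\ge\tfrac{C_1}{K}+|\sigma_2|+C_0$ need not exceed $\pi\tfrac{C_1}{K}$ (take $|\sigma_2|=C_0=0$). So ``absorbed into $M$'' is not automatic, and the bound you obtain is really of the form $\sup|F_\theta|\le C'+\int|F|$ for an explicit $C'$ that may exceed $M$. That is perfectly serviceable for everything downstream, but it is not the stated inequality with the stated constant; the paper's endpoint trick is what produces $M$ verbatim. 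A second point: you are right that \eqref{maximum} is only established near the argmax --- the paper's own proof restricts to $|\theta-\theta^*|\le\tfrac1{4\pi}$, and your $\{\cos\ge\tfrac12\}$ version is the analogous statement --- but your proposed repair via the lower bound $F\ge\sigma_1K+\sigma_2-C_0$ would not extend it to all $\theta$: a uniform positive lower bound on $F$ does nothing against $F_{\max}\to\infty$ at troughs far from $\theta^*$. The correct reading is simply that \eqref{maximum} is implicitly local, which is consistent with how it is used in Section~\ref{S:rescaling}.
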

\begin{proof}%{{{
Fix a time $t_1 >0$ and  take $t_0 = \sup \{t \in [0,T):$ (\ref{fgrad}) holds  $\}.$
Suppose $t_0 < T$.
Let $t_1 = t_0+\varepsilon < T$.
If $(F^2+F^2_\theta)(\theta,t_1) \leq M^2$ then \eqref{fgrad} holds for all $\varepsilon>0$, this contradicts the definition of $t_0$.

Therefore it must be the case that $(F_\theta^2+F^2)(\theta, t_1) > M^2$ for some
$\theta \in [0,2\pi)$ and $\varepsilon>0$.
This implies that there exists $\theta_1, \theta_2 \in [0,2\pi)$ such that
$(F^2_\theta+F^2)(\theta, t_1) > M^2$ for $\theta \in (\theta_1,\theta_2)$ and
$F^2_{\theta}(\theta_2,t_1) \leq M^2$.
From Proposition \ref{PNsturmF} we thus have
\[
 (F_{\theta\theta}+F)(\theta,t_1)
 \ge 0 \text{ for } \theta \in (\theta_1,\theta_2)
\,,
\]
which gives for $\theta \in (\theta_1,\theta_2)$
\begin{align*}
 \int_\theta^{\theta_2}(F_{\theta' \theta'} +F)(\theta',t_1)\,d\theta' &\geq  0
 \\
 \int_{\theta_1}^{\theta}(F_{\theta' \theta'} +F)(\theta',t_1)\,d\theta'&\geq 0
\,.
\end{align*}
After rearranging and taking absolute values we have 
\begin{align*}
	|F_\theta(\theta,t_1)| &\leq \left\lvert F_\theta(\theta_2,t_1)+\int_\theta^{\theta_2}F(\theta,t_1)d\theta \right\rvert\\
	&\leq M+\int_0^{2\pi}|F(\theta,t_1)|d\theta.
\end{align*}
By assumption, outside of these intervals we have $F_\theta^2(\theta,t_1) \leq
M^2$.
Thus we obtain the estimate 
\[
 \sup\limits_{\theta \in [0,2\pi)} |F_\theta(\theta,t_1)| \leq M+\int_0^{2\pi}|F(\theta,t_1)|d\theta\,,
\]
that is, \eqref{fgrad}.
This is a contradiction and so $t_0 = T$.

Using the Fundamental Theorem of Calculus we have 
\[
 F(\theta,t) = \frac{1}{2\pi}\int_0^{2\pi}F(\theta,t)\,d\theta
	 + \int_a^{\theta}F_{\theta'}(\theta',t)\,d\theta'
\,.
\]
Here $a$ is any point where $F$ attains its average, that is, $F(a) = \bar{F}$.

After taking absolute values and estimating, we find
\begin{align*}
F_{\max}(t)
&\le \frac{1}{2\pi}\int_0^{2\pi}|F(\theta,t)|\,d\theta
 + \int_0^{2\pi} |F_\theta(\theta,t)|d\theta
\\
&\le
 \frac{1}{2\pi}\int_0^{2\pi}|F(\theta,t)|\,d\theta
 + 2\pi\left(M+\int_0^{2\pi}|F(\theta,t)|\,d\theta\right)
\\
&\le
 M_1\left(1+ \int_0^{2\pi} |F(\theta,t)|\,d\theta\right)\,,
\end{align*}
with $M_1 =\max\left\{2\pi M, 2\pi+\frac{1}{2\pi}\right\}$.
This proves \eqref{Fmax}.

Let $\theta^*$ be such that $F_{\max}(t) = F(\theta^*,t)$.
Set $\theta \in (0,2\pi)$ to be such that $|\theta -\theta^*| \leq \frac{1}{4\pi}$.
By the fundamental theorem of calculus we have 
\begin{align*}
F_{\max}(t) 
&=
 F(\theta,t)+\int_\theta^{\theta^*}F_\theta(\theta,t)d\theta
\\
&\le
 F(\theta,t)+|\theta^*-\theta|\sup\limits_{\theta \in [0,2\pi)}|F_\theta(\theta,t)|
\\
&\le
 F(\theta,t)+|\theta^*-\theta|\left( M+\int_0^{2\pi}|F(\theta,t)|d\theta \right)
\\
&\le
 F(\theta,t) +\frac{1}{4\pi}(M+2\pi F_{\max}(t))
\,.
\end{align*}
Rearranging yields
\[ 
	F_{\max}(t) \leq 2F(\theta,t)+\frac{M}{2\pi}\,,
\]
or \eqref{maximum}, as required.
\end{proof}%}}}

%}}}

\section{Rescaling}%{{{
\label{S:rescaling}

In this section we study a rescaling of the flow, and show that it converges to
a circle.
Our method is to use an adapted monotonicity formula, which itself requires a
curvature bound.
Establishing the curvature bound forms the majority of the work in this
section.

Set the rescaling factor to be $\phi(t) = (2T-2t)^{-\frac{1}{2}}$
 and set rescaled time to $t(\hat{t}) = T(1-e^{-2\hat{t}})$.
Observe that $\phi(t(\hat{t})) =\frac{e^{\hat t}}{\sqrt{2T}}$.
The rescaled flow is denoted $\hat\gamma$ and defined by
\begin{equation}
 \hat{\gamma}(\hat{\theta}, \hat{t})
 = \phi(t(\hat{t}))(\gamma(\theta(\hat{\theta}),t(\hat{t}))-\mathcal{O})
 = \frac{e^{\hat{t}}}{\sqrt{2T}}(\gamma(\theta(\hat{\theta}),t(\hat{t}))-\mathcal{O})
\,,
\label{EQrescalingdefn}
\end{equation}
where $\mathcal{O}$ is the final point. Note that we have $\phi'(t) =
\frac{1}{\phi^3}$ and $t'(\hat{t}) = 2Te^{-2\hat{t}}$, this gives
$\frac{d\phi}{d\hat{t}} = \sqrt{2T}e^{-\hat{t}} = \phi^{-1}$. 
We calculate 
\begin{align*}
\hat{\gamma}(\hat{\theta}, \hat{t})_{\hat{t}}
 &= 
	\phi'(t(\hat{t}))t'(\hat{t})[\gamma(\theta(\hat{\theta}), t(\hat{t}))-\mathcal{O}]
	+ \phi(t(\hat{t}))t'(\hat{t})\partial_t([\gamma(\theta(\hat{\theta}), t(\hat{t}))-\mathcal{O}])
\\
&= 
	\frac{e^{\hat t}}{\sqrt{2T}}[\gamma(\theta(\hat{\theta}), t(\hat{t}))-\mathcal{O}]
	+ \phi(t(\hat{t}))((\sigma_1 k(\theta, t(\hat{t}))+\sigma_2 + V(\gamma)\cdot\nu(\hat\theta,t(\hat t))\nu(\hat{\theta}, t(\hat{t}))
\\
&=
	\hat{\gamma}
	+ \sqrt{2T}e^{-\hat{t}}(
		\sigma_1k(\theta, t(\hat{t}))
		+ \sigma_2
		+ V(\gamma)\cdot \nu(\hat{\theta}, t(\hat t)))\nu(\hat{\theta}, t(\hat t))
\\
&=
	\hat{\gamma}
	 + \left(\sigma_1\hat{k}
		+ \sqrt{2T}e^{-\hat{t}}\sigma_2
		+ \sqrt{2T}e^{-\hat{t}}V\left(\frac{\hat{\gamma}}{\phi}+\mathcal{O}\right)\cdot\hat\nu
	\right)\hat{\nu}
\,.
\end{align*} 
Summarising,
\[
 \hat{\gamma}_t(\hat{\theta}, \hat{t})
 = \hat{\gamma}
	+ \left(\sigma_1\hat{k}
	+ \sqrt{2T}e^{-\hat{t}}\sigma_2
	+ \sqrt{2T}e^{-\hat{t}}V\left(\frac{\hat{\gamma}}{\phi}+\mathcal{O}\right)\cdot\hat\nu\right)\hat{\nu}
\,.
\]

First, we use a straightforward argument to show that the area of $\hat\gamma$ converges.

\begin{lem}
Let $\gamma:\S^1\times[0,T)\rightarrow\R^2$ be a solution to \eqref{flow}
satisfying the hypotheses of Theorem \ref{T:main}.
Set $\hat\gamma$ to be the rescaled flow defined in \eqref{EQrescalingdefn}.
The area of $\hat\gamma$ converges to $\sigma_1 \pi$ as $\hat{t} \rightarrow
\infty$.
\label{LMareaconv}
\end{lem}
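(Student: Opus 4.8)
The plan is to express the rescaled area in terms of the original area and evaluate the limit using the area evolution equation of Proposition~\ref{p:evo}. Write $\hat A(\hat t)$ for the area enclosed by $\hat\gamma(\cdot,\hat t)$. Since $\hat\gamma(\cdot,\hat t)$ is the image of $\gamma(\cdot,t(\hat t))$ under the dilation $x\mapsto\phi(t(\hat t))(x-\mathcal{O})$, the enclosed area scales quadratically, so using $\phi(t)^2=(2T-2t)^{-1}$ (equivalently $\phi(t(\hat t))^2 = e^{2\hat t}/(2T)$) we obtain
\[
\hat A(\hat t) = \phi\big(t(\hat t)\big)^2\,A\big(\gamma(\cdot,t(\hat t))\big) = \frac{A\big(\gamma(\cdot,t(\hat t))\big)}{2\big(T-t(\hat t)\big)}\,.
\]
Because $t(\hat t)=T(1-e^{-2\hat t})\to T$ as $\hat t\to\infty$, it suffices to prove that $\displaystyle\lim_{t\to T^-}\frac{A(\gamma(\cdot,t))}{2(T-t)}=\sigma_1\pi$.

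By Proposition~\ref{p:evo}, $A'(t)=-2\sigma_1\pi-\sigma_2 L(t)-\int_\gamma\langle V(\gamma),\nu\rangle\,ds$. Under the hypotheses of Theorem~\ref{T:main} the flow remains inside $U$ for all $t$ (by the confinement results of Section~\ref{S:evo}), so $|V\circ\gamma|\le C_0$ and hence
\[
\big|A'(t)+2\sigma_1\pi\big| \le \big(|\sigma_2|+C_0\big)L(t)\,.
\]
The convergence result at the end of Section~\ref{S:convex} gives $A(t)\to 0$ and $L(t)\to 0$ as $t\to T^-$; in particular $A'(t)\to -2\sigma_1\pi$.

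Now $A$ is $C^1$ on $[0,T)$ with $A(t)\to 0$, while $2(T-t)\to 0$ with nonzero derivative, so L'H\^opital's rule yields
\[
\lim_{t\to T^-}\frac{A(t)}{2(T-t)}=\lim_{t\to T^-}\frac{A'(t)}{-2}=\sigma_1\pi\,,
\]
and combining with the first display gives $\hat A(\hat t)\to\sigma_1\pi$, as claimed. (Equivalently, integrating the displayed bound from $t$ to $T$ and using $A(T)=0$ writes $A(t)=2\sigma_1\pi(T-t)+E(t)$ with $|E(t)|\le(|\sigma_2|+C_0)\int_t^T L(s)\,ds$, which is $o(T-t)$ since $L(s)\to 0$.) There is no real obstacle here; the only point requiring care is the limit $A'(t)\to-2\sigma_1\pi$, which relies on the confinement of the flow to $U$ (which bounds $V$ along the flow) together with the already-established decay $L(t)\to 0$.
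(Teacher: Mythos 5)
Your argument is correct and follows essentially the same route as the paper: unwind $\hat A = \phi^2 A$, apply L'H\^opital's rule to $A(t)/(2T-2t)$ using the area evolution equation from Proposition~\ref{p:evo}, and kill the $\sigma_2 L$ and $\int\langle V,\nu\rangle\,ds$ contributions via $|V|\le C_0$ on $U$ and $L(t)\to 0$. The paper's proof is the same in all essentials, differing only in presentation.
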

\begin{proof}%{{{
Recall that $A(\gamma(\cdot, t)) \rightarrow 0$ and $L(\gamma(\cdot, t))
\rightarrow 0$ as $t \rightarrow T$.
Therefore, using L'H\^opital's rule and the evolution for the area,
\begin{align*}
\lim_{\hat{t} \to \infty} \hat{A}(\hat{\gamma}(\cdot,t)) &= \lim_{\hat{t} \to \infty} \phi^2(t(\hat{t}))A(\gamma(\cdot, t))\\
& = \lim_{t \to T} \frac{A(\hat{\gamma}(\cdot,t))}{2T-2t}\\
&= \lim_{t \to T} \frac{-2\pi \omega \sigma_1 - \sigma_2L(\gamma(\cdot, t))-\int_{\gamma} \langle V(\gamma),  \nu \rangle\,ds}{-2}\\
& = \pi \omega \sigma_1 +\lim_{t \to T}\left(
	 \frac{\sigma_2 L(\gamma(\cdot, t))}{2} + \frac{1}{2}\int_{\gamma} \langle V(\gamma),  \nu \rangle\, ds\right)
\,.
\end{align*}
Since
\[
 \left|\frac{1}{2}\int_{\gamma} \langle V(\gamma),  \nu \rangle\, ds\right|
 \leq \frac{C_0}{2}L(\gamma(\cdot,t))
\,,
\]
we conclude 
\[
 \lim_{\hat{t} \to \infty} \hat{A}(\hat{\gamma}(\cdot,t)) = \pi \sigma_1
\]
as required.
\end{proof}%}}}

%Now we exercise progressively stronger control on the curvature.
%
%\begin{lem}
%Let $\gamma:\S^1\times[0,T)\rightarrow\R^2$ be a solution to \eqref{flow}
%satisfying the hypotheses of Theorem \ref{T:main}.
%The curvature satisfies 
%\[
% ||k||^2_2(t) = O\left(\frac{1}{\sqrt{T-t}}\right)
%\,.
%\]
%\end{lem}
%\begin{proof}%{{{
%Need rescaled length bound.
%We have
%\begin{align*}
%\lim_{\hat{t} \to \infty} \hat{L}(\hat{\gamma}(\cdot, \hat{t}))
%&= \lim_{\hat{t} \to \infty} \phi(t(\hat{t}))L(\gamma(\cdot, t(\hat{t})))
%\\
%&= \lim_{\hat{t} \to \infty} \frac{L(\gamma(\cdot, t(\hat{t})))}{\phi^{-1}(t(\hat{t}))}
%\\
%&= \lim_{t\to T} \frac{L(t)}{(2T-2t)^{\frac12}}
%\\
%&= \lim_{t \to T} \frac{-\sigma_1||k||^2_2(t)-2\pi \sigma_2 - \int_{\gamma}k \langle V, \nu \rangle\, ds}
%			{-\frac{1}{\sqrt{2T-2t} }}
%\\
%&= \lim_{t \to T}\sigma_1||k||_2^2(t)\sqrt{2T-2t}
%\,.
%\end{align*}
%The third term goes to zero because 
%\begin{align*}
%\bigg|\int_\gamma k\langle V, \nu \rangle\, ds\bigg|
% \leq C_0\int_\gamma |k|\,ds
% = C_0\int_\gamma k\,ds
% = 2C_0\pi
%\,.
%\end{align*}
%\end{proof}%}}}

In order to obtain a bound on the rescaled curvature, we first use the speed
control from Section \ref{S:speed}.

\begin{lem}
Let $\gamma:\S^1\times[0,T)\rightarrow\R^2$ be a solution to \eqref{flow}
satisfying the hypotheses of Theorem \ref{T:main}.
The maximum of rescaled curvature satisfies 
\[
\lim_{\hat{t}\rightarrow \infty }\hat{k}_{\max}(\hat{t})e^{-\hat{t}} = 0.
\]
\label{LMrescaledcurv1}
\end{lem}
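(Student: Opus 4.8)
The plan is to reduce the claim to an estimate on the \emph{unrescaled} maximum curvature. With $\phi(t)=(2T-2t)^{-1/2}$ as in \eqref{EQrescalingdefn} we have $\hat k = k/\phi$, and since $e^{\hat t}=\sqrt{2T}\,\phi(t(\hat t))$ a direct computation gives
\[
\hat k_{\max}(\hat t)\,e^{-\hat t}
= \sqrt{\tfrac{2}{T}}\; k_{\max}\big(t(\hat t)\big)\,\big(T-t(\hat t)\big)\,.
\]
Because $t(\hat t)\to T$ as $\hat t\to\infty$, it therefore suffices to prove $k_{\max}(t)(T-t)\to 0$ as $t\to T$.

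First I would bound $F_{\max}$ in terms of $L^{-1}$. Rewriting the speed-oscillation estimate \eqref{maximum} as $F(\theta,t)\ge \tfrac12 F_{\max}(t)-\tfrac{M}{4\pi}$ for every $\theta$, I integrate this against $ds=d\theta/k>0$ (legitimate since the flow remains strictly convex, as $k>K>0$ by Lemma \ref{LMkconvex}). Using $\int_0^{2\pi}\tfrac{d\theta}{k}=L$, the identity $\int_\gamma k\,ds=2\pi$ (Lemma \ref{l:winding}), and the area evolution of Proposition \ref{p:evo}, one has $\int_\gamma F\,ds = -A'(t) = 2\pi\sigma_1+\sigma_2 L+\int_\gamma\langle V,\nu\rangle\,ds$, hence
\[
2\pi\sigma_1+(|\sigma_2|+C_0)L(0)\;\ge\;-A'(t)\;\ge\;\tfrac{L(t)}{2}\Big(F_{\max}(t)-\tfrac{M}{2\pi}\Big)\,,
\]
where $L(t)\le L(0)$ comes from Lemma \ref{LMlengthbd} (or Proposition \ref{PNcasec}). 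Setting $C_A=2\pi\sigma_1+(|\sigma_2|+C_0)L(0)$ this gives $F_{\max}(t)\le \tfrac{2C_A}{L(t)}+\tfrac{M}{2\pi}$; and since at a point of maximal curvature $F\ge \sigma_1 k_{\max}-|\sigma_2|-C_0$ (using $|V|\le C_0$ on $U$ and the confinement of $\gamma$ to $U$), we obtain $k_{\max}(t)\le \sigma_1^{-1}\big(\tfrac{2C_A}{L(t)}+\tfrac{M}{2\pi}+|\sigma_2|+C_0\big)$.

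Next I would show $L(t)\ge c\sqrt{T-t}$ for $t$ near $T$. The convergence result of Section \ref{S:convex} gives $A(t)\to 0$ and $L(t)\to 0$; together with $A'(t)=-2\pi\sigma_1-\sigma_2 L-\int_\gamma\langle V,\nu\rangle\,ds$ and $\big|\sigma_2 L+\int_\gamma\langle V,\nu\rangle\,ds\big|\le (|\sigma_2|+C_0)L\to 0$, this yields $A'(t)\to -2\pi\sigma_1$. Hence there is $t_1<T$ with $-A'(s)\ge\pi\sigma_1$ on $[t_1,T)$, so $A(t)=-\int_t^T A'(s)\,ds\ge \pi\sigma_1(T-t)$ there, and the isoperimetric inequality $L^2\ge 4\pi A$ gives $L(t)\ge 2\pi\sqrt{\sigma_1}\,\sqrt{T-t}$ on $[t_1,T)$. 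Combining this with the previous paragraph,
\[
k_{\max}(t)(T-t)\le \frac{1}{\sigma_1}\left(\frac{C_A\sqrt{T-t}}{\pi\sqrt{\sigma_1}}+\Big(\tfrac{M}{2\pi}+|\sigma_2|+C_0\Big)(T-t)\right)\xrightarrow[t\to T]{}0\,,
\]
which completes the argument.

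The only genuinely delicate choice is deciding which conservation law to pair \eqref{maximum} with: testing it against the arclength measure $ds$ — equivalently, feeding it into the \emph{area} evolution, whose rate $-A'(t)$ stays bounded as $t\to T$ — rather than against $d\theta$, i.e.\ the length evolution, whose rate $-L'(t)$ blows up, is exactly what makes $F_{\max}$, and hence $k_{\max}$, controllable. It is then the mild blow-up $L(t)^{-1}=O\big((T-t)^{-1/2}\big)$, rather than the sharp rate $O\big((T-t)^{-1}\big)$, that we both need and can afford at this stage.
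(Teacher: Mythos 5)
Your reduction of the claim to showing $k_{\max}(t)(T-t)\to 0$ is correct, and the second half of your argument --- using the area evolution to get $-A'(t)\to -2\pi\sigma_1$ and thus $A(t)\gtrsim T-t$, then invoking the isoperimetric inequality to conclude $L(t)\gtrsim\sqrt{T-t}$ --- is sound. The gap is in the first step. The estimate \eqref{maximum} does \emph{not} hold for every $\theta$: in the proof of Proposition 5.3, $\theta$ is explicitly chosen so that $|\theta-\theta^*|\le\frac{1}{4\pi}$, where $\theta^*$ is the spatial maximum of $F$, and this restriction is reiterated each time the estimate is invoked later (for instance in the proof of Theorem \ref{TMresccurvbd}). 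So the lower bound $F(\theta,t)\ge\tfrac12 F_{\max}(t)-\tfrac{M}{4\pi}$ is only available on an arc of $\theta$-length $\tfrac{1}{2\pi}$ around $\theta^*$, not on all of $[0,2\pi)$. Indeed it cannot hold globally: at a point $\theta_0$ far from $\theta^*$ where $k(\theta_0)$ is close to the floor $K$, $F(\theta_0)$ is bounded uniformly while $F_{\max}\to\infty$. Consequently your integration against $ds=d\theta/k$ over the whole curve does not yield $-A'(t)\ge\tfrac{L(t)}{2}\bigl(F_{\max}(t)-\tfrac{M}{2\pi}\bigr)$; integrating only over the good arc produces a factor of the arclength of that arc (roughly $\tfrac{1}{2\pi k_{\max}}$) rather than $L(t)$, and the resulting inequality collapses to $-A'\gtrsim\sigma_1$, which carries no new information.

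For contrast, the paper sidesteps this by integrating $F$ against $d\theta$ (not $ds$), which couples to the length evolution $L'$ rather than $A'$. Since $L'$ blows up, one cannot bound $F_{\max}$ pointwise this way, but one \emph{can} bound the weighted space-time integral $\int_0^\infty F_{\max}(t(\hat t))e^{-2\hat t}\,d\hat t$ via \eqref{Fmax} and \eqref{EQspacetimeF}. To pass from integrability to the pointwise limit, the paper then observes that $Q=F+t(|\sigma_2|C_1+2C_0C_1)$ is eventually monotone in time (via Proposition \ref{PNsturmF} together with \eqref{EQft}), so the finiteness of the weighted integral forces $Q_{\max}e^{-2\hat t}\to 0$ along the full sequence. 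If you wish to repair your argument you would need either this monotonicity device or some other mechanism to show that $L(t)^{-1}(T-t)\to 0$, since your reverse inequality $L(t)\gtrsim\sqrt{T-t}$, even combined with a correct version of the $F_{\max}$ bound, would only give $k_{\max}(T-t)\lesssim\sqrt{T-t}$ near the maximum of $F$ --- an inequality your first paragraph never legitimately establishes.
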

\begin{proof}%{{{
Differentiating and using $k\,ds = d\theta,$ we find
\begin{align*}
\frac{d L(t(\hat{t}))}{d \hat{t}}
 &= \left(
	- \sigma_1 \int_{\gamma}k^2ds
	- 2\pi \sigma_2
	- \int_\gamma k\langle V, \nu \rangle ds 
	\right)2Te^{-2\hat{t}}
\\
&= -2Te^{-2\hat{t}}\left(\sigma_1\int_0^{2\pi}kd\theta+\sigma_2 \int_{0}^{2\pi}d\theta+\int_{0}^{2\pi}\langle V, N \rangle d\theta \right)
\\
&= -2Te^{-2\hat{t}}\int_{0}^{2\pi}F(\theta,t(\hat{t}))d\theta
\,.
\end{align*}
Integrating this, and using the result that the length is zero at final time, yields
\begin{equation}
\label{EQspacetimeF}
 \int_0^{\infty}\int_{0}^{2\pi}F(\theta,t(\hat{t}))e^{-2\hat{t}}d\theta d\hat{t} = \frac{L(0)}{2T}.
\end{equation}
Now by the estimate \eqref{Fmax}
%\[
% F_{\max}(t)\leq M_1\left(1+\int_0^{2\pi} |F(\theta ,t)|d\theta\right),
%\]
we have 
\begin{align*}
\int_0^\infty F_{\max}(t(\hat{t}))e^{-2\hat{t}}d\hat{t}
 &\leq 
	\int_0^\infty e^{-2\hat{t}}M_1\left(
		1 
		+ \int_0^{2\pi}|F(\theta,t(\hat{t}))|\,d\theta
	\right)d\hat{t}
\\
&\le \frac{M_1}{2}+M_1\left(\int_0^\infty \int_0^{2\pi}(\sigma_1 k + |\sigma_2| +|\langle V,N\rangle|) e^{-2\hat{t}} d\theta d\hat{t}  \right)
\\
& \leq \frac{M_1}{2}+M_1\pi C_0+ M_1\int_0^\infty \int_0^{2\pi}(\sigma_1 k + |\sigma_2|)e^{-2\hat{t}}\,d\theta\, d\hat{t}
\,.
\end{align*}
Equality \eqref{EQspacetimeF} implies
\[
 \int_0^\infty \int_0^{2\pi}(\sigma_1 k +\sigma_2)e^{-2\hat{t}}d\theta d\hat{t} = \frac{L(0)}{2T} - \int_0^\infty \int_0^{2\pi}\langle V, N \rangle e^{-2\hat{t}}d\theta d\hat{t} \leq \frac{L(0)}{2T} + C_0 \pi
\,.
\]
Therefore we obtain the estimate
\[
 \int_0^\infty F_{\max}(t(\hat{t}))e^{-2\hat{t}}d\hat{t} \leq M_1\left(\frac{1}{2} +\frac{L(0)}{2}+2\pi C_0 \right).
\]
%So, there exists a sequence $\hat t_j\rightarrow\infty$ such that
%$F_{\max}(t(\hat t_j)) e^{-2\hat t_j} \rightarrow 0$.
%
%This means that $F(\theta,t(\hat t_j)) e^{-2\hat t_j} \rightarrow 0$ for all
%$\theta$.

Observe that $F_{\max}(t(\hat t))\rightarrow\infty$ and also by \eqref{maximum}
$F(\theta,t(\hat t))\rightarrow\infty$ for all $\theta$.
Let $\hat t_0\in(0,\infty)$ be such that $F(\theta,t(\hat t)) > M$ for all
$\hat t > \hat t_0$.
Then by Proposition \ref{PNsturmF} $(F_{\theta\theta} + F)(\theta,t(\hat t))
\ge 0$ for all $\hat t > \hat t_0$.
Set
$Q = F + t(|\sigma_2|C_1 + 2C_0C_1)$; then
$Q_t = F_t + (|\sigma_2|C_1 + 2C_0C_1) \ge 0$ and $Q(\theta,\cdot)$ is a
monotone function.
Furthermore
\begin{align*}
 \int_0^\infty Q_{\max}(t(\hat{t}))e^{-2\hat{t}}\,d\hat{t}
 &= \int_0^\infty F_{\max}(t(\hat{t}))e^{-2\hat{t}}\,d\hat{t}
	+ (|\sigma_2|C_1 + 2C_0C_1)\int_0^\infty t(\hat t) e^{-2\hat{t}}\, d\hat{t}
\\
 &= \int_0^\infty F_{\max}(t(\hat{t}))e^{-2\hat{t}}d\hat{t}
	+ T(|\sigma_2|C_1 + 2C_0C_1)\int_0^\infty e^{-2\hat{t}}-e^{-4\hat{t}}\, d\hat{t}
\\&<\infty
\,.
\end{align*}
Therefore $Q_{\max}(t(\hat{t}))e^{-2\hat{t}}\rightarrow0$ as $\hat
t\rightarrow\infty$ along the full sequence, and so also must the same be true
for $F_{\max}(t(\hat{t}))e^{-2\hat{t}}$.

Using \eqref{maximum}, we see that for any sequence $\theta_i\in\S$ (not
necessarily convergent), and any sequence $\hat t_i\rightarrow\infty$,
$F(\theta_i,t(\hat{t}))e^{-2\hat{t}}\rightarrow0$.
Let $\theta_i$ be such that $k_{\max}(t(\hat t_i)) = k(\theta_i)$.
Since $|(F-\sigma_1k)(\theta,t(\hat t))| \le |\sigma_2| + C_0$,
we find that $k_{\max}(t(\hat{t}))e^{-2\hat{t}}\rightarrow0$ as $\hat
t\rightarrow\infty$.
In view of the rescaling, $\hat k_{\max} = \sqrt{2T} e^{-\hat t}k_{\max}$, and
the conclusion follows.
\end{proof}%}}}

The proof above in fact establishes also the following estimate, which will be
useful in the proof of the entropy estimate.

\begin{cor}
Let $\gamma:\S^1\times[0,T)\rightarrow\R^2$ be a solution to \eqref{flow}
satisfying the hypotheses of Theorem \ref{T:main}.
Then
\[
\int_0^\infty \hat{k}_{\max}(\hat{t})e^{-\hat{t}}\,d\hat t < \infty
\,.
\]
\label{CYrescaledcurvintegral}
\end{cor}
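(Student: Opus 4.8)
The plan is to extract the estimate directly from the computations already carried out in the proof of Lemma \ref{LMrescaledcurv1}. There the key intermediate bound obtained was
\[
\int_0^\infty F_{\max}(t(\hat{t}))e^{-2\hat{t}}\,d\hat{t} \leq M_1\left(\tfrac{1}{2} +\tfrac{L(0)}{2}+2\pi C_0 \right) < \infty\,,
\]
with $M_1$ as in \eqref{Fmax}. This is the substantive content; the corollary is then a bookkeeping consequence.

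First I would rewrite the quantity of interest using the rescaling relation $\hat k_{\max} = \sqrt{2T}\,e^{-\hat t}k_{\max}$, so that
\[
\int_0^\infty \hat{k}_{\max}(\hat{t})e^{-\hat{t}}\,d\hat t = \sqrt{2T}\int_0^\infty k_{\max}(t(\hat t))\,e^{-2\hat t}\,d\hat t\,;
\]
note the "extra" factor $e^{-\hat t}$ in the statement is exactly what converts the unrescaled integrand into one with the $e^{-2\hat t}$ weight that was controlled above. Next I would invoke the pointwise comparison $|(F-\sigma_1 k)(\theta,t)| \le |\sigma_2| + C_0$ already used in that proof: evaluating at the point where $k(\cdot,t)$ is maximal gives $\sigma_1 k_{\max}(t) \le F_{\max}(t) + |\sigma_2| + C_0$. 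Substituting this in and using $\int_0^\infty e^{-2\hat t}\,d\hat t = \tfrac12$ yields
\[
\int_0^\infty \hat{k}_{\max}(\hat{t})e^{-\hat{t}}\,d\hat t
\le \frac{\sqrt{2T}}{\sigma_1}\left(\int_0^\infty F_{\max}(t(\hat t))e^{-2\hat t}\,d\hat t + \frac{|\sigma_2|+C_0}{2}\right)
\le \frac{\sqrt{2T}}{\sigma_1}\left(M_1\left(\tfrac12+\tfrac{L(0)}{2}+2\pi C_0\right) + \frac{|\sigma_2|+C_0}{2}\right)\,,
\]
which is finite since $T < \infty$ by Lemma \ref{LMfintietime} (or Proposition \ref{PNcasec} in case (c)). This completes the argument.

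There is no real obstacle here: every ingredient (the space-time integrability of $F_{\max}e^{-2\hat t}$, the comparison between $F$ and $\sigma_1 k$, and finiteness of $T$) has already been established. The only point requiring a moment's care is aligning the weights — confirming that the $e^{-\hat t}$ appearing in the corollary together with the $e^{-\hat t}$ hidden inside $\hat k_{\max}$ reproduces precisely the $e^{-2\hat t}$ weight under which the $F_{\max}$ integral was shown to be finite.
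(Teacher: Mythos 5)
Your proof is correct and uses the same ingredients the paper relies on: the paper records this corollary as an immediate byproduct of the proof of Lemma \ref{LMrescaledcurv1}, which already establishes the finiteness of $\int_0^\infty F_{\max}(t(\hat t))e^{-2\hat t}\,d\hat t$, the pointwise bound $|F-\sigma_1 k|\le|\sigma_2|+C_0$, and the rescaling identity $\hat k_{\max}=\sqrt{2T}\,e^{-\hat t}k_{\max}$. You have simply assembled the bookkeeping explicitly, and the alignment of the exponential weights is handled correctly.
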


\begin{rmk}
It is simple to see the integrated form of Corollary \ref{CYrescaledcurvintegral}, that is,
\[
\int_0^\infty \int_0^{2\pi} \hat k(\hat t)e^{-\hat t}\,d\theta\,d\hat t < \infty\,.
\]
It follows from
\[
 \int_0^\infty \int_0^{2\pi}(\sigma_1 k +\sigma_2)e^{-2\hat{t}}d\theta d\hat{t} \leq \frac{L(0)}{2T} + C_0 \pi
\,.
\]
\end{rmk}

Now we must derive the evolution of $\hat k$.

\begin{lem}
\label{LMkhatevo}
Let $\gamma:\S^1\times[0,T)\rightarrow\R^2$ be a solution to \eqref{flow}
satisfying the hypotheses of Theorem \ref{T:main}.
The evolution of the rescaled curvature is 
\begin{align*}
\hat{k}_{\hat{t}} &=
	- \hat{k} 
	+ \sigma_1 \hat{k}^2\hat{k}_{\theta \theta}+\sigma_1 \hat{k}^3
	+ \sqrt{2T}\sigma_2e^{-\hat{t}}\hat{k}^2
	- 2Te^{-2\hat{t}}\hat{k}( 2\langle D_T\hat{V},T \rangle-\langle D_N\hat{V},N\rangle) 
\\ &\qquad
	- 2Te^{-2\hat{t}}\hat{k}_{\theta} \langle D_T\hat{V} ,N \rangle 
	+ 2T\sqrt{2T}e^{-3\hat{t}}\langle D^2_{T,T}\hat{V}, N\rangle
\,.
\end{align*}
Here we use the notation $\hat{V} = V\left( \frac{\hat{\gamma}}{{\phi}} +\mathcal{O} \right)$.
\end{lem}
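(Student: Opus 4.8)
The plan is to derive the evolution of $\hat k$ directly from that of $k$ in the angle parametrisation (Lemma~\ref{LMkevothetaparam}) together with the chain rule, exploiting that a positive dilation fixes tangent directions. First I would note that, since $\hat\gamma = \phi\,(\gamma-\mathcal{O})$ with $\phi = \phi(t(\hat t))>0$ depending only on $\hat t$, the unit tangent of $\hat\gamma$ agrees with that of $\gamma$; hence the angle parameter is unchanged ($\hat\theta=\theta$), the orthonormal frame $\{T,N\}$ is common to both curves, and curvature scales inversely to length:
\[
\hat k(\theta,\hat t) = \phi^{-1}\,k(\theta, t(\hat t))\,.
\]
Differentiating in $\theta$ (with $\phi$ independent of $\theta$) then gives $k_\theta = \phi\,\hat k_\theta$ and $k_{\theta\theta} = \phi\,\hat k_{\theta\theta}$.

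Next I would differentiate $\hat k = \phi^{-1}k$ in $\hat t$, treating $\theta$ and $\hat t$ as independent variables as arranged in Section~\ref{S:convex}. Using $\frac{d}{d\hat t}\phi^{-1} = -\phi^{-1}$ (immediate from $\phi^{-1} = \sqrt{2T}\,e^{-\hat t}$) and $\frac{d}{d\hat t}\big[k(\theta,t(\hat t))\big] = t'(\hat t)\,k_{t'}$ with $t'(\hat t) = 2Te^{-2\hat t}$, one obtains
\[
\hat k_{\hat t} = -\hat k + \phi^{-1}\,t'(\hat t)\,k_{t'}\,.
\]
Into this I would substitute the six-term expression for $k_{t'}$ from Lemma~\ref{LMkevothetaparam}, replace each $k$, $k_\theta$, $k_{\theta\theta}$ by the corresponding hatted quantity times $\phi$, and collect the resulting powers of $\phi$ against $t'(\hat t)$. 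The bookkeeping reduces to the four identities $\phi^2 t'(\hat t)=1$, $\phi\,t'(\hat t)=\sqrt{2T}\,e^{-\hat t}$, $t'(\hat t)=2Te^{-2\hat t}$, and $\phi^{-1}t'(\hat t)=2T\sqrt{2T}\,e^{-3\hat t}$, which applied term-by-term produce exactly the six terms in the statement (the first two terms carrying no exponential weight, the middle terms picking up $e^{-\hat t}$ or $e^{-2\hat t}$, and the top-order $V$-term picking up $e^{-3\hat t}$). Finally, since $\gamma = \hat\gamma/\phi + \mathcal{O}$, I would rewrite $V$ and its derivatives along $\gamma$ using the abbreviation $\hat V = V(\hat\gamma/\phi + \mathcal{O})$; this is purely notational because the frame $\{T,N\}$ is unaffected by the rescaling.

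No genuine difficulty arises here: the computation is a chain rule together with the scaling relations. The only points meriting care are (i) verifying that the angle parameter and the frame $\{T,N\}$ are invariant under the dilation, so that $\hat k = \phi^{-1}k$ with $\hat k_\theta = \phi^{-1}k_\theta$; and (ii) tracking, for each term of $k_{t'}$, the exact power of $\phi$ that multiplies $t'(\hat t)$, since these powers are what produce the differing exponential weights in the final formula.
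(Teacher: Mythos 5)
Your proposal is correct and coincides with the paper's argument: both differentiate $\hat k = \phi^{-1}k$ by the chain rule, use $\partial_{\hat t}\phi^{-1} = -\phi^{-1}$ and $t'(\hat t) = 2Te^{-2\hat t}$ to reduce to $\hat k_{\hat t} = -\hat k + \phi^{-3}k_t$, and then substitute Lemma \ref{LMkevothetaparam} with $k = \phi\hat k$, $k_\theta = \phi\hat k_\theta$, $k_{\theta\theta}=\phi\hat k_{\theta\theta}$ to collect the powers of $\phi$ into the stated exponential weights. The only difference is that you make explicit the (correct) observation that the dilation leaves the angle parameter and the frame $\{T,N\}$ unchanged, a point the paper leaves implicit.
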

\begin{proof}%{{{
From the definition of the rescaling we have the relations
\begin{align*}
\phi(t(\hat{t})) &= \frac{1}{\sqrt{2T}}e^{\hat{t}}\\
%%
%% one of these was wrong (\phi_t = 1/\phi)
%%
%% \phi_t = \phi^{-3}
%% (1/\phi)_t = -\phi^{-2}\phi_t = -\phi^{-5}
%%
\frac{\partial t}{\partial \hat{t}} &= 2Te^{-2\hat{t}} = \frac{1}{\phi(t(\hat{t}))^2}
\,.
\end{align*}
Using the chain rule we calculate
\begin{align*}
\hat{k}_{\hat{t}}
	= \left(
		\frac{k(t(\hat{t}))}{\phi(t(\hat{t}))} 
	\right)_{\hat{t}} 
	&= 
	\frac{\partial}{\partial \hat t}\left(\frac{1}{\phi}\right) k(t(\hat{t}))
	+ \frac{1}{\phi} \frac{\partial k}{\partial t} \frac{\partial t}{\partial \hat{t}}
\\
	&= 
	-\phi^{-1}{k} 
	+ \phi^{-3} k_t
	 = 
	- \hat{k} 
	+ \phi^{-3} k_t
\,.
\end{align*}
Here we used $\frac{k}{\phi} = \hat{k}$.  (Note also that
$\frac{k_\theta}{\phi} = \hat{k}_\theta, \frac{k_{\theta \theta}}{\phi} =
\hat{k}_{\theta \theta}$.)
Therefore
\begin{align*}
\hat{k}_{\hat{t}}
 &= 
	- \hat{k} 
	+ \frac{1}{\phi^3}\left(\sigma_1k^2k_{\theta \theta}
	+ \sigma_1k^3
	+ \sigma_2k^2
	- k(2\langle D_T V, T \rangle-\langle D_N V, N \rangle)
	- k_\theta \langle D_TV , N \rangle +\langle D^2_{T,T}V, N \rangle\right)
\\
 &= 
	- \hat{k} 
	+ \sigma_1 \hat{k}^2\hat{k}_{\theta \theta}+\sigma_1 \hat{k}^3
	+ \sqrt{2T}\sigma_2e^{-\hat{t}}\hat{k}^2
	- 2Te^{-2\hat{t}}\hat{k}( 2\langle D_T\hat{V},T \rangle-\langle D_N\hat{V},N\rangle) 
\\ &\qquad
	- 2Te^{-2\hat{t}}\hat{k}_{\theta} \langle D_T\hat{V} ,N \rangle +2T\sqrt{2T}e^{-3\hat{t}}\langle D^2_{T,T}\hat{V}, N\rangle
\,,
\end{align*}
as required.	
\end{proof}%}}}

Thus the entropy along the rescaling evolves according to
\[
 \hat{E}(\hat{t})_{\hat{t}} = \frac{d}{d\hat{t}} \frac{1}{2\pi} \int_0^{2\pi} \log \hat{k} d\theta = \frac{1}{2\pi} \int_0^{2\pi} \frac{\hat{k}_{\hat{t} }}{\hat{k}}d\theta
\,.
\]
Using Lemma \ref{LMkhatevo} we find
\begin{align*}
2\pi\hat{E}(\hat{t})_{\hat{t}} &= \int_0^{2\pi} -1 +\sigma_1 \hat{k}(\hat{k}_{\theta \theta}+ \hat{k})+\sqrt{2T}\sigma_2e^{-\hat{t}}\hat{k} - 2Te^{-2\hat{t}} (2\langle D_T\hat{V},T \rangle-\langle D_T\hat{V} ,N \rangle)\\
	&\quad
	  -2Te^{-2\hat{t}}\frac{\hat{k}_{\theta}}{\hat{k}} \langle D_T\hat{V} ,N \rangle +\frac{2T\sqrt{2T}e^{-3\hat{t} }}{\hat{k}}\langle D^2_{T,T}\hat{V}, N\rangle d\theta
\\
	& = \int_0^{2\pi} u\, d\theta
	 + \int_0^{2\pi} 3\sqrt{2T}e^{-\hat{t}}\sigma_2\hat{k}   - 2Te^{-2\hat{t}}(2 \langle D_T\hat{V},T \rangle-\langle D_T\hat{V} ,N \rangle) -2Te^{-2\hat{t}}\frac{\hat{k}_{\theta}}{\hat{k}} \langle D_T\hat{V} ,N \rangle\\
	&\quad 
	+\frac{2T\sqrt{2T}e^{-3\hat{t} }}{\hat{k}}\langle D^2_{T,T}\hat{V}, N\rangle d\theta
\,.
\end{align*}
Here we use the substitution $u := -1+\sigma_1\hat{k}(\hat{k}_{\theta \theta}+ \hat{k})-2\sqrt{2T}\sigma_2e^{-\hat{t}}\hat{k}$.

Our strategy now is to show that $f = \int_0^{2\pi}u\,d\theta$ is an integrable function in time.
This will allow an entropy estimate which eventuates in a curvature bound.
Note that we restart the count of the constants notated by $D$.

\begin{lem}(Rescaled Gradient Estimate)
There are constants $D_1,D_2, D_3$ depending only on $C_0$, $C_1$, $\sigma_2$, $K$, $\max k(\cdot,0)$, $\max k_\theta(\cdot,0)$ so that the rescaled derivative of curvature satisfies
\begin{equation*}
	\sigma_1^2 \hat{k}_\theta^2 \leq 2TD_1^2+D_12\pi\sqrt{2T}+ (D_1\sqrt{2T}+2\pi) \int_0^{2\pi} \sigma_1^2\hat{k}^2d\theta = D_2+D_3\int_0^{2\pi} \sigma_1^2\hat{k}^2d\theta
\,.
\end{equation*}
\end{lem}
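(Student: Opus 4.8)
The plan is to deduce this from the unrescaled speed gradient bound \eqref{fgrad} by pulling it back through the rescaling, rather than running a fresh Sturmian argument on the evolution of $\hat k$ from Lemma \ref{LMkhatevo}. The key observation is that the rescaled normal speed $\hat F := F/\phi = \sqrt{2T}e^{-\hat t}F$ satisfies
\[
\hat F = \sigma_1\hat k + \sqrt{2T}e^{-\hat t}\bigl(\sigma_2 + \langle V,N\rangle\bigr)\,,
\]
so that, since the angle variable is unchanged under the rescaling ($\hat\theta=\theta$),
\[
\sigma_1\hat k_\theta = \hat F_\theta - \sqrt{2T}e^{-\hat t}\,\partial_\theta\langle V,N\rangle\,.
\]

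First I would record two elementary bounds. Using $\gamma_\theta = T/k$ and $N_\theta=-T$ gives $\partial_\theta\langle V,N\rangle = \tfrac1k\langle D_TV,N\rangle - \langle V,T\rangle$, which has absolute value at most $C_1/K + C_0$ by $k>K$. Also $\hat F_\theta = \sqrt{2T}e^{-\hat t}F_\theta$, so \eqref{fgrad} yields $|\hat F_\theta| \le \sqrt{2T}e^{-\hat t}M + \int_0^{2\pi}|\hat F|\,d\theta$, and since $\hat k>0$ we have $\int_0^{2\pi}|\hat F|\,d\theta \le \int_0^{2\pi}\sigma_1\hat k\,d\theta + 2\pi\sqrt{2T}e^{-\hat t}(|\sigma_2|+C_0)$. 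Combining these and using $e^{-\hat t}\le 1$ gives the first-order estimate
\[
\sigma_1|\hat k_\theta| \le \sqrt{2T}\,D_1 + \int_0^{2\pi}\sigma_1\hat k\,d\theta\,,\qquad D_1 := M + 2\pi(|\sigma_2|+C_0)+\tfrac{C_1}{K}+C_0\,,
\]
where $M$ is the constant from Proposition \ref{PNsturmF}; inspecting its definition shows $M$, and hence $D_1$, depends only on $\sigma_1,\sigma_2,C_0,C_1,K$ and on $(\tilde F^2+\tilde F_\theta^2)(\cdot,0)$, i.e. on $\max k(\cdot,0)$ and $\max k_\theta(\cdot,0)$, as claimed.

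To conclude I would square this, bounding the cross term by integrating the pointwise inequality $2\sigma_1\hat k \le 1 + \sigma_1^2\hat k^2$ over $\S^1$ (so that $2\int_0^{2\pi}\sigma_1\hat k\,d\theta \le 2\pi + \int_0^{2\pi}\sigma_1^2\hat k^2\,d\theta$) and the remaining square term by Cauchy--Schwarz ($\bigl(\int_0^{2\pi}\sigma_1\hat k\,d\theta\bigr)^2 \le 2\pi\int_0^{2\pi}\sigma_1^2\hat k^2\,d\theta$). This gives
\[
\sigma_1^2\hat k_\theta^2 \le 2TD_1^2 + 2\pi\sqrt{2T}\,D_1 + \bigl(\sqrt{2T}\,D_1 + 2\pi\bigr)\int_0^{2\pi}\sigma_1^2\hat k^2\,d\theta\,,
\]
which is exactly the claim with $D_2 = 2TD_1^2 + 2\pi\sqrt{2T}D_1$ and $D_3 = \sqrt{2T}D_1 + 2\pi$. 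The substantive inputs (the Sturmian comparison of Proposition \ref{PNsturmF} and the estimate \eqref{fgrad}) are already available, so there is no genuine obstacle here; the only point requiring care is correctly tracking the scaling factors $\phi = e^{\hat t}/\sqrt{2T}$ and $1/\phi = \sqrt{2T}e^{-\hat t}$ throughout, together with confirming the stated dependence of the constants — both routine.
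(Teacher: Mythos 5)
Your proposal is correct and follows essentially the same route as the paper: both start from the unrescaled speed-gradient bound \eqref{fgrad}, isolate $\sigma_1 k_\theta$ from $F_\theta = \sigma_1 k_\theta + \tfrac{1}{k}\langle D_T V,N\rangle - \langle V,T\rangle$ using $k>K$ and the $C_0,C_1$ bounds, multiply through by $\phi^{-1}=\sqrt{2T}e^{-\hat t}$, and then square with H\"older and Cauchy to pass from $\int\sigma_1\hat k\,d\theta$ to $\int\sigma_1^2\hat k^2\,d\theta$ (whether you rewrite in terms of $\hat F$ first or rescale at the end is an immaterial reshuffling, since $\theta$ is untouched by the rescaling). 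One small point in your favour: your constant $D_1 = M + \tfrac{C_1}{K} + (2\pi+1)C_0 + 2\pi|\sigma_2|$ uses $|\sigma_2|$ where the paper writes $\sigma_2$, which is the correct form since $\sigma_2$ may be negative, and your remark unpacking the dependence of $M$ (hence $D_1$) on the initial data via Proposition \ref{PNsturmF} makes explicit something the paper leaves implicit.
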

\begin{proof}%{{{
Recall \eqref{fgrad}:
\[
 \sup\limits_{\theta \in [0,2\pi)} |F_\theta(\theta,t)|
 \leq M+\int_0^{2\pi}|F(\theta,t)|d\theta
\,.
\]
Since $F_\theta(\theta,t) = \sigma_1k_\theta+ \frac{\langle D_TV,N \rangle}{k} -\langle V,N\rangle$,  by the  triangle inequality
\[
 |\sigma_1 k_\theta| \leq  \sup\limits_{\theta \in [0,2\pi)}
|F_\theta(\theta,t)|+\left| \frac{\langle D_TV,N \rangle}{k}\right|
+\left|\langle V,N\rangle \right|.
\]
Therefore, using the hypotheses on $V$ \eqref{EQvhyps} (and confinement results
as appropriate), as well as Lemma \ref{LMkconvex}, we find
\[
 |\sigma_1k_{\theta}| \leq  \frac{C_1}{K}+C_0 +  M+\int_0^{2\pi}|\sigma_1 k+\sigma_2+ \langle V, N\rangle |d\theta,
\]
which implies 
\[
 |\sigma_1k_\theta| \leq D_1 +\int_0^{2\pi}\sigma_1 kd\theta
\,.
\]
Here we have
\[ D_1 = M+\frac{C_1}{K}+(2\pi+1)C_0+2\pi \sigma_2.
\]
Now we multiply through by the reciprocal of the rescaling factor to get the rescaled curvatures
\[
	|\sigma_1\hat k_\theta|
 = |\sqrt{2T}e^{-\hat{t}} \sigma_1k_\theta|
 \leq D_1\sqrt{2T}e^{-\hat{t}}+ \int_0^{2\pi}\sqrt{2T}e^{-\hat{t}}\sigma_1 kd\theta
\,.
\]
Squaring gives
\[
 \sigma_1^2 \hat{k}_\theta^2 \leq   2TD^2_1e^{-2\hat{t}}+2D_1e^{-\hat{t}}\sqrt{2T}\int_0^{2\pi}\sigma_1\hat{k}d\theta+\left(\int_0^{2\pi}\sigma_1\hat{k}d\theta\right)^2
\,.
\]
Note that the H\"older inequality implies
\[
 \left(\int_0^{2\pi}\sigma_1\hat{k}d\theta\right)^2 \leq 2\pi \int_0^{2\pi}\sigma_1^2\hat{k}^2d\theta
\,.
\]
Applying this and then using the Cauchy inequality gives
\[
 \sigma_1^2\hat{k}_\theta^2
 \leq 2TD_1^2+2D_1\sqrt{2T}\left( \int_0^{2\pi}\frac{1}{2}d\theta +\int_0^{2\pi}\frac{\sigma_1^2\hat{k}^2}{2}d\theta\right)+2\pi\int_0^{2\pi}\sigma_1^2\hat{k}^2d\theta
\,.
\]
Thus we obtain the gradient estimate
\begin{equation} \label{gradest}
	\sigma_1^2 \hat{k}_\theta^2 \leq 2TD_1^2+D_12\pi\sqrt{2T}+ (D_1\sqrt{2T}+2\pi) \int_0^{2\pi} \sigma_1^2\hat{k}^2d\theta = D_2+D_3\int_0^{2\pi} \sigma_1^2\hat{k}^2d\theta.
\end{equation}
\end{proof}%}}}

Although we won't need this improvement here, we note that by waiting, the bound above can be improved to
\[
	\sigma_1^2 \hat{k}_\theta^2 \leq \varepsilon + (2\pi+\delta)\int_0^{2\pi} \sigma_1^2\hat{k}^2d\theta\,,
\]
for $\hat t > \hat t_0$, where $\varepsilon(\hat t_0),\delta(\hat t_0) \rightarrow 0$ as $\hat t_0\rightarrow\infty$.

\begin{prop}
Let $u(\theta,\hat t) = -1+\sigma_1\hat{k}(\hat{k}_{\theta \theta}+\hat{k}) -2\sqrt{2T}\sigma_2e^{-\hat{t}}\hat{k}$.
Then
\[
	f(\hat t) = \int_0^{2\pi}u\,d\theta
 %= \int_0^{2\pi}-1+\sigma_1\hat{k}(\hat{k}_{\theta \theta}+\hat{k}) -2\sqrt{2T}\sigma_2e^{-\hat{t}}\hat{k} d\theta
	< D(e^{-2\hat t} + e^{-\hat t}\hat k + e^{-2\hat t}\hat k^2) 
	\rightarrow 0\,,\qquad\text{ as $t\rightarrow\infty$,}
\]
where $D$ is a polynomial in 
 $T$, $C_1$, $C_2$, $\sigma_1$, $\sigma_2$ and $1/K$.
%is nonpositive for $\hat{t} >t_0$.
\label{PNfest}
\end{prop}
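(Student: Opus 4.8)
The plan is to integrate $u$ over $\theta$, peel off the two contributions carrying explicit powers of $e^{-\hat t}$, and reduce everything to the single scalar $\int_0^{2\pi}(\hat k^2-\hat k_\theta^2)\,d\theta$, which will be controlled by a rescaled Gage--Hamilton isoperimetric inequality together with the area convergence already established in Lemma~\ref{LMareaconv}. First I would integrate by parts: since $\hat k(\cdot,\hat t)$ is $2\pi$-periodic, $\int_0^{2\pi}\hat k\hat k_{\theta\theta}\,d\theta=-\int_0^{2\pi}\hat k_\theta^2\,d\theta$, hence
\[
f(\hat t)=-2\pi+\sigma_1\int_0^{2\pi}(\hat k^2-\hat k_\theta^2)\,d\theta-2\sqrt{2T}\,\sigma_2\,e^{-\hat t}\int_0^{2\pi}\hat k\,d\theta.
\]
The last term is bounded in absolute value by $4\pi\sqrt{2T}\,|\sigma_2|\,e^{-\hat t}\hat k_{\max}$, which is of the form $e^{-\hat t}\hat k$ permitted by the statement, so the whole problem reduces to estimating $M(\hat t):=-2\pi+\sigma_1\int_0^{2\pi}(\hat k^2-\hat k_\theta^2)\,d\theta$ from above.

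For $M$ the decisive input is the sharp inequality $\int_0^{2\pi}(k^2-k_\theta^2)\,d\theta\le2\pi$ valid for every convex curve of enclosed area $\pi$ (Gage--Hamilton \cite{gage1986heat}; the elementary Wirtinger bound $\int_0^{2\pi}(k^2-k_\theta^2)\,d\theta\le\frac1{2\pi}\big(\int_0^{2\pi}k\,d\theta\big)^2$ alone is not enough here). A homothety taking a convex curve of area $A$ to area $\pi$ multiplies $\int(k^2-k_\theta^2)\,d\theta$ by $A/\pi$, so this reads $\int_0^{2\pi}(k^2-k_\theta^2)\,d\theta\le2\pi^2/A$; applying it to $\hat\gamma(\cdot,\hat t)$, with $\hat A(\hat t)$ its enclosed area, gives
\[
M(\hat t)\le-2\pi+\frac{2\pi^2\sigma_1}{\hat A(\hat t)}=\frac{2\pi\big(\pi\sigma_1-\hat A(\hat t)\big)}{\hat A(\hat t)}.
\]
By Lemma~\ref{LMareaconv}, $\hat A(\hat t)\to\pi\sigma_1$, so this already gives $M(\hat t)\to0$. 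To make it quantitative I would differentiate $\hat A=(2T-2t)^{-1}A$ and use the area evolution of Proposition~\ref{p:evo} to obtain the linear ODE
\[
\hat A_{\hat t}=2\big(\hat A-\pi\sigma_1\big)-\sqrt{2T}\,e^{-\hat t}\Big(\sigma_2\hat L+\int_{\hat\gamma}\IP{\hat V}{\hat\nu}\,d\hat s\Big),
\]
where $\hat L$ is the length of $\hat\gamma$; since $|\hat V|\le C_0$ on the confined flow, the inhomogeneity is $O(e^{-\hat t}\hat L)$. Provided $\hat L$ is bounded --- a preliminary I would get from the length monotonicity $L'\le-\sigma_1\int_\gamma k^2\,ds+O(1)$ of Lemma~\ref{LMlengthbd}/Proposition~\ref{PNcasec}, Gage's isoperimetric inequality $\int_\gamma k^2\,ds\ge\pi L/A$, and $A=(2T-2t)\hat A=O(T-t)$, which together force $L=O(\sqrt{T-t})$ and hence $\hat L$ bounded --- solving this ODE and using that $\hat A$ has a finite limit yields $|\hat A(\hat t)-\pi\sigma_1|\le D'e^{-\hat t}$, and therefore $M(\hat t)\le D''e^{-\hat t}$.

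Assembling the pieces, $f(\hat t)\le D''e^{-\hat t}+4\pi\sqrt{2T}\,|\sigma_2|\,e^{-\hat t}\hat k_{\max}$. Since $\hat k_{\max}\ge\frac1{2\pi}\int_0^{2\pi}\hat k\,d\theta\ge\hat L/(2\hat A)$ is bounded below by a positive constant for large $\hat t$ (again by Gage's inequality and $\hat A\to\pi\sigma_1$), the first term is itself $O(e^{-\hat t}\hat k_{\max})$, so $f(\hat t)\le D\big(e^{-2\hat t}+e^{-\hat t}\hat k_{\max}+e^{-2\hat t}\hat k_{\max}^2\big)$ after enlarging $D$; that $f(\hat t)\to0$ then follows from $e^{-\hat t}\hat k_{\max}\to0$ (Lemma~\ref{LMrescaledcurv1}), and the displayed right-hand side is integrable in $\hat t$ by Corollary~\ref{CYrescaledcurvintegral} (using also $(e^{-\hat t}\hat k_{\max})^2\le e^{-\hat t}\hat k_{\max}$ eventually), which is precisely what the subsequent entropy estimate needs. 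The main obstacle is the sharp constant in the bound on $M$: the crude route via Wirtinger and $\int_0^{2\pi}\hat k\,d\theta\le2\pi\hat k_{\max}$ gives only $M(\hat t)=O(\hat k_{\max}^2)$, which need not vanish, so one genuinely needs both the rescaled Gage--Hamilton inequality and the exponential rate of $\hat A\to\pi\sigma_1$; the latter in turn rests on the somewhat delicate boundedness of the rescaled length $\hat L$, which is where the confinement hypotheses and the geometric estimate Lemma~\ref{LMgeomest} enter.
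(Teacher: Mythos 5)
Your proposal takes a genuinely different route from the paper. The paper's proof of Proposition \ref{PNfest} computes $f'(\hat t)$, organises the result into
$f'(\hat t)\ge \frac1{6\pi}f^2(\hat t)+2f(\hat t)-\delta(\hat t)$
with $\delta(\hat t)\to 0$, and then argues that if $f$ ever reached $\frac12\delta(\hat t)$ it would solve a Riccati inequality and blow up in finite time --- a contradiction since the rescaled flow is smooth on $[0,\infty)$. You instead try to estimate $f(\hat t)=-2\pi+\sigma_1\int(\hat k^2-\hat k_\theta^2)\,d\theta - 2\sqrt{2T}\sigma_2 e^{-\hat t}\int\hat k\,d\theta$ directly, via a sharp isoperimetric inequality together with exponential convergence of the rescaled area. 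Conceptually this is attractive (no differential-inequality gymnastics), but it has two genuine gaps that your sketch does not close.

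First, the critical ingredient --- that $\int_0^{2\pi}(k^2-k_\theta^2)\,d\theta\cdot A\le2\pi^2$ for smooth convex curves, with equality only for circles --- is \emph{not} in Gage--Hamilton \cite{gage1986heat}. Their entropy estimate (and the present paper's Lemma \ref{LMintegralest}) proceeds through the median curvature $k^*$ and a Wirtinger inequality on sub-intervals of length $\le\pi$, precisely \emph{because} no such clean global bound is available there: the whole point of that argument is to substitute a non-sharp but provable estimate for the sharp one you are invoking. You do correctly recognise that the naive Wirtinger bound $\int(k^2-k_\theta^2)\,d\theta\le\frac1{2\pi}\big(\int k\,d\theta\big)^2$ is too weak, but the sharper statement you substitute for it is not something you can cite; it would have to be proved, and a proof would be roughly of the same depth as the entropy monotonicity of the normalised flow. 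Without it, step one of your argument does not stand.

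Second, even granting that inequality, your derivation of the exponential rate $|\hat A(\hat t)-\pi\sigma_1|\lesssim e^{-\hat t}$ hinges on $\hat L$ being bounded, and the argument you sketch does not deliver that. From $\hat A\to\pi\sigma_1$ one only knows $A\le(1+\varepsilon)2\sigma_1\pi(T-t)$ for $t$ close to $T$, so Gage's inequality gives $L'\le -\frac{L}{2(1+\varepsilon)(T-t)}+C$; integrating with the factor $(T-t)^{-1/(2(1+\varepsilon))}$ yields $L=O\big((T-t)^{1/(2(1+\varepsilon))}\big)$, which for every $\varepsilon>0$ is strictly weaker than $O(\sqrt{T-t})$, so $\hat L=L/\sqrt{2(T-t)}$ diverges like $(T-t)^{-\varepsilon/(2(1+\varepsilon))}$ by this estimate. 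One would need a bootstrap (feed a suboptimal rate on $\hat A-\pi\sigma_1$ back into the length ODE and iterate), which you do not carry out; note also that the paper only proves $\hat L$ bounded at \eqref{EQhatlbd}, which lies \emph{downstream} of Proposition \ref{PNfest} in the logical order, so appealing to that would be circular.

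Apart from these two points your computations (the integration by parts reducing $f$ to $-2\pi+\sigma_1\int(\hat k^2-\hat k_\theta^2)\,d\theta$ plus an $O(e^{-\hat t}\hat k_{\max})$ error, the homothety scaling, the ODE for $\hat A_{\hat t}$, and the observation that the unstable $e^{2\hat t}$ mode must vanish because $\hat A$ converges) are correct. But the proof cannot be accepted as written: one essential inequality is mis-attributed and unproved, and the auxiliary length estimate needed to make the other ingredient quantitative is not actually obtained.
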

\begin{proof}%{{{
Firstly, from integration by parts we have 
\begin{align*}
f &= \int_0^{2\pi}ud\theta = \int_0^{2\pi} -1+\sigma_1\hat{k}(\hat{k}_{\theta \theta}+\hat{k}) - 2\sqrt{2T}\sigma_2e^{-\hat{t}}\hat{k}d\theta\\
	&= \int_0^{2\pi} -1-\sigma_1\hat{k}^2_{\theta}+\sigma_1\hat{k}^2-2\sqrt{2T}\sigma_2e^{-\hat{t}}{\hat{k}}d\theta.
\end{align*}
Therefore the derivative of $f(\hat{t})$ is 
\begin{align*}
f'(\hat{t}) &= \int_0^{2\pi}-2\sigma_1\hat{k}_{{\theta}}\hat{k}_{{\theta}\hat{t}}+2\sigma_1 \hat{k}\hat{k}_{\hat{t}}+2\sqrt{2T}e^{-\hat{t}}\sigma_2\hat{k}-2\sqrt{2T}e^{-\hat{t}}\sigma_2\hat{k}_{\hat{t}}d{\theta}\\
	&= 2\int_0^{2\pi} \left[\sigma_1(\hat{k}_{{\theta} {\theta}}+\hat{k})-\sqrt{2T}e^{-\hat{t}}\sigma_2  \right]\hat{k}_{\hat{t}}+\sqrt{2T}e^{-\hat{t}}\sigma_2\hat{k} d\theta\\
	& = 2\int_0^{2\pi} \left[\sigma_1(\hat{k}_{{\theta} {\theta}}+\hat{k})-\sqrt{2T}e^{-\hat{t}}\sigma_2  \right]\\
	&\bigg[ -\hat{k} +\sigma_1 \hat{k}^2(\hat{k}_{\theta \theta}+ \hat{k})+\sqrt{2T}\sigma_2e^{-\hat{t}}\hat{k}^2 - 2Te^{-2\hat{t}}\hat{k}( 2\langle D_T\hat{V},T \rangle-\langle D_T\hat{V} ,N \rangle)\\
	& -2Te^{-2\hat{t}}\hat{k}_{\theta} \langle D_T\hat{V} ,N \rangle +2T\sqrt{2T}e^{-3\hat{t}}\langle D^2_{T,T}\hat{V}, N\rangle \bigg]+\sqrt{2T}\sigma_2e^{-\hat{t}}\hat k d\theta.
\end{align*}
This gives 
\begin{align*}
f'(\hat{t}) &= 
	2\int_0^{2\pi} -\sigma_1\hat{k}(\hat{k}_{\theta \theta}+\hat{k})
		 + \sqrt{2T}\sigma_2e^{-\hat{t}}\hat{k} 
		 + \sigma_1^2\hat{k}^2(\hat{k}_{\theta \theta}+\hat{k})^2
		 - \sqrt{2T}e^{-\hat{t}}\sigma_1\sigma_2\hat{k}^2(\hat{k}_{\theta \theta}+\hat{k})
\\
&\qquad		 + \sqrt{2T}\sigma_1\sigma_2e^{-\hat{t}}\hat{k}^2(\hat{k}_{\theta \theta}+\hat{k})
		 - 2T\sigma_2^2e^{-2\hat{t}}\hat{k}^2
		 - 2Te^{-2\hat{t}}\sigma_1\hat{k}(\hat{k}_{\theta \theta}+\hat{k})(2\langle D_T\hat{V}, T\rangle-\langle D_T\hat{V} ,N \rangle)
\\
&\qquad		 + 2T\sqrt{2T}e^{-3\hat{t}}\sigma_2\hat{k}(2 \langle D_T\hat{V},T\rangle-\langle D_T\hat{V} ,N \rangle)-2Te^{-2\hat{t}}\sigma_1\hat{k}_{\theta} (\hat{k}_{\theta \theta}+\hat{k}))\langle D_T\hat{V}, N\rangle
\\
&\qquad		 + 2T\sqrt{2T}\sigma_2e^{-3\hat{t}}\hat{k}_{\theta} \langle D_T\hat{V}, N\rangle +2T\sqrt{2T}e^{-3\hat{t}}\sigma_1(\hat{k}_{\theta \theta}+\hat{k})\langle D^2_{T,T}\hat{V},N\rangle 
\\
&\qquad		 - 4T^2e^{-4\hat{t}}\sigma_2\langle D^2_{T,T}\hat{V},N\rangle +\sqrt{2T}\sigma_2e^{-\hat{t}}\hat{k}\, d\theta.
\end{align*}
The fourth and fifth terms in the integrand cancel.
To begin controlling this expression, we deal with terms one to four, and the last term.
Recalling the definition of $u$ we have 
\begin{align*}
u^2 &= 1+\sigma_1^2\hat{k}^2(\hat{k}_{\theta \theta}+\hat{k})^2+8Te^{-2\hat{t}}\sigma_2^2\hat{k}^2
	-2\sigma_1\hat{k}(\hat{k}_{\theta \theta}+ \hat{k})\\
	&+4\sqrt{2T}e^{-\hat{t}}\sigma_2\hat{k}-4\sqrt{2T}e^{-\hat{t}}\sigma_1\sigma_2\hat{k}^2(\hat{k}_{\theta\theta}+ \hat{k}).
\end{align*}
This implies that 
\begin{align*}
u^2+u &= -\sigma_1\hat{k}(\hat{k}_{\theta \theta}+\hat{k})+2\sqrt{2T}e^{-\hat{t}}\sigma_2\hat{k}+\sigma_1^2\hat{k}^2(\hat{k}_{\theta \theta}+\hat{k})
\\
&\quad 
	+8T\sigma_2^2e^{-2\hat{t}}\hat{k}^2-4\sqrt{2T}e^{-\hat{t}}\sigma_1\sigma_2\hat{k}^2(\hat{k}_{\theta \theta}+\hat{k})
\,.
\end{align*}
Comparing $u^2+u$ with the first 6 terms of $f'(t)$ and the last we see that 
%\[
%	f'(t) = 2\int_0^{2\pi}u^2+u d\theta - \int_0^{2\pi} 20T\sigma_2^2 e^{-2\hat{t}}\hat{k}^2d\theta +\int_0^{2\pi} 8\sqrt{2T}e^{-\hat{t}}\sigma_2\sigma_1\hat{k}^2(\hat{k}_{\theta \theta}+\hat{k})d\theta+\text{ other terms}.
%\]
\begin{align*}
f'(\hat{t}) &=
	2\int_0^{2\pi} u^2+u\,d\theta
	- 20T\sigma_2^2 
	\int_0^{2\pi}
		 e^{-2\hat t}\hat{k}^2
	\,d\theta
	+ 8\sqrt{2T}e^{-\hat{t}}\sigma_1\sigma_2
	  \int_0^{2\pi} \hat{k}^2(\hat{k}_{\theta \theta}+\hat{k})\,d\theta
\\
&\quad
	+ 2\int_0^{2\pi}
		 - 2Te^{-2\hat{t}}\sigma_1\hat{k}(\hat{k}_{\theta \theta}+\hat{k})(2\langle D_T\hat{V}, T\rangle-\langle D_T\hat{V} ,N \rangle)
\\
&\qquad		 + 2T\sqrt{2T}e^{-3\hat{t}}\sigma_2\hat{k}(2 \langle D_T\hat{V},T\rangle-\langle D_T\hat{V} ,N \rangle)-2Te^{-2\hat{t}}\sigma_1\hat{k}_{\theta} (\hat{k}_{\theta \theta}+\hat{k})\langle D_T\hat{V}, N\rangle
\\
&\qquad		 + 2T\sqrt{2T}\sigma_2e^{-3\hat{t}}\hat{k}_{\theta} \langle D_T\hat{V}, N\rangle +2T\sqrt{2T}e^{-3\hat{t}}\sigma_1(\hat{k}_{\theta \theta}+\hat{k})\langle D^2_{T,T}\hat{V},N\rangle 
\\
&\qquad		 - 4T^2e^{-4\hat{t}}\sigma_2\langle D^2_{T,T}\hat{V},N\rangle\, d\theta.
\end{align*}
Using the definition of $u$ and the Cauchy inequality on the third term we have
\begin{align*}
\int_0^{2\pi} &8\sqrt{2T}e^{-\hat{t}}\sigma_2\sigma_1\hat{k}^2(\hat{k}_{\theta \theta}+\hat{k})\,d\theta 
= \int_0^{2\pi}8\sqrt{2T}\sigma_2e^{-\hat{t}}\hat{k}(u+1+2\sqrt{2T}\sigma_2e^{-\hat{t}}\hat{k})\,d\theta
\\
&= 8\sqrt{2T}\sigma_2\int_0^{2\pi}e^{-\hat{t}}\hat{k}u\,d\theta
 + 8\sqrt{2T}\sigma_2\int_0^{2\pi}e^{-\hat{t}}\hat{k}(1+2\sqrt{2T}\sigma_2e^{-\hat{t}}\hat{k})\,d\theta
\\
&\ge 
	- \frac{128T}{4}\sigma_2^2\int_0^{2\pi} e^{-2\hat{t}}\hat{k}^2\,d\theta
	- \int_0^{2\pi}u^2\,d\theta
\\
&\qquad 
	+ 8\sqrt{2T}\sigma_2\int_0^{2\pi}e^{-\hat{t}}\hat{k}\,d\theta
	+ 32T\sigma_2^2\int_0^{2\pi}e^{-2\hat{t}}\hat{k}^2\,d\theta
\\
&\ge 
	- \int_0^{2\pi}u^2\,d\theta
	+ 8\sqrt{2T}\sigma_2\int_0^{2\pi}e^{-\hat{t}}\hat{k}\,d\theta
\,.
\end{align*}
Inserting this into the expression for $f'$ above, 
\begin{align*}
f'(t) &\geq
	\int_0^{2\pi} u^2\,d\theta 
	+ 2\int_0^{2\pi}u\,d\theta
	- 20T\sigma_2^2\int_0^{2\pi}e^{-2\hat{t}}\hat{k}^2\,d\theta
	+ 8\sqrt{2T}\int_0^{2\pi}\sigma_2e^{-\hat{t}}\hat{k}\,d\theta
\\
&\quad
	+ 2\int_0^{2\pi}
		 - 2Te^{-2\hat{t}}\sigma_1\hat{k}(\hat{k}_{\theta \theta}+\hat{k})(2\langle D_T\hat{V}, T\rangle-\langle D_T\hat{V} ,N \rangle)
\\
&\qquad		 + 2T\sqrt{2T}e^{-3\hat{t}}\sigma_2\hat{k}(2 \langle D_T\hat{V},T\rangle-\langle D_T\hat{V} ,N \rangle)
		- 2Te^{-2\hat{t}}\sigma_1\hat{k}_{\theta} (\hat{k}_{\theta \theta}+\hat{k}))\langle D_T\hat{V}, N\rangle
\\
&\qquad		 + 2T\sqrt{2T}\sigma_2e^{-3\hat{t}}\hat{k}_{\theta} \langle D_T\hat{V}, N\rangle 
		 + 2T\sqrt{2T}e^{-3\hat{t}}\sigma_1(\hat{k}_{\theta \theta}+\hat{k})\langle D^2_{T,T}\hat{V},N\rangle 
\\
&\qquad		 - 4T^2e^{-4\hat{t}}\sigma_2\langle D^2_{T,T}\hat{V},N\rangle\, d\theta
\\&=
	\int_0^{2\pi} u^2\,d\theta 
	+ 2\int_0^{2\pi}u\,d\theta
	- 20T\sigma_2^2\int_0^{2\pi}e^{-2\hat{t}}\hat{k}^2\,d\theta
	+ 8\sqrt{2T}\int_0^{2\pi}\sigma_2e^{-\hat{t}}\hat{k}\,d\theta
\\
&\quad
	+ 2\int_0^{2\pi}
		 T_1 + T_2 + T_3 + T_4 + T_5 + T_6
		\, d\theta
	\,.
\end{align*}
Now it remains to estimate the remaining six terms
 $T_1,\ldots,T_6$
 in the large integral.
We begin with $T_1$:
\begin{align*}
2\int_0^{2\pi}&
 - 2Te^{-2\hat{t}}\sigma_1\hat{k}(\hat{k}_{\theta \theta}+\hat{k})(2\langle D_T\hat{V}, T\rangle-\langle D_T\hat{V} ,N \rangle)
\,d\theta
\\
	&= 
	- 4T\int_0^{2\pi} e^{-2\hat{t}}(2\langle D_T\hat{V},T\rangle-\langle D_T\hat{V} ,N \rangle) (u+1+2\sqrt{2T}e^{-\hat{t}}\sigma_2\hat{k})
\\&\ge
	 - \frac{16}{4\varepsilon_1}T^2\int_0^{2\pi}e^{-4\hat{t}}(2\langle D_T\hat{V},T\rangle-\langle D_T\hat{V} ,N \rangle)^2\,d\theta
	 - \varepsilon_1\int_0^{2\pi}u^2\,d\theta
\\&\quad
	 - 4T\int_0^{2\pi}e^{-2\hat{t}}3C_1\,d\theta
	 - 8T\sqrt{2T}\int_0^{2\pi}e^{-3\hat{t}}|\sigma_2|\hat{k}(2\langle D_T\hat{V},T\rangle-\langle D_T\hat{V} ,N \rangle)\,d\theta
\\&\geq 
	 - 2\pi e^{-2\hat{t}}\left(\frac{144T^2C_1^2}{\varepsilon_1}e^{-2\hat t} + 12TC_1\right)
	 -\varepsilon_1\int_0^{2\pi}u^2\,d\theta
\\&\quad
	 - 8T\sqrt{2T}\int_0^{2\pi}e^{-3\hat{t}}|\sigma_2|\hat{k}(2\langle D_T\hat{V},T\rangle-\langle D_T\hat{V} ,N \rangle)\,d\theta
\,.
\end{align*}
The final term above is now combined with $T_2$, which we estimate together:
\begin{align*}
 - 8T\sqrt{2T}\int_0^{2\pi}&e^{-3\hat{t}}\sigma_2\hat{k}(2\langle D_T\hat{V},T\rangle-\langle D_T\hat{V} ,N \rangle)\,d\theta
 + 4T\sqrt{2T}\int_{0}^{2\pi}e^{-3\hat{t}}\sigma_2\hat{k}(2\langle D_T\hat{V},T\rangle-\langle D_T\hat{V} ,N \rangle)\, d\theta
\\
 &\geq -4T\sqrt{2T}\int_{0}^{2\pi} 3C_1|\sigma_2|e^{-2\hat{t}}(e^{-\hat{t}}\hat{k}) \, d\theta
\\
 &\geq -4T\sqrt{2T}\int_{0}^{2\pi}\left(\frac{9C_1^2\sigma_2^2e^{-4\hat{t} }}{4} + e^{-2\hat{t}}\hat{k}^2\right)\, d\theta
\\
 &= 
	-18\pi T\sqrt{2T}C_1^2\sigma_2^2e^{-4\hat{t}}
	-4T\sqrt{2T}\int_{0}^{2\pi}e^{-2\hat{t}}\hat{k}^2\, d\theta
\,.
\end{align*}
For $T_3$, we integrate by parts and estimate:
\begin{align*}
2\int_0^{2\pi}&T_3\,d\theta
= 2\int_0^{2\pi}
	- 2Te^{-2\hat{t}}\sigma_1\hat{k}_{\theta} (\hat{k}_{\theta \theta}+\hat{k}))\langle D_T\hat{V}, N\rangle
	\,d\theta
\\&=
	-4T\int_{0}^{2\pi}e^{-2\hat{t}}\sigma_1\frac{\partial}{\partial \theta} \left(\frac{1}{2} \hat{k}^2_\theta+ \frac{1}{2}\hat{k}^2\right)\langle D_T\hat{V},N\rangle
	\,d\theta
\\&=
	2T\int_{0}^{2\pi}e^{-2\hat{t}}\sigma_1( \hat{k}^2_\theta+\hat{k}^2)\frac{\partial}{\partial \theta}\langle D_T\hat{V},N\rangle
	\,d\theta
\,.
\end{align*}
We compute
\begin{align*}
\frac{\partial}{\partial \theta}\langle D_T\hat{V},N\rangle
&= \langle D^2_{T,\frac{\hat{\gamma}_\theta}{\phi} } \hat{V}, N\rangle
	+ \langle D_N\hat{V},N\rangle
	- \langle D_T\hat{V},T\rangle
%\\
%&=
%	 \frac{1}{k} \langle D^2_{T,T}\hat{V},N\rangle
%	+ \langle D_N\hat{V},N\rangle
%	- \langle D_T\hat{V},T\rangle
\\ 
&=
	\frac{\sqrt{2T}e^{-\hat{t} }}{\hat{k}}\langle D^2_{T,T}\hat{V},N\rangle+\langle D_N\hat{V},N\rangle 
	- \langle D_T\hat{V},T\rangle
\,.
\end{align*}
Inserting into the first $T_3$ calculation and estimating, we find
\begin{align*}
2\int_0^{2\pi}&T_3\,d\theta
 =  
	2T\int_{0}^{2\pi}e^{-2\hat{t}}\sigma_1( \hat{k}^2_\theta+\hat{k}^2)
	\bigg(
		\frac{\sqrt{2T}e^{-\hat{t} }}{\hat{k}}\langle D^2_{T,T}\hat{V},N\rangle+\langle D_N\hat{V},N\rangle 
		- \langle D_T\hat{V},T\rangle
	\bigg)
	\,d\theta
\\&\geq 
	- \left(\frac{2TC_2}{K}+4TC_1 \right)
		\int_0^{2\pi}(\sigma_1e^{-2\hat{t}}(\hat{k}_\theta^2+\hat{k}^2))
	\,d\theta
\,.
\end{align*}
Integrating the estimate (\ref{gradest})  we have 
\begin{equation}
\label{EQP65est1}
 \int_0^{2\pi}\sigma_1e^{-2\hat{t}}\hat{k}^2_\theta d\theta
 \leq
	 \frac{1}{\sigma_1}\int_0^{2\pi}\left( D_2e^{-2\hat{t}} + D_3e^{-2\hat{t}}\hat{k}^2\right)d\theta
\,,
\end{equation}
thus we finally arrive at
\begin{align*}
2\int_0^{2\pi}&T_3\,d\theta
\geq
	 -\left(\frac{2TC_2}{K}+4TC_1 \right)\int_0^{2\pi}\left(\frac{D_2}{\sigma_1}e^{-2\hat{t}}+\left(\frac{D_3}{\sigma_1}+\sigma_1\right)e^{-2\hat{t}}\hat{k}^2\right)
	\,d\theta
\\
&= -2\pi\frac{D_2}{\sigma_1}\left(\frac{2TC_2}{K}+4TC_1 \right)
	e^{-2\hat{t}}
   -\left(\frac{2TC_2}{K}+4TC_1 \right)\left(\frac{D_3}{\sigma_1}+\sigma_1\right)
	\int_0^{2\pi}e^{-2\hat{t}}\hat{k}^2
	\,d\theta
\,.
\end{align*}
Let us turn to $T_4$.
This will be estimated similarly to $T_3$, using also \eqref{EQP65est1}.
The steps are:
\begin{align*}
2\int_0^{2\pi} T_4\,d\theta
&\ge
	-4T\sqrt{2T}C_1|\sigma_2|\int_0^{2\pi} e^{-3\hat{t}}\hat{k}_{\theta}
		\,d\theta
\\&\ge
 	- \int_0^{2\pi}\sigma_1e^{-2\hat{t}}\hat{k}^2_\theta d\theta 
	- 32T^3C_1^2\sigma_2^2\sigma_1^{-1}\int_0^{2\pi} e^{-2\hat{t}}
		\,d\theta
\\&\ge
	- \frac{1}{\sigma_1}\int_0^{2\pi}\left( D_2e^{-2\hat{t}} + D_3e^{-2\hat{t}}\hat{k}^2\right)d\theta
	- 64\pi T^3C_1^2\sigma_2^2\sigma_1^{-1} e^{-2\hat{t}}
\\&\ge
	- \frac{D_3}{\sigma_1}\int_0^{2\pi} e^{-2\hat{t}}\hat{k}^2
		\,d\theta
	- \bigg(
		2\pi D_2\sigma_1^{-1}
		+ 64\pi T^3C_1^2\sigma_2^2\sigma_1^{-1} 
		\bigg)
		e^{-2\hat{t}}
% 4T\sqrt{2T}\sigma_2\int_0^{2\pi}e^{-3\hat{t}}\hat{k}_\theta\langle D_T\hat{V}, N\rangle d\theta \geq -8T^2\sigma_2C_2\int_0^{2\pi}e^{-2\hat{t}}
%	\,d\theta
\,.
\end{align*}
For $T_5$, we use again $\frac{\sqrt{2T}e^{-\hat{t} }}{\hat{k}} = \frac{1}{k} \le \frac1K$ and estimate
\begin{align*}
2\int_0^{2\pi} &T_5\,d\theta
	= 2\int_0^{2\pi} 2T\sqrt{2T}e^{-3\hat{t}}\sigma_1(\hat{k}_{\theta \theta}+\hat{k})\langle D^2_{T,T}\hat{V},N\rangle
		d\theta
\\&= 
	4T\sqrt{2T}\int_0^{2\pi}e^{-3\hat{t}}\left( \frac{u}{\hat{k}}+\frac{1}{\hat{k}}+2\sqrt{2T}e^{-\hat{t}}\sigma_2\right)\langle D^2_{T,T}\hat{V},N\rangle
		\, d\theta
\\&\geq 
	-\frac{32T^3C_2^2}{4\varepsilon_2}\int_0^{2\pi}\frac{e^{-6\hat{t} }}{\hat{k}^2}d\theta -\varepsilon_2\int_0^{2\pi}u^2
		\, d\theta
\\&\quad
	+ 4T\sqrt{2T}\int_0^{2\pi}\frac{e^{-3\hat{t} }}{\hat{k}} \langle D^2_{T,T}\hat{V},N\rangle
		\, d\theta 
	+ 16T^2\int_0^{2\pi}e^{-4\hat{t}}\sigma_2 \langle D^2_{T,T}\hat{V},N\rangle
		\, d\theta
\\&\geq 
	- \frac{8T^2C_2^2}{K^2\varepsilon_2}\int_0^{2\pi}e^{-4\hat{t}}
		\,d\theta
	- \varepsilon_2\int_0^{2\pi}u^2
		\,d\theta
\\&\quad
	- \frac{4TC_2}{K}\int_0^{2\pi}e^{-3\hat{t}}
		\,d\theta
	- 32C_2\pi T^2|\sigma_2|\, e^{-4\hat{t}}
\\&=
	- \varepsilon_2\int_0^{2\pi}u^2
		\,d\theta
	- \bigg(
		\frac{16\pi T^2C_2^2}{K^2\varepsilon_2}
		+ 32C_2\pi T^2|\sigma_2|
	\bigg)
		e^{-4\hat{t}}
	- \frac{8\pi TC_2}{K}e^{-3\hat{t}}
\,.
\end{align*}
Finally the last term $T_6$ is estimated by
\[
 8T^2\int_0^{2\pi}e^{-4\hat{t}}\sigma_2\langle D^2_{T,T}\hat{V},N\rangle d\theta
	 \geq 
		- 16\pi T^2\sigma_2C_2 e^{-2\hat{t}}
\,.
\]
Now, we combine all of our estimates for $T_1, \ldots, T_6$.
The only further estimate we make is to estimate the different (better) terms $e^{-3\hat t}$, $e^{-4\hat t}$ by $e^{-2\hat t}$.
This makes the equations simpler, but is otherwise of no benefit.

Taking $\varepsilon_1 = \varepsilon_2 = \frac{1}{3}$, and estimating $\int u^2\,d\theta \ge \frac1{2\pi} \bigg(\int u\,d\theta\bigg)^2$, we have 
\begin{equation}
 f'(\hat t) 
\geq 
	\frac{1}{6\pi}
	\bigg(
		\int_0^{2\pi}
			u
		\,d\theta
	\bigg)^2
	+ 2\int_0^{2\pi}
		u
	\,d\theta 
	- D_4\int_0^{2\pi}
		\left( 
			e^{-\hat{t}}\hat{k}
			 + e^{-2\hat{t}}\hat{k}^2 
		\right)
	\,d\theta
	- D_5 e^{-2\hat{t}}
\,.
\label{EQP65est2}
\end{equation}
Here $D_4$ and $D_5$ are non-negative polynomials in $T$, $C_1$, $C_2$, $\sigma_1$, $\sigma_2$ and $1/K$.
%In this case 
%	\[  D_4 = 20T^2\sigma_2^2+4T\sqrt{2T}+\left(\frac{2TC_2}{K}  +4TC_1\right)\left(\frac{D_3}{\sigma_1}+\sigma_1\right),
%	\]
%	\begin{align*}
%		D_5 &= 108T^2C_1^2+12TC_1+9T\sqrt{2T}C_1^2\sigma^2_2+\left(\frac{2TC_2 }{K} +4TC_1\right)\frac{D_2}{\sigma_1}\\
%		&+8T^2\sigma_2C_2+\frac{12T^2C_2^2}{k^2_0}+\frac{4TC_2}{K}+8T^2\sigma_2C_2.
%	\end{align*}

With the estimate \eqref{EQP65est2} we may complete the proof.
First, recall that Lemma \ref{LMrescaledcurv1} gives that $e^{-\hat t}\hat k \rightarrow 0$, so, setting $\delta:[0,\infty)\rightarrow[0,\infty)$ to be
\[
	\delta(\hat t)
	=
	D_4\int_0^{2\pi}
		\left( 
			e^{-\hat{t}}\hat{k}
			 + e^{-2\hat{t}}\hat{k}^2 
		\right)
	\,d\theta
	+ D_5 e^{-2\hat{t}}
\]
we have $\delta(\hat t)\to 0$  and $\delta(\hat t)>0$ for all $\hat t$.

Then, \eqref{EQP65est2} implies
\begin{equation}
 f'(\hat t) 
\geq 
	\frac{1}{6\pi}f^2(\hat t)
	+ 2f(\hat t)
	- \delta(\hat t)
\,.
\label{EQP65est3}
\end{equation}
Let us suppose that $f(\hat t_\delta) \ge \frac12\delta(\hat t_\delta)$.
Then, from \eqref{EQP65est3}, $f$ is increasing at this time and later times.
So, for all $\hat t>\hat t_\delta$, $f(\hat t) \ge \frac12f(\hat t_\delta) > 0$.
Then we estimate
\begin{equation}
 f'(\hat t) 
\geq 
	\frac{1}{6\pi}f^2(\hat t)
\,, \quad \text{$\hat t\ge \hat t_\delta$.}
\label{EQP65est4}
\end{equation}
Integrating, we find
\[
-\frac1{f(\hat t)} + \frac1{f(\hat t_\delta)}
\ge \frac{\hat t}{6\pi}\,, 
\]
or
\[
\frac1{f(\hat t)} 
\le 
	\frac1{f(\hat t_\delta)} - \frac{\hat t}{6\pi}\,. 
\]
Recall that $f$ is defined for all values of $\hat t$.
Thus, the above gives a contradiction for $\hat t = 6\pi/f(\hat t_\delta)$.

Therefore there does not exist a $\hat t_\delta$ such that $f(\hat t_\delta)
\ge \frac12\delta(\hat t_\delta)$, which means that $f(\hat t) <
\frac12\delta(\hat t)$ for all $\hat t$, as required.

\end{proof}%}}}

\begin{cor}
We have
\[
\int_0^\infty f(\hat t)\,d\hat t
= D_0 < \infty
\]
where $D_0$ depends only on $T$, $C_1$, $C_2$, $\sigma_1$, $\sigma_2$ and $1/K$.
\label{CYfisintegrable}
\end{cor}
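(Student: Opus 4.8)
The plan is to integrate, over $\hat t\in[0,\infty)$, the pointwise upper bound for $f$ produced by Proposition \ref{PNfest}, namely
\[
	f(\hat t)\;<\;D\,e^{-2\hat t}\;+\;D\int_0^{2\pi}\Big(e^{-\hat t}\hat k+e^{-2\hat t}\hat k^2\Big)\,d\theta\,,
\]
with $D$ a polynomial in $T$, $C_1$, $C_2$, $\sigma_1$, $\sigma_2$ and $1/K$. Since the right-hand side is non-negative, this immediately yields $\int_0^{\hat T}f\,d\hat t\le \tfrac{D}{2}+D\int_0^{\hat T}\int_0^{2\pi}(e^{-\hat t}\hat k+e^{-2\hat t}\hat k^2)\,d\theta\,d\hat t$ for every $\hat T$, so I would reduce the corollary to showing that the two space-time integrals on the right are finite, with bounds of the asserted form; the claimed value $D_0$ is then the resulting upper bound.

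For the first integral I would quote the Remark following Corollary \ref{CYrescaledcurvintegral}: writing $e^{-\hat t}\hat k\,d\theta=\sqrt{2T}\,e^{-2\hat t}k\,d\theta$ and invoking the estimate $\int_0^\infty\int_0^{2\pi}(\sigma_1k+\sigma_2)e^{-2\hat t}\,d\theta\,d\hat t\le \tfrac{L(0)}{2T}+C_0\pi$ (a consequence of \eqref{EQspacetimeF}) gives
\[
	\int_0^\infty\int_0^{2\pi}e^{-\hat t}\hat k\,d\theta\,d\hat t\;\le\;\frac{\sqrt{2T}}{\sigma_1}\Big(\tfrac{L(0)}{2T}+C_0\pi-\sigma_2\pi\Big)\;=:\;D_6<\infty\,.
\]
The second integral is where the real difficulty lies, since at this stage of the section no bound on $\hat k$ (or $\hat k_{\max}$) is yet available — obtaining one is the entire point of the subsequent work — so I cannot simply pull $\hat k$ out. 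Instead I would trade one factor of $\hat k$ for a decaying exponential: by Lemma \ref{LMrescaledcurv1} the continuous function $\hat t\mapsto e^{-\hat t}\hat k_{\max}(\hat t)$ tends to $0$ as $\hat t\to\infty$, hence is bounded on $[0,\infty)$ by some finite $C^\ast$, and then pointwise $e^{-2\hat t}\hat k^2=(e^{-\hat t}\hat k)(e^{-\hat t}\hat k)\le C^\ast e^{-\hat t}\hat k$, so $\int_0^\infty\int_0^{2\pi}e^{-2\hat t}\hat k^2\,d\theta\,d\hat t\le C^\ast D_6<\infty$.

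Combining the three bounds gives $\int_0^{\hat T}f\,d\hat t\le \tfrac{D}{2}+D\,D_6(1+C^\ast)=:D_0$ uniformly in $\hat T$, with $D_0$ depending only on the quantities appearing in $D$ together with $L(0)<M$ and the (harmless) solution-dependent constant $C^\ast$; since $f$ is thereby dominated above by an $L^1(0,\infty)$ majorant, $\int_0^\infty f\,d\hat t\le D_0$, which is the content of the corollary as it is used later. The main obstacle, as indicated, is solely the $\int\int e^{-2\hat t}\hat k^2$ term, and the device above circumvents the absence of a curvature bound by using only the soft fact $e^{-\hat t}\hat k_{\max}\to0$ from Lemma \ref{LMrescaledcurv1}; should a two-sided statement be needed, one would additionally extract a lower bound on $f$ from the differential inequality \eqref{EQP65est3}, but every term other than the quadratic one is entirely elementary.
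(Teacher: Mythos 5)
Your argument is correct and follows the paper's proof almost verbatim: Proposition \ref{PNfest} gives the pointwise upper bound, Corollary \ref{CYrescaledcurvintegral} (equivalently the remark following it) handles $\int\!\!\int e^{-\hat t}\hat k$, and Lemma \ref{LMrescaledcurv1} is used to trade $e^{-2\hat t}\hat k^2$ for $c\,e^{-\hat t}\hat k$ with $c=\sup_{\hat t}e^{-\hat t}\hat k_{\max}(\hat t)<\infty$. Your parenthetical remark that this $c$ (your $C^\ast$) is genuinely solution-dependent, and hence that $D_0$ implicitly carries a dependence on initial data beyond the list stated in the corollary, is a fair and slightly sharper observation than the paper itself records, but it does not change the content: the approach and the ingredients are identical.
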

\begin{proof}%{{{
Observe that $e^{-2\hat t}\hat k^2 \le c e^{-\hat t}\hat k$ due to Lemma \ref{LMrescaledcurv1}.
Thus, the estimate from Proposition \ref{PNfest} combined with Corollary \ref{CYrescaledcurvintegral} implies
\begin{align*}
\int_0^\infty f(\hat t)\,d\hat t
	< D
		\int_0^\infty 
			(e^{-2\hat t} + (c+1)e^{-\hat t}\hat k)
		\,d\hat t
	< \infty
\end{align*}
as required.
\end{proof}%}}}

\begin{lem}
We have
\[
 \hat{E}(\hat{t}) \leq D_6
\,,
\]
where 
$D_6$ depends only on $T$, $C_1$, $C_2$, $\sigma_1$, $\sigma_2$, $\hat E(0)$ and $1/K$.
\end{lem}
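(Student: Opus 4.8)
The plan is to integrate in time the evolution equation for the entropy $\hat E(\hat t)=\frac1{2\pi}\int_0^{2\pi}\log\hat k\,d\theta$. Recall from the computation performed immediately after Lemma~\ref{LMkhatevo} that
\begin{align*}
2\pi\,\hat E_{\hat t}(\hat t)
&= f(\hat t)+\int_0^{2\pi}\Big(3\sqrt{2T}\,\sigma_2 e^{-\hat t}\hat k-2Te^{-2\hat t}\big(2\langle D_T\hat V,T\rangle-\langle D_T\hat V,N\rangle\big)
\\ &\qquad -2Te^{-2\hat t}\tfrac{\hat k_\theta}{\hat k}\langle D_T\hat V,N\rangle+\tfrac{2T\sqrt{2T}e^{-3\hat t}}{\hat k}\langle D^2_{T,T}\hat V,N\rangle\Big)\,d\theta,
\end{align*}
where $f$ is the function of Proposition~\ref{PNfest}. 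I would bound the right-hand side above by a nonnegative function $\Phi(\hat t)$ that is integrable on $[0,\infty)$ with $\int_0^\infty\Phi\le D$ for a constant $D$ of the advertised type, and then conclude $2\pi\hat E(\hat t)\le 2\pi\hat E(0)+D$ by integrating from $0$ to $\hat t$.

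For the $f$-term I use Proposition~\ref{PNfest}, which gives $f(\hat t)<\tfrac12\delta(\hat t)$ with $\delta(\hat t)=D_4\int_0^{2\pi}(e^{-\hat t}\hat k+e^{-2\hat t}\hat k^2)\,d\theta+D_5e^{-2\hat t}$; since $e^{-2\hat t}\hat k^2\le c\,e^{-\hat t}\hat k$ by Lemma~\ref{LMrescaledcurv1} and $\int_0^\infty\int_0^{2\pi}e^{-\hat t}\hat k\,d\theta\,d\hat t<\infty$ by the Remark following Corollary~\ref{CYrescaledcurvintegral}, we get $\int_0^\infty\delta<\infty$ with the right dependence on the data. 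The remaining four terms I estimate pointwise: the first is $\le 3\sqrt{2T}|\sigma_2|e^{-\hat t}\int_0^{2\pi}\hat k\,d\theta$, again time-integrable by the same Remark; the second is $\le 12\pi TC_1e^{-2\hat t}$; and the fourth uses $\sqrt{2T}e^{-\hat t}/\hat k=1/k\le 1/K$ to give $\le 4\pi TC_2K^{-1}e^{-2\hat t}$. The third term is the only delicate one, because it carries a factor $1/\hat k$: here I rewrite $\hat k_\theta/\hat k=k_\theta/k$, apply the gradient estimate $|\sigma_1k_\theta|\le D_1+\int_0^{2\pi}\sigma_1k\,d\theta$ together with $k\ge K$ from Lemma~\ref{LMkconvex}, and use $e^{-2\hat t}\int_0^{2\pi}\sigma_1k\,d\theta=\sigma_1(2T)^{-1/2}e^{-\hat t}\int_0^{2\pi}\hat k\,d\theta$, which is once more controlled by the Remark after Corollary~\ref{CYrescaledcurvintegral}.

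Adding these bounds produces $2\pi\hat E_{\hat t}(\hat t)\le\Phi(\hat t)$ with $\Phi\ge 0$ and $\int_0^\infty\Phi\,d\hat t\le D$, where $D$ is a polynomial in $T$, $C_1$, $C_2$, $\sigma_1$, $\sigma_2$ and $1/K$ (the constants $D_1,\dots,D_5$ and $L(0)$ entering the intermediate estimates are themselves of this form by the preceding lemmas). Integrating from $0$ to $\hat t$ then gives $\hat E(\hat t)\le\hat E(0)+\tfrac{D}{2\pi}=:D_6$, as claimed. I expect the only real work to be bookkeeping — verifying that every constant depends only on the listed quantities, and handling the $\hat k_\theta/\hat k$ term — since all the analytic ingredients (Lemma~\ref{LMrescaledcurv1}, Corollary~\ref{CYrescaledcurvintegral} and its Remark, the gradient estimate~\eqref{gradest}, and Proposition~\ref{PNfest}) are already established.
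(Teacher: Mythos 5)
Your proof is correct, and the overall strategy (integrate the evolution identity for $\hat E$ and show the right-hand side is dominated by a time-integrable function) is the same as the paper's. The difference is entirely in how you treat the one delicate term $-2Te^{-2\hat t}\int_0^{2\pi}\frac{\hat k_\theta}{\hat k}\langle D_T\hat V,N\rangle\,d\theta$. The paper rewrites $\hat k_\theta/\hat k = (\log\hat k)_\theta$, integrates by parts to move the derivative onto $\langle D_T\hat V,N\rangle$, and then must control $e^{-2\hat t}|\log\hat k|$ and $e^{-2\hat t}|\log\hat k/\hat k|$ using the $k\ge K$ lower bound together with Corollary~\ref{CYrescaledcurvintegral} for the resulting $e^{-\hat t}\hat k_{\max}$ term. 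You instead observe that $\hat k_\theta/\hat k = k_\theta/k$, invoke the unrescaled gradient estimate $|\sigma_1 k_\theta|\le D_1+\int_0^{2\pi}\sigma_1 k\,d\theta$ from the Rescaled Gradient Estimate lemma together with $k\ge K$, and reduce immediately to $e^{-\hat t}\int\hat k\,d\theta$, which is integrable by the Remark after Corollary~\ref{CYrescaledcurvintegral}. Your route is shorter and avoids the logarithmic bookkeeping entirely, at the cost of bringing in $D_1$, which carries a dependence on $M$ (hence on $C_0$ and the initial data through $\max k(\cdot,0)$ and $\max k_\theta(\cdot,0)$). In practice this is not a loss: the paper's own argument already depends on the same quantities through Corollary~\ref{CYrescaledcurvintegral} (whose bound is built from $M_1$ and $L(0)$), so neither route actually achieves the dependency list stated in the lemma as literally written. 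The two proofs are therefore of comparable strength, with yours being the more economical on that one term.

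One small point to make explicit when you write it up: after applying the gradient estimate you should spell out the conversion $e^{-2\hat t}\int_0^{2\pi}\sigma_1 k\,d\theta = \frac{\sigma_1}{\sqrt{2T}}\,e^{-\hat t}\int_0^{2\pi}\hat k\,d\theta$, since that is the step that lands you in the integrable quantity controlled by the Remark. You do say this, but it is the hinge of the whole argument for term four, so it deserves a displayed line.
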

\begin{proof}
We calculate
\begin{align}
 2\pi\hat{E}(\hat{t})_{\hat{t}} 
 &\leq 
	f(\hat t) 
	+ \int_0^{2\pi} 
		3\sqrt{2T}e^{-\hat{t}}\sigma_2\hat{k}  
		- 2Te^{-2\hat{t}} \langle D_T\hat{V},T \rangle
		- 2Te^{-2\hat{t}}\frac{\hat{k}_{\theta}}{\hat{k}} \langle D_T\hat{V},N \rangle
		+ \frac{2T\sqrt{2T}e^{-3\hat{t} }}{\hat{k}}\langle D^2_{T,T}\hat{V}, N\rangle 
	\,d\theta
\notag
\\&\leq 
	\bigg(
	f(\hat t) 
	+ 3\sigma_2\sqrt{2T}\int_0^{2\pi} 
		e^{-\hat{t}}\hat{k}
	\,d\theta
	+ 2\pi\left(2TC_1 +\frac{2 TC_2}{K}\right)e^{-\hat{t}}
	\bigg)
	- \int_0^{2\pi}
		2Te^{-2\hat{t}}\frac{\partial}{\partial \theta}(\log \hat{k}) \langle D_T\hat{V},N \rangle
	\,d\theta 
\,. 
\label{EQ68est1}
\end{align}
The bracketed terms on the right hand side are integrable on $(0,\infty)$, and so we do not need to estimate them further.

Lemma \ref{LMkconvex} and the definition of the rescaling implies the estimate
\begin{equation*}
\frac{\log \hat k}{\hat k}
\le
	(2T)^{-\frac12}e^{\hat t} 
		\frac{\log k}{k}
	+ \frac{\hat t}{\hat k} + \frac{\log 2T}{2\hat k}
\end{equation*}
and so
\begin{equation*}
\left|\frac{\log \hat k}{\hat k}\right|
\le 
	(2T)^{-\frac12}e^{\hat t} 
	\bigg(
		\max\left\{\frac{\log K}{K},\frac1e\right\}
		+ \frac{\hat t}{K} + \frac{|\log 2T|}{2K}
	\bigg)
\,.
\end{equation*}
%%
%% \phi = 1/sqrt(2T-2t)
%%
%% \phi(\hat t) = e^{\hat t}/\sqrt{2T}
%%
%% \log\phi = \hat t + (1/2)\log 2T
%%
%% \hat k = k/\phi
%%
%% \le \phi(\log k)/k + \phi|\log\phi|/k
%% \le \phi \max{\log K / K, 1/e\}
%%	+ e^{\hat t} |\hat t + (1/2)\log 2T| / K
%%
These estimates imply respectively
\begin{equation}
\label{EQ68est2}
e^{-2\hat t}\left|\log \hat k\right|
\le C(T,K)\left(
	e^{-\hat t}\hat k
	+ e^{-2\hat t}\hat t
	+ e^{-2\hat t}
\right)
\qquad
\text{and}
\qquad
e^{-2\hat t}\left|\frac{\log \hat k}{\hat k}\right|
\le C(T,K)
\,.
\end{equation}
For the remaining term in \eqref{EQ68est1}, we break it down with integration by parts as follows:
\begin{align}
-2T\int_0^{2\pi}e^{-2\hat{t}}&\frac{\partial}{\partial \theta}(\log \hat{k}) \langle D_T\hat{V},N \rangle d\theta 
\notag
\\ &=  
	2T\int_0^{2\pi}
		e^{-2\hat{t}}\log \hat{k}\left(
			\frac{\sqrt{2T}e^{-\hat t}}{\hat k}\langle D^2_{T,T}\hat{V},N \rangle +\langle D_N\hat{V},N \rangle-\langle D_T\hat{V},T \rangle
		\right)
	\,d\theta 
\,.
\notag
\end{align}
Expanding the bracket gives three integrals.
We estimate the first by using \eqref{EQ68est2}:
\begin{align*}
2T\sqrt{2T}\int_0^{2\pi}
	e^{-3\hat{t}}\frac{\log \hat{k}}{\hat k}\langle D^2_{T,T}\hat{V},N \rangle
\,d\theta
&\le  
	2TC_2\sqrt{2T}\int_0^{2\pi}
		e^{-3\hat{t}}\left|\frac{\log \hat k}{\hat k}\right| 
	\,d\theta
\\
&\le  
	4TC_2C(T,K)\pi\sqrt{2T}
		e^{-\hat{t}}
\,.
\end{align*}
Thus this piece is integrable in $\hat t$.

We use \eqref{EQ68est2} to estimate the remaining terms:
\begin{align*}
2Te^{-2\hat{t}}\int_0^{2\pi}
	&\log \hat{k} (\langle D_N\hat{V},N \rangle-\langle D_T\hat{V},T \rangle)
	\,d\theta
\le 
	4Te^{-2\hat{t}}C_1\int_0^{2\pi} 
		|\log \hat k|
	\,d\theta
\\&\le
 	C(T,K,C_1)\left(
	e^{-\hat t}\hat k_{\max}
	+ e^{-2\hat t}\hat t
	+ e^{-2\hat t}
	\right)
\,.
\end{align*}
Each of these are integrable in $\hat t$ (using Corollary \ref{CYrescaledcurvintegral} for the first).
Thus the result follows from integrating \eqref{EQ68est1}.
\end{proof}

Finally, we give the rescaled curvature estimate.

\begin{thm}
There exists a constant  $\hat{k}_1$ depending only on 
$T$, $L_0$, $C_0$, $C_1$, $C_2$, $\sigma_1$, $\sigma_2$, $\hat E(0)$ and $K$ such that 
\[
 \hat{k}_{\max}(\hat{t}) \leq \hat{k}_1  \qquad\text{ for all }\qquad \hat{t} \in [0,\infty)
\,.
\]
\label{TMresccurvbd}
\end{thm}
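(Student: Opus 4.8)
The plan is to obtain the bound by running a rescaled version of the pointwise estimate (Lemma~\ref{LMpwest}), driven by the two ingredients now available: the rescaled gradient estimate \eqref{gradest} and the entropy bound $\hat E\le D_6$ just proved. Since $\hat k_{\max}(\hat t)$ is continuous on $[0,\infty)$ it suffices to bound $\limsup_{\hat t\to\infty}\hat k_{\max}(\hat t)$, so throughout one may pass to large $\hat t$.

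The first, easy, step is an $L^2$ bound on $\hat k_\theta$: since the right-hand side of \eqref{gradest} is independent of $\theta$, integrating over $\theta\in[0,2\pi]$ gives
\[
\int_0^{2\pi}\hat k_\theta^2(\theta,\hat t)\,d\theta\le B_1+B_2\int_0^{2\pi}\hat k^2(\theta,\hat t)\,d\theta,
\qquad B_1=\tfrac{2\pi D_2}{\sigma_1^2},\ \ B_2=2\pi D_3,
\]
for all $\hat t$. (The decay of the forcing in Lemma~\ref{LMkhatevo} would allow, by a Gronwall argument in the spirit of Proposition~\ref{PNfest}, the slightly sharper bound with $B_2$ close to $2\pi$, but this is not needed.)

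Next I would carry out the argument of Lemma~\ref{LMpwest} in the rescaled variables. Fix $\hat t$ and $\alpha>0$, put $I_\alpha=\{\theta:\log\hat k(\theta,\hat t)\ge\alpha\}$, and use the entropy bound to produce an estimate $\mu_L(I_\alpha)\le\hat D_6/\alpha$ for a suitable constant $\hat D_6$ (this is the delicate point; see below). Choose $\delta$ small enough that $\sqrt{2\pi\delta B_2}<\tfrac12$ and then $\alpha=\hat D_6/\delta$, so that $\mu_L(I_\alpha)\le\delta$ and $\hat k\le e^{\alpha}$ off $I_\alpha$. For any $\phi\in\mathbb{S}$ pick $a\notin I_\alpha$ with $|\phi-a|\le\delta$; the fundamental theorem of calculus and Hölder give
\[
\hat k(\phi)\le e^{\alpha}+\sqrt{\delta}\,\Big(\int_0^{2\pi}\hat k_\theta^2\,d\theta\Big)^{1/2}
\le e^{\alpha}+\sqrt{\delta B_1}+\sqrt{2\pi\delta B_2}\,\hat k_{\max},
\]
where the last step uses Step~1 and $\int_0^{2\pi}\hat k^2\,d\theta\le 2\pi\hat k_{\max}^2$. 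Taking the supremum over $\phi$ and absorbing the $\hat k_{\max}$ term on the left yields $\hat k_{\max}\le 2\big(e^{\hat D_6/\delta}+\sqrt{\delta B_1}\big)=:\hat k_1$, which is the claimed constant provided $\hat D_6$ and $\delta$ do not depend on $\hat t$.

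The main obstacle is exactly that last proviso: in Lemma~\ref{LMpwest} the measure estimate used the uniform lower bound $k>K$, whereas in the rescaled picture the only a priori bound is $\hat k>\sqrt{2T}e^{-\hat t}K$, which degenerates, so a naive argument only gives $\mu_L(I_\alpha)\lesssim(D_6+\hat t)/\alpha$ --- not uniform in $\hat t$, i.e.\ the negative part of the rescaled entropy is not controlled. To fix this I would first establish a genuine uniform lower bound $\hat k_{\min}(\hat t)\ge c_\ast>0$. One route is a barrier argument for the rescaled curvature minimum: at a minimum of $\hat k$, Lemma~\ref{LMkhatevo} gives $\tfrac{d}{d\hat t}\hat k_{\min}\ge\sigma_1\hat k_{\min}^3-\hat k_{\min}+g(\hat t)$ with $g\to0$, and hypothesis~(i) of Theorem~\ref{T:main} with $K$ chosen large enough can be arranged so that $\hat k_{\min}(0)$ lies above the unstable zero $\sigma_1^{-1/2}$ of the cubic, forcing the barrier to persist. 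An alternative, should the pointwise minimum prove elusive, is to control the rescaled length $\hat L=\phi L$ through its evolution $\tfrac{d}{d\hat t}\hat L=\hat L-\sigma_1\int_0^{2\pi}\hat k\,d\theta+g(\hat t)$, the Cauchy--Schwarz inequality $\int_0^{2\pi}\hat k\,d\theta\ge4\pi^2/\hat L$, and the convergence $\hat A\to\sigma_1\pi$ (Lemma~\ref{LMareaconv}), so that $\int_0^{2\pi}|\log\hat k|\,d\theta\le\int_0^{2\pi}\hat k\,d\theta+\hat L$ stays bounded. With either improvement $\hat D_6$ becomes a genuine constant and the pointwise step closes.
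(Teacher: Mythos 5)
Your proposal takes a different route from the paper, and your diagnosis of the obstacle is exactly right: rescaling Lemma~\ref{LMpwest} fails because the measure estimate on $I_\alpha$ needs a uniform-in-$\hat t$ lower bound on $\hat k$, which is not available since $\hat k>\sqrt{2T}e^{-\hat t}K$ degenerates. Unfortunately neither proposed fix closes this gap. The barrier argument (a) needs $\hat k_{\min}(0)=k_{\min}(0)/\sqrt{2T}$ above the unstable zero $\sigma_1^{-1/2}$, that is $k_{\min}(0)>\sqrt{2T/\sigma_1}$; this couples $k_{\min}(0)$ to $T$, which is determined by the flow and, except in confinement case~(c), is not quantitatively bounded (Lemma~\ref{LMfintietime} gives only $T<\infty$), so ``choosing $K$ large enough'' is not possible from the constants the theorem is allowed to depend on. Moreover, the forcing $g(\hat t)$ in your barrier inequality is small only for \emph{large} $\hat t$; at $\hat t=0$ its terms are of order $T$, so the unstable fixed point does not protect $\hat k_{\min}$ at early times even if it starts above $\sigma_1^{-1/2}$. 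The length argument (b) also does not close: $\int\hat k\,d\theta\ge4\pi^2/\hat L$ is a lower bound for the dissipative term, giving only $\frac{d}{d\hat t}\hat L\le\hat L-4\pi^2\sigma_1/\hat L+g$, which is nonnegative once $\hat L\ge2\pi\sqrt{\sigma_1}$ --- precisely the regime the isoperimetric inequality and $\hat A\to\sigma_1\pi$ put you in --- so there is no trapping region for $\hat L$; and even granting a bound on $\hat L$, the term $\int\hat k\,d\theta$ appearing in your bound for $\int|\log\hat k|\,d\theta$ is only bounded \emph{below} by that information, so the chain still does not yield a uniform entropy-style estimate.

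The paper's proof avoids any pointwise lower bound on $\hat k$. Rather than a rescaled Lemma~\ref{LMpwest}, it uses the speed comparison \eqref{maximum}, which in rescaled variables reads $\hat k(\theta,\hat t)\ge\tfrac12\big(\hat k_{\max}(\hat t)-D_7e^{-\hat t}\big)$ on the fixed set $\{|\theta-\theta^*|\le\tfrac1{4\pi}\}$ about the spatial maximum of $\hat k$. Integrating $\log\hat k$ over this fixed-measure set produces a contribution to $\hat E(\hat t)$ of order $\log\hat k_{\max}(\hat t)$. The possibly negative contribution on $\{\hat k<1\}$ is handled not by a curvature minimum but by the change of measure $d\theta=\hat k\,d\hat s$ together with $\hat k\log\hat k\ge -e^{-1}$, which bounds that piece from below without any control on $\hat k_{\min}$. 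Combined with the entropy bound $\hat E\le D_6$ this forces $\hat k_{\max}$ to remain bounded. The ingredient missing from your proposal is this use of the speed estimate \eqref{maximum} to obtain a lower bound on $\log\hat k$ near the argmax, which replaces the $I_\alpha$ measure estimate entirely and removes the need for a lower bound on the rescaled curvature.
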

\begin{proof}%{{{
Recall the speed estimate \eqref{maximum}:
At a fixed time $t$ and in a neighbourhood $\{\theta\in\S\,:\,|\theta-\theta^*| \leq \frac{1}{4\pi}\}$ about the spatial maximum of curvature $k(\theta^*,t)$ we have 
\[ 
	\sigma_1k_{\max}(t) < 2(\sigma_1k(\theta,t)+\sigma_2+\langle V,N \rangle )+C_0+\frac{M}{2\pi}
	\,.
\]
In rescaled variables, this reads
\[
\sigma_1\hat{k}_{\max}(\hat{t})
< 
	2\sigma_1\hat{k}(\theta, \hat{t})+\left(2\sigma_2+3C_0+\frac{M}{2\pi}\right)\sqrt{2T}e^{-\hat{t}}
\,.
\]
Rearranging gives
\[ 
\hat{k}(\theta, \hat{t}) \geq \frac{1}{2}\left( \hat{k}_{\max}(\hat{t})-D_7e^{-\hat{t}}\right)
\,,
\]
here the constant $D_7$ is
\[
D_7 =\frac{1}{\sigma_1}\left(3C_0+2\sigma_2+\frac{M}{2\pi}\right).
\]
Taking logarithms and integrating over the set $\left\{ |\theta-\theta^*|\leq \frac{1}{4\pi} \right\}$ implies 
\begin{align}
\int_{|\theta-\theta^*|\leq \frac{1}{4\pi}}\log\hat{k}
\,d\theta 
&\ge 
	\int_{|\theta-\theta^*|\leq \frac{1}{4\pi}}\log\left(\hat{k}_{\max}(\hat{t})-D_7e^{-\hat{t}}\right)
	\,d\theta
	-2\pi\log2
\notag\\&\ge 
	\frac1{4\pi}\log\left(\hat{k}_{\max}(\hat{t})-D_7e^{-\hat{t}}\right)
	-2\pi\log2
\,.
\label{EQkhatest1}
\end{align}
Now suppose that $\hat{k}(\hat t)\rightarrow\infty$ as $\hat t\rightarrow\infty$.
Hence there exists a monotone sequence of times $\{ \hat{t}_j\}$ with $\hat{t}_j\rightarrow \infty$ such that $\hat{k}_{\max}(\hat{t}_j) \rightarrow\infty$. 
The entropy bound implies 
\[
 D_6 \geq E(\hat{t}_j) = \frac{1}{2\pi} \int_0^{2\pi}\log \hat{k}(\theta, \hat{t}_j)\,d\theta\,.
\]
Fix a time $\hat{t}_j$ and split the space domain in the following way 
\[
[0,2\pi)
 = \left\{|\theta-\theta^*| \leq \frac{1}{4\pi} \right\} 
	\cup \left\{ \{ \hat{k} < 1  \}\setminus  \left\{ |\theta-\theta^*| \leq \frac{1}{4\pi} \right\} \right\} 
	\cup \left\{ \{ \hat{k} \geq 1  \}\setminus  \left\{ |\theta-\theta^*| \leq \frac{1}{4\pi} \right\} \right\}
\,.
\] 
We wish to estimate the entropy on each of these disjoint sets.
On the first set the estimate is \eqref{EQkhatest1}.
On the second set we have $ 0 > \hat{k}\log{\hat{k}} \geq -e^{-1} $ for $\hat{k} \in (0,1)$ and therefore
\begin{align*}
\int_{\{ \hat{k} < 1 \} \setminus \{|\theta-\theta^*| \leq \frac{1}{4\pi}\} }
	\log \hat{k}(\theta, \hat{t})
\,d\theta 
&\ge 
	\int_{\{ \hat{k} < 1 \} \setminus \{|\theta-\theta^*| \leq \frac{1}{4\pi}\}}
		\hat{k}(\hat{s})\log \hat{k}(\hat{s}, \hat{t})
	\,d\hat{s}
\\&\ge 
	\int_{\{ \hat{k} < 1 \} \setminus \{|\theta-\theta^*| \leq \frac{1}{4\pi}\}} -e^{-1}
	\,d\hat{s}
\\&\ge
	-e^{-1}\hat{L}_0
\,.
\end{align*}
Finally on the third set $\log(\hat k) \geq 0$ and so trivially 
\[
	\int_{\{ \hat{k} < 1 \} \setminus \{|\theta-\theta^*| \leq \frac{1}{4\pi}\} } \log \hat{k}(\theta, \hat{t})\,d\theta \ge 0\,.
\]
Thus
\[
D_6 \ge \frac1{2\pi} \int_0^{2\pi}\log \hat{k}(\theta, \hat{t}_j)\,d\theta
 \ge \frac1{2\pi}\bigg(
	\frac1{4\pi}\log\left(\hat{k}_{\max}(\hat{t})-D_7e^{-\hat{t}}\right)
	-2\pi\log2
	-e^{-1}\hat{L}_0
	\bigg)
\,.
\]
The right hand side $\to\infty$ and the left hand side is bounded.
This is a contradiction.
Furthermore, the above equation is an explicit estimate for $\hat k_{\max}$, which gives the dependencies of the constant $\hat k_1$.
\end{proof}%}}}

The curvature bound allows us to show that the rescaled curvature is bounded from below.

\begin{thm}
The minimum of rescaled curvature is bounded from below.
\end{thm}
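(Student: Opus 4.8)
The plan is to deduce the lower bound directly from the upper curvature bound. The key intermediate fact is that Theorem~\ref{TMresccurvbd} forces the rescaled length $\hat L(\hat t)$ to be uniformly bounded. Indeed, $\hat k\le\hat k_1$ means the smooth convex curve $\hat\gamma(\cdot,\hat t)$ has radius of curvature $\ge 1/\hat k_1$ everywhere, so by Blaschke's rolling theorem a disk of radius $1/\hat k_1$ fits inside, i.e.\ the inradius is $\ge 1/\hat k_1$. Placing the origin at an incentre and applying the divergence theorem, $2\hat A(\hat t)=\int_{\hat\gamma}\langle x,\nu_{\mathrm{out}}\rangle\,d\hat s\ge \tfrac1{\hat k_1}\hat L(\hat t)$, since at every boundary point $\langle x,\nu_{\mathrm{out}}\rangle$ is the distance from the incentre to the supporting line there, which is at least the inradius. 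Hence $\hat L(\hat t)\le 2\hat k_1\hat A(\hat t)$, and since $\hat A(\hat t)\to\sigma_1\pi$ (Lemma~\ref{LMareaconv}) and $\hat A$ is continuous, we get a uniform bound $\hat L(\hat t)\le L_*:=2\hat k_1\sup_{\hat t\ge 0}\hat A(\hat t)<\infty$.

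Next, the rescaled gradient estimate \eqref{gradest}, together with $\int_0^{2\pi}\hat k^2\,d\theta\le 2\pi\hat k_1^2$ (again by Theorem~\ref{TMresccurvbd}), gives a uniform Lipschitz bound $|\hat k_\theta|\le B$ on $\S\times[0,\infty)$, where $B$ depends only on the constants already appearing in \eqref{gradest} and on $\hat k_1$. Now fix $\hat t$, let $\varepsilon=\hat k_{\min}(\hat t)$ and let $\theta_*$ be a point where it is attained. For $\theta$ at circle-distance $r\le\pi$ from $\theta_*$ we have $\hat k(\theta)\le\varepsilon+Br$, so
\[
 L_*\ \ge\ \hat L(\hat t)\ =\ \int_0^{2\pi}\frac{d\theta}{\hat k(\theta)}\ \ge\ 2\int_0^{\pi}\frac{dr}{\varepsilon+Br}\ =\ \frac2B\log\!\Big(1+\frac{B\pi}{\varepsilon}\Big).
\]
Rearranging yields $\hat k_{\min}(\hat t)=\varepsilon\ge B\pi\big(e^{BL_*/2}-1\big)^{-1}=:c_*>0$, which is independent of $\hat t$. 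This is the claimed lower bound, with an explicit constant depending on the same data as $\hat k_1$.

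In terms of difficulty, the only step that is not a routine computation is recognising that the one-sided curvature bound already delivers two-sided control of the length: the upper bound on $\hat L$ comes from the inradius (via Blaschke's rolling theorem and the divergence-theorem estimate above, both standard for smooth convex curves), while its finiteness rests on the area convergence of Lemma~\ref{LMareaconv}. Once the uniform length bound is available, the lower curvature bound is immediate from the gradient estimate, because a very small minimum of $\hat k$ together with a uniform bound on $\hat k_\theta$ forces $\hat k^{-1}$ to contribute a logarithmically large amount to $\hat L$, which is exactly what the displayed inequality quantifies. One should take a little care to check that the Lipschitz constant $B$ in \eqref{gradest} is genuinely uniform in $\hat t$, which it is since its only $\hat t$-dependent ingredient, $\int_0^{2\pi}\sigma_1^2\hat k^2\,d\theta$, is dominated by $2\pi\sigma_1^2\hat k_1^2$.
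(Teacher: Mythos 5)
Your proof is correct and follows essentially the same route as the paper: a reverse isoperimetric inequality $\hat L \le 2\hat k_{\max}\hat A$ combined with the area convergence gives a uniform rescaled length bound, and then the uniform Lipschitz bound on $\hat k$ from the rescaled gradient estimate forces $\int \hat k^{-1}\,d\theta$ to blow up logarithmically if $\hat k_{\min}$ gets small. The only cosmetic difference is that you derive the reverse isoperimetric inequality from scratch via Blaschke's rolling theorem and the divergence theorem, whereas the paper cites Lemma 3.2 of \cite{reverse} (noting it also follows from Minkowski inequalities).
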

\begin{proof}
First, we need a reverse isoperimetric inequality.
Lemma 3.2 from \cite{reverse} applies, to yield
\[
	\hat L(\hat t) \le 2\hat k_{\max} \hat A(\hat t)\,.
\]
Note that, as mentioned in \cite{reverse}, this also holds by using the Minkowski inequalities.
Then, using Lemma \ref{LMareaconv} and the rescaled curvature bound, Theorem \ref{TMresccurvbd}, we find
\begin{equation}
\label{EQhatlbd}
	\hat L(\hat t) \le C\,.
\end{equation}
The constant $C$ above depends on $T, L_0, C_0, C_1, C_2, \sigma_1, \sigma_2, \hat E(0), K, \max k(\cdot,0)$, and $\max k_\theta(\cdot,0)$.
In this proof, $C$ will refer to any such constant.

Let $\theta^*\in [0,2\pi]$ be such that $\hat{k}(\theta^*,t) = \hat{k}_{\min}(t)$ then by the fundamental theorem of calculus, the gradient bound and curvature bound from above, we have 
\begin{align*}
\sigma_1(\hat{k}-\hat{k}_{\min})(\theta,t) 
= \sigma_1(\hat{k}-\hat{k}_{\min})(\theta^*,t) +
	\int_{\theta^*}^{\theta}
	\sigma_1\hat{k}_{\theta}
	\,d\theta
\leq 
	|\theta-\theta^*|C
	\,.
\end{align*}

Reparametrise so that $\theta^* = 0$ (this does not change the minimum of $\hat k$).
Then we calculate
\begin{align*}
\hat L = 
	\int_{-\pi}^{\pi}
		\frac{1}{\hat{k}}
	\,d\theta 
&\geq 
	\int_{-\pi}^{\pi} 
		\frac{1}{\hat{k}_{\min}+C|\theta|}
	\,d\theta
\\&= 
	2\int_{0}^{\pi}
		\frac{1}{\hat{k}_{\min}+C\theta}
	\,d\theta
\\&= 
	\frac{2}{C}\log\left(\frac{\hat{k}_{\min}+C\pi}{\hat{k}_{\min}}\right)
\,.
\end{align*}
Upon rearranging this gives 
\[ 
	\hat{k}_{\min} \geq \frac{C\pi}{e^{\frac{C\hat L}{2}}-1}
\,.
\]
Combining with \eqref{EQhatlbd} finishes the proof.
\end{proof}

Finally we prove that $\hat\gamma$ converges to a round circle.
This is done using a Gaussian-type integral similar to Huisken's monotonicity formula \cite{huisken1990asymptotic}. 
The choice of kernel we use here follows \cite{cesaroni2011curve} (see also \cite{he2019curvature}).
While we do not obtain monotonicity of our integral, it does satisfy good enough estimates to be useful.
Our main task is to control the additional terms that arise due to the presence of the ambient field $V$.

We need the evolution of $|\hat{\gamma}_u|$, which is a straightforward calculation.

\begin{lem}
The evolution of $|\hat{\gamma}_u|$ is
\[
 |\hat{\gamma}_u|_{\hat{t}} = |\hat{\gamma}_u|\left[1 -\hat{k}(\sigma_1\hat{k} +\sqrt{2T}e^{-\hat{t}}\sigma_2) +2Te^{-2\hat{t}} \langle D_{\hat{\tau}}\hat{V},\hat{\tau} \rangle \right].
\]
\end{lem}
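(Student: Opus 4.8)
The plan is to express $|\hat\gamma_u|$ through the unrescaled arc-length element, differentiate in $\hat t$, feed in the evolution of $|\gamma_u|$ from Proposition~\ref{p:evo}, and convert the powers of the scaling factor into rescaled quantities. Parametrising $\gamma$ and $\hat\gamma$ by a common time-independent parameter $u$, so that $\hat\gamma(u,\hat t) = \phi(t(\hat t))\big(\gamma(u,t(\hat t))-\mathcal{O}\big)$, and using that $\phi$ and $\mathcal O$ are independent of $u$, we have $\hat\gamma_u = \phi\,\gamma_u$ and hence $|\hat\gamma_u| = \phi(t(\hat t))\,|\gamma_u|$. Differentiating in $\hat t$ by the chain rule gives
\[
\partial_{\hat t}|\hat\gamma_u|
 = \frac{d\phi}{d\hat t}\,|\gamma_u|
   + \phi\,\frac{dt}{d\hat t}\,\partial_t|\gamma_u|
 = |\hat\gamma_u|\left(\frac{1}{\phi}\frac{d\phi}{d\hat t}
   + \frac{dt}{d\hat t}\cdot\frac{\partial_t|\gamma_u|}{|\gamma_u|}\right),
\]
and from $\phi(t(\hat t)) = (2T)^{-1/2}e^{\hat t}$ and $t(\hat t) = T(1-e^{-2\hat t})$ one reads off $\frac{d\phi}{d\hat t} = \phi$ (so $\frac1\phi\frac{d\phi}{d\hat t} = 1$) and $\frac{dt}{d\hat t} = 2Te^{-2\hat t} = \phi^{-2}$.

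Substituting the arc-length element evolution from Proposition~\ref{p:evo}, namely $\partial_t|\gamma_u| = \big(-(\sigma_1 k^2 + \sigma_2 k) + \langle D_\tau V,\tau\rangle\big)|\gamma_u|$, we obtain
\[
\partial_{\hat t}|\hat\gamma_u|
 = |\hat\gamma_u|\Big[\,1 - \phi^{-2}\sigma_1 k^2 - \phi^{-2}\sigma_2 k + \phi^{-2}\langle D_\tau V,\tau\rangle\,\Big].
\]
It remains only to translate into rescaled variables: since $\hat k = k/\phi$ and $\phi^{-1} = \sqrt{2T}\,e^{-\hat t}$, we have $\phi^{-2}k^2 = \hat k^2$, $\phi^{-2}k = \phi^{-1}\hat k = \sqrt{2T}\,e^{-\hat t}\,\hat k$, and $\phi^{-2}\langle D_\tau V,\tau\rangle = 2Te^{-2\hat t}\langle D_{\hat\tau}\hat V,\hat\tau\rangle$, using that the unit tangent is scale-invariant ($\hat\tau = \tau$) and that $\hat V$ denotes $V$ evaluated at $\hat\gamma/\phi + \mathcal O = \gamma$. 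Collecting the terms yields exactly the stated identity.

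There is no genuine obstacle in this argument — it is the ``straightforward calculation'' promised in the statement — and the only points requiring care are the bookkeeping of the powers of $\phi$ and the notational convention whereby $\hat V$ and $D_{\hat\tau}\hat V$ stand for $V$ and its directional derivative evaluated back at the unrescaled point $\gamma$, with $\hat\tau$ and $\hat\nu$ identified with $\tau$ and $\nu$.
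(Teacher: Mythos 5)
Your argument is correct, and it is a genuinely different route from the paper's. The paper proves the lemma from scratch: it differentiates $|\hat\gamma_u|^2$ in $\hat t$, writes $|\hat\gamma_u|_{\hat t} = |\hat\gamma_u|\langle\partial_{\hat s}\hat\gamma_{\hat t},\hat\gamma_{\hat s}\rangle$, and feeds in the explicit rescaled flow velocity $\hat\gamma_{\hat t} = \hat\gamma + (\sigma_1\hat k + \sqrt{2T}e^{-\hat t}\sigma_2)N + \sqrt{2T}e^{-\hat t}V(\hat\gamma/\phi + \mathcal O)$, mirroring the un-rescaled calculation in Proposition~\ref{p:evo}. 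You instead factor $|\hat\gamma_u| = \phi\,|\gamma_u|$ (with a common $\hat t$-independent parameter $u$) and apply the chain rule, pulling in Proposition~\ref{p:evo} directly and converting powers of $\phi$ into $\sqrt{2T}e^{-\hat t}$. This is shorter, reuses already-established work, and makes transparent that the leading $1$ in the bracket is exactly $\tfrac1\phi\tfrac{d\phi}{d\hat t}$; the paper's route is more self-contained and parallels the proof it will reuse for the curvature and Gaussian-integral evolution equations. One point worth flagging for your own bookkeeping: your computation $\tfrac{d\phi}{d\hat t}=\phi$ (equivalently $\tfrac1\phi\tfrac{d\phi}{d\hat t}=1$) is correct, since $\phi(t(\hat t)) = (2T)^{-1/2}e^{\hat t}$; the displayed statement just before the lemma in the paper, ``$\tfrac{d\phi}{d\hat t}=\sqrt{2T}e^{-\hat t}=\phi^{-1}$,'' is a typo (it conflates $\phi$ and $\phi^{-1}$ and also states $\phi'(t)=\phi^{-3}$ rather than $\phi^3$), but the paper's proof of the lemma does not use that identity, so the final result is unaffected. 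Your identifications $\hat\tau=\tau$, $\hat\nu=\nu$, $\hat k=k/\phi$ and the reading of $D_{\hat\tau}\hat V$ as $D_\tau V$ evaluated at $\gamma = \hat\gamma/\phi + \mathcal O$ are all consistent with the paper's conventions and needed to land the $2Te^{-2\hat t}$ prefactor.
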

\begin{proof}%{{{
Calculating,
\begin{align*}
\frac{\partial}{\partial \hat{t}} |\hat{\gamma}_u|^2 
&= 2 |\hat{\gamma}_u| |\hat{\gamma}_u|_{\hat{t}} = 2\langle \hat{\gamma}_{ut}, \hat{\gamma}_{u} \rangle\\
\implies |\hat{\gamma}_u|_{\hat{t}}& = \langle \partial_s \hat{\gamma}_{\hat{t}}, \hat{\gamma}_u\rangle\\
	&=|\hat{\gamma}_u|\langle \partial_s \hat{\gamma}_{\hat{t}}, \hat{\gamma}_s\rangle
\,.
\end{align*}
Therefore  
\begin{align*}
|\hat{\gamma}_u|_{\hat{t}} 
&= 
	|\hat{\gamma}_u|\left\langle 
		\partial_s\left[
			\hat{\gamma} + (\sigma_1\hat{k}+\sqrt{2T}e^{-\hat{t}}\sigma_2)N
			+ \sqrt{2T}e^{-\hat{t}}V\left(\frac{\hat{\gamma}}{\phi} + \mathcal{O}\right)\right],
			T
		\right\rangle 
\\
&= 
	|\hat{\gamma}_u|\left[
		1 
		- \hat{k}(\sigma_1\hat{k}+\sqrt{2T}e^{-\hat{t}}\sigma_2)
		+ 2Te^{-2\hat{t}}\langle D_{T}\hat{V},T\rangle
	\right]
\,,
\end{align*}
as required.
\end{proof}%}}}

Now we calculate the evolution of a Gaussian integral.

\begin{lem}
Let 
$\rho = e^{-\frac{|\hat{\gamma}|^2}{2\sigma_1}}$ and 
$R(\hat{t}) = \int_0^{2\pi} \rho\, d\hat s$.
We have
\[ 
R'(\hat{t}) 
= 
	-\int_{0}^{2\pi}
		Q^2\rho 
	\,d\hat s 
	+ \frac{T\sigma_2^2}{2\sigma_1}e^{-2\hat{t}}\int_{0}^{2\pi}
		\rho 
	\,d\hat s 
	+ \int_{0}^{2\pi}
		\bigg( 
			2Te^{-2\hat{t}}\langle D_{T} \hat{V},T \rangle 
			- \frac{2Te^{-\hat{t} }}{\sigma_1}\langle \hat{V}, N \rangle 
		\bigg)
	\,d\hat s,
\]
where 
\[ 
Q  
= 
	\frac{  \langle \hat{\gamma}, {N} \rangle}{\sqrt{\sigma_1}} +\sqrt{\sigma_1} \hat{k} +\frac{\sqrt{2T}\sigma_2}{2\sqrt{\sigma_1}}e^{-\hat{t}}.
\]
\label{LMmonoton}
\end{lem}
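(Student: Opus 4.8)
The plan is to differentiate $R$ under the integral sign and reorganise the integrand into a perfect square, as in the classical monotonicity computations of Huisken and Gage--Hamilton, but carrying along the extra $\sigma_2 e^{-\hat t}$ and $\hat V$ terms produced by \eqref{flow}.

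Write $R(\hat t)=\int_0^{2\pi}\rho\,|\hat\gamma_u|\,du$. Since $\rho=e^{-|\hat\gamma|^2/(2\sigma_1)}$ depends on $\hat t$ only through $\hat\gamma$, we have $\rho_{\hat t}=-\tfrac{\rho}{\sigma_1}\langle\hat\gamma,\hat\gamma_{\hat t}\rangle$, and $|\hat\gamma_u|_{\hat t}$ is given by the previous lemma. Taking for $\hat\gamma_{\hat t}$ the velocity $\hat\gamma+(\sigma_1\hat k+\sqrt{2T}e^{-\hat t}\sigma_2)N+\sqrt{2T}e^{-\hat t}\hat V$ used there (which differs from the displayed rescaled flow only by a tangential reparametrisation and hence leaves the geometric integral $R$ unchanged), a short computation gives
\[
R'(\hat t)=\int_0^{2\pi}\rho\Big[\,1-\tfrac{|\hat\gamma|^2}{\sigma_1}-\hat k\langle\hat\gamma,N\rangle-\sigma_1\hat k^2-\tfrac{\sqrt{2T}\sigma_2}{\sigma_1}e^{-\hat t}\langle\hat\gamma,N\rangle-\sqrt{2T}\sigma_2 e^{-\hat t}\hat k-\tfrac{\sqrt{2T}}{\sigma_1}e^{-\hat t}\langle\hat\gamma,\hat V\rangle+2Te^{-2\hat t}\langle D_T\hat V,T\rangle\,\Big]\,d\hat s.
\]

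The crucial step is to remove the isolated constant $1$ from the bracket. For this I would use that $\int_0^{2\pi}\partial_{\hat s}\bigl(\rho\langle\hat\gamma,T\rangle\bigr)\,d\hat s=0$ on the closed curve; together with $\partial_{\hat s}\rho=-\tfrac{\rho}{\sigma_1}\langle\hat\gamma,T\rangle$ and $\partial_{\hat s}\langle\hat\gamma,T\rangle=1+\hat k\langle\hat\gamma,N\rangle$ (from the Frenet relation $\partial_{\hat s}T=\hat k N$) this yields
\[
\int_0^{2\pi}\rho\bigl(1+\hat k\langle\hat\gamma,N\rangle\bigr)\,d\hat s=\tfrac1{\sigma_1}\int_0^{2\pi}\rho\,\langle\hat\gamma,T\rangle^2\,d\hat s.
\]
Substituting $\tfrac1{\sigma_1}\langle\hat\gamma,T\rangle^2-\hat k\langle\hat\gamma,N\rangle$ for the $1$-term and using $|\hat\gamma|^2=\langle\hat\gamma,T\rangle^2+\langle\hat\gamma,N\rangle^2$, the $\langle\hat\gamma,T\rangle^2$ contributions cancel, and the surviving $\hat k$-, $\langle\hat\gamma,N\rangle$- and $\sigma_2 e^{-\hat t}$-terms become exactly $-Q^2+\tfrac{T\sigma_2^2}{2\sigma_1}e^{-2\hat t}$; this is just completing the square in the three quantities $\langle\hat\gamma,N\rangle/\sqrt{\sigma_1}$, $\sqrt{\sigma_1}\hat k$ and $\tfrac{\sqrt{2T}\sigma_2}{2\sqrt{\sigma_1}}e^{-\hat t}$, the spurious $\sigma_2^2 e^{-2\hat t}$ cross-term being pulled out as the stated correction.

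What remains is to put the $\hat V$-dependent terms $-\tfrac{\sqrt{2T}}{\sigma_1}e^{-\hat t}\langle\hat\gamma,\hat V\rangle+2Te^{-2\hat t}\langle D_T\hat V,T\rangle$ into the displayed form. Here one uses $\partial_{\hat s}\hat V=\tfrac1\phi D_T\hat V=\sqrt{2T}e^{-\hat t}D_T\hat V$, and, if needed, a further integration by parts $\int_0^{2\pi}\partial_{\hat s}\bigl(\rho\langle\hat V,T\rangle\bigr)\,d\hat s=0$ to trade $\langle D_T\hat V,T\rangle$ against $\hat k\langle\hat V,N\rangle$ and $\langle\hat\gamma,T\rangle\langle\hat V,T\rangle$, the latter combining with $\langle\hat\gamma,\hat V\rangle$ via $\langle\hat\gamma,\hat V\rangle=\langle\hat\gamma,T\rangle\langle\hat V,T\rangle+\langle\hat\gamma,N\rangle\langle\hat V,N\rangle$. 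I expect this last bookkeeping of the $\hat V$-terms --- keeping straight which pieces recombine under integration by parts and which powers of $e^{-\hat t}$ survive --- to be the main nuisance; the completing-the-square step is forced once the constant $1$ has been absorbed.
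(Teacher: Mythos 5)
Your derivation matches the paper's proof step by step up to the $\hat V$-terms: differentiate under the integral using $\rho_{\hat t}=-\frac{\rho}{\sigma_1}\langle\hat\gamma,\hat\gamma_{\hat t}\rangle$ and $|\hat\gamma_u|_{\hat t}$, take the same unprojected velocity (your remark that the tangential part of $\hat V$ only reparametrises and leaves $R$ unchanged is correct), apply the integration-by-parts identity $\int_0^{2\pi}\bigl(1+\hat k\langle\hat\gamma,N\rangle\bigr)\rho\,d\hat s=\tfrac1{\sigma_1}\int_0^{2\pi}\langle\hat\gamma,T\rangle^2\rho\,d\hat s$, use $|\hat\gamma|^2=\langle\hat\gamma,T\rangle^2+\langle\hat\gamma,N\rangle^2$, and complete the square to produce $-Q^2+\tfrac{T\sigma_2^2}{2\sigma_1}e^{-2\hat t}$. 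All of that is exactly what the paper does.

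Where you diverge is the treatment of the $\hat V$-dependent terms. The paper does \emph{no} further manipulation there: its proof stops at
\[
\int_0^{2\pi}\Bigl(2Te^{-2\hat t}\langle D_T\hat V,T\rangle-\tfrac{\sqrt{2T}e^{-\hat t}}{\sigma_1}\langle\hat V,\hat\gamma\rangle\Bigr)\rho\,d\hat s\,,
\]
which does not actually coincide with the displayed form in the lemma statement (which has $\frac{2Te^{-\hat t}}{\sigma_1}\langle\hat V,N\rangle$ and omits $\rho$); the statement as typeset appears to contain a typo. The extra integration by parts you sketch, $\int_0^{2\pi}\partial_{\hat s}\bigl(\rho\langle\hat V,T\rangle\bigr)\,d\hat s=0$, combined with $\langle\hat\gamma,\hat V\rangle=\langle\hat\gamma,T\rangle\langle\hat V,T\rangle+\langle\hat\gamma,N\rangle\langle\hat V,N\rangle$, is a valid manipulation, but if you carry it through you will land on $-\sqrt{2T}e^{-\hat t}\int_0^{2\pi}\bigl(\hat k+\tfrac{\langle\hat\gamma,N\rangle}{\sigma_1}\bigr)\langle\hat V,N\rangle\rho\,d\hat s$, which is not the form in the lemma statement either. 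So the ``nuisance'' you anticipate is real, but it would not close as you hope; the right target is the form appearing in the paper's proof, and once you accept that, no further manipulation of the $\hat V$-terms is needed.
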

\begin{proof}%{{{
Noting that $\rho_{\hat{t}} = -\frac{\langle \hat{\gamma}_{\hat{t}}, \hat{\gamma} \rangle}{\sigma_1}\rho $ we calculate 
\begin{align*}
R'(t)
 &= 
	\int_{0}^{2\pi} 
		\rho |\hat{\gamma}_u|_{\hat{t}}
	\,d\hat s
	+ \int_{0}^{2\pi} 
		\rho_{\hat{t}}
	\,d\hat s
\\&=
	\int_{0}^{2\pi}
		\rho \left( 
			1
			- \sigma_1\hat{k}^2
			- \sqrt{2T}e^{-\hat{t}}\sigma_2\hat{k} 
			- \frac{1}{\sigma_1}\Big\langle 
				\hat{\gamma},
				\hat{\gamma}
				+ \Big(
					\sigma_1\hat{k}
					+ \sigma_2\sqrt{2T}e^{-\hat{t}}
				\Big)
			N\Big\rangle
		\right) 
	\,d\hat s
\\&\qquad
	+ \int_{0}^{2\pi}
		\rho \left(
			2Te^{-2\hat{t}}\langle 
				D_{T} \hat{V}, T
			\rangle 
		- \frac{\sqrt{2T}e^{-\hat{t} }}{\sigma_1}\langle 
			\hat{V}, 
			\hat{\gamma}\rangle
		\right)
	\,d\hat s
\\&=
	\int_{0}^{2\pi}
		\left( 
			1
			- \sigma_1\hat{k}^2
			- \sqrt{2T}e^{-\hat{t}}\sigma_2\hat{k} 
			- \frac{1}{\sigma_1}|\hat\gamma|^2
			- \hat k\langle \hat\gamma, N\rangle
			- \frac{\sigma_2\sqrt{2T}e^{-\hat{t} }}{\sigma_1}\langle \hat\gamma, N\rangle
		\right) 
	\rho\,d\hat s
\\&\qquad
	+ \int_{0}^{2\pi}
		\left(
			2Te^{-2\hat{t}}\langle 
				D_{T} \hat{V}, T
			\rangle 
		- \frac{\sqrt{2T}e^{-\hat{t} }}{\sigma_1}\langle 
			\hat{V}, 
			\hat{\gamma}\rangle
		\right)
	\rho\,d\hat s.
\end{align*}
The square of $Q$ is
\[
Q^2 
= 
	\frac{ \langle \hat{\gamma}, N \rangle^2}{\sigma_1} 
	+ \sigma_1\hat{k}^2 
	+ \frac{2T\sigma_2^2}{4\sigma_1}e^{-2\hat{t}} 
	+ 2\hat{k}\langle \hat{\gamma},N\rangle 
	+ \sigma_2\sqrt{2T}\hat{k}e^{-\hat{t}}
	+ \frac{\sigma_2\sqrt{2T}e^{-\hat{t}}\langle \hat{\gamma}, N\rangle}{\sigma_1}
\,.
\]
Inserting this into $R'$ yields
\begin{align*}
R'(\hat{t}) 
&=
	- \int_{0}^{2\pi} 
		Q^2\rho 
	\,d\hat s 
	+ \int_{0}^{2\pi}
		\left(
			1
			- \frac{|\hat{\gamma}|^2}{\sigma_1}
			+ \hat{k} \langle \hat{\gamma}, N\rangle 
			+ \frac{\langle \hat{\gamma}, N\rangle^2}{\sigma_1} 
			+ \frac{T\sigma_2^2e^{-2\hat{t} }}{2\sigma_1}
		\right)\rho
	\,d\hat s
\\&\qquad
	+ \int_{0}^{2\pi}
		\left(
			2Te^{-2\hat{t}}\langle D_{T} \hat{V}, T \rangle 
			- \frac{\sqrt{2T}e^{-\hat{t} }}{\sigma_1} \langle \hat{V}, \hat{\gamma}\rangle   
		\right) 
	\rho\,d\hat s.
\end{align*}
Since
\[
\langle \hat{\gamma}, T\rangle_{\hat s}
=
	1 
	+ \hat{k}\langle \hat{\gamma}, N\rangle,
\]
we have
\[ 
\int_{0}^{2\pi}
	(1+\hat{k}\langle \hat{\gamma}, N\rangle)
\rho\,d\hat s 
= 
	\int_{0}^{2\pi}
		\frac{1}{\sigma_1} \langle \hat{\gamma},T\rangle^2
	\rho\,d\hat s
\,.
\]
Note that we also used $\partial_{\hat s} \rho = - \frac{\langle \hat{\gamma}, T\rangle}{\sigma_1}\rho$.
This cancels with other terms nicely, since 
\[
-\frac{1}{\sigma_1}|\hat{\gamma}|^2
= 
	-\frac{1}{\sigma_1}\left(  
		\langle \hat{\gamma}, N\rangle^2
		+ \langle \hat{\gamma}, T\rangle^2 
	\right)
\,.
\] 
Therefore we conclude that 
\[ 
R'(\hat{t}) 
= 
	-\int_{0}^{2\pi} 
		Q^2
	\rho\,d\hat s
	+ \int_{0}^{2\pi}\frac{T\sigma_2^2e^{-2\hat{t} }}{2\sigma_1}
	\rho\,d\hat s
	+ \int_{0}^{2\pi}
		\left(2Te^{-2\hat{t}}\langle D_{T}\hat{V}, T\rangle - \frac{\sqrt{2T}e^{-\hat{t} }}{\sigma_1} \langle \hat{V}, \hat{\gamma}\rangle
		\right) 
	\rho\,d\hat s
\,,
\]
as required.
\end{proof}%}}}

We can now finish our argument following Huisken \cite{huisken1990asymptotic}.
Integrating Lemma \ref{LMmonoton} we find 
\begin{align*}
 R(0)-R(\hat{t}_j) 
+ \frac{T\sigma_2^2}{2\sigma_1}\int_0^{\hat{t}_j}
	e^{-2\hat{t}_j}\int_{0}^{2\pi} 
		\rho 
	\,d\hat s
\,d\hat{t}
&+ \int_0^{\hat{t}}
	\int_{0}^{2\pi} 
		\left(
			2Te^{-2\hat{t}}\langle D_{\hat{\tau}} \hat{V}, T\rangle 
			- \frac{\sqrt{2T}e^{-\hat{t} }}{\sigma_1} \langle \hat{V}, \hat{\gamma}\rangle   
		\right) 
	\rho\,d\hat s
\,d\hat{t} 
\\&= 
	\int_0^{\hat{t}_j}
		Q^2
	\rho\,d\hat s 
\,d\hat{t}
\,.
\end{align*}
Along any sequence of times $\hat t_j$, we have that
\[
 \lim_{j\rightarrow \infty} \int_0^{\hat{t}_j} \int_{0}^{2\pi}Q^2\rho d\hat s d\hat{t} 
\]
is bounded.
This implies there is a subsequence $\{ \hat{t}_i\}$ along which $\int_{0}^{2\pi}Q^2\rho d\hat s \rightarrow 0$.
As we have already established uniform rescaled curvature and length estimates, and we have the confinement conditions that enable control on $\hat V$ and all its derivatives, we obtain bounds on all derivatives of rescaled curvature.
Following now the argument in \cite[Proposition 3.4]{huisken1990asymptotic} gives smooth convergence to a solution of 
\[
 \langle \hat\gamma ,N \rangle  = -\sigma_1\hat k
\,.
\]
Applying finally Abresch-Langer \cite{abresch1986normalized}, and embeddedness of $\hat\gamma$, we conclude that $\hat\gamma$ is a standard round circle.

%}}}

\appendix

\section{Ambient Killing Force Fields}

We briefly discuss the case where the ambient vector field $V$ is Killing, i.e.
it is generated by the three Killing vector fields in the plane $\xi_1(x,y) = (1,0)$,
$\xi_2(x,y) = (0,1)$ and $\xi_3(x,y) = (-y,x)$.
In the below, a \emph{wound healing flow} (referring to \cite{he2019curvature})
is a flow of the form \eqref{flow} with $V=0$.

\begin{lem}
If the vector field $V$ is a Killing field then there is an associated wound
healing flow equivalent to \eqref{flow} by rigid motion.
\end{lem}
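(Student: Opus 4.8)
The plan is to take the rigid motion to be the flow of $V$ itself. First I would recall the standard description of Killing fields on $\R^2$: such a $V$ is a fixed linear combination $V = a\xi_1 + b\xi_2 + c\xi_3$, so that $V(p) = Ap + w$ where $w = (a,b)$ and $A$ is the skew-symmetric matrix with $Ap = c\,Rp$ (here $R$ is the rotation from the statement of Theorem~\ref{T:loss}). The flow $\Phi_{\hat t}$ of $V$, i.e. the solution of $\frac{d}{d\hat t}\Phi_{\hat t}(p) = V(\Phi_{\hat t}(p))$ with $\Phi_0 = \id$, is then governed by a linear ODE and is given explicitly by $\Phi_{\hat t}(p) = R_{\hat t}\,p + b_{\hat t}$ with $R_{\hat t} = e^{\hat t A} \in SO(2)$ and $b_{\hat t} = \int_0^{\hat t} e^{(\hat t - s)A}w\,ds$. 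In particular $\Phi_{\hat t}$ is defined for all $\hat t\in\R$ and is, for each $\hat t$, an orientation-preserving Euclidean isometry with constant spatial derivative $D\Phi_{\hat t}\equiv R_{\hat t}$.

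The main step is then a direct verification. Let $\eta:\S^1\times[0,T)\to\R^2$ be a wound healing flow, that is $\partial_t^\bot\eta = (\sigma_1 k_\eta + \sigma_2)\nu_\eta$, and set $\gamma(u,t) = \Phi_t(\eta(u,t))$. By the chain rule and the defining property of $\Phi_t$,
\[
\partial_t\gamma = V(\gamma) + R_t\,\partial_t\eta.
\]
Since $R_t\in SO(2)$ commutes with $R$ and $\Phi_t$ preserves the interior of the curve, the inward unit normal transforms as $\nu_\gamma = R_t\nu_\eta$; likewise arc length is preserved, so $k_\gamma = k_\eta$. Taking the normal component of the displayed identity and using orthogonality of $R_t$,
\[
\langle\partial_t\gamma,\nu_\gamma\rangle = \langle V(\gamma),\nu_\gamma\rangle + \langle\partial_t\eta,\nu_\eta\rangle = \langle V(\gamma),\nu_\gamma\rangle + \sigma_1 k_\gamma + \sigma_2,
\]
so that $\partial_t^\bot\gamma = (\sigma_1 k_\gamma + \sigma_2 + (V\circ\gamma)\cdot\nu_\gamma)\nu_\gamma$, which is precisely \eqref{flow}; as the paper notes, the tangential part of the velocity is immaterial. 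Running the same computation with $\Phi_t$ replaced by $\Phi_t^{-1} = \Phi_{-t}$ shows conversely that $\eta := \Phi_t^{-1}\circ\gamma$ is a wound healing flow whenever $\gamma$ solves \eqref{flow}; since each $\Phi_t$ is a diffeomorphism of $\R^2$, this correspondence preserves embeddedness and the maximal existence time.

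Finally I would make explicit what ``equivalent by rigid motion'' means here: at each time $t$ the two flows differ by the single Euclidean motion $\Phi_t$, which is time-dependent but depends only on $V$ and not on the curve. I do not expect any real obstacle; the only slightly non-routine ingredients are the identity $\partial_t\gamma = V(\gamma) + R_t\partial_t\eta$ and the isometry-invariance of $\nu$ and $k$, both immediate once one observes that $\Phi_t$ is an affine isometry, while the global existence of $\Phi_t$ (that a complete Killing field integrates to a one-parameter group of isometries) is handled by the explicit formula above.
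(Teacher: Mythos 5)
Your proof is correct, but it takes a genuinely different route from the paper's own proof of this lemma. The paper argues indirectly: it passes to the angle parametrisation, computes that for a Killing field the extra terms in the curvature evolution of Lemma~\ref{LMkevothetaparam} collapse to a single convective term $a\bar{k}_\theta$ (because $\langle D_TV,T\rangle=\langle D_NV,N\rangle=0$, $\langle D_TV,N\rangle=-a$, $D^2V=0$), observes that $\bar{k}(\theta,t)$ and $k(\theta+at,t)$ then solve the same PDE, invokes uniqueness, and finally appeals to the fundamental theorem of plane curves to conclude that the two curve families differ by an isometry. You instead produce the isometry explicitly as the time-$t$ flow $\Phi_t$ of $V$ and verify by direct substitution that $\gamma=\Phi_t\circ\eta$ solves \eqref{flow} whenever $\eta$ is a wound healing flow, using that $D\Phi_t\equiv R_t\in SO(2)$, so $\nu_\gamma=R_t\nu_\eta$ and $k_\gamma=k_\eta$, and taking normal components. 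Your computation is essentially the content of the paper's subsequent Lemma~\ref{killingrigid}, which records the same rigid motion (there written as $R_{at}\gamma+(x(t),y(t))$ with $(x,y)$ an integral curve of $V$). Your route is constructive and sidesteps both the uniqueness argument for the nonlinear PDE and the fundamental theorem of curves; the paper's route makes visible the algebraic reason Killing fields are special, namely that all $V$-contributions to the angle-parametrised curvature equation reduce to a pure transport term.
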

\begin{proof}%{{{
For the wound healing flow we have 
\[
 k_t = \sigma_1k^2k_{\theta \theta}+\sigma_1k^3+\sigma_2k^2\,.
\]
Since $V$ is Killing there exist $a,b,c$ such that $V(x,y) = a\xi_3(x,y) +
b\xi_1(x,y) + c\xi_2(x,y) = a(y,-x)+(b,c)$, where $a,b,c \in \mathbb{R}$.
Letting $T = (\alpha,\beta), N = (-\beta, \alpha)$ we find 
\[
 D_TV = \begin{bmatrix} 0&a\\
	-a&0
\end{bmatrix} \begin{bmatrix} \alpha\\
	\beta
\end{bmatrix}  = \begin{bmatrix} a\beta\\
	-a\alpha
\end{bmatrix}, \implies \langle D_TV,N\rangle = -a
\]
and $\langle D_TV,T\rangle = \langle D_NV,N\rangle = 0$ as well as $D^2V = 0$.
Let $\bar{k}$ be the curvature of a curve evolving by the flow \eqref{flow}.
The curvature evolves via (see Lemma \ref{LMkevothetaparam})
\[
 \bar{k}_t = \sigma_1\bar{k}^2\bar{k}_{\theta \theta}+\sigma_1\bar{k}^3+\sigma_2\bar{k}^2+a\bar{k}_{\theta}\,.
\]
Observe that $\bar{k}$ and $k(\theta+at,t)$ satisfy same evolution equation,
and by uniqueness, must be equal.
Therefore from the fundamental theorem of curves there must exist an isometry
between the two curves they represent.
\end{proof}%}}}

In fact, it is not hard to prove what this isometry is.
Let $(x(t),y(t))$ be an integral curve of $V$ starting from the origin, that is,
\begin{align*}
	x'(t) &= -y(t)a+b\\
	y'(t)& = ax(t)+c\\
	(x(0),y(0)) &= (0,0)\,.
\end{align*}
The solution, which can be found using matrix exponentials and Duhamel's formula, is given by
\[ \begin{pmatrix}
	x(t)\\
	y(t)
\end{pmatrix} =
\frac{1}{a}
\begin{pmatrix}
	b\sin(at)+c\cos(at)-c\\
	c\sin(at)-b\cos(at)+b
\end{pmatrix}\,.
\]

Below we give a more formal statement.

\begin{lem}\label{killingrigid}
Let $V(x,y) = a\xi_3(x,y) + b\xi_1(x,y) + c\xi_2(x,y) = a(y,-x)+(b,c)$, where
$a,b,c \in \mathbb{R}$ and set $R_\theta$ to be anticlockwise rotation by angle
$\theta$.

Suppose $\gamma$ satisfies the wound healing flow.
Then the family of curves 
\[
 \hat{\gamma} = R_{at}\gamma + (x(t),y(t))
\]
satisfies
\begin{align*}
	\hat{\gamma}_t &= (\sigma_1\hat{k}+\sigma_2)\hat{\nu}+V(\hat{\gamma})\\
	\hat{\gamma}(\cdot,0) &= \gamma_0\,.
\end{align*}
\end{lem}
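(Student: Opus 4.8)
The plan is to take the explicit family $\hat\gamma = R_{at}\gamma + (x(t),y(t))$ and simply differentiate in time, checking that the terms generated by the moving frame reassemble into $V(\hat\gamma)$ while the rotated velocity of $\gamma$ becomes the curvature term for $\hat\gamma$. The initial condition is immediate: $R_0 = \id$ and $(x(0),y(0)) = (0,0)$, so $\hat\gamma(\cdot,0) = \gamma_0$.

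Write $z(t) = (x(t),y(t))$ and let $S$ be the constant skew matrix with $\frac{d}{dt}R_{at} = aSR_{at}$, chosen (with the sign conventions of the statement) so that the linear part of $V$ is $\xi \mapsto aS\xi$; thus $V(\xi) = aS\xi + (b,c)$. Differentiating $\hat\gamma$ with the product rule gives $\hat\gamma_t = aSR_{at}\gamma + R_{at}\gamma_t + z'(t)$. Using $R_{at}\gamma = \hat\gamma - z(t)$, the first term becomes $aS\hat\gamma - aSz(t)$. The ODE defining $z$ is exactly the statement that $z$ is the integral curve of $V$ through the origin, i.e.\ $z'(t) = aSz(t) + (b,c)$. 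Hence the first and third terms combine, the $aSz(t)$ contributions cancel, and what remains is $aS\hat\gamma + (b,c) = V(\hat\gamma)$. So $\hat\gamma_t = V(\hat\gamma) + R_{at}\gamma_t$.

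It then remains to rewrite $R_{at}\gamma_t$. Since $\gamma$ solves the wound healing flow we have $\gamma_t = (\sigma_1 k + \sigma_2)\nu$. A rigid motion built from a rotation and a translation is an orientation-preserving isometry, so it preserves curvature ($\hat k = k$ at corresponding points) and carries the unit inward normal of $\gamma$ to that of $\hat\gamma$ (the translation leaves $\nu$ unchanged and $R_{at}$ rotates it); that is, $\hat\nu = R_{at}\nu$. Therefore $R_{at}\gamma_t = (\sigma_1\hat k + \sigma_2)\hat\nu$, and combining with the previous step yields $\hat\gamma_t = (\sigma_1\hat k + \sigma_2)\hat\nu + V(\hat\gamma)$, which is the assertion.

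There is no real obstacle; the whole content is the identity $\frac{d}{dt}\Phi_t \circ \Phi_t^{-1} = V$ for the time-dependent isometry $\Phi_t(p) = R_{at}p + z(t)$, which forces $z$ to be the integral curve of $V$ from the origin and the angular speed to be $a$, and which makes the cancellation in the second paragraph happen. The only things needing care are fixing the orientation and sign conventions consistently so that $\frac{d}{dt}R_{at}$ reproduces the linear part of $V$ (and not its negative), confirming that $R_{at}$ sends inward normal to inward normal, and observing that because we take the wound healing flow in its purely normal parametrisation the resulting identity for $\hat\gamma_t$ holds for the full velocity rather than only its normal component.
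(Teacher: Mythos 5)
Your proof is correct and follows essentially the same route as the paper: differentiate $\hat\gamma = R_{at}\gamma + (x,y)$, use that rigid motions carry $(k,\nu)$ to $(\hat k,\hat\nu)$, identify the rotational derivative term with the linear part of $V$, and use the integral-curve ODE for $(x,y)$ to produce $V(\hat\gamma)$. Your closing caveat about matching the sign of $\frac{d}{dt}R_{at}$ with the linear part of $V$ is well placed: the paper writes $a\xi_3 = a(y,-x)$ while having defined $\xi_3=(-y,x)$, and it is the ODE $(x',y') = aR_{\pi/2}(x,y)+(b,c)$ (hence $V(\xi)=aR_{\pi/2}\xi+(b,c)=a(-y,x)+(b,c)$) that is used in the computation, so your identification $S=R_{\pi/2}$ and $V(\xi)=aS\xi+(b,c)$ is the consistent reading.
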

\begin{proof}%{{{
We calculate 
\begin{equation}
	\hat{\gamma}_t =R_{at}\gamma_t+ \frac{d}{dt}(R_{at})\gamma  +(x',y').
\end{equation}
Differentiating the rotation $R_{at}$ we find 
\[
 \frac{d}{dt}\begin{bmatrix} \cos(at) &-\sin(at)\\
	\sin(at) & \cos(at)
\end{bmatrix} = 
a\begin{bmatrix}
	-\sin(at) & -\cos(at)\\
	\cos(at) &-\sin(at)	
\end{bmatrix}.
\]
By considering the facts that $\cos(\frac{\pi}{2}+at) = -\sin(at)$ and
$\sin(\frac{\pi}{2}+at)  = \cos(at)$ it is clear that 
\[
 \frac{d}{dt}R_{at} = aR_{\frac{\pi}{2}+at}\,.
\]
We also calculate 
\begin{align*}
\hat{\gamma}_u &= R_{at}\gamma_u,\\
\implies \hat{\tau} &= R_{at}\tau,\\
\implies \hat{\nu} & = R_{at}\nu,
\end{align*}
and $\hat{k} = k$. Substituting these and the result that $(x,y)$ is an integral curve for $V$ we find 
\begin{align*}
\hat{\gamma}_t &= R_{at}(\sigma_1k+\sigma_2)\nu +aR_{\frac{\pi}{2}+at}\gamma  +aR_{\frac{\pi}{2}}(x,y)+(b,c)\\
	&=(\sigma_1\hat{k}+\sigma_2)\hat{\nu} +aR_{\frac{\pi}{2}}\left( R_{at}\gamma +(x,y)\right) +(b,c)\\
	&=  (\sigma_1\hat{k}+\sigma_2)\hat{\nu}+V(\hat{\gamma})
\,.
\end{align*}
\end{proof}%}}}

\bibliographystyle{plain}
\bibliography{svg}{}

\end{document}